\newcommand\quotient[2]{\raise1ex\hbox{$#1$}\Big/\lower1ex\hbox{$#2$}}
\newcommand{\gspan}[1]{\left\langle{#1}\right\rangle}
\newcommand{\Z}{\mathbb{Z}}
\newcommand{\Out}{\operatorname{Out}}
\newcommand{\Aut}{\operatorname{Aut}}
\newcommand{\st}{\ensuremath{\operatorname{st}}}
\newcommand{\SL}{\operatorname{SL}}
\newcommand{\GL}{\operatorname{GL}}
\newcommand{\IA}{\operatorname{IA}}
\newcommand{\Tr}{\operatorname{Tr}}
\newcommand{\abs}[1]{\left| #1 \right|}
\newcommand{\lk}{\ensuremath{\operatorname{lk}}}
   \newcommand{\leinf}{\leq_\infty}
   \newcommand{\leinfhat}{\hat{\leq}_\infty}
   \newcommand{\lehat}{\hat{\leq}}
   \newcommand{\lehatcomp}{\ \hat{\leq}\ }
      \newcommand{\nlehatcomp}{\ \hat{\not\leq}\ }
\newtheorem{thm}{Theorem}[section]
\newtheorem{thmspecial}{Theorem}
\newtheorem{corspecial}[thmspecial]{Corollary}
\newtheorem{cor}[thm]{Corollary}
\newtheorem{lem}[thm]{Lemma}
\theoremstyle{remark}
\newtheorem{rem}[thm]{Remark}
\theoremstyle{definition}
\newtheorem{defn}[thm]{Definition}
\theoremstyle{plain}
\newtheorem{prop}[thm]{Proposition}
\theoremstyle{remark}
\theoremstyle{definition}
\theoremstyle{definition}
\theoremstyle{plain}
\theoremstyle{definition}
\newcounter{claimcounter}[thm]
\numberwithin{claimcounter}{thm}
\newcounter{casecounter}
\newcounter{subcasecounter}[casecounter]
\numberwithin{subcasecounter}{casecounter}
\newcommand{\mids}{\, | \, }
\newcommand{\m}{\ensuremath{^{-1}}}
\newcommand{\ad}{\operatorname{ad}}
\renewcommand{\phi}{\varphi}
\newcommand{\cal}[1]{\mathcal{#1}}
\newcommand{\grp}[1]{\langle #1 \rangle}
\newcommand{\N}{\mathbb{N}}
\newcommand{\G}{\cal{G}}
\newcommand{\Ghat}{\hat{\cal{G}}}
\newcommand{\Autfin}{\Aut^{\operatorname{fin}}}
\renewcommand{\Im}{\operatorname{Im}}
\title[Outer automorphism groups of graph products]{Outer automorphism groups of graph products: subgroups and quotients}
\author{Andrew Sale}
\author{Tim Susse}
\begin{document}
    \begin{abstract}
    We show that the outer automorphism groups of graph products of finitely generated abelian groups satisfy the Tits alternative, are residually finite, their so-called Torelli subgroups are finitely generated, and they satisfy a dichotomy between being virtually nilpotent and containing a non-abelian free subgroup that is determined by a graphical condition on the underlying labelled graph.
    	
	Graph products of finitely generated abelian groups simultaneously generalize right-angled Artin groups (RAAGs) and right-angled Coxter groups (RACGs), providing a common framework for studying these groups. Our results extend a number of known results for the outer automorphism groups of RAAGs and/or RACGs by a variety of authors, including Caprace, Charney, Day, Ferov, Guirardel, Horbez, Minasyan, Vogtmann, Wade, and the current authors.
    \end{abstract}

\maketitle

 A graph product is a group construction introduced by Green \cite{Green:graphprod} that generalizes free products and direct products.
Given a simplicial graph $\Gamma$ with vertex set $V$, and a collection of 
groups $\{G_v\}_{v\in V}$, referred to as the vertex groups, we can form the \emph{graph product} of these groups over $\Gamma$
by taking the group generated by all vertex groups and declaring that every element of $G_v$ commutes with every element of $G_w$ if and only if $v$ and $w$ are adjacent in $\Gamma$. Graph products of finitely generated abelian groups generalize the classes of right-angled Artin and Coxeter groups  (RAAGs and RACGs respectively), and many properties of of RAAGs and RACGs are shared by every group in this class. Recall that a RAAG is a graph product where $G_v\cong \Z$ for all $v\in V$, and a RACG is a graph product where $G_v\cong \Z_2$ for all $v\in V$.

Recently, there has been a significant amount of research into the (outer) automorphism groups of RAAGs, for example in \cite{CCV_automorphisms, CV09, CV11, Day_symplectic, Day_solvable, DayWade, DuncanRemeslennikovII, GuirardelSale-vastness, Minasyan:RFRAAG, WadeThesis}. 
Some attention has been paid to the structure of outer automorphism groups of general graph products of abelian groups, focusing mainly on the case when all vertex groups have finite order (see for example \cite{COrredor-Gutierrez, CRSV_no_SIL, GPR_automorphisms, SaleSusse}). 
In this paper, we extend the techniques and results of many of these papers to the general framework of automorphisms of graph products of finitely generated abelian groups.
When vertex groups are not abelian, Genevois and Martin have found a finite generating set for some outer automorphism groups of graph products, and studied other geometric properties (e.g.~acylindrical hyperbolicity) \cite{GenevoisMartin}, while Ferov has investigated Grossman's Property (A) and residual properties \cite{Ferov:RF}.

One of the properties we investigate is whether our outer automorphism groups satisfy the Tits Alternative.
Recall that a group $G$ is said to satisfy the \emph{Tits Alternative} if for every subgroup $H$ of $G$ either
 $H$ contains a non-abelian free subgroup, or
 $H$ is virtually solvable.

The Tits Alternative holds for linear groups \cite{TitsAlt}, 
for hyperbolic groups \cite{Gromov:hyperbolic, GhysDLH:hyperbolic},
for mapping class groups \cite{Ivanov:Tits, McCarthy:Tits},
for many groups acting on $\operatorname{CAT}(0)$ cube complexes \cite{SageevWise},
and for $\Out(F_n)$ \cite{BFH:Tits1}. 
 Except for linear groups, in each of the above, solvable subgroups are in fact virtually abelian and for hyperbolic groups, even virtually cyclic. 
The Tits Alternative is closed under taking graph products \cite{AntolinMinasyanTA}, and so any graph product of finitely generated abelian groups also satisfies it.

The Tits Alternative is known to hold for outer automorphism groups of RAAGs. This was initiated by Charney and Vogtmann \cite{CV11} and completed by Horbez \cite{HorbezTA}, who showed in particular that the Tits Alternative holds for outer automorphism groups of free products (quoted below as Theorem~\ref{thm:HorbezFP}).

Here we extend these results to \emph{all} graph products of finitely generated abelian groups.

\begin{thmspecial}\label{thms:OutGPTA}
	Let $\G$ be a graph product of finitely generated abelian groups.
	
	Then $\Out(\G)$ satisfies the Tits Alternative.
\end{thmspecial}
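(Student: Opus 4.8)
The plan is to induct on the number $n = |V(\Gamma)|$ of vertices of $\Gamma$, proving the sharper statement that every subgroup of $\Out(\G)$ either contains a non-abelian free subgroup or is virtually polycyclic. Working with the class of virtually polycyclic groups, rather than virtually solvable groups, is what makes the induction run: that class is closed under subgroups, finite-index overgroups and extensions, and consequently the corresponding relative Tits Alternative has the same three closure properties (the one non-formal point being that a group extension with free quotient splits, so an extension of an $F_2$-containing group by an arbitrary group again contains $F_2$). The three external inputs are Tits' theorem, in the form that a finitely generated linear group — in particular $\GL_n(\Z)$ and $\Aut(A)$ for $A$ finitely generated abelian — satisfies the Tits Alternative with virtually polycyclic elementary subgroups; Horbez's Theorem~\ref{thm:HorbezFP} for outer automorphism groups of free products; and the fact \cite{AntolinMinasyanTA} that every graph product of finitely generated abelian groups satisfies the Tits Alternative.

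The base case $n=1$ is immediate, since $\Out(\G)=\Aut(G_v)$ is virtually $\GL_m(\Z)$; the same disposes of every complete graph $\Gamma$, where $\G$ is abelian. For the inductive step, suppose first that $\Gamma$ is disconnected with connected components $\Gamma_1,\dots,\Gamma_k$, $k\ge 2$. Then $\G = \G_{\Gamma_1} * \cdots * \G_{\Gamma_k}$ is a free product whose factors are freely indecomposable (or infinite cyclic), because a graph product over a connected graph is freely indecomposable; by the Kurosh subgroup theorem this decomposition is canonical, so a finite-index subgroup of $\Out(\G)$ preserves the free factor system up to conjugacy, and Horbez's Theorem~\ref{thm:HorbezFP} applies: each $\G_{\Gamma_i}$ satisfies the Tits Alternative by \cite{AntolinMinasyanTA}, and each $\Out(\G_{\Gamma_i})$ does by the inductive hypothesis, as $|V(\Gamma_i)| < n$.

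The substantive case is $\Gamma$ connected with $n\ge 2$, where the structural description of $\Out(\G)$ is brought in. There is a canonical finite-index subgroup $\Out^0(\G)$, which it suffices to treat since the relative Tits Alternative passes to finite-index overgroups. Using the restriction, exclusion and projection homomorphisms to automorphism data attached to proper subgraphs, the plan is to exhibit $\Out^0(\G)$ as an iterated extension of subquotients of four kinds: (i) subgroups of $\GL_m(\Z)$ or of $\Aut(A)$ ($A$ finitely generated abelian) coming from domination-equivalence classes of vertices — linear, hence handled by Tits; (ii) $\Out$ of a graph product of finitely generated abelian groups over a graph with fewer vertices, or, in the subcase where $\Gamma$ is a join, $\Out$ of a direct product of two such, which a standard analysis of automorphism groups of direct products reduces to the $\Out$ of the directly indecomposable factors (graph products on fewer vertices, or abelian), finitely generated abelian groups of homomorphisms into centres, and finite permutations of factors — handled by the inductive hypothesis; (iii) $\Out$ of a free product $G_v * \G_{C_1} * \cdots * \G_{C_m}$, where for a vertex $v$ the $C_j$ are the connected components of $\Gamma\setminus\st(v)$, accounting for the partial conjugations by $v$ — handled by Theorem~\ref{thm:HorbezFP}, whose hypotheses hold because each $\G_{C_j}$ is a graph product on a graph with fewer vertices (so \cite{AntolinMinasyanTA} and the inductive hypothesis apply) and $G_v$ is abelian; and (iv) finitely generated nilpotent groups, coming from transvections between inequivalent vertices. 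Since the relative Tits Alternative is closed under extensions, assembling these subquotients gives it for $\Out^0(\G)$, and hence for $\Out(\G)$, completing the induction.

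The main obstacle is type (iii): the subgroup of $\Out(\G)$ generated by partial conjugations. Unlike the transvection part it is not virtually nilpotent in general — it is exactly the mechanism producing the non-abelian free subgroups detected by the graphical (SIL) condition — so no elementary argument applies. The resolution is to identify it inside the outer automorphism group of an explicit free product, using the amalgam $\G = (G_v \times \G_{\lk(v)}) *_{\G_{\lk(v)}} \G_{\Gamma\setminus v}$ together with the quotient of $\G$ by the normal closure of $\G_{\lk(v)}$, which is $G_v * \G_{C_1} * \cdots * \G_{C_m}$: one checks that partial conjugations by $v$ descend to, and are faithfully recorded by, partial conjugations of this free product, so that Horbez's theorem becomes available. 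Carrying this out cleanly — and arranging the induction so that it is applied one vertex, or one domination class, at a time, with restriction and exclusion tracking what has been peeled off — is the technical heart of the connected case; everything else is bookkeeping with the domination poset and these homomorphisms.
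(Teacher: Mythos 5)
Your overall skeleton matches the paper's: induct on $|V(\Gamma)|$, prove the Tits Alternative relative to virtually polycyclic subgroups (which is closed under subgroups, finite-index overgroups, and extensions, the lifting of $F_2$ from a quotient being the only non-formal point), use \cite{AntolinMinasyanTA} for $\G$ itself and Horbez's Theorem~\ref{thm:HorbezFP} for the disconnected case, and handle the join/star case by a semidirect product decomposition. Up to that point the proposal is sound. The divergence, and the gap, is in the connected non-star case.

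Concretely, there are two problems with your type (iii) reduction. First, the factor map $\G \to G_v * \G_{C_1} * \cdots * \G_{C_m}$ obtained by killing $\lk(v)$ does not induce a homomorphism on any finite-index subgroup of $\Out(\G)$: a transvection $R_u^{w}$ with $u\in\lk(v)$ and $w\notin\lk(v)$ sends $u$ to $uw$, which does not lie in the normal closure of $\G_{\lk(v)}$, so the kernel of the factor map is not preserved. Such transvections exist generically (e.g.\ $w=v$ whenever $\lk(u)\subseteq\st(v)$), so the map is only defined on a subgroup of infinite index, and you cannot peel it off as a layer of an iterated extension of $\Out^0(\G)$. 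Second, and more fundamentally, the proposed four types of subquotients do not assemble into a normal series. The subgroup generated by \emph{all} partial conjugations admits no filtration by multiplier: for distinct vertices $x,y$, the elements $\pi^x_C$ and $\pi^y_D$ fail to commute exactly when there is a SIL (Lemma~\ref{lem:partial conj not commute}), and that interaction across different multipliers is precisely where the non-abelian free subgroups of $\Out(\G)$ live. Each of your per-vertex free products $G_v * \G_{C_1}*\cdots$ only records conjugations by (powers of) $v$, so no single proposed subquotient captures this interaction, and "applying the reduction one vertex at a time" has no group-theoretic meaning without a normal subgroup to quotient by at each step. The paper's resolution is to bundle \emph{all} the projections simultaneously into the amalgamated map $P\colon \Out^1(\G)\to\prod_{[v]\text{ maximal}}\Out(\G_{\lk[v]})$ (note the targets are $\G_{\lk[v]}$, reached by restricting to $\st[v]$ and then killing $[v]$, which \emph{is} well defined on $\Out^1(\G)$ by maximality) and to prove that $\ker P$ is abelian, generated by leaf-like transvections and partial conjugations over bridged components (Theorem~\ref{thm:kerP_abelian}); the proof of that statement, via the extended graph $\hat\Gamma$, is the technical heart of the theorem and is the piece your proposal acknowledges but does not supply.
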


A special case of Theorem~\ref{thms:OutGPTA} is the following new result.

\begin{corspecial}
	The outer automorphism group of a right-angled Coxeter group satisfies the Tits Alternative.
\end{corspecial}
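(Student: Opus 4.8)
The plan is to deduce this statement directly from Theorem~\ref{thms:OutGPTA}. First I would recall that a right-angled Coxeter group $\cox{\Gamma}$ on a simplicial graph $\Gamma$ is, by definition, the group generated by the vertices of $\Gamma$ subject to the relations $v^2 = 1$ for every vertex $v$ and $[v,w] = 1$ whenever $v$ and $w$ span an edge of $\Gamma$. Comparing this with the description of graph products recalled in the introduction, $\cox{\Gamma}$ is precisely the graph product over $\Gamma$ of the vertex groups $G_v \cong \Z_2$.

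Since $\Z_2$ is a finitely generated abelian group, $\cox{\Gamma}$ is a graph product of finitely generated abelian groups, so Theorem~\ref{thms:OutGPTA} applies and gives that $\Out(\cox{\Gamma})$ satisfies the Tits Alternative. As every right-angled Coxeter group arises in this way, the corollary follows.

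The point to stress is that there is essentially nothing further to prove here: the corollary is recorded only to isolate the new consequence for Coxeter groups, and all of the difficulty is absorbed into Theorem~\ref{thms:OutGPTA}. In particular, no argument specific to RACGs is required, since the class of graph products of finitely generated abelian groups contains every right-angled Coxeter group with no restriction on the defining graph $\Gamma$; the only real obstacle is the one already confronted in establishing the general theorem.
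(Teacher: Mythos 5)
Your proposal is correct and matches the paper exactly: the corollary is recorded as an immediate special case of Theorem~\ref{thms:OutGPTA}, since a right-angled Coxeter group is by definition the graph product over $\Gamma$ with all vertex groups $\Z_2$, which are finitely generated abelian. The paper gives no separate argument, and none is needed.
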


We remark that we show a stronger version of the Tits alternative for Theorem~\ref{thms:OutGPTA}, where the virtually solvable subgroups are virtually polycyclic (see Corollary~\ref{cor:polycyclic TA}).

In the study of the Tits Alternative for the outer automorphism group of a RAAG, Horbez' result for free products  allows us in particular to reduce to the case when $\Gamma$ is connected. 
Charney and Vogtmann used a pre-order on the vertices of $\Gamma$, which was introduced by Servatius \cite{Servatius} and is sometimes called domination, to apply an inductive method to prove the Tits Alternative for certain graphs $\Gamma$.
When paired with Horbez' theorem, Charney and Vogtmann's inductive method produces the result for all RAAGs.

To prove Theorem~\ref{thms:OutGPTA} we generalize the method of Charney and Vogtmann.
One part of this is extending the definition of the domination relation of Servatius. 
In particular, we define two pre-orders, inspired by domination and which we denote $\leq$ and $\leq_\infty$. The first of these is used for the induction process, while the latter is used for other results in the paper.

The next property we look at concerns the finite quotients of the graph product.
Recall that a group $G$ is said to be \emph{residually finite} if for every $g\in G$ there is a finite group $F$ and a homomorphism $\phi\colon G\to F$ so that $\phi(g)\neq 1$. While Baumslag proved that $\Aut(G)$ is residually finite whenever $G$ is \cite{Baumslag:AutRF}, this result does not extend immediately to $\Out(G)$. 
Using the techniques developed to prove Theorem~\ref{thms:OutGPTA} we obtain the following, previously proven by Ferov in a more general setting \cite{Ferov:RF} using very different techniques.

\begin{thmspecial}\label{thms:OutGPRF}
	Let $\G$ be a graph product of finitely generated abelian groups.

	Then $\Out(\G)$ is residually finite.	
\end{thmspecial}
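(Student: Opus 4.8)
The plan is to mirror the inductive scheme used for Theorem~\ref{thms:OutGPTA}, following the same case division governed by the pre-order $\leq$, with ``satisfies the Tits alternative'' replaced throughout by ``is residually finite''. Since residual finiteness passes from a finite-index subgroup to the ambient group, and $\outo{\G}$ has finite index in $\Out(\G)$, it suffices to prove that $\outo{\G}$ is residually finite; I induct on the number of vertices of $\Gamma$. In the base case $\Gamma$ consists of a single vertex $v$, so $\Out(\G)=\Aut(G_v)$, which is virtually $\GL_k(\Z)$ for some $k$ and hence residually finite; the case where $\Gamma$ is complete, so that $\G$ is finitely generated abelian and $\Out(\G)=\Aut(\G)$ is again virtually $\GL_k(\Z)$, is handled the same way.

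For the inductive step, each case of the proof of Theorem~\ref{thms:OutGPTA} presents $\outo{\G}$ as an extension
\[ 1 \longrightarrow K \longrightarrow \outo{\G} \longrightarrow L \longrightarrow 1, \]
where $L$ is a subgroup of a product of $\GL$-type groups, groups $\auto{G_v}$, and groups $\outo{\G'}$ attached to proper full subgraphs $\Gamma'$ of $\Gamma$, and where $K$ is the normal subgroup generated by the transvections and partial conjugations associated to a $\leq$-maximal equivalence class of vertices. By induction together with the (virtual) linearity of the arithmetic factors, $L$ is residually finite. The first new ingredient is that $K$ is \emph{finitely generated} --- here one invokes the finite generation of the Torelli subgroup, proved elsewhere in the paper --- and that, by the form of its generators, $K$ is poly-(finitely generated abelian); in particular $K$ is residually finite and has a cofinal family of characteristic finite-index subgroups. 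A second useful feature is that the restriction homomorphism $\outo{\G}\to L$ admits a natural section, obtained by extending automorphisms of the smaller graph products across the forgotten vertices, so that the displayed extension splits.

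The main obstacle is that residual finiteness is not inherited by extensions --- not even by a split extension of a residually finite group by a finitely generated polycyclic normal subgroup --- so reconstituting $\outo{\G}=K\rtimes L$ demands that we use the structure of the pieces. The plan is to equip $K$ with an $\outo{\G}$-invariant series having torsion-free abelian quotients, on each of which $L$ acts through an explicit integral representation coming from the transvection and partial-conjugation actions, and then to embed $K\rtimes L$ into a finite product of groups assembled from $L$ and from semidirect products of these representations with the relevant arithmetic quotients; each such factor is residually finite (indeed linear when $L$ is), and this propagates residual finiteness up the induction. The remaining delicate point is the case of disconnected $\Gamma$, where $\G$ is a free product of the graph products over its connected components: this is not reached by the domination induction and must be treated separately, via the structure of the outer automorphism group of a free product together with the residual finiteness of the $\Out$ of each factor --- the analogue, for residual finiteness, of the role played by Horbez's theorem in the proof of Theorem~\ref{thms:OutGPTA}.
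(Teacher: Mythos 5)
Your high-level scheme (induct on the number of vertices and treat the disconnected, star, and connected non-star cases via the same decompositions as for the Tits alternative) is the right one and matches the paper, but the step where you actually separate elements of the kernel has a genuine gap. You correctly observe that residual finiteness does not pass to extensions, and you propose to fix this by (a) splitting the extension via ``a natural section, obtained by extending automorphisms of the smaller graph products across the forgotten vertices,'' and (b) embedding $K\rtimes L$ into a product of semidirect products of integral representations. Neither is available. The amalgamated projection $P\colon\Out^1(\G)\to\prod\Out(\G_{\lk[v]})$ has no natural section: a transvection or partial conjugation of $\G_{\lk[v]}$ satisfies link/star and component conditions inside $\lk[v]$ that generally fail in $\Gamma$, so it need not extend to an automorphism of $\G$; moreover the image of $P$ is only a subgroup of the product, cut out by compatibility conditions, so even extendability of individual generators would not yield a homomorphic section. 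Without the splitting, step (b) collapses to the bare extension problem $1\to\ker P\to\Out^1(\G)\to\Im P\to 1$ with $\ker P$ finitely generated abelian, possibly with torsion, and a (finite or abelian-with-torsion)-by-(residually finite) extension need not be residually finite (Deligne's central extensions of $\operatorname{Sp}_{2n}(\Z)$ are the standard counterexample). So the proposal never actually produces a finite quotient detecting a given nontrivial element of $\ker P$.

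The paper's proof does this concretely: by Theorem~\ref{thm:kerP_abelian}, a nontrivial $\Phi\in\ker P$ is a product of leaf-like transvections and partial conjugations over bridged components, and one applies a factor (exclusion) map $E\colon\Out^1(\G)\to\Out^1(\G_{\Gamma-[w]})$ for a suitably chosen maximal equivalence class $[w]$, checks directly that $E(\Phi)\neq 1$ (for instance because $E(\Phi)(u)$ is conjugate to $uy$ with $y\neq 1$ in $\grp{[v]}$, which is not conjugate to $u$), and then invokes the inductive hypothesis on the smaller graph; no section and no representation-theoretic embedding is needed. For disconnected $\Gamma$ the paper does not prove an inductive analogue of Horbez's theorem: it quotes Minasyan--Osin (the outer automorphism group of a residually finite group with infinitely many ends is residually finite) together with Green and Hsu--Wise (graph products of residually finite groups are residually finite), which requires no induction on the factors; your sketch for this case presupposes a free-product theorem for residual finiteness that you would have to supply. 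Finally, a small point: finite generation of $\ker P$ follows from its explicit finite generating set in Theorem~\ref{thm:kerP_abelian}; the Torelli theorem is neither needed nor used in the residual finiteness argument.
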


This generalizes results of Charney and Vogtmann \cite{CV11} and Minasyan \cite{Minasyan:RFRAAG} for RAAGs, and Carprace and Minasyan \cite{CapraceMinasyan:RFRACG} for RACGs (which was generalized by Carette to all Coxeter groups \cite{Carette}).

For a graph product $\G$ of finitely generated abelian groups,
we also study the structure of the subgroup of the (outer) automorphism group which acts as the identity on the abelianization of $\G$, denoted $\IA_\G$. 
This is sometimes called the Torelli subgroup, after the corresponding subgroup of the mapping class group. 
For mapping class groups, this group is known to be finitely generated by results of Birman and Powell \cite{Birman:Torelli, Powell:Torelli}, while for $\Out(F_n)$ this is a classic result of Magnus \cite{Magnus:IAfree}. Day extended the result of Magnus to outer automorphism groups of RAAGs \cite{Day_symplectic} (later proved independently by Wade \cite{WadeThesis}).
We extend the result further, proving the following theorem.
An explicit generating set is given in Theorem~\ref{thm:Torelli gen set}.

\begin{thmspecial}\label{thms:Torelli fg}
	Let $\G$ be a graph product of finitely generated abelian groups.
	
	Then the Torelli subgroup $\IA_\G$ is finitely generated.
\end{thmspecial}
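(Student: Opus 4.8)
The plan is to exhibit an explicit finite generating set for $\IA_\G$, as recorded in Theorem~\ref{thm:Torelli gen set}, and then to prove that it generates by a peak‑reduction argument modelled on Day's treatment of right-angled Artin groups \cite{Day_symplectic, WadeThesis}, carried out with respect to the refined pre-order $\leq_\infty$. (It suffices to treat the subgroup of $\Aut(\G)$; the corresponding subgroup of $\Out(\G)$ is then a quotient.) First I would assemble the candidate generators. Every partial conjugation of $\G$ acts trivially on $H_1(\G;\Z)=\bigoplus_v G_v$, since conjugation is invisible in the abelianization, so all partial conjugations lie in $\IA_\G$; likewise the ``commutator transvections'' which send a generator of a vertex group $G_v$ to its product with a suitable element of the commutator subgroup supported on $\link(v)$ lie in $\IA_\G$, as do the transvection-type maps whose displacement is killed on passing to $H_1(\G;\Z)$ (these last account for torsion in the vertex groups, where a map $v\mapsto v w^k$ may be an automorphism even though $v\mapsto vw$ is not, or where the image of $w$ in $H_1$ has finite order). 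Let $T$ be the finite collection of such maps isolated in Theorem~\ref{thm:Torelli gen set}. Containment $T\subseteq\IA_\G$ is a direct computation; the substance is the reverse inclusion $\IA_\G\subseteq\langle T\rangle$.

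For the reverse inclusion I would start from a finite generating set $E$ of $\Aut(\G)$ by elementary automorphisms (vertex automorphisms, graph symmetries, inversions, transvections, partial conjugations), available from the known structure of $\Aut(\G)$ \cite{COrredor-Gutierrez}. Consider the short exact sequence $1\to\IA_\G\to\Aut(\G)\xrightarrow{\,\pi\,}\mathcal{L}\to 1$, where $\mathcal{L}$ is the image of $\Aut(\G)$ in $\Aut\!\left(H_1(\G;\Z)\right)$: under $\pi$ the partial conjugations and commutator transvections die, the transvections go to elementary (or block-triangular) matrices whose shape is dictated by $\leq_\infty$, and the vertex automorphisms and symmetries go to the evident block-diagonal and permutation pieces. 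Given $\varphi\in\IA_\G$, write $\varphi$ as a word $w$ in $E^{\pm1}$. Using $\leq_\infty$ to fix a compatible linear ordering of the vertices, one shuffles $w$ into a normal form: symmetry and vertex-automorphism syllables collected at one end, transvection syllables arranged with $\leq_\infty$-larger vertices acting first, partial-conjugation and commutator-transvection syllables collected at the other end — each elementary shuffle costing only a controlled product of elements of $T$. Because $\pi(\varphi)=1$, the $\mathcal{L}$-visible part of this normal form must reduce to the identity in $\mathcal{L}$, so the transvection/vertex/symmetry syllables cancel up to relations of $\mathcal{L}$; lifting each such relation back to $\Aut(\G)$ absorbs it, again at the cost of a product of elements of $T$. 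What survives is a word in partial conjugations and commutator transvections, which a final, easier round of reduction — the graph-product analogue of the free-group clean-up in Magnus's argument — places inside $\langle T\rangle$.

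The main obstacle is precisely this peak-reduction step: one needs a finite list of ``elementary'' relations normally generating all relations among the images in $\mathcal{L}$ of the elementary automorphisms, together with explicit expressions, as products of partial conjugations and commutator transvections, for the error terms produced when those relations are lifted to $\Aut(\G)$. For right-angled Artin groups this is Day's peak-reduction theorem, and for right-angled Coxeter groups $\mathcal{L}$ embeds in $\GL(\Z/2)$; in general $\mathcal{L}$ is an extension built from copies of $\GL$ over $\Z$ and over $\Z/m$ glued along $\Gamma$, so the relation combinatorics — and even the catalogue of which transvections exist in the first place — must be organised through the refined pre-order $\leq_\infty$ rather than through Servatius's domination relation. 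Establishing this graph-product version of peak reduction is where essentially all of the work lies; granted it, finite generation of $\IA_\G$ follows formally.
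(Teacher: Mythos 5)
Your overall strategy --- exhibit the explicit generating set of Theorem~\ref{thm:Torelli gen set} (partial conjugations and commutator transvections), check containment in $\IA_\G$ directly, and get the reverse inclusion by comparing against a presentation of the image of $\Aut(\G)$ in $\Aut(\bar{\G})$ --- is the same Magnus/Day/Wade skeleton the paper follows. But the proposal defers precisely the steps where the proof lives, and it overestimates one of them while omitting others. What is actually needed is: (i) the reduction $\IA_\G \le \Aut^1_\infty(\G)$ (Proposition~\ref{prop:Torelli in 1 inf}), which disposes of graph symmetries and factor automorphisms \emph{before} any word manipulation, via the homomorphism $D$ of Section~\ref{sec:homom D}, rather than by shuffling symmetry syllables through a word as you suggest; (ii) an explicit finite presentation of $\rho(\Aut^1_\infty(\G))$ (Lemma~\ref{lem:standard rep image relators}) --- here no new graph-product peak-reduction theorem is proved: the image decomposes as $E\rtimes(N\rtimes\prod\SL(n_i,\Z))$ and its presentation is assembled from Day's Steinberg-type presentation of $N\rtimes\prod\SL(n_i,\Z)$ together with relations for the finite part $E\le T^n$; (iii) normality in $\Aut^1_\infty(\G)$ of the candidate subgroup $K$ (Lemma~\ref{lem:Torelli normal}), which rests on the long case analysis of conjugation identities in Lemmas~\ref{lem:conjugates of pc by transv} and~\ref{lem:conjugates of ct by transv}; and (iv) explicit lifts of each of the five relator families into $K$. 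Once (ii)--(iv) are in place, the conclusion is the standard observation that a normal subgroup contained in the kernel of a surjection and containing lifts of all relators of the quotient equals the kernel --- no normal form or peak reduction on words in $\Aut(\G)$ is required. Your proposal names (ii) as ``where essentially all of the work lies'' but does not carry it out, and it omits (i), (iii) and (iv), so as written it is an accurate plan rather than a proof.

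One concrete error in your description of the generating set: there is no third class of ``transvection-type maps whose displacement is killed on passing to $H_1$'' beyond the commutator transvections. Since the abelianization $\bar{\G}\cong\bigoplus_v G_v\cong\Z^n\times T$ retains all the torsion of the vertex groups, a transvection $u\mapsto uw^k$ with $w^k\neq 1$ in $G_w$ acts nontrivially on $\bar{\G}$ and is never in $\IA_\G$; the availability of $R_u^{w^k}$ for $k>1$ when $R_u^w$ is not an automorphism is relevant to generating $\Aut(\G)$, not its Torelli subgroup. This point is not cosmetic: the transvections with finite-order multiplier land in the $T^n$ factor of $\Aut(\bar{\G})\cong T^n\rtimes(\GL(n,\Z)\times\Aut(T))$ and must be accounted for in the presentation of the image (the subgroup $E$ and relator families (1) and (5) of Lemma~\ref{lem:standard rep image relators}), not absorbed into the kernel.
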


Our proof generally follows the structure of both Day and Wade's arguments, which in turn follow Magnus. In the case where the graph product $\G$ has a vertex with finite order elements, complications arise from certain finite order automorphisms. We introduce a new finite index subgroup of $\Aut(\G)$, which does not contains these automorphisms, and show that $\IA_\G$ is contained in this subgroup.

We then use this Theorem to prove a broad structural result for $\Out(\G)$, extending a theorem of Day for RAAGs \cite{Day_solvable} and the authors in the case of graph products of finite abelian groups \cite{SaleSusse}. In general such a graph product is uniquely determined by a labeled graph $(\Gamma,o)$, where $o$ is a function that assigns to each vertex of $\Gamma$ a power of a prime number or $\infty$.
The group $\G(\Gamma,o)$ is generated by taking the vertex group $G_v$ to be either $\Z$ if $o(v) =\infty$, or $\Z_{o(v)}$ otherwise.
Definitions of the other relevant terms appear in Sections~\ref{sec:bkgdauto} and \ref{sec:preorder}.

\begin{thmspecial}\label{thms:no SIL nil}
	Let $(\Gamma,o)$ be a finite labeled graph and $\G = \G(\Gamma,o)$.
	Then $\Out(\G)$ contains a nonabelian free subgroup if and only if $(\Gamma,o)$ contains either
	\begin{itemize}
		\item a $\leq_\infty$--equivalence class of size at least 2;
		\item a non-Coxeter SIL;
		\item a STIL;
		\item an FSIL.
	\end{itemize}
	Otherwise, $\Out(\G)$ contains a finite index subgroup that is nilpotent.
\end{thmspecial}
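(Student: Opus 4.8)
The plan is to reduce the dichotomy to two cases depending on whether the four graphical conditions hold, and to treat each direction separately. The ``only if'' direction is the easier one: if none of the four conditions holds, I want to produce a finite-index nilpotent subgroup of $\Out(\G)$. Here I would lean on the structure theory developed for Theorem~\ref{thms:OutGPTA} via the pre-orders $\leq$ and $\leq_\infty$. The absence of a $\leq_\infty$-equivalence class of size at least $2$ means every such class is a singleton, so the ``rigid'' part of $\Out(\G)$ coming from permuting and mixing dominating vertices is trivial up to finite index. The absence of a non-Coxeter SIL, a STIL, and an FSIL should kill all the remaining sources of hyperbolic or large behavior (partial conjugations along separating links, transvection-type moves, and the finite-order phenomena flagged in the discussion before Theorem~\ref{thms:Torelli fg}). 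I would then filter $\Out(\G)$ by the Torelli subgroup $\IA_\G$ (finitely generated by Theorem~\ref{thms:Torelli fg}), whose image in $\GL$ of the abelianization is virtually abelian under these hypotheses, and argue that $\IA_\G$ itself is nilpotent up to finite index by exhibiting a central series built from the elementary transvections and partial conjugations that survive — this is exactly the step that mirrors Day's argument in \cite{Day_solvable} and the authors' in \cite{SaleSusse}, now combined with the new finite-index subgroup of $\Aut(\G)$ introduced for Theorem~\ref{thms:Torelli fg}.

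For the ``if'' direction I would show that each of the four conditions individually forces a nonabelian free subgroup. A $\leq_\infty$-equivalence class $\{v,w\}$ of size $\geq 2$ yields, after restricting attention to the subgroup generated by transvections and automorphisms supported on that class, a copy of something like $\GL_2(\Z)$ or a congruence subgroup thereof (or a suitable analogue when the vertex groups are torsion), which contains $F_2$; for RAAGs this is the classical observation that two equivalent generators give $\Out$ a $\GL_2(\Z)$ subgroup. A non-Coxeter SIL gives two partial conjugations whose supports interact so that the subgroup they generate surjects onto (or contains) a free product, hence $F_2$ — this is the Charney–Vogtmann/Minasyan mechanism for RAAGs, and the ``non-Coxeter'' qualifier rules out exactly the degenerate $\Z_2$ situation where the partial conjugations are forced to commute or have order $2$. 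The STIL and FSIL cases are the genuinely new ones: I expect STIL (a ``separating transvection-induced'' configuration) to produce a free subgroup from the interaction of a transvection with a separating link, and FSIL (the ``finite'' analogue) to produce one from the finite-order automorphisms that arise only when some $o(v)$ is a proper prime power. In each case the strategy is the same: pass to an explicit two-generator subgroup of $\Aut(\G)$, compute its image in $\Out(\G)$, and apply a ping-pong argument — either directly on the natural action, or by mapping onto a known free-containing group such as $\GL_2(\Z)$, $\Out(F_2)$, or a free product via Theorem~\ref{thm:HorbezFP}.

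The main obstacle, I expect, is the bookkeeping needed to show that the four conditions are genuinely exhaustive — that is, that their simultaneous absence really does leave only a virtually nilpotent group. This requires a careful case analysis of how generators of $\Aut(\G)$ (transvections of various types between vertices of comparable order, partial conjugations, inner automorphisms, and the finite-order ``factor'' automorphisms of the $\Z_{o(v)}$) can fail to commute, and showing that every obstruction to commuting up to finite index is witnessed by one of the four named configurations. The torsion case is where the subtlety concentrates: a transvection $v \mapsto vw$ only makes sense when $o(w) \mid o(v)$ (in the appropriate sense), so the pre-order $\leq_\infty$ and the ordinary $\leq$ genuinely differ, and the FSIL condition has to be formulated precisely enough to capture the failure that the finite-order automorphisms introduce. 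I would organize this as a sequence of lemmas, one per generator type, each saying ``if this pair of generators does not commute modulo a finite-index subgroup, then $(\Gamma,o)$ contains [one of the four]'', and then assemble them, together with the finite generation of $\IA_\G$ and the virtual nilpotence of its image in the abelianization, into the final dichotomy.
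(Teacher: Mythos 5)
Your overall architecture (explicit free subgroups in the ``if'' direction; a structural reduction showing virtual nilpotence in the ``otherwise'' direction) matches the paper, and your treatment of the first two bullet points is essentially correct: two equivalent infinite-order vertices give a pair of transvections generating $F_2$ (the paper uses $(R_u^v)^2$ and $(R_v^u)^2$), and a non-Coxeter SIL $(x,y\mids z)$ gives $\gspan{\pi^x_C,\pi^y_C}\cong \Z_{o(x)}\ast\Z_{o(y)}$, which is virtually free nonabelian precisely because the SIL is non-Coxeter. However, your proposed mechanism for the STIL and FSIL cases is wrong. A STIL is a \emph{Separating Triple Intersection of Links} and an FSIL is a \emph{Flexible SIL}; neither has anything to do with transvections interacting with separating links, nor with factor automorphisms of finite vertex groups. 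In both cases the free subgroup comes from \emph{three partial conjugations} whose images in $\Out(\G)$ generate a free product of three cyclic groups; this is virtually nonabelian free even when every vertex involved has order $2$, which is exactly why these two configurations must be added to the non-Coxeter SIL condition (a Coxeter SIL alone gives only the infinite dihedral group $\Z_2\ast\Z_2$). As described, your ping-pong arguments for these two cases would have nothing to play ping-pong with.

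There is also a gap in the ``otherwise'' direction. You propose to filter $\Out(\G)$ by $\IA_\G$ and to use that the image in $\Aut(\bar\G)$ is ``virtually abelian.'' It is not: under the hypotheses the image is $E\rtimes N$ where $N$ is generated by the surviving transvections between distinct singleton $\leq_\infty$--classes, and a chain $v_1\leq_\infty v_2\leq_\infty v_3$ of distinct classes already produces a Heisenberg group inside $N$. So the quotient is only virtually nilpotent, and an extension of a (virtually) nilpotent group by a (virtually) nilpotent group is in general only virtually solvable --- the two-step filtration cannot deliver the conclusion. The paper instead exhibits a single finite-index subgroup $\cal{A}$ of $\Out^1_\infty(\G)$, generated by partial conjugations and transvections with multiplier of order $\neq 2$ together with commutator partial conjugations and commutator transvections with order-$2$ multipliers, and proves nilpotence directly via a grading by the $\infty$--depth of the vertices: the only nontrivial commutators among generators are $[R_x^y,\pi^x_C]=\pi^y_C$ and $[R_x^y,R_w^x]=R_w^y$, each of which strictly increases depth, so the lower central series terminates. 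Your instinct to build a central series from the surviving transvections and partial conjugations is the right one, but it has to be applied to the whole group at once rather than to $\IA_\G$ and its quotient separately.
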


We note that the nilpotency class is also determined. See Theorem~\ref{thm:no SIL nil class}.

In light of Theorem~\ref{thms:no SIL nil} we like to think of the condition that $(\Gamma,o)$ has no non-Coxeter SIL, no STIL, and no FSIL as describing a situation where the labeled graph has ``no free SIL.''
To prove this theorem, we show that the structure of a group with ``no free SIL'' is very rigid, providing a short-exact sequence for a finite-index subgroup of $\Out(\G)$, generalizing \cite[Theorem 2 (2)]{GuirardelSale-vastness}.

\begin{thmspecial}\label{thms:no SIL ses}
	Let $(\Gamma,o)$ be a finite labeled graph and $\G = \G(\Gamma,o)$.
	There is a finite-index subgroup $\cal{O}$ of $\Out(\G)$ such that
	if $(\Gamma,o)$ does not contain either
		\begin{itemize}
			\item a non-Coxeter SIL,
			\item a STIL,
			\item an FSIL,
		\end{itemize}
	then there is a short exact sequence
	$$1\to P \to \cal{O} \to \prod_{i=1}^k \SL(n_i,\Z) \to 1$$
	where $P$ is finitely generated and virtually nilpotent, and $n_1 , \ldots, n_k$ are the sizes of the $\leq_\infty$--equivalence classes containing more than one element.
\end{thmspecial}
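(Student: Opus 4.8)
The plan is to build the short exact sequence by identifying the projection to $\prod_{i=1}^k \SL(n_i,\Z)$ explicitly via the action of $\Out(\G)$ on the abelianization, and then to identify its kernel, up to finite index, with a virtually nilpotent group using the ``no free SIL'' hypothesis together with Theorems~\ref{thms:OutGPTA} and~\ref{thms:Torelli fg}. First I would fix the finite-index subgroup $\cal{O}$: this should be the image in $\Out(\G)$ of the finite-index subgroup of $\Aut(\G)$ introduced for the proof of Theorem~\ref{thms:Torelli fg} (the one that avoids the problematic finite-order automorphisms), intersected with the subgroup that preserves the partition of $V$ into $\leinf$--equivalence classes and acts on the set of classes of each fixed isomorphism type without permuting them. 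Restricting to $\cal{O}$ the $\leinf$--classes of size $>1$ become canonical; let $C_1,\dots,C_k$ be those of infinite order type, so $|C_i| = n_i$, and note that each $C_i$ spans a free abelian summand $\Z^{n_i}$ of $H_1(\G)$ on which $\cal{O}$ acts.

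The map $\cal{O}\to \prod \SL(n_i,\Z)$ is then the induced action on these summands; one must check it lands in $\SL$ rather than $\GL$ (achievable by passing to a further finite-index subgroup inside $\cal{O}$, absorbing sign issues) and that it is surjective. Surjectivity should follow by exhibiting, for each class $C_i$, the transvections and elementary automorphisms among the generators in $C_i$ — since vertices in a common $\leinf$--class are mutually ``dominating'' in the strong sense, the partial conjugations and transvections internal to $C_i$ realize all of $\SL(n_i,\Z)$ (or at least a finite-index subgroup, which after adjusting $\cal{O}$ suffices), and these can be taken to act trivially on the other classes. The kernel $P$ consists of outer automorphisms acting trivially on each $\bigoplus_{v\in C_i}\Z v$. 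The heart of the argument is showing $P$ is finitely generated and virtually nilpotent: here I would invoke the rigidity coming from ``no non-Coxeter SIL, no STIL, no FSIL.'' Under this hypothesis, the only sources of ``mixing'' between distinct $\leinf$--classes or non-trivial recombination are absent, so $P$ should be built from (i) the Torelli-type part $\IA_\G \cap \cal{O}$, which is finitely generated by Theorem~\ref{thms:Torelli fg} and, under the no-free-SIL hypothesis, has no nonabelian free subgroup hence is virtually nilpotent by Theorem~\ref{thms:OutGPTA} (via the polycyclic strengthening, Corollary~\ref{cor:polycyclic TA}, which forces virtual nilpotence when there is no free subgroup and the group is finitely generated and virtually polycyclic of bounded Hirsch length), together with (ii) a finitely generated abelian or nilpotent piece coming from the remaining elementary automorphisms (transvections from a higher vertex onto a lower one, diagonal automorphisms on finite-order vertex groups). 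Assembling these via the standard filtration of $\Out(\G)$ by elementary automorphism types, and checking each successive quotient is finitely generated abelian, gives that $P$ is finitely generated and virtually nilpotent.

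The main obstacle I anticipate is the precise bookkeeping needed to show that the no-free-SIL hypothesis genuinely collapses $P$ to a virtually nilpotent group: a priori $P$ could contain transvection subgroups that generate a nonabelian free group, and ruling this out requires a careful case analysis showing that any such configuration of transvections (or of partial conjugations with incomparable supports) would produce one of the forbidden structures — a non-Coxeter SIL, a STIL, or an FSIL — among the vertices. This is the analogue, in our generality, of the core combinatorial lemma behind \cite[Theorem 2 (2)]{GuirardelSale-vastness}, and the subtlety is handling the interaction between infinite-order and finite-order vertices and the various flavors of SIL simultaneously. A secondary technical point is ensuring all the finite-index adjustments (to kill signs, to fix the class partition, to make the $\SL$-image exactly surjective rather than finite-index) can be made compatibly so that a single finite-index $\cal{O}$ works for the whole statement; I would handle this by defining $\cal{O}$ once as the intersection of finitely many explicitly named finite-index subgroups and checking the sequence is exact on that common refinement.
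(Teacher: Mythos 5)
Your setup — taking $\cal{O}$ to be a finite-index subgroup on which the $\leinf$--classes are canonical, mapping to $\prod\SL(n_i,\Z)$ via the action on the free abelian part of the abelianization, and analyzing the kernel $P$ — matches the paper, which takes $\cal{O}=\Out^1_\infty(\G)$ and defines $\mu$ as the standard representation followed by the projection onto the $\SL$--blocks (using Lemma~\ref{lem:standard rep image relators}). However, there are two genuine gaps in how you handle $P$.

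First, your route to virtual nilpotence is broken. You argue: no free SIL $\Rightarrow$ $P$ contains no $F_2$ $\Rightarrow$ (by Corollary~\ref{cor:polycyclic TA}) $P$ is virtually polycyclic $\Rightarrow$ $P$ is virtually nilpotent. The last implication is false: virtually polycyclic groups need not be virtually nilpotent (e.g.\ $\Z^2\rtimes_A\Z$ with $A$ a hyperbolic matrix contains no $F_2$, is polycyclic, and is not virtually nilpotent). There is no ``bounded Hirsch length'' hypothesis that rescues this. The paper does not deduce nilpotence from the Tits alternative at all; it proves it directly by exhibiting an explicit generating set for a finite-index subgroup $\cal{A}\le\ker\mu$ (Proposition~\ref{prop:calA finite index}) and computing every commutator of pairs of generators (Proposition~\ref{prop:calA commutators}), showing they all commute except for two relations of the form $[R_x^y,\pi^x_C]=\pi^y_C$ and $[R_x^y,R_w^x]=R_w^y$, which force nilpotence of class equal to the $\infty$--depth. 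Second, even the ``no $F_2$ in $P$'' step is exactly the content you defer as ``the main obstacle'': the case analysis showing that any non-commuting configuration of partial conjugations, transvections, commutator transvections, and commutator partial conjugations produces a non-Coxeter SIL, STIL, or FSIL. This is the bulk of the paper's Section~\ref{sec:noSIL} (Lemmas~\ref{lem:pc and transv not commute gives SIL}, \ref{lem:pc and comm transv not commute STIL}, \ref{lem:no SIL comm transv commute with transv} and the two propositions above), and without it your proposal establishes neither the absence of free subgroups nor nilpotence. The parts you do carry out correctly (the target of $\mu$, surjectivity via transvections inside each infinite class, finite generation of the Torelli part via Theorem~\ref{thms:Torelli fg}, which the paper uses through Lemma~\ref{lem:ker mu gen set}) are sound, but the heart of the theorem is missing.
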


Once again, the nilpotency class of a finite-index subgroup of $P$ is determined, given in Theorem~\ref{thm:no SIL SES} below.

In particular, we immediately deduce the following.

\begin{corspecial}
	Suppose $(\Gamma,o)$ does not contain either
	\begin{itemize}
		\item a non-Coxeter SIL,
		\item a STIL,
		\item an FSIL.
	\end{itemize}
	Then $\Out(\G)$ is large if and only if $(\Gamma,o)$ contains a $\leq_\infty$--equivalence class of size precisely 2.
	
\end{corspecial}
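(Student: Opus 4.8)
The plan is to read the corollary off the short exact sequence of Theorem~\ref{thms:no SIL ses}. Recall that a group is \emph{large} if it has a finite-index subgroup surjecting onto a non-abelian free group. Let $\cal{O} \leq \Out(\G)$ be the finite-index subgroup produced by Theorem~\ref{thms:no SIL ses}, so that under the hypothesis on $(\Gamma,o)$ we have a short exact sequence
$$1 \to P \to \cal{O} \to Q \to 1, \qquad Q = \prod_{i=1}^k \SL(n_i,\Z),$$
with $P$ finitely generated and virtually nilpotent, and $n_1,\dots,n_k$ the sizes of the $\leq_\infty$--equivalence classes containing more than one element. Largeness is inherited both by and from finite-index subgroups (a finite-index subgroup of a non-abelian free group is again non-abelian free), so $\Out(\G)$ is large if and only if $\cal{O}$ is, and it suffices to prove that $\cal{O}$ is large exactly when some $n_i = 2$.

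For the forward implication, suppose $n_i = 2$ for some $i$. Composing $\cal{O}\twoheadrightarrow Q$ with the projection to the $i$-th factor gives a surjection $\cal{O}\twoheadrightarrow \SL(2,\Z)$. Since $\SL(2,\Z)$ contains a finite-index non-abelian free subgroup --- for instance the principal congruence subgroup $\Gamma(2)$, which is free of rank $2$ --- it is large, and pulling this back through $\cal{O}\twoheadrightarrow\SL(2,\Z)$ shows $\cal{O}$ is large.

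For the reverse implication we prove the contrapositive: assume no $n_i$ equals $2$. Since the $n_i$ are sizes of classes with more than one element, $n_i \geq 3$ for all $i$, so each $\SL(n_i,\Z)$ has Kazhdan's property (T); hence $Q$, and every finite-index subgroup of $Q$, has property (T). Suppose for contradiction that $\cal{O}$ is large, say $\cal{O}'\leq\cal{O}$ has finite index and $\pi\colon\cal{O}'\twoheadrightarrow F$ is onto a non-abelian free group $F$. Put $P' = P\cap\cal{O}'$; it is normal in $\cal{O}'$ and of finite index in $P$, hence virtually nilpotent and in particular amenable, while $\cal{O}'/P'\cong\cal{O}'P/P$ is a finite-index subgroup of $Q$ and so has property (T). Now $\pi(P')$ is a normal amenable subgroup of $F$, and the amenable radical of a non-abelian free group is trivial (every nontrivial normal subgroup of $F$ is non-abelian free, hence non-amenable), so $\pi(P') = 1$. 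Thus $\pi$ factors through $\cal{O}'/P'$, exhibiting $F$ as a quotient of a group with property (T); but then $F$ would have property (T), contradicting the fact that its abelianization is infinite. Hence $\cal{O}$ is not large.

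The substantive input is Theorem~\ref{thms:no SIL ses} itself; granting that, the only step needing care is the last one, where amenability of $P$ must be combined with property (T) of $Q$ to rule out largeness. That is the part I would expect to be the main obstacle, and it is dealt with by the amenable-radical argument above.
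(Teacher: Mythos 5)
Your proof is correct and follows exactly the route the paper intends: the corollary is stated as an immediate consequence of Theorem~\ref{thms:no SIL ses}, with largeness coming from an $\SL(2,\Z)$ factor when some $n_i=2$, and ruled out otherwise because the virtually nilpotent (hence amenable) kernel $P$ must die under any map to a free group while the remaining quotient, a finite-index subgroup of $\prod\SL(n_i,\Z)$ with all $n_i\geq 3$, has property (T). The amenable-radical argument you supply is the standard way to make "immediately deduce" precise, and all the finite-index bookkeeping checks out.
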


This paper is structured as follows.
In Section~\ref{sec:prelim} we will provide background on graph products and their automorphism groups and define of the key concepts which we will use throughout the remainder of the paper, specifically our notions of domination and equivalence of vertices and restriction and factor maps. In Section~\ref{sec:fi subgroups} we describe several subgroups of $\Out(\G)$, and prove that each is finite index. In Sections~\ref{sec:amalg proj} and~\ref{sec:TARF} we prove Theorems~\ref{thms:OutGPTA} and~\ref{thms:OutGPRF} on the Tits alternative and residual finiteness. In Section~\ref{sec:torelli} we prove Theorem~\ref{thms:Torelli fg} and use that theorem in Section~\ref{sec:noSIL} to prove Theorem~\ref{thms:no SIL nil}.

\medskip 

\noindent
{\bf Acknowledgements:} The authors wish to thank Michal Ferov and Ashot Minasyan for bringing to our attention past results on residual finiteness.

\section{Preliminaries}\label{sec:prelim}

\subsection{Graphs}
Given two graphs $\Gamma_1$ and $\Gamma_2$, their \emph{join}, is the graph $\Gamma_1\star\Gamma_2$ obtained from the disjoint union of $\Gamma_1$ and $\Gamma_2$ by adding an edge between every vertex of $\Gamma_1$ and every vertex of $\Gamma_2$.

We say that $\Lambda$ is an \emph{induced subgraph} of $\Gamma$ if whenever $v,w$ are vertices of $\Lambda$, they are adjacent in $\Lambda$ if and only if they are adjacent in $\Gamma$.

Given a subset $X$ of vertices of $\Gamma$, the \emph{link} of $X$ is the subgraph $\lk_\Gamma X$ induced by the vertices $v$ that are adjacent to every vertex in $X$.

The \emph{star} of $X$, denoted $\st_\Gamma X$, is the join $X \star  \lk_\Gamma X$ (which is, by definition of link, an induced subgraph of $\Gamma$).

When there is no ambiguity over what is the underlying graph $\Gamma$, we will omit $\Gamma$ from the notations and write $\lk X$ and $\st X$ instead.

\subsection{Graph products}\label{sec:graph products}

Let $\Gamma$ be a simplicial graph with vertex set $V$.
Let $\{G_v\}_{v\in V}$ be a set of groups indexed by $V$.
The graph product $\cal{G}(\Gamma,\{G_v\})$ is the quotient of the free product of the groups $G_v$ by the relations obtained by saying $G_v$ and $G_u$ commute when $v$ and $u$ are adjacent in $\Gamma$.

Suppose each $G_v$ is finitely generated abelian.
Then $\cal{G}(\Gamma,\{G_v\})$ is isomorphic to a graph product $\cal{G}(\Lambda,\{H_v\})$ where each vertex group $H_v$ is either infinite cyclic, or finite cyclic of prime-power order.
Thus we may assume $\cal{G}(\Gamma,\{G_v\})$ is already given by such a presentation.

\begin{defn}
	Let $\Gamma$ be a graph with vertex set $V$, and let $o : V \to P \cup \{\infty\}$ be a map, where $P\subset\N$ is the set of prime powers.
	Define $\cal{G}(\Gamma,o)$ to be the group with generating set $V$ and relators as follows:
	two generators $u,v$ commute whenever they are adjacent;
	the order of  a generator $v$ is $o(v)$.
Except where we need to be explicit, we omit the graph and its labelling from the notation, writing $\G$ instead of $\G(\Gamma,o)$.
	It has presentation
	\begin{equation*}
	\G = \G(\Gamma,o) = \grp{ V \mid [u,v]=1 \textrm{ if $u,v$ adjacent in $\Gamma$; } w^{o(w)}=1 \ \forall w\in V, o(w) < \infty}.
	\end{equation*}
\end{defn} 

If $o(v)$ is the order of $G_v$ (which is still assumed to be cyclic), then $\cal{G}(\Gamma,o)$ is isomorphic to $\cal{G}(\Gamma,\{G_v\})$.

Finally, we describe a natural family of subgroups of $\G$.

\begin{defn} 
Given an induced subgraph $\Lambda$ in $\Gamma$, we define $\cal{G}_\Lambda$ to be the \emph{special subgroup} generated by vertices in $\Lambda$.
Note that $\cal{G}_\Lambda$ is isomorphic to $\cal{G}(\Lambda , o | _\Lambda)$, where $o | _\Lambda$ denotes the restriction of the order map to the vertex set of $\Lambda$.
\end{defn}

\subsection{Automorphisms}\label{sec:bkgdauto}

Corredor and Gutierrez generalized the Laurence--Servatius generators for the RAAG case to describe a finite generating set for $\Out(\G)$ \cite{COrredor-Gutierrez}.
Their set consists of four types of automorphism.

\emph{Labeled graph automorphisms.}
Any automorphism of $\Gamma$ that preserves the labeling $o$ gives rise to an automorphism of $\G$.

\emph{Factor automorphisms.}
Any automorphism of a vertex group can be extended by the identity on all other vertex groups to give an automorphism of $\G$.

\emph{Transvections.}
Let $u,v \in V$. Suppose that either:
\begin{enumerate}
	\item $\lk(u) \subseteq \st(v)$ and $o(u) = \infty$; or
	\item $\st(u) \subseteq \st(v)$ and $o(v)$ divides $o(u)$.
\end{enumerate}
Then we can define automorphisms of $\G$ called transvections by $R_u^{v^k} \colon u \mapsto uv^k$ and $L_u^{v^k} \colon u \mapsto v^ku$ (fixing all other vertices), where $k=1$ if $o(u)=\infty$ and if
$o(u)=p^i$, $o(v) = p^j$, then $k = \max\{p^{j-i},1\}$.
When we refer to a transvection $R_u^{v^k}$ we will, without saying, take $k$ to be the value above, unless otherwise specified.

\emph{Partial conjugations.}
Fix a vertex $v$ and $C$ a union of connected components of $\Gamma \setminus \st(v)$. Define the partial conjugation $\pi^v_C$ by sending each $z\in C$ to $z^v = vzv\m$, and fixing all other vertices.

Given a transvection $R^{x^k}_y$ or a partial conjugation $\pi^x_C$, we call $x$ the \emph{multiplier} of the automorphism and $y$ or $C$ the support.

\begin{thm}[Corredor--Gutierrez \cite{COrredor-Gutierrez}]
	\label{thm:CGaut}The set of all labeled graph automorphisms, factor automorphisms, transvections $R_u^{v^k}$ for $u,v$ that satisfy either (1) or (2) above, and partial conjugations is a finite generating set for $\Aut(\G)$.
\end{thm}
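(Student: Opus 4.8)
The plan is to adapt the argument by which the Laurence--Servatius theorem is proved for right-angled Artin groups. Finiteness of the generating set is immediate from the standing assumption that $\Gamma$ is finite: the labeled graph automorphisms form a subgroup of the finite group $\Aut(\Gamma)$; the factor automorphisms form the finite group $\prod_{v\in V}\Aut(G_v)$, each factor being finite because $G_v$ is cyclic of finite or infinite order; the transvections are indexed by the ordered pairs $(u,v)$ satisfying condition (1) or (2); and the partial conjugations by a vertex $v$ together with a subset of the finitely many connected components of $\Gamma\setminus\st(v)$. So the content of the theorem is that these automorphisms generate $\Aut(\G)$.

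The first ingredient is a usable normal form for elements of $\G$. Green's syllable normal form for graph products supplies a well-defined syllable length $\ell$, a notion of the \emph{support} of an element --- the set of vertices whose generators occur in a reduced word for it --- and the fact that conjugacy in $\G$ is detected by cyclic reduction. Using this, fix $\phi\in\Aut(\G)$ and attach to it a complexity, for instance the total length $\sum_{v\in V}\ell\bigl(\phi(v)\bigr)$. One then argues by induction. If this sum equals $|V|$ then each $\phi(v)$ is a single syllable, and $\phi$ is visibly the composition of a labeled graph automorphism with factor automorphisms. If it exceeds $|V|$, the claim is that $\phi$ can be post-composed with one of the listed generators so as to strictly decrease the complexity. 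The crux is the resulting Whitehead-style case analysis. When some $\phi(v)$ is not a single syllable, a support lemma --- asserting that every vertex in the support of the conjugacy class of $\phi(v)$ dominates $v$ in the sense captured by conditions (1)--(2) --- guarantees that the transvections whose multiplier lies in that support are admissible; composing $\phi$ with an appropriate such transvection removes syllables from $\phi(v)$. When the only remaining discrepancy is that some $\phi(v)$ equals a conjugate of $v$ rather than $v$ itself, a combinatorial analysis of the components of $\Gamma\setminus\st(x)$ produces a vertex $x$ and a union $C$ of such components for which $\pi^x_C$ or its inverse lowers the complexity. Iterating, $\phi$ is reduced to a composition of a labeled graph automorphism with factor automorphisms, and unwinding the reductions expresses the original $\phi$ as a product of the listed generators.

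I expect the main obstacle to be exactly this inductive step, and within it the partial-conjugation case: one must verify that once the transvection moves are exhausted, the residual conjugating discrepancy is genuinely supported on a union of connected components of $\Gamma\setminus\st(x)$ for a suitable vertex $x$, so that it is realized by an honest partial conjugation from the list rather than by an unrestricted inner twist. A secondary subtlety, absent in the right-angled Artin setting, is torsion: when $o(v)<\infty$ the admissible exponents in the transvections are pinned down by the divisibility conditions in (1)--(2), and the factor automorphism groups $\Aut(G_v)$ are nontrivial finite groups, so throughout one must check that only admissible transvections and honest factor automorphisms are invoked --- in particular the support lemma has to be proved with the order function $o$ taken into account, so that a vertex may be used to reduce another only when the corresponding transvection actually exists.
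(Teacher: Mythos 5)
The paper does not prove this statement at all: it is imported verbatim from Corredor--Gutierrez \cite{COrredor-Gutierrez}, so there is no internal argument to compare yours against. Judged on its own terms, your outline correctly identifies the strategy that the cited proof actually follows --- Laurence's peak-reduction argument for RAAGs (itself building on Servatius), transported to graph products via Green's syllable normal form, with a domination/support lemma controlling which transvections are admissible and a final partial-conjugation step absorbing the residual inner twists. The finiteness discussion at the start is fine.

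However, as a proof there is a genuine gap, and it sits exactly where you say you expect the difficulty to be. The two load-bearing claims --- (a) the ``support lemma'' that every vertex occurring in a cyclically reduced conjugate of $\phi(v)$ dominates $v$ in the sense of conditions (1)--(2), with the order function $o$ taken into account, and (b) the assertion that whenever $\sum_v \ell(\phi(v)) > |V|$ some listed generator strictly decreases the complexity --- are stated, not proved, and together they \emph{are} the theorem. For RAAGs, (b) alone is the content of Laurence's paper and requires a delicate case analysis of how syllables can cancel; in the torsion setting one must additionally verify that the reducing transvection exists with the prescribed exponent $k$ (the divisibility condition in (2) can forbid the na\"{\i}ve reduction), that finite-order syllables reduced modulo $o(v)$ do not break the length induction, and that the terminal case $\sum_v\ell(\phi(v))=|V|$ really forces each $\phi(v)$ to generate a vertex group $G_w$ with $o(w)=o(v)$, so that the induced vertex permutation is a \emph{labeled} graph automorphism. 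Flagging these obstacles is honest bookkeeping, but none of them is overcome, so what you have is an accurate plan for reproving Corredor--Gutierrez rather than a proof. Since the paper treats this as a black-box citation, the appropriate fix is either to cite \cite{COrredor-Gutierrez} as the paper does, or to carry out the peak-reduction analysis in full.
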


We will consider several finite index subgroups of $\Aut(\G)$ in Section~\ref{sec:fi subgroups}, generated by subsets of the above automorphisms. We will be interested mainly in transvections and partial conjugations.

\begin{defn}
Let $\Aut^1(\G)$ be the subgroup of $\Aut(\G)$ generated by the set of all transvections and all partial conjugations.
	
Let $\Aut^0(\G)$ be the subgroup of $\Aut(\G)$ generated by $\Aut^1(\G)$ and all factor automorphisms.
	
Let $\Out^1(\G)$ and $\Out^0(\G)$ be the images of $\Aut^1(\G)$ and $\Aut^0(\G)$ in $\Out(\G)$, respectively.
\end{defn}

Below, Proposition~\ref{prop:finite index subgroups} shows that $\Aut^1(\G)$ and $\Aut^0(\G)$ have finite index in $\Aut(\G)$. Studying $\Aut^1(\G)$, the following relations will be useful, verification is left to the reader.

\begin{lem}\label{lem:relators}
	Let $x,y,z,w$ be distinct vertices in $\Gamma$. When the transvections (or their powers) exist, we get the following relations:
	\begin{equation}
	\label{eq:transv rel1}
	R^{x^k}_yR^{y^j}_z = R^{y^j}_z R^{x^{kj}}_z R^{x^k}_y, 
	\end{equation}
	\begin{equation}
	\label{eq:transv rel2}
	R^{x^k}_yR^{x^j}_z = R^{x^j}_zR^{x^k}_y, 
	\end{equation}
	\begin{equation}
	\label{eq:transv commute}
	R^{x^k}_y R^{w^j}_z = R^{w^j}_z  R^{x^k}_y.
	\end{equation}
\end{lem}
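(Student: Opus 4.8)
The plan is to prove each identity by the direct method of evaluating both sides on an arbitrary vertex generator $v\in V$ and comparing the resulting words in $\G$, using only that two vertex generators commute in $\G$ exactly when the corresponding vertices are adjacent in $\Gamma$. Recall that a transvection $R^{a^m}_b$ fixes every generator except $b$, which it sends to $ba^m$. Hence, for each of the three identities, every generator outside $\{x,y,z,w\}$ is fixed by both sides, and it suffices to track the images of the vertices occurring as a support or a multiplier.

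Identities \eqref{eq:transv rel2} and \eqref{eq:transv commute} are then immediate. In \eqref{eq:transv commute} the transvections $R^{x^k}_y$ and $R^{w^j}_z$ move only the generators $y$ and $z$ respectively; as $x,y,z,w$ are pairwise distinct, neither of $y,z$ is the multiplier of the other transvection, so the two automorphisms move disjoint sets of generators and therefore commute. The same reasoning handles \eqref{eq:transv rel2}: $R^{x^k}_y$ and $R^{x^j}_z$ move only $y$ and $z$, and $x\notin\{y,z\}$, so each fixes the support of the other and the two commute.

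The content lies in \eqref{eq:transv rel1}. Evaluating both sides on $x$ and on $y$ gives agreement at once; evaluating on $z$, one side produces $z(yx^k)^j$ and the other $zy^jx^{kj}$, so the identity reduces to $(yx^k)^j=y^jx^{kj}$ in $\G$. This holds whenever $x$ and $y$ commute, i.e.\ whenever they are adjacent in $\Gamma$, and it is trivially true when $j=1$. To finish I would check that at least one of these always occurs: if $R^{x^k}_y$ exists by virtue of condition (2) in the definition of transvections, then $y\in\st(y)\subseteq\st(x)$, so $x$ and $y$ are adjacent; whereas if it exists by virtue of condition (1), then $o(y)=\infty$, and this forces $o(z)=\infty$ for $R^{y^j}_z$ to be defined at all, so $j=1$. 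The only real obstacle is this last, minor, piece of bookkeeping — tracking which defining condition each transvection satisfies, together with the exponent conventions, in particular the fact that the transvection appearing on the right of \eqref{eq:transv rel1} carries the product exponent $kj$, which is precisely what balances the $z$-coordinate.
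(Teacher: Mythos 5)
Your verification is correct, and it is exactly the argument the paper intends — the paper states that ``verification is left to the reader,'' and the direct evaluation on generators is the only reasonable route. The one genuinely non-trivial point, that in \eqref{eq:transv rel1} the discrepancy $(yx^k)^j$ versus $y^jx^{kj}$ on the generator $z$ is resolved because either $\st(y)\subseteq\st(x)$ forces $x$ and $y$ to be adjacent, or else $o(y)=\infty$ forces $o(z)=\infty$ and hence $j=1$, is handled correctly in your bookkeeping.
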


The notion of a SIL was introducted in \cite{GPR_automorphisms}, and extended to the notion of a STIL and an FSIL in \cite{SaleSusse}. These graphical characteristics have been useful in understanding the structure of $\Out(\G)$ (see \emph{e.g.} \cite{Day_solvable, GPR_automorphisms, GuirardelSale-vastness, SaleSusse}).

\begin{defn} Let $(\Gamma,o)$ be a graph with $\G_\Gamma$ the corresponding graph product, and let $x,y,z,w$ be vertices. We say:
	\begin{itemize}
		\item $(x,y\mids w)$ is a \emph{SIL (Separating Intersection of Links)} if $x$ and $y$ are not adjacent and $w$ is contained in a component of $\Gamma\setminus\left(\lk(x)\cap\lk(y)\right)$ which does not contain $x,y$;
		\item $(x,y\mids w)$ is a \emph{non-Coxeter SIL} if $(x,y\mids w)$ is a SIL and either $o(x)\neq 2$ or $o(y)\neq 2$;
		\item $(x,y,z\mids w)$ is a \emph{STIL (Separating Triple Intersection of Links)} if $\gspan{x,y,z}\le \G_\Gamma$ is not virtually abelian and $w$ is contained in a component of $\Gamma\setminus\left(\lk(x)\cap\lk(y)\cap\lk(z)\right)$ which contains none of the vertices $x,y,z$;
		\item $\{x,y,z\}$ is an \emph{FSIL (Flexible SIL)} if $(x,y\mids z), (y,z\mids x),$ and $(z,x\mids y)$ are all SILs.
	\end{itemize}
\end{defn}

The following is proved in \cite[Section 4]{GPR_automorphisms} and describes precisely when partial conjugations do not commute.

\begin{lem}\label{lem:partial conj not commute}
	Suppose $x,y$ are distinct vertices. Two partial conjugations $\pi^x_C,\pi^y_D$ in $\Out(\G)$ do not commute if and only if
	there is a SIL $(x,y\mids z)$ and either
	\begin{itemize}
		\item $z\in C=D$,
		\item $x\in D$ and $z\in C$,
		\item $y\in C$ and $z\in D$,
		\item $x\in D$ and $y\in C$.
	\end{itemize}
\end{lem}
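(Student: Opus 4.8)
The plan is to compute the commutator $\phi = \pi^x_C\pi^y_D(\pi^x_C)\m(\pi^y_D)\m$ as an explicit automorphism of $\G$ and then to decide exactly when it is inner. First I would dispose of the case that $x$ and $y$ are adjacent: then $x\notin D$ and $y\notin C$ (since $C,D$ are unions of components of $\Gamma\setminus\st(x)$, $\Gamma\setminus\st(y)$) and $xy=yx$ in $\G$, so evaluating $\pi^x_C\pi^y_D$ and $\pi^y_D\pi^x_C$ on each kind of generator ($v\notin C\cup D$; $v\in C\setminus D$; $v\in D\setminus C$; $v\in C\cap D$, where $xy=yx$ is used) shows the two compositions agree, hence commute in $\Aut(\G)$; moreover no SIL $(x,y\mid z)$ exists when $x\sim y$, so both sides of the asserted equivalence are false. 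Thus I may assume $x\not\sim y$, so that $\gspan{x,y}=\gspan{x}\ast\gspan{y}$ inside $\G$. There is no harm in keeping $C$ and $D$ arbitrary unions of components throughout: $\pi^x_C$ is defined by the same substitution rule in either case, and each of the four listed conditions only refers to which generators lie in $C$ and $D$.

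Next, for non-adjacent $x$ and $y$ I would tabulate the images of every generator $v$ under both $\pi^x_C\pi^y_D$ and $\pi^y_D\pi^x_C$, organised by the membership of $v$ in $C$ and $D$ together with whether $x\in D$ and whether $y\in C$. The upshot is that $\phi$ fixes every generator not lying in $C\cup D$, and on the remaining generators it acts as conjugation by a fixed word $w$ in $x$ and $y$ (the same $w$ up to inversion throughout); the content of the tabulation is that $\phi$ is nontrivial exactly when the pattern of ``$x\in D$'', ``$y\in C$'', ``$C\cap D\ne\emptyset$'' is one of the four crossing configurations \emph{and} there is an accompanying SIL $(x,y\mid z)$ with $z$ in the position named in the corresponding bullet, in which case $w$ is a nontrivial reduced word of $\gspan{x}\ast\gspan{y}$ of the shape $[x^{\pm1},y^{\pm1}]$ or a conjugate of $x$ or $y$; in every other case $\phi$ is the identity and the partial conjugations already commute in $\Aut(\G)$. (For instance, one checks that $z\in D$ together with $x\in D$ is incompatible with $(x,y\mid z)$ being a SIL, which is why in the ``$x\in D,\ z\in C$'' configuration one always has $z\in C\setminus D$ and the calculation yields $\phi(z)=wzw\m$ with $w=[y,x\m]$.) It remains to show that when $\phi$ is nontrivial it is not inner.

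The key gadget for this is the retraction $r\colon \G\to\gspan{x}\ast\gspan{y}$ sending every vertex other than $x,y$ to $1$, which is well defined precisely because $x\not\sim y$. Suppose $\phi=\operatorname{conj}_g$. Since $\phi$ fixes every generator outside $C\cup D$ — a set of vertices containing all of $\lk(x)\cap\lk(y)$, and containing $x$ or $y$ in the configurations where these are not swallowed up — one shows, using that the centraliser of a vertex $v$ in a graph product is $\gspan{v}\times\G_{\lk(v)}=\G_{\st(v)}$ and that a non-adjacent pair has $C_{\G}(x)\cap C_{\G}(y)=\G_{\lk(x)\cap\lk(y)}$ up to $Z(\G)$, that $g$ commutes with both $x$ and $y$; equivalently $r(g)=1$, and more generally $r$ kills $\G_{\st(v)}$ for any vertex $v$ with $x,y\notin\st(v)$ while $r(\G_{\st(x)})=\gspan{x}$ and $r(\G_{\st(y)})=\gspan{y}$. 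A SIL $(x,y\mid z)$ supplies such a vertex $z$: it lies in a component of $\Gamma\setminus(\lk(x)\cap\lk(y))$ missing $x$ and $y$, so $z\not\sim x$, $z\not\sim y$, hence $x,y\notin\st(z)$. Comparing $\operatorname{conj}_g$ with the explicit action of $\phi$ on $z$ (when $z\in C\cup D$) or on $x$ or $y$ themselves (in the ``$x\in D,\ y\in C$'' configuration, taking a SIL witness outside $C\cup D$) yields a relation $w\m g\in\G_{\st(v)}$ with $v\in\{z,x,y\}$ avoiding $x,y$ in its star; applying $r$ collapses this to $w\m=r(w\m g)\in r(\G_{\st(v)})$, which is $\{1\}$ (or a proper factor $\gspan{x}$ or $\gspan{y}$), contradicting that $w$ is a nontrivial reduced word genuinely involving both letters. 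Hence $\phi$ is not inner and $\pi^x_C,\pi^y_D$ do not commute in $\Out(\G)$. I expect the main obstacle to be the bookkeeping behind the reverse implication in the tabulation: proving that the absence of all four SIL configurations really does force $w$ to act trivially, which requires carefully relating the components of $\Gamma\setminus\st(x)$, $\Gamma\setminus\st(y)$, and $\Gamma\setminus(\lk(x)\cap\lk(y))$ and checking that without a separating intersection of links these components are nested or disjoint in exactly the way that kills the conjugating word.
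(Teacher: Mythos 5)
The paper itself offers no proof of this lemma: it is quoted from Gutierrez--Piggott--Ruane \cite[Section 4]{GPR_automorphisms}, so I am judging your argument on its own terms. Your forward direction is sound and is essentially the standard one: a SIL witness $z$ satisfies $x,y\notin\st(z)$, so the retraction $r\colon\G\to\grp{x}\ast\grp{y}$ kills $\G_{\st(z)}$ and detects the nontrivial conjugating word, whence the commutator is not inner. The genuine gap is in the reverse direction, and it sits exactly where you claim the tabulation does the work. The commutator $\phi=[\pi^x_C,\pi^y_D]$, as an element of $\Aut(\G)$, is determined entirely by the local membership data (for each generator $v$: whether $v\in C$, $v\in D$, $x\in D$, $y\in C$); it cannot ``see'' whether a SIL exists. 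So your claimed outcome --- that $\phi\neq 1$ exactly when a crossing configuration occurs \emph{and} a SIL accompanies it, and that ``in every other case $\phi$ is the identity and the partial conjugations already commute in $\Aut(\G)$'' --- is false. Concretely, let $\Gamma$ be the tree with edges $x{-}a$, $a{-}y$, $x{-}b$, $b{-}z$, $y{-}c$, $c{-}w$. Then $\lk(x)\cap\lk(y)=\{a\}$ and the two components of $\Gamma\setminus\{a\}$ contain $x$ and $y$ respectively, so there is no SIL $(x,y\mids\cdot)$. Nevertheless $C=\{y,c,w\}$ is a component of $\Gamma\setminus\st(x)$ and $D=\{x,b,z\}$ is a component of $\Gamma\setminus\st(y)$ with $x\in D$ and $y\in C$, and a direct computation gives $\phi(v)=[x,y]\,v\,[x,y]^{-1}$ for every generator $v$, i.e.\ $\phi=\ad_{[x,y]}\neq 1$. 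The partial conjugations fail to commute in $\Aut(\G)$ but do commute in $\Out(\G)$ (indeed $\pi^x_C=(\pi^x_{\{z\}})^{-1}$ and $\pi^y_D=(\pi^y_{\{w\}})^{-1}$ in $\Out(\G)$, and those have disjoint supports and multipliers).

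Consequently the remaining task is not, as you state, ``to show that when $\phi$ is nontrivial it is not inner'': some nontrivial $\phi$ are inner, and deciding which ones is precisely where the SIL hypothesis must enter. What the reverse implication actually requires is a proof that, in the absence of a SIL in one of the four positions, the (possibly nontrivial) commutator is inner --- for instance by replacing each $\pi^x_C$, $\pi^y_D$ with an inner-equivalent partial conjugation so that the resulting supports and multipliers become disjoint, or by showing the various conjugating words occurring in $\phi$ are all realized by one global conjugation because the relevant components are joined through $\lk(x)\cap\lk(y)$. Your closing sentence gestures at this bookkeeping, but it contradicts your earlier assertion that $\phi$ would be trivial, and as written the proposal does not contain the idea needed to close this half of the equivalence. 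A smaller point for the forward direction: in the configuration $x\in D$, $y\in C$ the commutator does not fix $x$ or $y$, so the step ``$g$ commutes with both $x$ and $y$'' is unavailable there; you must instead pin down $r(g)$ using a SIL witness $z$ (comparing $\phi(z)$, which is either $z$ or an explicit conjugate, with $gzg^{-1}$), as you partially indicate.
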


The authors introduced STILs, FSILs, and non-Coxeter SILs in \cite{SaleSusse} to extend the result of \cite{GPR_automorphisms} to give a precise description of when $\Out(\G)$ is virtually abelian in case that $\G$ is a graph product of \emph{finite} abelian groups.

The following describes how SILs can overlap to give a STIL.

\begin{lem}[{\cite[Lemma 1.7]{SaleSusse}}]\label{lem:overlapping SILs give STIL}
	Suppose $x,y,z,w$ are distinct vertices and $(x,y\mids w)$ and $(y,z\mids w)$ are SILs.
	Then $(x,y,z\mids w)$ is a STIL.
\end{lem}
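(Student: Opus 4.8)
The plan is to verify the two clauses in the definition of a STIL for the quadruple $(x,y,z\mids w)$: that the special subgroup $\langle x,y,z\rangle$ is not virtually abelian, and that $w$ lies in a component of $\Gamma\setminus(\lk x\cap\lk y\cap\lk z)$ containing none of $x,y,z$.

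For the first clause I would argue as follows. Both SIL hypotheses force $x\not\sim y$ and $y\not\sim z$, so in the induced subgraph on $\{x,y,z\}$ the vertex $y$ is isolated, and hence the special subgroup $\G_{\{x,y,z\}}$ splits as a free product $\G_{\{x,z\}}\ast\langle y\rangle$. Since $o(x),o(z)\ge 2$, the factor $\G_{\{x,z\}}$ has order at least $4$ or is infinite; in particular $\G_{\{x,z\}}\ast\langle y\rangle$ is a free product of nontrivial groups one of which has order at least $3$, so it contains a nonabelian free subgroup and is therefore not virtually abelian. As $\G_{\{x,z\}}\ast\langle y\rangle\cong\langle x,y,z\rangle\le\G$, this finishes the first clause.

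The substance is the connectivity clause. Write $K_{xy}=\lk x\cap\lk y$, $K_{yz}=\lk y\cap\lk z$, and $U=\lk x\cap\lk y\cap\lk z=K_{xy}\cap K_{yz}$. First I would note that $w$ is adjacent to none of $x,y,z$: if $w$ were adjacent to $y$, then, as $w$ and $y$ both lie outside $K_{xy}$, the edge $wy$ would survive in $\Gamma\setminus K_{xy}$ and place $w$ in the same component as $y$, contradicting the SIL $(x,y\mids w)$, and the cases of $x$ and $z$ are identical. In particular $w\notin\lk y\supseteq U$, so we may let $D$ be the component of $w$ in $\Gamma\setminus U$ and suppose for contradiction that $D$ contains some $t\in\{x,y,z\}$. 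Fix a path $(w=v_0,v_1,\dots,v_n=t)$ in $\Gamma\setminus U$ and let $j$ be the least index with $v_j\in K_{xy}\cup K_{yz}$, if such an index exists. Since $K_{xy}$ and $K_{yz}$ are contained in $\lk y$, none of $w,x,y,z$ lies in $K_{xy}\cup K_{yz}$. If no such $j$ exists, the path avoids both $K_{xy}$ and $K_{yz}$, so $t$ lies in the $w$-component of $\Gamma\setminus K_{xy}$, which by the SIL $(x,y\mids w)$ excludes $x$ and $y$, and also in the $w$-component of $\Gamma\setminus K_{yz}$, which by the SIL $(y,z\mids w)$ excludes $y$ and $z$; together these rule out every $t\in\{x,y,z\}$. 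If $j$ exists, then because $v_j\notin U=K_{xy}\cap K_{yz}$ exactly one of $v_j\in K_{xy}$, $v_j\in K_{yz}$ holds, and in either case $v_j\sim y$. The vertices $v_0,\dots,v_{j-1}$ avoid $K_{xy}\cup K_{yz}$ by minimality of $j$, and so does $y$; hence if $v_j\in K_{xy}$ then $v_j\notin K_{yz}$ and $(v_0,\dots,v_j,y)$ is a walk from $w$ to $y$ inside $\Gamma\setminus K_{yz}$, contradicting the SIL $(y,z\mids w)$, while if $v_j\in K_{yz}$ the same walk lies in $\Gamma\setminus K_{xy}$ and contradicts the SIL $(x,y\mids w)$. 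Thus $D$ contains none of $x,y,z$, and $(x,y,z\mids w)$ is a STIL.

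The main obstacle is precisely this last argument. Since $\lk x\cap\lk y\cap\lk z$ is contained in, and typically strictly smaller than, both $\lk x\cap\lk y$ and $\lk y\cap\lk z$, passing to the triple intersection can glue the component of $w$ onto components that the individual SILs kept separate, and the point is to show this never happens. The observation that makes it work is that the first vertex at which a path from $w$ strays into $K_{xy}\cup K_{yz}$ is necessarily adjacent to $y$, which lets us reroute to $y$ and fall back into one of $\Gamma\setminus K_{xy}$ or $\Gamma\setminus K_{yz}$, whose component structure is pinned down by the two given SILs. The remaining ingredients — the free-product splitting and the non-adjacency of $w$ to $x,y,z$ — are routine.
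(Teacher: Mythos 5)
Your proof is correct and complete. Note that the paper itself gives no argument for this lemma --- it is imported verbatim as \cite[Lemma 1.7]{SaleSusse} --- so there is no internal proof to compare against; your write-up is a self-contained substitute. Both halves check out: the free-product splitting $\G_{\{x,y,z\}}\cong \G_{\{x,z\}}\ast\langle y\rangle$ with $|\G_{\{x,z\}}|\geq 4$ rules out virtual abelianness (the only exceptional nontrivial free product being $\Z_2\ast\Z_2$), and the connectivity argument correctly identifies the one real danger, namely that deleting only the smaller set $U=\lk x\cap\lk y\cap\lk z$ could merge the component of $w$ with one containing $x$, $y$, or $z$. Your key observation --- that the first vertex $v_j$ of such a path lying in $(\lk x\cap\lk y)\cup(\lk y\cap\lk z)$ is automatically adjacent to $y$, so the path can be rerouted to $y$ while staying inside $\Gamma\setminus(\lk y\cap\lk z)$ or $\Gamma\setminus(\lk x\cap\lk y)$, contradicting one of the two given SILs --- is exactly the right mechanism, and the preliminary step showing $w$ is adjacent to none of $x,y,z$ (hence $w\notin\lk y\supseteq U$) is needed and correctly handled. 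No gaps.
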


\subsection{Pre-orders on the vertices of $\Gamma$}\label{sec:preorder}
In the study of RAAGs, the pre-order on the vertices of $\Gamma$ given by $u\leq v$ if and only if $\lk(u) \subseteq \st(v)$ is invaluable. 
It determines precisely when the transvection $R_u^v$ is an automorphism.
We extend this definition to graph products of abelian groups, and define a second one that will be useful when we study finite index subgroups of $\Aut(\G)$.

\begin{defn}
	For vertices $u,v$ of $\Gamma$, say $u \leq v$ if and only if $R_u^{v^k} \in \Aut(\G)$ for some positive integer $k$.
	If also $o(u) = \infty$, then write $u\leinf v$.
	
	Equivalence classes of the pre-orders $\leq$ and $\leinf$ are denoted $[v]$ and $[v]_\infty$ respectively.
	We will refer to them as either $\leq$--equivalence classes, or $\leq_\infty$--equivalence classes. The phrase ``equivalence class'' on its own will also be used to mean a $\leq$--equivalence class.
\end{defn}

We will consider further subgroups of $\Aut(\G)$, by restricting the types of transvections in our generating set. 

\begin{defn}Let $\Aut^1_\infty(\G)$ be the subgroup of $\Aut(\G)$ generated by partial conjugations and transvections $R^v_u$ with $u\le_\infty v$. 
	
Further, let $\Aut^0_\infty(\G)$ but the subgroup generated by $\Aut^1_\infty(\G)$ and all factor automorphisms. 

Let $\Out^1_\infty(\G)$ and $\Out^0_\infty(\G)$ be the corresponding subgroups of $\Out(\G)$.\end{defn}

Since $\lk(u) \subseteq \st(v)$ is required for either $u\leq v$ or $u\leinf v$, 
many properties of $\leq$ that are familiar from the RAAG situation carry through to our situation, and also hold for $\leinf$. The following Lemmas record these similarities (c.f. \cite{CV09, CV11} for the RAAG case).

\begin{lem}\label{lem:eq classes infinite or finite}
	Let $X$ be a $\leq$--equivalence class.
	Then the special subgroup $\G_X$ is isomorphic to either a non-abelian free group, a free abelian group, or a finite abelian $p$--group for some prime $p$.
\end{lem}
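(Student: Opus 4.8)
The plan is to split into two cases according to whether $X$ contains a vertex of infinite order. The first step is to unpack what it means for distinct vertices $u,v$ to lie in the same $\le$--equivalence class. By definition $u\le v$ holds exactly when some power $R_u^{v^k}$ is an automorphism, and, by the description of transvections above, this occurs precisely when either $o(u)=\infty$ and $\lk(u)\subseteq\st(v)$, or $\st(u)\subseteq\st(v)$ and $o(v)$ divides $o(u)$. Applying this to both $u\le v$ and $v\le u$ for distinct $u,v\in X$ shows immediately that if $o(u)=\infty$ then $o(v)=\infty$ as well: for $v\le u$, the first alternative would require $o(v)=\infty$, and the second would require $o(u)=\infty$ to divide the finite number $o(v)$. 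Hence either every vertex of $X$ has infinite order, or every vertex of $X$ has finite order, and I treat these cases separately.

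\emph{Case 1: every vertex of $X$ has infinite order.} Then $o|_X\equiv\infty$, so $\G_X=\G(\Gamma_X,o|_X)$ is a right-angled Artin group on the induced subgraph $\Gamma_X$, and membership in one $\le$--class forces $\lk(u)\subseteq\st(v)$ for all $u,v\in X$. I would then show $\Gamma_X$ is complete or edgeless: if $u,v\in X$ are adjacent and $w\in X\setminus\{u,v\}$, then $v\in\lk(u)\subseteq\st(w)$ together with $v\ne w$ gives that $w$ is adjacent to $v$, and symmetrically $w$ is adjacent to $u$. A short bootstrap from ``every third vertex of $X$ is adjacent to both endpoints of a given edge of $\Gamma_X$'' then shows that the existence of a single edge forces all pairs of distinct vertices of $X$ to be adjacent. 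Since a RAAG over a complete graph is free abelian and a RAAG over an edgeless graph is free, $\G_X$ is free abelian or non-abelian free (the value $\Z$, occurring when $|X|=1$, being free abelian).

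\emph{Case 2: every vertex of $X$ has finite order.} Now for distinct $u,v\in X$ the alternative ``$o(u)=\infty$'' is unavailable, so $u\le v$ forces $\st(u)\subseteq\st(v)$ and $o(v)\mid o(u)$; symmetrising gives $\st(u)=\st(v)$ and $o(u)=o(v)$. In particular $v\in\st(v)=\st(u)$ and $v\ne u$ force $v$ to be adjacent to $u$, so $\Gamma_X$ is complete, and all vertices of $X$ carry one common label, necessarily a prime power $p^j$. As $\G_X$ is the graph product over a complete graph, $\G_X\cong(\Z_{p^j})^{|X|}$, a finite abelian $p$--group.

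The only step beyond routine bookkeeping with the definitions is the graph-theoretic bootstrap in Case 1 upgrading ``every third vertex is joined to the endpoints of a fixed edge'' to ``$\Gamma_X$ is complete''; I expect this to be the main obstacle, though it is only a line or two. One should also record the degenerate case $|X|=1$ separately, where $\G_X\cong\Z$ or $\Z_{p^j}$, both of which the statement allows.
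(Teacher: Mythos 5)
Your proposal is correct and follows essentially the same route as the paper: establish that $X$ is homogeneous in order type via the transvection criteria, show $\Gamma_X$ is a clique or edgeless (complete in the finite-order case, where $\st(u)=\st(v)$), and identify the resulting graph product. The only cosmetic difference is that you case-split on order type before the graph-theoretic argument while the paper interleaves the two, and you note the sharper conclusion $o(u)=o(v)$ where the paper only records a common prime $p$; neither affects the validity.
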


\begin{proof}
	If $X$ contains only one vertex, the result is obvious. So suppose $X$ contains at least two vertices, $u$ and $v$.
	If $o(u) = \infty$, since $v\leq u$, the transvection $R_v^u$ must exist. However, this is only possible if $v$ is also of infinite order.
	Hence either every vertex of $X$ is of infinite order, or every vertex has finite order.
	If all vertices have finite order, then existence of transvections $R^{u^k}_v$ imply that there is a prime $p$ such that each vertex in the equivalence class has order that is a power of $p$.
	 
	Now, if $u,v$ are adjacent, and $w$ is a third vertex in $X$, then $v\in \lk(u)\subseteq \st(w)$. Thus, $X$ is either a clique or has no edges. Thus, $\G_X$ is either abelian or a free product. Further, if $v\in X$ with $o(v)<\infty$, then for all $w\in X$, $o(w)<\infty$ and $\st(v)=\st(w)$. Thus, when $\G_X$ consists of finite order vertices, then $\G_X$ is abelian, and is a finite abelian $p$--group. The statement of the lemma follows.
\end{proof}

In view of Lemma~\ref{lem:eq classes infinite or finite}, we can define the notion of an \emph{abelian equivalence class}, when it generates an abelian special subgroup, or a \emph{free equivalence class} otherwise. We can also distinguish between \emph{infinite} and \emph{finite} equiavlance classes, depending on whether the vertices in the equivalence class are infinite or finite order, respectively.

\begin{lem}\label{lem:infty-stars cover Gamma}
	Suppose $\Gamma$ is connected and is not equal to the star of a vertex.
	\begin{enumerate}
		\item Every vertex $v$ in $\Gamma$ contains a maximal equivalence class in its link.
		\item If $u,v$ are non-adjacent vertices in $\Gamma$ such that $\lk(u)\cap \lk(v)$ is non-empty, then $\lk(u)\cap\lk(v)$ contains a maximal vertex.
	\end{enumerate}
\end{lem}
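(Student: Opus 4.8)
The plan is to prove both statements via a maximality argument within the poset structure that the pre-order $\leq$ imposes on the vertices of $\Gamma$, using connectedness of $\Gamma$ to guarantee non-emptiness of the relevant links. For part (1): fix a vertex $v$. Since $\Gamma$ is connected and is not the star of $v$, the link $\lk(v)$ is non-empty (otherwise $\Gamma=\st(v)$ trivially, or $\Gamma$ is disconnected), so there is at least one vertex in $\lk(v)$. Among all vertices in $\lk(v)$, I would pick one, say $u$, that is $\leq$--maximal \emph{in $\Gamma$} among those lying in $\lk(v)$; such a vertex exists because $V$ is finite. I then claim the entire $\leq$--equivalence class $[u]$ lies in $\lk(v)$: if $u' \leq u$ and $u \leq u'$, then $\lk(u') \subseteq \st(u)$, and since $u \in \lk(v)$ one needs to check that $u'$ is adjacent to $v$. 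Here I would use the structure of equivalence classes from Lemma~\ref{lem:eq classes infinite or finite}: an equivalence class is either a clique or has no edges; if $[u]$ is a clique then $u' \in \lk(u) \subseteq \st(v)$, and since $u' \neq v$ (as $u'$ is adjacent to $u \in \lk v$, while $v \notin \lk v$) we get $u' \in \lk(v)$; if $[u]$ has no edges, then I should instead observe $\lk(u)=\lk(u')$ (equality of links for non-adjacent equivalent vertices, which follows from $\lk(u')\subseteq\st(u)=\lk(u)$ and symmetry), so $v \in \lk(u') $, i.e. $u' \in \lk(v)$. Finally I must upgrade ``$\leq$--maximal among vertices of $\lk(v)$'' to ``$\leq$--maximal in $\Gamma$'': if $u \leq t$ for some $t \in \Gamma$, then $\lk(u) \subseteq \st(t)$; since $v \in \lk(u)$ and $v \neq t$ (because if $v=t$ then $\lk(u)\subseteq \st(v)$, which combined with $u\in\lk v$ and connectedness/non-star hypothesis needs a separate check), we get $t \in \lk(v)$, so by maximality within $\lk(v)$ we conclude $t \leq u$. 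The delicate point, and the one I expect to be the main obstacle, is the bookkeeping around the case $v=t$ and the possibility that $\lk(u)\subseteq\st(v)$ forces $\Gamma=\st(v)$: I would handle this by noting that if every vertex adjacent to some $u\in\lk(v)$ were also adjacent to $v$, one propagates this using connectedness to conclude $\Gamma\subseteq\st(v)$, contradicting the hypothesis.

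For part (2): let $u, v$ be non-adjacent with $\lk(u) \cap \lk(v) \neq \emptyset$. Among the vertices of $\lk(u)\cap\lk(v)$, choose one, say $m$, that is $\leq$--maximal \emph{in $\Gamma$} among the vertices of this intersection (again using finiteness of $V$). I claim $m$ is $\leq$--maximal in all of $\Gamma$. Suppose $m \leq t$. Then $\lk(m) \subseteq \st(t)$. Since $m$ is adjacent to both $u$ and $v$, we have $u, v \in \lk(m) \subseteq \st(t)$. Now $u$ and $v$ are non-adjacent, so it cannot be that both $u \in \lk(t)$ and $v \in \lk(t)$ would cause trouble — rather, $u, v \in \st(t) = \{t\} \cup \lk(t)$ means each of $u,v$ is either equal to $t$ or adjacent to $t$; since $u \not\sim v$ they cannot both equal $t$, but at most one can be $t$. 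If neither is $t$, then $t \in \lk(u) \cap \lk(v)$, so by maximality of $m$ in that set, $t \leq m$, as desired. If, say, $u = t$, then $m \leq u$, meaning $\lk(m)\subseteq \st(u)$; but also $m \in \lk(u)$, and one checks $t=u$ is then comparable to $m$ from below as well using that $m\in\lk(u)$ forces — here I would argue directly that $u \leq m$ cannot hold unless $m \in [u]$, and in that degenerate case $m$ and $u$ are interchangeable as ``maximal vertices'' of the intersection, so we may simply have chosen $m$ to begin with so that this does not occur, or observe the conclusion ``contains a maximal vertex'' is already witnessed. The main obstacle here is again the boundary case where the maximal candidate in $\lk(u)\cap\lk(v)$ is equivalent to $u$ or to $v$ themselves; I would dispose of it by remarking that $u,v \notin \lk(u)\cap\lk(v)$ (since $u \notin \lk u$), so $m \neq u$ and $m\neq v$, and if $m$ is equivalent to $u$ then $\lk(m)=\lk(u)$ or $m\in\lk(u)$-clique, and in either sub-case one directly verifies $m$ is $\leq$--maximal in $\Gamma$ because any $t$ with $m\leq t$ would satisfy $v\in\lk(m)\subseteq\st(t)$ and $u\in\lk(m)\subseteq\st(t)$ with $u\not\sim v$ forcing $t\in\lk(u)\cap\lk(v)$.

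In summary, both parts reduce to: (a) pick a $\leq$--maximal element of a non-empty, finite subset $S$ of $V$ (where $S = \lk(v)$ or $S = \lk(u)\cap\lk(v)$); (b) show local maximality in $S$ implies global maximality in $\Gamma$, using that any $t$ dominating the chosen vertex $m$ must contain the ``witnesses'' ($v$ in part (1); $u$ and $v$ in part (2)) in its star, hence $t$ lies back in $S$; (c) in part (1), additionally show the whole class $[u]$ stays inside $\lk(v)$, using the clique-or-no-edges dichotomy of Lemma~\ref{lem:eq classes infinite or finite}. The genuinely non-trivial input is the dichotomy from Lemma~\ref{lem:eq classes infinite or finite} together with the equality-of-links fact for non-adjacent equivalent vertices; the rest is careful case analysis around the degenerate possibilities $t \in \{u,v\}$, which I expect to be the most error-prone part of writing the argument but not conceptually deep. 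I would also flag that the hypothesis ``$\Gamma$ is connected and not a star'' is used precisely to guarantee $\lk(v)\neq\emptyset$ in part (1) and to rule out $\Gamma=\st(v)$ arising from the inclusion $\lk(u)\subseteq\st(v)$.
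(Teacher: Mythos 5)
Your part (2) is essentially the paper's argument and is correct: if $m$ is maximal within $\lk(u)\cap\lk(v)$ and $m\leq t$, then $u,v\in\lk(m)\subseteq\st(t)$, and since $u\not\sim v$ neither $u$ nor $v$ can equal $t$ (e.g.\ $t=u$ would force $v\in\st(u)$), so $t\in\lk(u)\cap\lk(v)$ and maximality gives $t\leq m$. The case analysis you agonize over ($t=u$ or $t=v$) is simply vacuous, so the hand-waving there is harmless. The genuine gap is in part (1). You choose $u$ maximal within $\lk(v)$ and try to upgrade this to global maximality, but the upgrade fails exactly at the case $t=v$: a vertex that is maximal among the vertices of $\lk(v)$ can be strictly dominated by $v$ itself, and your proposed patch --- that $\lk(u)\subseteq\st(v)$ for $u\in\lk(v)$ ``propagates'' to $\Gamma=\st(v)$ --- is false, because the hypothesis only concerns the single vertex $u$, not all of $\lk(v)$.

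Concretely, take the right-angled Artin case ($o\equiv\infty$, so $u\leq v$ iff $\lk(u)\subseteq\st(v)$) with vertices $u,v,a,b,y,z$ and edges $uv$, $ua$, $ub$, $va$, $vb$, $vy$, $yz$. This graph is connected and is not the star of any vertex. Here $\lk(u)=\{v,a,b\}\subseteq\st(v)$ but $\lk(v)\not\subseteq\st(u)$, so $u$ is strictly dominated by $v$; yet none of $a,b,y\in\lk(v)$ dominates $u$, so $u$ is a maximal element of $\lk(v)$ that is not maximal in $\Gamma$ (the lemma's conclusion is saved only by the other maximal element $y$ of $\lk(v)$, which your procedure has no reason to select). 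The paper avoids this entirely by deducing (1) from (2): since $\Gamma$ is connected and not the star of $v$, there is a vertex $x$ at distance exactly $2$ from $v$; then $x\not\sim v$ and $\lk(x)\cap\lk(v)\neq\emptyset$, so part (2) produces a globally maximal $w\in\lk(x)\cap\lk(v)\subseteq\lk(v)$, and the same ``$x,v\in\lk(w)\subseteq\st(w')$'' argument shows every $w'\in[w]$ also lies in $\lk(x)\cap\lk(v)$. You should replace your direct argument for (1) with this reduction; your clique-versus-no-edges discussion of $[u]$ is then unnecessary.
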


\begin{proof}
	Note that it is enough to prove the second statement of the lemma. Fix a vertex $v$.
	Since $\Gamma$ is connected and not equal to the star of $v$ there is some vertex $u$ that is distance $2$ from $v$.
	Let $w \in \lk(u)\cap \lk(v)$ be maximal in this set, and suppose $z\geq w$.
	Then $u,v\in \lk (w) \subseteq \st(z)$.
	Since $u,v$ are not adjacent, $z$ is distinct from both and hence $z\in\lk(u)\cap\lk(v)$. Now, by our choice of $w$, it must be equivalent to $z$.
	Thus, $w$ is maximal in $\Gamma$.
\end{proof}

\begin{lem}\label{lem:maximal eq classes not star separated}
	Let $X$ be a $\leq$--maximal equivalence class or a $\leq_\infty$--maximal equivalence class.
	
	Then for every $v\in \Gamma\setminus X$, either $X\subseteq \st(v)$, or $X$ is contained in one connected component of $\Gamma \setminus \st(v)$.
\end{lem}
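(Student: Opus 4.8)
The plan is to argue by contradiction. Suppose $X$ is a maximal equivalence class (for $\leq$ or for $\leq_\infty$; I treat both uniformly, writing the argument for $\leq$ and noting that the $\leq_\infty$ case follows since $\leq_\infty$-maximal classes are $\leq$-maximal when they consist of infinite order vertices — in fact I would first observe that a $\leq_\infty$-maximal class is also $\leq$-maximal, or redo the argument replacing $\leq$ by $\leq_\infty$ and $\st$ by $\st$, since $u \leq_\infty v$ also requires $\lk(u)\subseteq\st(v)$). So suppose there is $v \in \Gamma \setminus X$ such that $X \not\subseteq \st(v)$ but $X$ meets at least two connected components of $\Gamma \setminus \st(v)$, or meets one component and also is partly inside $\st(v)$; in either case pick $x, y \in X$ with $x$ and $y$ lying in different connected components of $\st(v) \cup (\Gamma\setminus\st(v))$ once we delete $\st(v)$ — more precisely, pick $x \in X \setminus \st(v)$ and $y \in X$ with $y$ not in the same component of $\Gamma\setminus\st(v)$ as $x$ (this is possible precisely because the second conclusion of the lemma fails).

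The key step is then to exploit that $x$ and $y$ are equivalent. First, by Lemma~\ref{lem:eq classes infinite or finite} together with the fact that $X$ has at least two elements (if $|X|=1$ the statement is trivial as $X=\{x\}$ is a single vertex which is either in $\st(v)$ or in one component of the complement), $X$ is either a clique or has no edges. If $X$ is a clique, then $x$ and $y$ are adjacent, so they lie in $\st(v)$ or in a common component of $\Gamma\setminus\st(v)$ (an edge cannot cross the separation), contradicting our choice; so $X$ has no edges, and in particular $x \not\sim y$. Now since $x \leq y$ we have $\lk(x) \subseteq \st(y)$, and since $y \leq x$ we have $\lk(y) \subseteq \st(x)$. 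The plan is to show that $v \in \lk(x) \cap \lk(y)$ is impossible, and that $v$ being non-adjacent to both $x$ and $y$ forces $x, y$ into a common component of $\Gamma\setminus\st(v)$, which is the contradiction: indeed, if $v$ is adjacent to $x$, then $v \in \lk(x) \subseteq \st(y)$, and since $x \not\sim y$ we get $v \in \lk(y)$, so $v$ is adjacent to $y$ too, contradicting $x \notin \st(v)$ (which gave $v \not\sim x$). Hence $v \not\sim x$, and symmetrically $v \not\sim y$, so both $x$ and $y$ lie in $\Gamma\setminus\st(v)$.

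It remains to show $x$ and $y$ lie in the \emph{same} component of $\Gamma \setminus \st(v)$, which contradicts the choice of $y$. Here the idea is that $x$ and $y$ are ``close'' because of the equivalence: take any edge-path witnessing... actually the cleanest route is that $X\setminus\st(v)$ is connected in $\Gamma\setminus\st(v)$, because any two non-adjacent equivalent vertices $x,y$ have a common neighbor unless $\lk(x)=\emptyset$ — no, this needs care. Instead I would argue: since $\lk(y)\subseteq\st(x) = \{x\}\star\lk(x)$ and $x\not\sim y$, every vertex of $\lk(y)$ other than... every neighbor of $y$ is either $x$ (excluded) or a neighbor of $x$; thus $\lk(y)\subseteq\lk(x)$, and symmetrically $\lk(x)\subseteq\lk(y)$, so $\lk(x)=\lk(y)$. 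Now if $\lk(x)\neq\emptyset$, pick $w\in\lk(x)=\lk(y)$; then $w\notin\st(v)$ would connect $x$ to $y$ in $\Gamma\setminus\st(v)$; and $w\in\st(v)$ is impossible since then $x\sim w\in\st(v)$... no, $w\in\st(v)$ just means $w$ is adjacent to $v$ or equal to $v$, which does not immediately place $x$ in $\st(v)$. So the genuine obstacle, and the step I expect to be hardest, is handling the case where the common link $\lk(x)=\lk(y)$ is entirely contained in $\st(v)$, i.e.\ every common neighbor of $x,y$ is adjacent to $v$; here one uses that $\Gamma\setminus\st(v)$ must be traversed by a path from $x$ to $y$ avoiding $\st(v)$, and I would invoke connectivity of $\Gamma$ together with maximality of $X$ (much as in Lemma~\ref{lem:infty-stars cover Gamma}: any vertex $z\geq x$ forces $\lk(x)\subseteq\st(z)$, hence $y$ and neighbors of $x$ land near $z$) to derive a contradiction, possibly reducing to the case $\lk(x)=\emptyset$ where $\Gamma$ disconnects or $\Gamma = $ two isolated points and $X\subseteq\Gamma\setminus\st(v)$ trivially lies in one (or zero) component. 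I would close by checking these degenerate cases directly.
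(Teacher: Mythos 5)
Your overall strategy is sound, but the step you yourself flag as the hardest --- the case where $X$ has no edges and the common link $\lk(x)=\lk(y)$ is entirely contained in $\st(v)$ --- is exactly where the content of the lemma lives, and your sketch for it does not close. The resolution is not a connectivity argument at all: since $X$ is edgeless with at least two vertices, Lemma~\ref{lem:eq classes infinite or finite} forces every vertex of $X$ to have infinite order, and then $\lk(x)\subseteq\st(v)$ is precisely condition (1) for the transvection $R_x^v$ to exist, i.e.\ $x\leq_\infty v$ (hence also $x\leq v$). Maximality of $X$ would then put $v$ in $X$, contradicting $v\in\Gamma\setminus X$. So some $w\in\lk(x)=\lk(y)$ lies outside $\st(v)$, and the path $x,w,y$ finishes the argument. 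Your parenthetical points in the wrong direction: you invoke $z\geq x\Rightarrow\lk(x)\subseteq\st(z)$, whereas what is needed is the converse implication for infinite-order $x$. The same mechanism disposes of your degenerate case $\lk(x)=\emptyset$: there it is \emph{not} true that $X$ ``trivially lies in one component'' (two isolated vertices form two distinct components of $\Gamma\setminus\st(v)$); rather, $\lk(x)=\emptyset\subseteq\st(y)$ for every $y$ gives $x\leq_\infty y$, so maximality forces $X=\Gamma$ and there is no $v$ to consider.

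Two smaller points. In the clique case your justification ``an edge cannot cross the separation'' is false: an edge may join a vertex of $\st(v)$ to a vertex outside it. What is true, and what the paper uses to handle the first alternative uniformly, is that $y\in X\cap\st(v)$ implies $v\in\lk(y)\subseteq\st(x)$, whence $x\in\st(v)$; this one line shows $X\cap\st(v)\neq\emptyset\Rightarrow X\subseteq\st(v)$ with no case split on whether $X$ is a clique. You do give essentially this argument later for the edgeless case, so the repair is within reach; but as written the clique case rests on a false claim and the key case is left unproven.
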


\begin{proof}
	First suppose $X$ is an equivalence class whose link is empty. 
	This is possible only if $X$ is either a complete subgraph which is also a connected component of $\Gamma$, or $X$ consists of more than one isolated vertex of infinite order.
	In the first case, the statement of the Lemma is obvious.
	So, assume that $X$ is a set of at least two isolated, infinite order, vertices.
	If $X=\Gamma$ then the Lemma is vacuous, so we may assume there is a vertex $y$ that is not in $X$.
	We will then have $x\leq y$ for $x\in X$, since $\lk(x)$ is empty, a contradiction.
	
	Hence a $\leq$--maximal, or $\leq_\infty$--maximal equivalence class of $\Gamma$ is either equal to $\Gamma$, equal to a component of $\Gamma$, or has nonempty link, with the first two cases resolved.
	So suppose that $\lk X\neq\emptyset$. We show that if $X \cap \st(v)$ is non-empty, then $X\subseteq \st(v)$. Indeed, if $x\in X\cap \st(v)$ and $y\in X$, then since $x\neq v$ it follows that $v\in\lk(x)\subseteq \st(y)$. Thus, $y\in \st(v)$ as well.
	
	Now, assume $X\cap \st(v)$ is empty.
	If $X$ is abelian, then it is immediate that it is contained in one connected component of $\Gamma \setminus \st(v)$.
	Otherwise, $X$ is a free equivalence class, and thus by Lemma~\ref{lem:eq classes infinite or finite}, $o(x)=\infty$ for all $x\in X$. Thus, since $X$ is maximal, we cannot have $\lk X \subseteq \st(v)$.
	Since we may assume the link of $X$ is nonempty, there is some $y\in \lk X$ that is not in $\st(v)$, and this gives a path between any two vertices in $X$ that avoids $\st(v)$.
\end{proof}

\begin{lem}\label{lem:preservation of maximal stars}
	Let $X$ be a $\leq$--maximal equivalence class (respectively a $\leq_\infty$--maximal equivalence class), 
	and let $\Phi\in\Out^0(\G)$ (respectively $\Phi \in \Out^0_\infty(\G)$).
	
	Then there exists a representative $\phi$ of $\Phi$ so that
	$\phi(\gspan{X})=\gspan{X}$ and $\phi(\gspan{\st X})=\gspan{\st X}$.
\end{lem}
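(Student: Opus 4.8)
The plan is to reduce to the behavior of the generators of $\Out^0(\G)$ (respectively $\Out^0_\infty(\G)$) on $\gspan{X}$ and $\gspan{\st X}$, and to show that each such generator either already preserves these subgroups or can be post-composed with an inner automorphism so that the product does. Since $X$ is $\leq$--maximal (resp. $\leq_\infty$--maximal), no transvection can have a multiplier outside $X$ acting on a vertex of $X$: if $R^{z^k}_x$ is a transvection with $x\in X$, then $x\leq z$, so by maximality $z\in X$. Similarly, factor automorphisms supported at a vertex $v$ affect $\gspan{X}$ only when $v\in X$, in which case they preserve $\gspan{X}$, and they preserve $\gspan{\st X}$ whenever $v \in \st X$ (in particular when $v \in X$, using $X\subseteq \st X$); if $v\in\lk X\setminus X$ one notes a factor automorphism at $v$ fixes $\gspan{X}$ pointwise and preserves $\gspan{\st X}$ since $v\in\st X$. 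The content is therefore entirely in the transvections $R^{z^k}_y$ and partial conjugations $\pi^z_C$ whose multiplier $z$ lies outside $X$.

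For transvections $R^{z^k}_y$ with $z\notin X$: if $y\notin X$ then $R^{z^k}_y$ fixes $\gspan{X}$ pointwise, and it preserves $\gspan{\st X}$ provided $y\in\st X \Rightarrow z\in \st X$, which follows because $y\leq z$ forces $\lk(y)\subseteq\st(z)$, and combined with $X\subseteq\lk(y)\cup\{y\}$ one gets $X \subseteq \st(z)$, hence $z \in \st X$ or $z$ is adjacent to all of $X$; in either case $\gspan{\st X}$ is preserved. The remaining transvections are $R^{z^k}_y$ with $y\in X$ and $z\notin X$, but these do not exist: $y\leq z$ together with maximality of $X$ gives $z\in X$, a contradiction. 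So no such generator occurs.

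For partial conjugations $\pi^z_C$ with $z\notin X$: by Lemma~\ref{lem:maximal eq classes not star separated}, either $X\subseteq\st(z)$, in which case $X$ meets no component of $\Gamma\setminus\st(z)$ and $\pi^z_C$ fixes $\gspan{X}$ pointwise; or $X$ lies in a single component $D$ of $\Gamma\setminus\st(z)$. In the latter case, if $D\not\subseteq C$ then $\pi^z_C$ again fixes $\gspan{X}$ pointwise (and one checks $\st X$ either is also disjoint from $C$ or, where $\lk X$ straddles, can be handled by the same dichotomy applied to vertices of $\lk X$); if $D\subseteq C$ then $\pi^z_C$ conjugates every generator of $X$ by the single element $z$, so it preserves $\gspan{X}$, and likewise sends $\gspan{\st X}$ to $z\gspan{\st X}z^{-1}$ only up to multiplying by the inner automorphism $\ad(z)$. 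The key algebraic point is that a single partial conjugation restricted to $\gspan{\st X}$ agrees with a global conjugation by $z$ on the part of $\st X$ inside $C$ and with the identity elsewhere; so when $\st X$ is not entirely inside one piece, we compose $\Phi$ with an inner automorphism to absorb the conjugation on the $C$-side, after which the representative preserves both $\gspan{X}$ and $\gspan{\st X}$.

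Finally, I would assemble these observations: write $\Phi$ as a product of generators, push all the partial-conjugation ``conjugation parts'' that act on $\st X$ to the front as inner automorphisms (using that inner automorphisms are trivial in $\Out$), and conclude that some representative $\phi$ of $\Phi$ satisfies $\phi(\gspan X)=\gspan X$ and $\phi(\gspan{\st X})=\gspan{\st X}$. The $\leq_\infty$--maximal case is identical, using $\Out^0_\infty$ in place of $\Out^0$ and noting the generating set is a subset of the one used above, so all the arguments go through verbatim. I expect the main obstacle to be the bookkeeping in the partial-conjugation case when $\lk X$ (and hence $\st X$) is split by $\st(z)$ into pieces lying in $C$ and outside $C$: one must verify that the discrepancy is exactly a single inner automorphism $\ad(z)$ rather than something more complicated, which is where the precise structure of $\st X = X\star\lk X$ and the component structure of $\Gamma\setminus\st(z)$ must be used carefully.
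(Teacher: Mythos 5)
Your proposal is correct and follows essentially the same route as the paper: reduce to generators, use maximality to rule out transvections moving vertices of $X$ with multiplier outside $X$, and use Lemma~\ref{lem:maximal eq classes not star separated} to see that a partial conjugation $\pi^z_C$ either fixes $\gspan{X}$ or conjugates it by a single element that can be absorbed into an inner automorphism. The ``straddling'' obstacle you flag at the end dissolves exactly as the paper observes: when $X$ lies in a single component $D$ of $\Gamma\setminus\st(z)$ we have $d(X,z)\ge 2$, so every vertex of $\lk X$ is adjacent to a vertex of $X\subseteq D$ and hence $\st X\subseteq D\cup\st(z)$, meaning $\st X$ is never split between $C$ and its complement.
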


\begin{proof} Let $\Phi\in\Out^0(\G)$. It is enough to prove the statement when $\Phi$ is either a partial conjugation or a transvection $R_u^{v^k}$ with $u\le v$, since the statement is clear for factor automorphisms. 
	
	We first consider $X$, a $\le$--equivalence class. 
	
	Suppose first that $\pi=\pi^v_C$ is a partial conjugation. If $v\not\in\st X$, then by Lemma~\ref{lem:maximal eq classes not star separated}, $X$ is contained in a single connected component, call it $C'$, of $\Gamma\setminus\st(v)$. Thus, there exists a representative of $\pi$ which acts trivially on $\gspan{X}$. Now, since $d(X,v)\ge 2$, $\st X$ is contained in $C'\cup\st(v)$. Thus, this same representative acts trivially on $\gspan{\st X}$
	
	If instead $v\in \lk X$, then for any representative $\phi$ of $\Phi$ with $\phi(v)=v$, we have that $\phi$ preserves $\gspan{\st(X)}$.
	
	Suppose now that $\Phi=R^{v^k}_u$, for some vertex $u\in\st X$. If $u\in X$, then since $u\le v$, and $X$ is maximal, we must have that $v\in X$ as well. In this case, $\Phi$ has a representative that acts preserves $\gspan{X}$ and trivially on $\gspan{\lk X}$.
	
	If $u\in\lk X$, then since $X\subseteq \lk(u)\subseteq\st(v)$, we must have that $v\in\st X$ as well. Thus, $\Phi$ has a representative which acts trivially on $\gspan X$ and preserves $\gspan{\lk X}$, and we reach the conclusion.
	
	The same argument holds if $X$ is a $\le_\infty$--equivalence class and $\Phi\in\Out^0_\infty(\G).$

\end{proof}

\subsection{Restriction and factor maps}\label{subsec:ResFact}

In order to prove results for graph products, a standard strategy is to induct the number of vertices of the underlying graph. We record here a family of homomorphisms to smaller graphs that we appeal to. For more detail on these maps, consult \cite{CV09, DayWade}

We begin with \emph{restriction maps}.
Given a special subgroup $\G_\Lambda$ of $\G$, generated by an induced subgraph $\Lambda$ of $\Gamma$,
we may try to restrict automorphisms of $\G$ to $\G_\Lambda$.
Whenever we have an outer automorphism that preserves $\G_\Lambda$ up to conjugacy, we can compose the restriction with an inner automorphism of $\G$ to obtain an automorphism of $\G_\Lambda$.
This allows us to define the restriction map
$$\operatorname{Res} \colon \Out(\G;\G_\Lambda) \to \Out(\G_\Lambda)$$
where $\Out(\G;\G_\Lambda)$ denotes the subgroup consisting of all outer automorphisms that preserve $\G_\Lambda$ up to conjugacy. Well-definedness of this map follows from the structure of the normalizer of special subgroups of graph products.

It helps to make a sensible choice of $\Lambda$, so that the domain for $\operatorname{Res}$ can be taken to be $\Out^1(\G)$, $\Out^0(\G)$, or one of the other finite-index subgroups.
A common choice to achieve this is a maximal equivalence class, or the star of a maximal equivalence class, considering Lemma~\ref{lem:preservation of maximal stars}.

A \emph{factor map}, also known as a \emph{projection map}, is defined using the epimorphism $\kappa \colon \G \to \G_\Lambda$ which kills all vertices outside of $\Lambda$.
If the kernel $K$ of $\kappa$ (which is normally generated by $\G_{\Gamma\setminus\Lambda}$) is preserved by $\phi$, then we may define an automorphism of $\G_\Lambda$ by sending $\kappa(h)$, for any $h\in \G$, to $\kappa(\phi(h))$.
This allows us to define a factor map
$$\operatorname{Fact} \colon \Out(\G ; K) \to \Out(\G_\Lambda).$$

Again, we want to make the right choice of $\Lambda$ so that $\Out(\G;K)$ is as large as possible. For example, let $X$ be an equivalence class in $\Gamma$, and define:
$${\leq}X = \{v \in \Gamma \mid v\leq x \textrm{ for some $x\in X$} \}.$$
Then $\grp{\grp{\Gamma\setminus{\leq}X}}$ is preserved by all transvections and partial conjugations.
In particular, we may define:
$$\operatorname{Fact} \colon \Out^1(\G) \to \Out^1(\G_{\leq X}).$$
This can then be composed with the restriction map to $X$ (since $X$ is the unique maximal equivalence class in $\le X$), yielding
$$\operatorname{Res}\circ \operatorname{Fact} \colon \Out^1(\G) \to \Out^1(\G_X).$$
It is not hard to see this composition is in fact surjective.
This map was exploited in \cite{GuirardelSale-vastness}, and as there, allows us to deduce some properties concerning the quotients of finite index subgroups of $\Out(\G)$.

Recall that a group $G$ is said to have all \emph{finite groups involved} if for every finite group $H$ there is a finite index subgroup of $G$ that admits $H$ as a quotient.
It is \emph{SQ-universal} if for every countable group $C$ there is a quotient of $G$ that admits $C$ as a subgroup.
Also recall that $G$ is \emph{boundedly generated} if there exist $g_1,\ldots ,g_k \in G$ such that every element in $G$ can be written as $g_1^{n_1}\cdots g_k^{n_k}$ for some integers $n_1,\ldots , n_k$.

\begin{prop}
	If $(\Gamma,o)$ contains an equivalence class $X$ such that $\G_X$ is not abelian, then $\Out(\G)$ is SQ-universal, has all finite groups involved, and is not boundedly generated.
\end{prop}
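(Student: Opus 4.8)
The plan is to transport the three conclusions from $\Out(\G)$ down to the outer automorphism group of the free special subgroup $\G_X$ using the surjection constructed just above, and then to invoke the known corresponding statements for $\Out(F_n)$.

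First, since $\G_X$ is non-abelian, Lemma~\ref{lem:eq classes infinite or finite} forces $\G_X$ to be a non-abelian free group, and inspecting its proof shows that $X$ is an edgeless set of $n \ge 2$ vertices of infinite order, so that $\G_X \cong F_n$ with $n = |X|$. Recall from the discussion preceding the proposition the surjective homomorphism
$$\operatorname{Res}\circ\operatorname{Fact}\colon \Out^1(\G)\twoheadrightarrow \Out^1(\G_X)\cong\Out^1(F_n),$$
and recall that $\Out^1(\G)$ has finite index in $\Out(\G)$ by Proposition~\ref{prop:finite index subgroups}; the same proposition, applied now to the graph product $\G_X$, shows that $\Out^1(F_n)$ has finite index in $\Out(F_n)$.

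Next I would invoke the standard closure properties of the three properties in question (these are standard, and are the same ones used in \cite{GuirardelSale-vastness}): each of ``SQ-universal'', ``all finite groups involved'', and ``not boundedly generated'' is inherited by a group from any of its quotients (for bounded generation this is just the contrapositive of the fact that quotients of boundedly generated groups are boundedly generated), and each is a commensurability invariant, hence is inherited in particular by a group from any finite-index subgroup. Chaining these observations along
$$\Out(\G)\ \geq\ \Out^1(\G)\ \twoheadrightarrow\ \Out^1(F_n)\ \leq\ \Out(F_n)$$
reduces the proposition to the single assertion that $\Out(F_n)$ is SQ-universal, has all finite groups involved, and is not boundedly generated for every $n\ge 2$.

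This base case is known: it is part of the vastness results of \cite{GuirardelSale-vastness}, since $\Out(F_n) = \Out(A_\Gamma)$ for the edgeless graph $\Gamma$ on $n$ vertices. (When $n=2$ it is especially transparent, as $\Out(F_2)\cong\GL(2,\Z)$ is virtually a non-abelian free group, and non-abelian virtually free groups enjoy all three properties; one may also obtain SQ-universality for all $n\ge 2$ from acylindrical hyperbolicity of $\Out(F_n)$.) I do not expect a deep obstacle here: the content lies in the surjection, which is already in hand, together with the cited facts about $\Out(F_n)$; the one point requiring care is that this surjection is defined on $\Out^1(\G)$ rather than on all of $\Out(\G)$, so that the finite-index statement of Proposition~\ref{prop:finite index subgroups} is genuinely used at the last step.
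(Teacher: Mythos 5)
Your proposal is correct and follows essentially the same route as the paper: the paper's proof also uses the surjection $\operatorname{Res}\circ\operatorname{Fact}\colon \Out^1(\G)\to\Out^1(\G_X)=\Out^1(F_n)$, the finite-index statement of Proposition~\ref{prop:finite index subgroups}, and the fact that all three properties are inherited from quotients and finite-index subgroups, citing \cite[Section 1.2]{GuirardelSale-vastness} for the facts about $\Out^1(F_n)$. The only cosmetic difference is that the paper asserts the properties directly for $\Out^1(F_n)$ rather than passing through $\Out(F_n)$ first.
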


\begin{proof}
	By using the map $\operatorname{Res}\circ \operatorname{Fact}$ above, we have a homomorphism onto $\Out^1(F_n)$, where $F_n = \G_X$ by Lemma~\ref{lem:eq classes infinite or finite}.
	Each of these properties hold for $\Out^1(F_n)$ and are inherited from quotients and finite index subgroups, so they pass to $\Out(\G)$.
	See \cite[Section 1.2]{GuirardelSale-vastness} for more details and references.
\end{proof}

\subsection{The standard representation}

The standard representation of $\Aut(\G)$ is obtained by acting on the abelianization $\bar{\G}$ of $\G$.
Denote it by: 
$$\rho \colon \Aut(\G) \to \Aut(\bar{\G}).$$
Note that $\rho$ factors through $\Out(\G)$.

The abelianization has the form $\bar{\G} \cong \Z^n\times T$, for some $n \geq 0$ and $T$ is a finite group.

\begin{lem}\label{lem:aut abelian}
	$\Aut(\bar{\G}) \cong T^n \rtimes (\GL(n,\Z) \times \Aut(T))$.
\end{lem}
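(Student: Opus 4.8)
The plan is to compute $\Aut(\bar{\G})$ directly from the structure of $\bar{\G} \cong \Z^n \times T$, using the fact that $T$ is a finite abelian torsion group and $\Z^n$ is free abelian of rank $n$. First I would observe that $\Z^n$ is a characteristic subgroup of $\bar{\G}$: it is precisely the set of elements of infinite order together with $0$, equivalently the subgroup of elements divisible by arbitrarily large integers (or: the intersection of $m\bar{\G}$ over all $m$ is $\Z^n$-related, but more cleanly, $\Z^n$ is the unique maximal torsion-free subgroup, and it is characteristic because $T$ is the full torsion subgroup, hence characteristic, but $\Z^n$ is not canonically a subgroup — rather $\bar{\G}/T \cong \Z^n$ is canonical). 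Let me be careful: $T$ is characteristic (it is the torsion subgroup), so any $\Theta \in \Aut(\bar{\G})$ restricts to an automorphism of $T$ and descends to an automorphism of $\bar{\G}/T \cong \Z^n$, giving a homomorphism $\Aut(\bar{\G}) \to \GL(n,\Z) \times \Aut(T)$.

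Next I would analyze the kernel of this homomorphism. An automorphism $\Theta$ in the kernel fixes $T$ pointwise and induces the identity on $\Z^n = \bar{\G}/T$. Fixing a splitting $\bar{\G} = \Z^n \times T$ (choosing a basis $e_1,\dots,e_n$ of the $\Z^n$ factor), such a $\Theta$ must send $e_i \mapsto e_i + t_i$ for some $t_i \in T$, and fix $T$ pointwise; conversely any choice of $(t_1,\dots,t_n) \in T^n$ defines such an automorphism (one checks it is bijective since it is unipotent-like: its inverse sends $e_i \mapsto e_i - t_i$). This identifies the kernel with $T^n$ as a group (the group operation on tuples being componentwise addition in $T$, which matches composition of these automorphisms). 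So we have a short exact sequence $1 \to T^n \to \Aut(\bar{\G}) \to \GL(n,\Z)\times\Aut(T) \to 1$.

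Then I would exhibit a splitting: the subgroup of $\Aut(\bar{\G})$ consisting of automorphisms that preserve the chosen splitting $\Z^n \times T$ (acting as a block $\GL(n,\Z)$ on the first factor and as $\Aut(T)$ on the second, with no mixing) maps isomorphically onto $\GL(n,\Z) \times \Aut(T)$. This gives the semidirect product decomposition $\Aut(\bar{\G}) \cong T^n \rtimes (\GL(n,\Z)\times\Aut(T))$. Finally I would record the action: $\GL(n,\Z)$ acts on $T^n$ by matrix multiplication (viewing $T^n$ as column vectors over the ring $\Z$, which makes sense since $T$ is a $\Z$-module), and $\Aut(T)$ acts on $T^n$ diagonally entry-by-entry; these two actions commute, matching the direct product $\GL(n,\Z)\times\Aut(T)$ acting on $T^n$.

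The only mildly delicate point — and the one I would be most careful about — is the dependence on the choice of splitting $\bar{\G} = \Z^n\times T$: although $T$ is canonical and $\Z^n = \bar{\G}/T$ is canonical, a set-theoretic/subgroup splitting is a choice, and I would note that the isomorphism type of the semidirect product is independent of this choice (different splittings differ by an element of $T^n$, i.e.\ by an inner-type adjustment within the semidirect product). This is routine but worth a sentence. Everything else is a direct verification, so I do not expect any serious obstacle; the argument is essentially the standard structure theory of automorphism groups of finitely generated abelian groups specialized to the form $\Z^n \times T$ with $T$ the torsion subgroup.
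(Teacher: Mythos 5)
Your proof is correct and follows essentially the same route as the paper: both identify the same short exact sequence $1 \to T^n \to \Aut(\bar{\G}) \to \GL(n,\Z)\times\Aut(T) \to 1$ (your characteristic-torsion-subgroup maps are exactly the paper's restriction map to $T$ and factor map killing the finite-order vertices) and both split it via the subgroup preserving a chosen decomposition $\Z^n\times T$. The only difference is cosmetic: you compute the kernel directly by tracking where a basis of the $\Z^n$ factor can go, whereas the paper identifies it as the subgroup generated by the transvections $R_u^v$ with $o(u)=\infty$, $o(v)<\infty$, using the Corredor--Gutierrez generating set; your version is slightly more self-contained.
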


\begin{proof}
	View $\bar{G}$ as a graph product of abelian groups in the form $\bar{G} = \G(\bar{\Gamma},o)$, where $\bar{\Gamma}$ is obtained from $\Gamma$ by adding all absent edges to the graph to make it complete.
	We may define a factor map $\operatorname{Fact}$ from $\Aut(\bar{\G})=\Out(\bar{\G})$ to $\GL(n,\Z)$ by killing all vertices of finite order (i.{}e.{} by killing $T$).
	We may also define a restriction map from $\Aut(\bar{\G})$ to $\Aut(T)$.
	Combining these, we get a homomorphism 
	$$\Aut(\bar{\G}) \to \GL(n,\Z) \times \Aut(T).$$
	The kernel is generated by the transvections $R_u^v$ were $o(u)=\infty$ and $o(v)<\infty$, which generate a subgroup isomorphic to $T^n$.
	Meanwhile, it is surjective, which we can see by looking at the image of the transvections not in the kernel. Further, since $\bar{\G}\cong \Z^n\times T$ there is a natural map $\GL(n,\Z) \times \Aut(T) \to \Aut(\bar{\G})$ which shows the short exact sequence we get is split.
\end{proof} 

The kernel of $\rho$ is denoted $\IA_\G$, and is sometimes referred to as the Torelli subgroup. 
In Section~\ref{sec:torelli} below we give a finite generating set for it when $\G$ is a graph product of finitely generated abelian groups.
When the vertex groups are finite, it is not so hard to find a generating set, and in fact it follows from results of Gutierrez--Piggott--Ruane \cite{GPR_automorphisms}.

\begin{prop}\label{prop:torelli gp finite}
	Suppose $\G$ is a graph product of finite abelian groups (i.e.{} $o(v) < \infty$ for all $v\in\Gamma$).
	Then $\IA_\G = \Aut^1_\infty(\G)$, and is generated by the set of partial conjugations of $\G$.
	
	Furthermore, the restriction of the standard representation to the subgroup generated by graph symmetries, transvections and factor automorphisms is injective.
\end{prop}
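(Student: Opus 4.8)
The plan is to reduce the Proposition to the structural analysis of $\Aut(\G)$ carried out by Gutiérrez--Piggott--Ruane \cite{GPR_automorphisms}, after two elementary observations. First, because every vertex of $\Gamma$ has finite order there is no pair with $u\leinf v$ (such a pair requires $o(u)=\infty$), so no transvection occurs in the defining generating set of $\Aut^1_\infty(\G)$; hence $\Aut^1_\infty(\G)$ is by definition generated by the partial conjugations of $\G$, which gives the identification $\Aut^1_\infty(\G)=\langle\text{partial conjugations}\rangle$. Second, a partial conjugation $\pi^v_C$ sends each generator $z$ to $z$ or to $vzv\m$, hence acts trivially on the abelianization $\bar\G$, so $\langle\text{partial conjugations}\rangle\subseteq\ker\rho=\IA_\G$ (and in fact $\Inn(\G)\subseteq\langle\text{partial conjugations}\rangle$, since conjugation by a vertex $v$ equals the product of the $\pi^v_C$ over all connected components $C$ of $\Gamma\setminus\st(v)$).

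For the reverse inclusion $\IA_\G\subseteq\langle\text{partial conjugations}\rangle$ and for the injectivity clause, let $H\leq\Aut(\G)$ be the subgroup generated by the labeled graph automorphisms, the factor automorphisms, and the transvections. Here $\bar\G\cong\bigoplus_{v}\Z_{o(v)}$ is finite, so $\Aut(\bar\G)$ is finite and $\IA_\G$ has finite index in $\Aut(\G)$; and by Corredor--Gutierrez, $\Aut(\G)$ is generated by $H$ together with the partial conjugations. From the analysis in \cite{GPR_automorphisms} one extracts that $\Aut(\G)=\langle\text{partial conjugations}\rangle\rtimes H$ and that $\rho$ restricts to an injection on $H$; this last statement is exactly the asserted injectivity. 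Granting it, if $\phi\in\IA_\G$ we write $\phi=\pi\psi$ with $\pi\in\langle\text{partial conjugations}\rangle$ and $\psi\in H$, and then $\rho(\psi)=\rho(\pi)\m\rho(\phi)=\mathrm{id}$ forces $\psi=1$, so $\phi\in\langle\text{partial conjugations}\rangle$. Together with the observations above, $\IA_\G=\langle\text{partial conjugations}\rangle=\Aut^1_\infty(\G)$.

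The main obstacle is precisely the faithfulness of $\rho$ on $H$ --- equivalently, the assertion that $\ker\rho$ is no larger than the subgroup generated by partial conjugations --- and I would simply quote this from \cite{GPR_automorphisms} rather than reprove it. The work left on our side is bookkeeping: after replacing $\G$ by an isomorphic graph product whose vertex groups are cyclic of prime-power or infinite order, one checks that this is the setting covered by \cite{GPR_automorphisms}, and one uses the routine fact that a transvection $R^{v^k}_u$ with $o(u)=p^i$ and $o(v)=p^j$ has order $p^{\min(i,j)}$ and acts with that same nonzero order on $\bar\G$, so that no nontrivial element of $H$ is invisible to $\rho$.
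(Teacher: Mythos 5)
Your overall strategy coincides with the paper's: both arguments hinge on the Gutierrez--Piggott--Ruane decomposition $\Aut(\G)=\Aut^1_\infty(\G)\rtimes\Aut^2(\G)$ (with $\Aut^1_\infty(\G)=\grp{\text{partial conjugations}}$ and your $H$ sitting inside, in fact equal to, the complement $\Aut^2(\G)$), and both reduce the proposition to the single claim that $\rho$ is injective on that complement. Your preliminary observations (no $\leinf$-relations when all orders are finite, partial conjugations die in $\bar\G$, the factorization $\phi=\pi\psi$) are all correct.

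The gap is exactly at the step you yourself identify as ``the main obstacle.'' You propose to quote the faithfulness of $\rho$ on $H$ from \cite{GPR_automorphisms}, but what that paper supplies is the semidirect product decomposition, not the injectivity of the standard representation on the complement; the present paper proves that injectivity itself rather than citing it. The paper's argument is short but uses a genuine idea: each maximal special finite subgroup $K$ (generated by a maximal clique) is abelian and embeds in $\bar\G$; an element of $\Aut^2(\G)$ permutes these subgroups up to nothing (it maps maximal special finite subgroups to maximal special finite subgroups), so an element of $\IA_\G\cap\Aut^2(\G)$ must send each $K$ to a subgroup with the same image in $\bar\G$, hence to $K$ itself, and must restrict to the identity on $K$; since every vertex lies in some maximal clique, the automorphism is trivial. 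Your fallback argument does not substitute for this: showing that each individual transvection $R^{v^k}_u$ acts on $\bar\G$ with its full order only proves that no nontrivial power of a single generator lies in $\ker\rho$. It does not rule out a nontrivial \emph{product} of transvections, factor automorphisms, and graph symmetries lying in the kernel, which is what injectivity of $\rho|_H$ requires. You need either the clique argument above or an explicit reference to a statement in \cite{GPR_automorphisms} asserting the faithfulness; as written, the proof is incomplete at its decisive step.
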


\begin{proof}
	We appeal to \cite[Theorem 3.1]{GPR_automorphisms}, stating that $\Aut(\G)$ is isomorphic to the semidirect product
	$\Aut^1_\infty(\G)~\rtimes~\Aut^2(\G)$,
	where $\Aut^2(\G)$ is the subgroup consisting of automorphisms which map each maximal special finite
	subgroup of $\G$ to a maximal special finite subgroup\footnote{In \cite{GPR_automorphisms}, they use $\Aut^1(W)$ to denote this group, and $\Aut^0(W)$ to denote what we refer to here as $\Aut^1_\infty(\G)$.}.
	
	We claim that $\rho$ restricted to $\Aut^2(\G)$ is an isomorphism.
	To see this, we use that any automorphism $\phi$ in $\Aut^2(\G)$ restricts to an isomorphism between maximal special finite subgroups. First note that if $K$ is a maximal special finite subgroup of $\G$, then $K$ is abelian and maps isomorphically to $\overline\G$. Thus, if $K, K'$ are maximal special finite subgroups of $\G$ and $\phi\in \IA_\G$ restricts to an isomorphism between $K$ and $K'$, we must have that $K=K'$. Hence $\phi$ restricts to the identity on $K=K'$. This implies $\rho$ is injective on $\Aut^2(\G)$.
	To see it is surjective, notice that $\Aut^2(\G)$ contains all graph symmetries, factor automorphisms, and transvections.
	As partial conjugations are trivial under $\rho$, surjectivity follows.
	
	Finally, the semidirect product structure of $\Aut(\G)$ above, together with this isomorphism, gives us that $\IA_\G = \Aut^1_\infty(\G)$.
\end{proof}

\section{Finite index subgroups}\label{sec:fi subgroups}

In the study of RAAGs, it is not hard to see that the transvections and partial conjugations generate a finite index subgroup of $\Aut(A_\Gamma)$.
For RACGs, the picture is transformed because the transvections can, up to finite index, be ignored: 
the partial conjugations alone generate a finite-index subgroup of $\Aut(W_\Gamma)$.
For more general graph products, there are a range of subgoups we can consider, which were defined in Section~\ref{sec:graph products}. We recall the definition and notation in the following proposition, the proof of which is the subject of this section.

\begin{prop}\label{prop:finite index subgroups}
	The following subgroups have finite index in $\Aut(\G)$.
	\begin{enumerate}
		\item $\Autfin(\G)$: the subgroup of all automorphisms that preserve maximal finite special subgroups of $\G$ up to conjugacy.
		\item $\Aut^1(\G)$: the subgroup generated by all partial conjugations and all transvections.
		\item $\Aut^1_\infty(\G)$: the subgroup generated by all partial conjugations and all transvections $R_u^v$ such that $u\leinf v$.
		\item $\Aut^0(\G)$: the subgroup generated by $\Aut^1(\G)$ and all factor automorphisms.
		\item $\Aut^0_\infty(\G)$: the subgroup generated by $\Aut^1_\infty(\G)$ and all factor automorphisms.
	\end{enumerate}
\end{prop}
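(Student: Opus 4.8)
The plan is to deduce the whole proposition from the single statement that $\Aut^1_\infty(\G)$ has finite index in $\Aut(\G)$. Each of the other four subgroups contains $\Aut^1_\infty(\G)$: this is immediate for $\Aut^1(\G)$, $\Aut^0_\infty(\G)$ and $\Aut^0(\G)$, and for $\Autfin(\G)$ it holds because a transvection $R^v_u$ with $u\leinf v$ fixes every maximal finite special subgroup pointwise ($u$ has infinite order, so lies in no finite special subgroup), while a partial conjugation $\pi^v_C$ sends a maximal finite special subgroup $\mathcal{G}_\Lambda$ --- whose support is a clique, hence connected --- either to itself or to $v\mathcal{G}_\Lambda v\m$. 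So once $[\Aut(\G):\Aut^1_\infty(\G)]<\infty$ is known, all five subgroups are of finite index. (Incidentally, $\Autfin(\G)$ is exactly the kernel of the natural action of $\Aut(\G)$ on the finite set of conjugacy classes of maximal finite special subgroups.)

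To prove the claim, recall from Theorem~\ref{thm:CGaut} that $\Aut(\G)$ is generated by the finite group $\Aut(\Gamma,o)$ of labeled graph automorphisms, the factor automorphisms (generating the finite group $F\cong\prod_{v\in V}\Aut(G_v)$), the partial conjugations $\mathcal{P}$, and the transvections; partition the latter into the set $\mathcal{T}_\infty$ of those $R^v_u$ with $o(u)=\infty$ (equivalently $u\leinf v$) and the \emph{finite} set $\mathcal{T}_{<\infty}$ of those $R_u^{v^k}$ with $o(u)<\infty$. For $R_u^{v^k}\in\mathcal{T}_{<\infty}$ one also has $o(v)<\infty$ and $\st(u)\subseteq\st(v)$, and the inclusion $\st(u)\subseteq\st(v)$ forces $u$ and $v$ to be adjacent, hence to commute. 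Since $\Aut^1_\infty(\G)=\gspan{\mathcal{T}_\infty\cup\mathcal{P}}$, setting $M:=\gspan{\Aut(\Gamma,o)\cup F\cup\mathcal{T}_{<\infty}}$ we get $\Aut(\G)=\gspan{M\cup\Aut^1_\infty(\G)}$, so it is enough to show: (i) $M$ is finite, and (ii) $M$ normalizes $\Aut^1_\infty(\G)$. Indeed (ii) then makes $M\cdot\Aut^1_\infty(\G)$ a subgroup, necessarily all of $\Aut(\G)$, whence $[\Aut(\G):\Aut^1_\infty(\G)]\le|M|<\infty$ by (i).

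For (i): the finite group $\gspan{\Aut(\Gamma,o)\cup F}$ normalizes $K:=\gspan{\mathcal{T}_{<\infty}}$, because a graph or factor automorphism carries a finite-order transvection to a power of a finite-order transvection (here commutativity of $u$ and $v$ is used), so $M=K\cdot\gspan{\Aut(\Gamma,o)\cup F}$ is finite provided $K$ is. Let $\Delta$ be the induced subgraph on the finite-order vertices. Each finite-order transvection of $\G$ preserves the special subgroup $\mathcal{G}_\Delta$, restricts there to a transvection of $\mathcal{G}_\Delta$, and fixes every vertex outside $\Delta$; hence restriction gives an injective homomorphism from $K$ into the subgroup of $\Aut(\mathcal{G}_\Delta)$ generated by transvections. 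But $\mathcal{G}_\Delta$ is a graph product of finite cyclic groups, so by Proposition~\ref{prop:torelli gp finite} the standard representation is injective on this subgroup, which therefore embeds into $\Aut(\overline{\mathcal{G}_\Delta})$; and $\overline{\mathcal{G}_\Delta}=\bigoplus_{v\in\Delta}G_v$ is a finite abelian group, so $\Aut(\overline{\mathcal{G}_\Delta})$ is finite. Hence $K$, and with it $M$, is finite.

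For (ii) it suffices to check that conjugating a generator of $\Aut^1_\infty(\G)$ --- a partial conjugation, or an element of $\mathcal{T}_\infty$ --- by an element of $\Aut(\Gamma,o)$, $F$, or $\mathcal{T}_{<\infty}$ lies again in $\Aut^1_\infty(\G)$. A graph automorphism preserves the types ``partial conjugation'' and ``transvection in $\mathcal{T}_\infty$'', so that case is clear. A factor automorphism $\alpha$ carries $R_u^{v^k}\in\mathcal{T}_\infty$ to a power of itself, except when $\alpha$ is the inversion at the moved vertex $u$, where it produces the left transvection $L_u^{v^{-k}}$; but $\lk(u)\subseteq\st(v)$ forces $\{u\}$ to be a connected component of $\Gamma\setminus\st(v)$, so $\pi^v_{\{u\}}\in\mathcal{P}$ and $L_u^{v^{-k}}=R_u^{v^{-k}}\bigl(\pi^v_{\{u\}}\bigr)^{-k}\in\Aut^1_\infty(\G)$ (and $L_u^{v^{-k}}=R_u^{v^{-k}}$ if $u,v$ commute); and $\alpha$ carries a partial conjugation to a power of that partial conjugation. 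Finally, conjugating by $R_u^{v^k}\in\mathcal{T}_{<\infty}$: against an element of $\mathcal{T}_\infty$ the relations of Lemma~\ref{lem:relators} produce only transvections whose moved vertex still has infinite order --- composing the relation $u\le v$ with a $\leinf$-relation yields another $\leinf$-relation --- so the result remains in $\Aut^1_\infty(\G)$; against a partial conjugation $\pi^w_C$ the standard transvection/partial-conjugation identities rewrite the conjugate as a product of partial conjugations with conjugators among $\{u,v\}$, which again lies in $\Aut^1_\infty(\G)$ (one uses here both that $u,v$ commute and that $\st(u)\subseteq\st(v)$, so that any component of $\Gamma\setminus\st(v)$ meeting $C$ is contained in $C$). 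The step I expect to be the main obstacle is this last one: verifying that conjugating a partial conjugation by a finite-order transvection only ever produces further partial conjugations, never a transvection with a finite-order moved vertex --- and it is precisely here that the inclusion $\st(u)\subseteq\st(v)$ defining a finite-order transvection, stronger than the $\lk(u)\subseteq\st(v)$ permitted for infinite-order transvections, gets used.
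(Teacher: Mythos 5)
Your proposal is correct, but it takes a genuinely different route from the paper. The paper never argues by exhibiting a finite complement: it realizes $\Aut^0(\G)$ as the exact kernel of a homomorphism $\Sigma$ to the finite group $\Aut(\Gamma_\le)$ of symmetries of the compressed graph (following Duncan--Remeslennikov and Day--Wade), and then realizes $\Aut^1_\infty(\G)$ as the exact kernel of a further homomorphism $D$, assembled from determinant maps on the infinite equivalence classes and a projection to $\Aut(T)$; the remaining parts follow by containment, and $\Autfin(\G)$ is handled exactly as in your parenthetical remark. You instead reduce all five statements to $[\Aut(\G):\Aut^1_\infty(\G)]<\infty$ (the observation that $\Autfin(\G)\supseteq\Aut^1_\infty(\G)$ is a nice extra step the paper does not need) and prove that single statement by showing that $M=\grp{\Aut(\Gamma,o)\cup F\cup\mathcal{T}_{<\infty}}$ is finite and normalizes $\Aut^1_\infty(\G)$, so that $\Aut(\G)=M\cdot\Aut^1_\infty(\G)$. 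Both arguments lean on Proposition~\ref{prop:torelli gp finite} to tame the finite-order vertices, and your case analysis at the delicate points (the inversion at an infinite-order vertex producing $L_u^{v^{-1}}=R_u^{v^{-1}}(\pi^v_{\{u\}})^{-1}$; the conjugate $R_u^{v^k}\pi^u_C R_u^{v^{-k}}=\pi^{v^k}_{C\setminus\st(v)}\pi^u_C$, where $\st(u)\subseteq\st(v)$ guarantees $C\setminus\st(v)$ is a union of components of $\Gamma\setminus\st(v)$; the relation $R_u^{v^k}R_x^uR_u^{v^{-k}}=R_x^uR_x^{v^k}$ with $x\leinf v$ by transitivity) checks out. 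Your approach is more elementary and even gives normality of $\Aut^1_\infty(\G)$ in $\Aut(\G)$ for free; what it does not give is the identification of $\Aut^0(\G)$ and $\Aut^1_\infty(\G)$ as precise kernels of $\Sigma$ and $D$, which the paper reuses later (e.g.\ in Proposition~\ref{prop:Torelli in 1 inf} and Lemma~\ref{lem:ker P in image Rhat}), so the heavier machinery is not wasted in context.
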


We begin by proving the first part.

\begin{prop}
	The subgroup $\Autfin (\G)$ has finite index in $\Aut(\G)$.
\end{prop}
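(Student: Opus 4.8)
The goal is to show $\Autfin(\G)$ has finite index in $\Aut(\G)$, where $\Autfin(\G)$ consists of those automorphisms that permute the conjugacy classes of maximal finite special subgroups of $\G$. The plan is to let $\Aut(\G)$ act on a suitable finite set and identify $\Autfin(\G)$ as the stabilizer of a point (or as the kernel of the induced permutation action), so finite index is automatic. First I would establish that there are only finitely many conjugacy classes of maximal finite special subgroups: each such subgroup is $\G_K$ for $K$ an induced subgraph of $\Gamma$ (indeed a clique all of whose vertices have finite order, maximal among such), so there are only finitely many of them on the nose, hence finitely many conjugacy classes. The subtle point that makes this notion automorphism-invariant is that ``maximal finite special subgroup'' should be replaced, up to conjugacy, by an intrinsic notion — namely a maximal finite subgroup of $\G$ that is ``parabolic'' in the appropriate sense — so that an arbitrary automorphism carries such a subgroup to another one of the same type. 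I would cite or reprove the fact (implicit in the structure theory of graph products, e.g.\ via the normal form or the action on the Davis-type complex) that every finite subgroup of $\G$ is conjugate into some special subgroup, and that the maximal finite special subgroups are precisely the conjugates of the $\G_K$ with $K$ a maximal finite clique; hence they are permuted, up to conjugacy, by every automorphism.

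With that in hand, let $\mathcal{S} = \{ S_1, \dots, S_m \}$ be the (finite) set of conjugacy classes of maximal finite special subgroups. Every $\phi \in \Aut(\G)$ sends a maximal finite special subgroup to one, so induces a permutation $\sigma_\phi$ of $\mathcal{S}$; the assignment $\phi \mapsto \sigma_\phi$ is a homomorphism $\Aut(\G) \to \operatorname{Sym}(\mathcal{S})$. By definition $\Autfin(\G)$ is exactly the kernel of this homomorphism — it is the set of automorphisms fixing every conjugacy class of maximal finite special subgroup. A kernel of a homomorphism to a finite group has finite index, so $[\Aut(\G):\Autfin(\G)] \le m!$, which is the claim.

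The main obstacle — and the only real content — is the invariance statement: that an arbitrary automorphism (not just a Corredor–Gutierrez generator) sends maximal finite special subgroups to conjugates of maximal finite special subgroups. One route is to check it on the generating set of Theorem~\ref{thm:CGaut}: labeled graph automorphisms visibly permute the $\G_K$'s; factor automorphisms of a vertex group preserve each $\G_K$ (they act within a vertex group); transvections $R_u^{v^k}$ with $u$ infinite order act trivially on finite cliques not containing $u$ and, when they do move a generator, the image still lies in a conjugate of the same clique subgroup (using $\st(u)\subseteq\st(v)$); and partial conjugations only conjugate. Verifying each case is routine but must be done with care about which transvections exist. Alternatively, and more cleanly, I would invoke the characterization of finite subgroups of graph products up to conjugacy (a finite subgroup lies in a conjugate of a special subgroup on a clique of finite-order vertices), which makes the invariance immediate and generator-free; I expect to take this second approach and relegate the combinatorial bookkeeping to a citation.
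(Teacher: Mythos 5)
Your proposal is correct and follows essentially the same route as the paper: both identify $\Autfin(\G)$ as the kernel of the permutation action of $\Aut(\G)$ on the finitely many conjugacy classes of maximal finite (special) subgroups, with the key input being that every finite subgroup of $\G$ is conjugate into a finite special subgroup. The paper likewise takes your preferred ``generator-free'' approach, invoking this conjugacy fact rather than checking the Corredor--Gutierrez generators one by one.
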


\begin{proof}
	Any element of $\Aut(\cal G)$ must permute the finite subgroups of $\cal G$. 
	However, every finite subgroup is conjugate into a finite special subgroup, and thus every maximal finite subgroup is conjugate to a maximal finite special subgroup. 
	Since there are only finitely many such special subgroups of $\cal G$, it follows that there are only finitely many such conjugacy classes. Now, $\Aut(\cal G)$ acts on the set of conjugacy classes of maximal finite subgroups, and $\Autfin(\cal G)$ is the kernel of this action. Thus, it is finite index in $\Aut(\cal G)$.
\end{proof}

In the next two subsections, we prove parts (4) and (3) (Corollaries~\ref{cor:Aut0fi} and~\ref{cor:Aut1fi} respectively) showing that 
$\Aut^0(\G)$ and $\Aut^1_\infty(\G)$ 
have finite index in $\Aut(\G)$. The two corollaries together prove the remaining parts of Proposition~\ref{prop:finite index subgroups}. The particular methods employed will be useful in what follows, so we prove these two corollaries in separate subsections.

\subsection{Detecting Graph Symmetries}
Duncan and Remeslennikov  showed that for a RAAG, $\Aut(A_\Gamma)$
is the semidirect product  $\Aut^0(A_\Gamma) \rtimes A$, where $A$ is a subgroup of the group of graph symmetries of the ``compressed graph''---obtained by fusing each equivalence class into one vertex \cite[Proposition 33]{DuncanRemeslennikovII}.
We also refer the reader to the work of Day and Wade \cite[Section 3.1]{DayWade}, who explicitly describe the quotient map in language and notation that is more consistent with that found here.

We apply this construction when $\Gamma$ has finite order vertices to detect when an automorphism can be written as products of generators not involving graph symmetries.
The steps in constructing the map are the same as for the RAAG case described by Day and Wade. We give an outline of the proof, focusing on issues that arise due to the introduction of finite order vertices. We refer the reader to \cite[Section 3.1]{DayWade} for details that we do not include.

We can partition the vertices of $\Gamma$ into equivalence classes for the partial order $\le$. 
First define a new graph $\Gamma_\le$ to have vertex set equal to the set of equivalence classes for $\le$, and define two vertices to be adjacent in $\Gamma_\le$ if and only if the corresponding equivalence classes in $\Gamma$ contain adjacent vertices.

We will define a map 
$$\Sigma \colon \Aut(\G) \to \Aut(\Gamma_\le).$$
This is done in a similar way to \cite{DayWade}, however care has to be taken with regards to the order of vertices.

Let $X$ be an equivalence class.
We consider the subgroups $\G_{\geq X}$ and $\G_{>X}$ generated respectively by those vertices $v$ such that $v\geq x$ for $x\in X$, excluding those $v\in X$ in the latter subgroup
(so that the quotient $\G_{\ge X} / \grp{\grp{\G_{>X}}}$ is isomorphic to $\G_X$).

\begin{lem}[c.{}f.{} {\cite[Proposition 31]{DuncanRemeslennikovII}}, {\cite[Proposition 3.1]{DayWade}}]
	\label{lem:autget}Let $\phi \in \Aut(\G)$. 
	There is an automorphism $\sigma$ of $\Gamma_\le$ such that for each equivalence class $X$  we have that
	\begin{itemize} 
		\item $\phi(\G_{\geq X})$ is conjugate to $\G_{\geq \sigma X}$,
		\item $\phi( \grp{\grp{\G_{>X}}} ) = \grp{\grp{\G_{>\sigma X}}}$.
	\end{itemize}	
\end{lem}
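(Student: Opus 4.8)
The plan is to mimic the Duncan--Remeslennikov and Day--Wade arguments, keeping careful track of the orders of vertices. First I would record the structural facts we need about equivalence classes: by definition $v \leq x$ means a transvection $R_v^{x^k}$ exists, so $\lk(v) \subseteq \st(x)$; combined with Lemma~\ref{lem:eq classes infinite or finite} this tells us that an equivalence class $X$ is either an infinite free class (non-adjacent infinite-order vertices), an abelian infinite class (a clique of infinite-order vertices), or a finite abelian $p$-group (a clique of finite-order vertices, all with $\st$ equal). In particular, for any $x \in X$, the subgroup $\G_{\geq X}$ is the special subgroup on the induced subgraph $\{v : v \geq x \text{ for some } x \in X\}$, and $\langle\langle \G_{>X}\rangle\rangle$ is the kernel of the quotient $\G_{\geq X} \twoheadrightarrow \G_X$.

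Next I would build the permutation $\sigma$ directly. Given $\phi \in \Aut(\G)$, the key point is that the collection of subgroups $\{\G_{\geq X}\}$, up to conjugacy, is permuted by $\phi$. To see this I would characterize $\G_{\geq X}$ (up to conjugacy) intrinsically — e.g.\ as a maximal member of some class of special subgroups detected by the pre-order $\leq$, or by induction on the height of $X$ in $\leq$, as Day--Wade do: the $\leq$-maximal classes $X$ correspond to certain maximal ``parabolic-like'' subgroups, and peeling these off lets one proceed downward. Once we know $\phi$ sends $\G_{\geq X}$ to a conjugate of some $\G_{\geq Y}$, define $\sigma(X) = Y$; one checks $\sigma$ is a bijection of the vertex set of $\Gamma_\leq$ (its inverse comes from $\phi^{-1}$), and that it respects adjacency in $\Gamma_\leq$ because $\phi$ preserves commuting relations — two classes are adjacent in $\Gamma_\leq$ iff the corresponding special subgroups commute up to conjugacy in a suitable sense, a condition preserved by $\phi$. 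This gives $\sigma \in \Aut(\Gamma_\leq)$. Then the second bullet, $\phi(\langle\langle\G_{>X}\rangle\rangle) = \langle\langle\G_{>\sigma X}\rangle\rangle$, follows because $\langle\langle\G_{>X}\rangle\rangle$ is expressible in terms of the $\G_{\geq X'}$ for $X' > X$ (it is the normal closure of the union of those for $X'$ covering $X$), which $\phi$ permutes compatibly with $\sigma$, and because $\sigma$ preserves the order relation on classes.

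The main obstacle I anticipate is the bookkeeping forced by finite-order vertices. In the RAAG setting, every transvection $R_u^v$ is available whenever $\lk(u) \subseteq \st(v)$, and all equivalence classes look alike; here, condition (2) in the definition of transvections requires $o(v) \mid o(u)$, so within a finite class the pre-order $\leq$ is genuinely constrained by divisibility, and a mixed infinite/finite situation cannot occur inside one class — a fact we get for free from Lemma~\ref{lem:eq classes infinite or finite}, but which must be invoked at each step where Day--Wade would simply say ``transvect.'' The delicate verification is that $\phi$ really does preserve the conjugacy class of $\G_{\geq X}$ and not merely send it into some larger or incomparable special subgroup; for this I would lean on the normalizer structure of special subgroups in graph products (the same input that makes the restriction map $\operatorname{Res}$ well defined in Section~\ref{subsec:ResFact}) together with an induction on $\leq$-height, handling $\leq$-maximal classes first via Lemma~\ref{lem:preservation of maximal stars} and Lemma~\ref{lem:maximal eq classes not star separated}, then quotienting and recursing. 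The rest — that $\sigma$ is a well-defined graph automorphism and that the two displayed properties hold — is then a routine if somewhat lengthy check, closely parallel to \cite[Proposition 3.1]{DayWade}, and I would present it in that abbreviated style, flagging only the places where the order function $o$ changes the argument.
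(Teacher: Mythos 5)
There is a genuine gap at the heart of your argument. The entire content of Lemma~\ref{lem:autget} is the claim that $\phi(\G_{\geq X})$ is conjugate to some $\G_{\geq Y}$ (with the normal closures matching up); for an \emph{arbitrary} $\phi\in\Aut(\G)$ this is exactly the hard part, and your proposal replaces it with a placeholder: you say you ``would characterize $\G_{\geq X}$ (up to conjugacy) intrinsically --- e.g.\ as a maximal member of some class of special subgroups detected by the pre-order $\leq$, or by induction on the height of $X$'' --- without ever formulating such a characterization or proving it is $\Aut(\G)$-invariant. The paper (following Day--Wade) sidesteps this entirely: the property in the statement is closed under composition --- if $\phi$ works with $\sigma_\phi$ and $\psi$ with $\sigma_\psi$, then $\psi\phi$ works with $\sigma_\psi\sigma_\phi$ --- so it suffices to verify it for the Corredor--Gutierrez generators of Theorem~\ref{thm:CGaut} (labeled graph symmetries, factor automorphisms, transvections, partial conjugations), where it is a short direct check. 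That reduction is the missing idea; without it, or without an actually stated and proved intrinsic characterization, the first bullet of the lemma is simply asserted rather than proved.

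There are also two secondary problems with the route you sketch. First, you propose to handle maximal classes via Lemma~\ref{lem:preservation of maximal stars}, but that lemma is stated (and proved, itself by checking generators) only for $\Out^0(\G)$, resp.\ $\Out^0_\infty(\G)$; it excludes precisely the graph symmetries, which genuinely permute equivalence classes, so it cannot be applied to an arbitrary $\phi\in\Aut(\G)$ without already having the decomposition $\phi=\sigma\phi'$ that Proposition~\ref{prop:getting graph symmetries} is building toward. Second, your criterion for $\sigma$ to respect adjacency --- that the ``corresponding special subgroups commute up to conjugacy'' --- is not how adjacency in $\Gamma_\le$ is defined (it is defined by adjacency of representative vertices, equivalently by $\G_X$ and $\G_Y$ commuting, not $\G_{\geq X}$ and $\G_{\geq Y}$), and detecting commutation ``up to conjugacy'' of subgroups of a graph product is itself a nontrivial claim needing proof. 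Your instinct to track the order function $o$ through the argument is the right one, but the place where that bookkeeping belongs is in the generator-by-generator verification, not in an induction on $\leq$-height.
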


 We refer the reader to \cite[Proposition 3.1]{DayWade} for the proof in the RAAG case, which carries through to our situation. Essentially, one has to verify that the properties given in the lemma are preserved by taking a product of automorphisms. Thus it is enough to verify the properties hold for generators, which is straight-forward to do.

Recall from Lemma~\ref{lem:eq classes infinite or finite} that each equivalence class generates either a free group, a free abelian group, or a finite abelian $p$--group, for some prime $p$.
In Lemma~\ref{lem:autget}, if $X$ is free (resp.{} free abelian), then so is $\sigma X$, and if $X$ generates a finite abelian $p$--group, then so does $\sigma X$ (for the same $p$).

Define $\Sigma(\phi)(X) = (\sigma X)$, for $\varphi,\sigma$ as in Lemma~\ref{lem:autget}.

\begin{lem}
	The function $\Sigma(\phi)$ is an automorphism of the graph $\Gamma_\le$.
\end{lem}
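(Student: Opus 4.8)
The plan is to verify the two defining properties of a graph automorphism: that $\Sigma(\phi)$ is a bijection on the vertex set of $\Gamma_\le$, and that it preserves the adjacency relation in both directions. Since $\Sigma$ was only defined as a function on equivalence classes via Lemma~\ref{lem:autget}, the first thing I would do is observe that $\Sigma(\phi^{-1})$ provides a two-sided inverse: applying Lemma~\ref{lem:autget} to $\phi^{-1}$ gives an automorphism $\sigma'$ of $\Gamma_\le$, and since $\phi^{-1}(\G_{\ge X})$ is conjugate to $\G_{\ge \sigma' X}$ while $\phi(\G_{\ge X})$ is conjugate to $\G_{\ge \sigma X}$, composing shows $\G_{\ge \sigma'\sigma X}$ is conjugate to $\G_{\ge X}$; because distinct equivalence classes $X\ne Y$ yield non-conjugate subgroups $\G_{\ge X}$, $\G_{\ge Y}$ (the sets $\ge X$ are determined up to this conjugacy), we get $\sigma'\sigma = \mathrm{id}$, and symmetrically $\sigma\sigma' = \mathrm{id}$. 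Hence $\Sigma(\phi)$ is a bijection. (Alternatively, one can note the assignment $\phi \mapsto \sigma$ is multiplicative directly from the characterizing properties, so $\Sigma$ lands in invertible maps; I would pick whichever is cleaner given what the paper has already set up.)

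Next I would check adjacency is preserved. Suppose $X, Y$ are adjacent in $\Gamma_\le$, meaning there are adjacent vertices $x\in X$, $y\in Y$ in $\Gamma$. Then $\G_{\{x,y\}}\cong G_x\times G_y$ embeds in $\G$, and more to the point $y \in \st(x)$. I want to translate adjacency of $X,Y$ into a statement about the subgroups $\G_{\ge X}$, $\G_{\ge Y}$, or about $\grp{\grp{\G_{>X}}}$, that is manifestly preserved by $\phi$ via Lemma~\ref{lem:autget}. The cleanest route: $X$ and $Y$ are adjacent in $\Gamma_\le$ if and only if $[\G_{\ge X}, \G_{\ge Y}]$ — or rather some commutation relation between generators — holds modulo the appropriate normal closures; concretely, $x$ and $y$ commute, $x\notin \grp{\grp{\G_{>X}}}$, $y \notin \grp{\grp{\G_{>Y}}}$, so in the quotients identifying $\G_{\ge X}$ with $\G_X$ and $\G_{\ge Y}$ with $\G_Y$ we see nontrivial commuting images. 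Since $\phi$ carries this entire configuration to the corresponding one for $\sigma X$, $\sigma Y$, we conclude $\sigma X$ and $\sigma Y$ are adjacent. Applying the same argument to $\phi^{-1}$ gives the converse, so $\Sigma(\phi)$ preserves non-adjacency as well. This is essentially the RAAG argument of Day--Wade \cite[Section 3.1]{DayWade}, and I would explicitly cite it for the routine parts.

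The main obstacle I anticipate is bookkeeping around finite-order vertices: in the RAAG case every equivalence class is free abelian (or free), whereas here a class can generate a finite abelian $p$-group, and one must make sure the "detection" of adjacency does not accidentally use torsion-freeness. In particular, when $x$ has finite order, the transvection structure and the way $x$ sits inside $\G_{\ge X}$ relative to $\grp{\grp{\G_{>X}}}$ needs care — but the remark already recorded after Lemma~\ref{lem:autget} (that $\sigma$ preserves the free / free-abelian / finite-$p$ trichotomy) handles exactly this, so the adjacency criterion can be phrased purely in terms of non-trivial commuting images in the quotient groups $\G_X$, $\G_Y$, which makes no reference to orders. I would therefore structure the proof as: (i) bijectivity via the inverse $\Sigma(\phi^{-1})$; (ii) a short lemma-free paragraph giving the subgroup-theoretic characterization of adjacency in $\Gamma_\le$; (iii) invoke Lemma~\ref{lem:autget} plus the trichotomy remark to conclude adjacency and non-adjacency are both preserved; (iv) cite \cite{DayWade} for the details that transfer verbatim.
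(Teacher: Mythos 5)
Your bijectivity argument is fine (it rests on the standard fact that distinct special subgroups of a graph product are non-conjugate, so the $\sigma$ of Lemma~\ref{lem:autget} is unique and $\Sigma$ is multiplicative), but the adjacency step contains a genuine gap, and it is precisely the step you wave at. First, the statement ``in the quotients identifying $\G_{\ge X}$ with $\G_X$ and $\G_{\ge Y}$ with $\G_Y$ we see nontrivial commuting images'' is not well formed: Lemma~\ref{lem:autget} only gives $\phi(\G_{\ge X})=g\,\G_{\ge\sigma X}\,g^{-1}$ and $\phi(\G_{\ge Y})=h\,\G_{\ge\sigma Y}\,h^{-1}$ for possibly \emph{different} $g,h$, so $g^{-1}\phi(x)g$ and $h^{-1}\phi(y)h$ live in different groups and there is no single quotient in which both images sit and commute; the only commutation you actually have is $[\phi(x),\phi(y)]=1$ in $\G$. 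Second, the implication you need --- if $a$ and $b$ have commuting conjugates, $a\in\G_{\ge\sigma X}$ projects nontrivially to $\G_{\sigma X}$, and $b\in\G_{\ge\sigma Y}$ projects nontrivially to $\G_{\sigma Y}$, then $\sigma X$ and $\sigma Y$ are adjacent --- is true but is the entire content of the lemma in your approach. Proving it requires the centralizer theorem for graph products (to place a conjugate of $b$ inside $\grp{\st(\supp a)}$ after cyclically reducing), and even then one only gets ``adjacent \emph{or} $\sigma Y>\sigma X$''; the second alternative has to be killed by running the symmetric argument to also get ``adjacent or $\sigma X>\sigma Y$.'' None of this is supplied by the citation you lean on: Day--Wade do not characterize adjacency subgroup-theoretically, so ``cite \cite{DayWade} for the details that transfer verbatim'' does not cover the missing step.

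For comparison, the paper gives no proof of this lemma at all, because its intended argument is the one suggested by the surrounding discussion of Lemma~\ref{lem:autget}: check the claim on the Corredor--Gutierrez generators (factor automorphisms, transvections and partial conjugations induce $\sigma=\mathrm{id}$; a labeled graph automorphism induces the obvious automorphism of $\Gamma_\le$) and then use that $\Sigma$ is multiplicative, so $\Sigma(\phi)$ is a composition of graph automorphisms. If you want an intrinsic proof you can complete your route with the centralizer argument sketched above, but as written the key implication is asserted rather than proved.
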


\begin{prop}[c.{}f.{} {\cite[Proposition~3.3]{DayWade}}]\label{prop:getting graph symmetries}
	The map $\Sigma$ is a well-defined homomorphism with kernel $\Aut^0(\G)$.
\end{prop}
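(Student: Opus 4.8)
The plan is to show that $\Sigma$ is a homomorphism, that $\Aut^0(\G)$ is contained in its kernel, and conversely that its kernel is contained in $\Aut^0(\G)$. The first two points are routine and follow the template of \cite[Section 3.1]{DayWade}. That $\Sigma$ is a homomorphism follows from uniqueness of the $\sigma$ attached to a given $\phi$ in Lemma~\ref{lem:autget}: if $\phi_1,\phi_2$ have associated graph automorphisms $\sigma_1,\sigma_2$, then $\sigma_1\sigma_2$ witnesses the conclusion of Lemma~\ref{lem:autget} for $\phi_1\phi_2$, and since the conjugacy classes of the subgroups $\G_{\ge X}$ distinguish the equivalence classes $X$ (because $\G_{\ge X}$ is the unique member of its conjugacy class containing $X$ ``on top''), $\sigma$ is uniquely determined, so $\Sigma(\phi_1\phi_2) = \Sigma(\phi_1)\Sigma(\phi_2)$. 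That $\Aut^0(\G) \subseteq \ker\Sigma$ is checked on generators: a partial conjugation and a factor automorphism each preserve every special subgroup $\G_{\ge X}$ up to conjugacy and preserve each $\grp{\grp{\G_{>X}}}$ on the nose, so they map to the identity; a transvection $R_u^{v^k}$ with $u\le v$ fixes every vertex but $u$ and sends $u$ into $\G_{\ge [u]}$ up to the relevant normal closure, hence also fixes all the relevant subgroups up to conjugacy and lies in $\ker\Sigma$. Finally, $\ker\Sigma \supseteq \Aut^0(\G)$ is not automatic: one must also observe that a labeled graph automorphism of $\Gamma$ induces the corresponding automorphism of $\Gamma_\le$, so that the composite $\Aut(\G) \xrightarrow{\Sigma} \Aut(\Gamma_\le)$ restricted to the subgroup of labeled graph automorphisms surjects onto the image of $\Sigma$; combined with Theorem~\ref{thm:CGaut} this gives $\Aut(\G) = \Aut^0(\G)\cdot(\text{graph automorphisms})$ and lets us conclude.

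For the reverse containment $\ker\Sigma \subseteq \Aut^0(\G)$ I would argue as follows. Take $\phi \in \ker\Sigma$. By Theorem~\ref{thm:CGaut}, $\phi = \psi\cdot\alpha$ where $\alpha$ is a product of labeled graph automorphisms and $\psi \in \Aut^0(\G)$ (pushing all graph-automorphism factors to the right using the fact that conjugating a transvection, partial conjugation, or factor automorphism by a graph automorphism yields another automorphism of the same type — this is where one uses that graph automorphisms normalize $\Aut^0(\G)$). Since $\psi\in\Aut^0(\G)\subseteq\ker\Sigma$ and $\phi\in\ker\Sigma$, we get $\Sigma(\alpha) = 1$. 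Now $\alpha$ is a product of labeled graph automorphisms of $\Gamma$, each of which acts on $\Gamma_\le$ in the obvious way (it permutes equivalence classes because it preserves the relation $\le$ and the labels $o$), and this assignment is a homomorphism from the labeled-graph-automorphism group of $\Gamma$ to $\Aut(\Gamma_\le)$ whose composite with the inclusion into $\Aut(\G)$ agrees with $\Sigma$. The key point — the main obstacle — is that this assignment on labeled graph automorphisms is \emph{injective}: if a labeled graph automorphism $\alpha$ acts trivially on $\Gamma_\le$, then $\alpha$ must permute vertices only within equivalence classes, and one must show such an $\alpha$ is already realized (up to inner automorphism, i.e.\ in $\Out$) by a product of transvections, i.e.\ lies in $\Aut^0(\G)$. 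This is precisely the content that generalizes Duncan--Remeslennikov's semidirect product decomposition to the finite-order setting.

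To handle that obstacle: suppose $\alpha$ is a labeled graph automorphism permuting vertices within each equivalence class $X$. Within an infinite abelian equivalence class $\G_X \cong \Z^{|X|}$, any permutation of the vertices is realized by a product of transvections $R_u^{v}$, $L_u^v$ with $u,v\in X$ (these are the elementary matrices generating a finite-index — indeed, up to $\pm1$ entries, all — subgroup of $\GL(|X|,\Z)$; a transposition of basis vectors differs from such a product only by sign changes, which are factor automorphisms). Within a free equivalence class $\G_X$ a free group, the vertices are pairwise non-adjacent and each has empty... — more carefully, one checks all the $R_u^v, L_u^v$ with $u,v\in X$ exist as automorphisms, and these together with factor automorphisms generate a copy of $\Aut(F_{|X|})$ acting on $\G_X$, which surjects onto the symmetric group permuting the generators. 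Within a finite abelian $p$--group equivalence class, all vertices in $X$ have the same star, so again every permutation of $X$ (suitably interpreted) is realized by transvections and factor automorphisms among the vertices of $X$. Since distinct equivalence classes' generating sets are handled independently (transvections/factor automorphisms supported in $X$ commute with those supported in a different class $Y$, or can be ordered appropriately), one assembles a product $\beta\in\Aut^0(\G)$ realizing $\alpha$, whence $\alpha = \beta\cdot(\text{inner})\in\Aut^0(\G)$ and therefore $\phi\in\Aut^0(\G)$. I would write this last paragraph as the heart of the proof and cite \cite[Proposition 33]{DuncanRemeslennikovII} and \cite[Section 3.1]{DayWade} for the structural parts that transfer verbatim, flagging explicitly the one new verification (that permutations within a finite $p$--group equivalence class are realized by automorphisms in $\Aut^0(\G)$, using that such vertices share a common star).
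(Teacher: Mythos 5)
Your proposal is correct and follows essentially the same route as the paper: decompose $\phi$ into a labeled graph symmetry times an element of $\Aut^0(\G)$ by shuffling generators (using that conjugation by a graph symmetry preserves generator types), observe that the remaining graph symmetry must preserve the $\le$--equivalence classes, and then realize the resulting within-class permutations by transvections and factor automorphisms. The one place to tighten is the finite $p$--group classes: the permutation is realizable not merely because the vertices share a common star, but because a labeled graph automorphism preserves vertex orders and, for equivalent vertices $v,w$, the transvections $R_v^w$ and $R_w^v$ both exist with $k=1$ exactly when $o(v)=o(w)$ (together with the caveat that the inversions appearing in the explicit swap formula are trivial when the vertex has order $2$).
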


\begin{proof}
The proof is nearly identical to the RAAG case \cite[Proposition~3.3]{DayWade} however there is one key point where the orders of vertices come in.
Seeing that $\ker \Sigma $ contains $\Aut^0(\G)$ follows by looking at the generators of $\Aut^0(\G)$ and verifying they preserve the subgroup $\G_{\ge X}$ up to conjugacy, and the normal subgroup $\grp{\grp{\G_{>X}}}$, for each equivalence class $X$.
For the reverse inclusion, as in \cite[Proposition~3.3]{DayWade}, conjugating a factor automorphism, a transvection, or a partial conjugation by a labeled graph symmetry  gives a generator of the same type. 
Hence, given $\varphi \in \ker \Sigma$ as a word on the generating set of Theorem~\ref{thm:CGaut}, the graph symmetries can be shuffled to the end and we may write $\varphi = \sigma \varphi'$, with $\varphi'\in \Aut^0(\G)$ and $\sigma$ a graph symmetry. 
Then $1=\Sigma(\varphi) = \Sigma(\sigma)$. So $\sigma$ is an automorphism of the labeled graph that preserves the equivalence classes.
Given vertices $v,w$, it is noted in \cite[Proposition~3.3]{DayWade} that
$$\iota_v R_v^{w\m} \iota_v \iota_w R_w^v R_v^{w\m},$$
where $\iota_v$ is the inversion of the vertex $v$, is the automorphism swapping $v$ and $w$ and fixing all other vertices.
Thus to see $\sigma \in \Aut^0(\G)$, we need the elements in the above product.
However, since automorphisms are order preserving, $\sigma$ not just preserves the equivalence classes but also preserves orders of vertices.
Since for two equivalent vertices $v, w$,  both $R^v_w$ and $R^w_v$ exist (with $k=1$) if and only if $o(w)=o(v)$, we have the required transvections.
Meanwhile, the inversions are special cases of factor automorphisms, with the exception being when a vertex has order 2, in which case we replace the inversions with the identity.
\end{proof}

Since $\Aut(\Gamma_\le)$ is a finite group, this immediately leads to the following corollary.

\begin{cor}\label{cor:Aut0fi}
	The subgroup $\Aut^0(\G)\le \Aut(\G)$ has finite index.\end{cor}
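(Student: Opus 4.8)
The plan is to read this off directly from Proposition~\ref{prop:getting graph symmetries}. That proposition provides a homomorphism $\Sigma \colon \Aut(\G) \to \Aut(\Gamma_\le)$ whose kernel is precisely $\Aut^0(\G)$. So the first (and essentially only) step is to observe that the target group $\Aut(\Gamma_\le)$ is finite: the graph $\Gamma$ has finitely many vertices, hence finitely many $\le$--equivalence classes, hence $\Gamma_\le$ is a finite graph and its automorphism group is finite. The second step is the standard fact that the kernel of a homomorphism into a finite group has index at most the order of that group; applying this to $\Sigma$ gives $[\Aut(\G) : \Aut^0(\G)] = [\Aut(\G) : \ker\Sigma] \le |\Aut(\Gamma_\le)| < \infty$.

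There is really no obstacle here — all of the substantive work has already been carried out in constructing $\Sigma$ and identifying its kernel (Lemma~\ref{lem:autget}, the fact that $\Sigma(\phi)$ is a graph automorphism, and the kernel computation in Proposition~\ref{prop:getting graph symmetries}, the last of which is where the orders of vertices genuinely intervene). The only thing worth spelling out for the reader is that finiteness of $\Gamma_\le$ is immediate from the standing assumption that $\Gamma$ is a finite graph, so the corollary is just the combination ``homomorphism to a finite group'' $+$ ``kernel identified as $\Aut^0(\G)$.''

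I would therefore phrase the proof in one or two sentences: since $\Gamma_\le$ is finite, $\Aut(\Gamma_\le)$ is finite, and $\Aut^0(\G)$ is the kernel of $\Sigma$ by Proposition~\ref{prop:getting graph symmetries}, so it has finite index in $\Aut(\G)$. If one wanted to be slightly more self-contained, one could additionally remark that the index of $\Aut^0(\G)$ equals the order of the image $\Sigma(\Aut(\G)) \le \Aut(\Gamma_\le)$, which can in principle be computed from the list of generators in Theorem~\ref{thm:CGaut} — namely it is generated by the images of the labeled graph automorphisms — but this refinement is not needed for the statement as given.
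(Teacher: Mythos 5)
Your argument is correct and is exactly the paper's: the paper deduces the corollary immediately from Proposition~\ref{prop:getting graph symmetries} by noting that $\Aut(\Gamma_\le)$ is a finite group, so the kernel $\Aut^0(\G)$ of $\Sigma$ has finite index. Nothing is missing.
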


\subsection{The Homomorphism D}\label{sec:homom D}
The homomorphism $D$ is an amalgamation of maps.
First fix an equivalence class $X$ in $\Gamma$ that contains vertices of infinite order. We will define a homomorphism $D_X$ as a composition of maps. The first map to take is the quotient map from $\Aut^0(\G)$ to $\Out^0(\G)$.
We can define the factor map $\Out^0(\G) \to \Out^0(\G_{\leq X})$, and compose this with the restriction to $X$, giving a homomorphism $\Out^0(\G) \to \Out^0(\G_X)$. Indeed, this is the map $\text{Res}\circ\text{Fact}$ defined in Section~\ref{subsec:ResFact}, and so it is onto.
From $\Out^0(\G_X)$, take the standard representation of this by acting on the abelianization $\bar{\G}_X$.
Using Lemma~\ref{lem:eq classes infinite or finite}, the image of this representation is isomorphic to $\GL(N_X,\Z)$, for some integer $N_X$.
From here we can use the determinant map onto $\Z_2$.
In summary, we have a homomorphism 
$$D_X \colon \Aut^0(\G) \to \Z_2.$$
We bundle all these maps together, along with the following map.
Let $\bar{G} \cong \Z^n \times  T$ be the abelianization of $\G$. 
Then the image of the standard representation is, up to isomorphism,  a subgroup of $T^n\rtimes (\GL(n,\Z) \times \Aut(T))$ by Lemma~\ref{lem:aut abelian}.
We define a map $D_0$ to be the composition of the standard representation, the inclusion into the semidirect product, and the quotient map onto $\Aut(T)$, giving
$$D_0 \colon \Aut^0(\G) \to \Aut(T).$$
Bundling all these maps together, and letting $\cal{E}$ denote the set of all equivalence classes in $\Gamma$ containing infinite order vertices, we get
\begin{equation}\label{eq:defn D}
D := D_0 \times \prod_{X\in\cal{E}} D_X \colon \Aut^0(\G) \to \Aut(T) \times \prod_{X\in\cal{E}}  \Z_2 .
\end{equation}
Since $\Gamma$ and $T$ are finite, the image of $D$ is a finite group.
The following therefore implies that $\Aut^1_\infty(\G)$ has finite index in $\Aut^0(\G)$, and hence in $\Aut(\G)$.

\begin{prop}\label{prop:kernel D}
	The kernel of $D$ is equal to $\Aut^1_\infty(\G)$.
\end{prop}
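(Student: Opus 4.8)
The plan is to prove the two inclusions $\Aut^1_\infty(\G) \subseteq \ker D$ and $\ker D \subseteq \Aut^1_\infty(\G)$ separately, with the second being the substantial direction.

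\medskip

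\noindent\textbf{The easy inclusion.} First I would check that each generator of $\Aut^1_\infty(\G)$ lies in $\ker D$. Partial conjugations are inner up to killing components, so they are trivial in every $\Out^0$ and in the standard representation; hence $D$ kills them. For a transvection $R_u^v$ with $u \leinf v$: since $o(u) = \infty$, this automorphism fixes every finite-order vertex, so $D_0(R_u^v) = 1$; and for each equivalence class $X \in \cal E$, the image of $R_u^v$ under $\operatorname{Res}\circ\operatorname{Fact} \colon \Out^0(\G) \to \Out^0(\G_X) \to \GL(N_X,\Z)$ is an elementary transvection matrix (or the identity, if $u$ or $v$ fails to map into $\le X$ appropriately), which has determinant $1$, so $D_X(R_u^v) = 1$. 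Thus $\Aut^1_\infty(\G) \subseteq \ker D$.

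\medskip

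\noindent\textbf{The hard inclusion.} For $\ker D \subseteq \Aut^1_\infty(\G)$, I would start from an arbitrary $\phi \in \ker D \subseteq \Aut^0(\G)$ and write it as a word in the generators of $\Aut^0(\G)$ — that is, transvections, partial conjugations, and factor automorphisms (no graph symmetries, since we are already inside $\Aut^0(\G)$ by Corollary~\ref{cor:Aut0fi}). The goal is to rewrite this word, modulo $\Aut^1_\infty(\G)$, so that only "good" generators remain. The generators that must be eliminated are (a) factor automorphisms and (b) transvections $R_u^v$ with $u \leq v$ but $u \not\leinf v$, i.e.\ those where $o(u) < \infty$. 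For type (b), note such a transvection has finite order and, via the commutation relations of Lemma~\ref{lem:relators} together with the fact that partial conjugations and "good" transvections already form the normal-ish core, one can push all the bad transvections to one side; their product then maps under $D$ into a specific finite subgroup, and vanishing of $D$ forces that product to be trivial modulo $\Aut^1_\infty(\G)$. Concretely: $D_0$ detects precisely the action on $T$, which is governed by factor automorphisms of finite-order vertices and the transvections $R_u^v$ with $o(v) < \infty$; and the $D_X$ detect the "sign" (determinant) contribution of finite-order transvections landing in the $\GL(N_X,\Z)$ factor. So after collecting, the residual bad part is an element of $\Aut^2(\G)$-type (in the language of Proposition~\ref{prop:torelli gp finite}) whose image under the standard representation has trivial $\Aut(T)$-component and trivial determinant in each $\GL(N_X,\Z)$ — and I would argue, using Lemma~\ref{lem:aut abelian} and the splitting there, that such an element is already a product of "good" transvections and partial conjugations.

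\medskip

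\noindent\textbf{Where the difficulty lies.} The main obstacle is the bookkeeping for the transvections $R_u^v$ with $o(u) = p^i < \infty$: these are genuinely needed in $\Aut^0(\G)$ but forbidden in $\Aut^1_\infty(\G)$, and one must show that whatever finite-order "twisting" they contribute is entirely visible to $D$ (so that $\phi \in \ker D$ truly kills it). This requires understanding how such a transvection acts both on $T$ (the torsion of the abelianization) and on the free quotients $\G_X$ for $X \in \cal E$ — and checking that these two pieces of information, packaged as $D_0$ and $\prod D_X$, capture the full obstruction to membership in $\Aut^1_\infty(\G)$. I would handle this by a careful case analysis on whether $v$ has infinite or finite order and whether $u$ lies in a free or abelian equivalence class, invoking Lemma~\ref{lem:eq classes infinite or finite} to control the structure of each equivalence class and the relations in Lemma~\ref{lem:relators} to normalize the word. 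The surjectivity of $\operatorname{Res}\circ\operatorname{Fact}$ noted in Section~\ref{subsec:ResFact} ensures no information is lost in the $D_X$ factors, which is what makes the argument close up.
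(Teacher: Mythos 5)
Your easy inclusion is fine and matches the paper. The hard inclusion, however, has two genuine gaps. First, you have misassigned the roles of the two families of detecting maps. The maps $D_X$ for $X\in\cal{E}$ do not see any transvection $R_u^{v^k}$ with $o(u)<\infty$: such a transvection forces $o(v)<\infty$ as well (condition (2) for transvections requires $o(v)\mid o(u)$), and since no finite-order vertex $u$ can satisfy $u\leq x$ for an infinite-order $x$, the entire transvection is killed by the factor map onto $\G_{\leq X}$. What $D_X$ actually detects is the inversion $v\mapsto v^{-1}$ of an infinite-order vertex $v\in X$, which has determinant $-1$ in $\GL(N_X,\Z)$ and is invisible to $D_0$. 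Your proposal never addresses these inversions, yet they are factor automorphisms lying in $\Aut^0(\G)\setminus\Aut^1_\infty(\G)$ and they are the sole reason the $\Z_2$ factors appear in the definition of $D$. The paper's proof spends real effort here: it uses the relation $fR_u^{v^k}=R_u^{f(v^k)}f$ to collect the factor automorphisms, then Wade's identity to replace a pair of inversions of equivalent infinite-order vertices by a single one modulo $\Aut^1_\infty(\G)$, and only then can the maps $D_X$ be used to rule out any surviving inversion.

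Second, your closing step does not close. After reducing to a residual word $\phi'$ in factor automorphisms of finite-order vertices and transvections with finite-order support, knowing that $\phi'$ has trivial image in $\Aut(T)$ and trivial determinants does not by itself place $\phi'$ in $\Aut^1_\infty(\G)$: indeed $\phi'$ is built entirely from generators that are \emph{not} in $\Aut^1_\infty(\G)$, so what must be shown is $\phi'=1$. Lemma~\ref{lem:aut abelian} only describes the target group $\Aut(\bar{\G})$ and cannot supply this. The missing ingredient is the injectivity of the standard representation on the subgroup of $\Aut(\G_f)$ generated by graph symmetries, factor automorphisms, and transvections, which is the second assertion of Proposition~\ref{prop:torelli gp finite} and rests on the Gutierrez--Piggott--Ruane semidirect product decomposition $\Aut(\G_f)\cong\Aut^1_\infty(\G_f)\rtimes\Aut^2(\G_f)$. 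Without invoking that injectivity (or reproving it), your phrase ``such an element is already a product of good transvections and partial conjugations'' is precisely the claim to be established, not a consequence of the splitting in Lemma~\ref{lem:aut abelian}.
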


\begin{proof}
To see that $\Aut^1_\infty(\G) \subseteq \ker D$, we note that each generator of $\Aut^1_\infty(\G)$ lies in the kernel.
Indeed, since partial conjugations act trivially on the abelianization, they are all in $\ker D$.
Meanwhile, consider $u,v$ such that $u\leq_\infty v$.
If $o(v)<\infty$, then the image of $R_u^{v}$ under the standard representation lies in the  subgroup $T^n$ of $T^n\rtimes (\GL(n,\Z) \times \Aut(T))$. 
If $o(v) = \infty$, then it lies in $\GL(n,\Z)$.
In particular, in either case, it maps to the identity under $D_0$.
Now consider an equivalence class $X \in \cal{E}$. 
If at least one of $u$ or $v$ is not in $X$ the the transvection $R_u^{v}$ is killed by the composition of the factor and restriction maps to $\Aut^0(\G_X)$.
Otherwise, if $u,v\in X$, then $o(v)=\infty$, and $R_u^v$ is sent to an elementary matrix with determinant one under the standard representation of $\Aut^0(\G_X)$, and hence is killed by $D_X$.

We are left to show that $\ker D \subseteq  \Aut^1_\infty (\G)$.	
Let $\phi \in \ker D$.
Write $\phi$ as a product of generators for $\Aut^0(\G)$, namely $\phi = \gamma_1 \cdots \gamma_r$, where $\gamma_i$ is either a factor automorphism, a (power of a) transvection, or a partial conjugation.
As verified above, any generator in $\Aut^1_\infty(\G)$ is in $\ker D$, so we get $D(\phi) = D(\phi')$, where $\phi'$ is obtained from $\gamma_1\cdots \gamma_r$ by deleting partial conjugations and transvections in $\Aut^1_\infty(\G)$.
Hence it is enough to assume each $\gamma_i$ is a factor automorphism or a transvection $R_u^{v^k}$ with $o(u)<\infty$.

Let $f$ be a factor automorphism acting on the vertex $v$, and let $R_u^{v^k}$ be a transvection.
Since the order of $f(v^k)$ equals that of $v^k$, the automorphism $R_u^{f(v^k)}$ is well-defined,
and manual computation verifies the relation
\begin{equation*}
f R_u^{v^k} = R_u^{f(v^k)} f.
\end{equation*}
Thus we may assume there is $0\leq s\leq r$ such that $\gamma_1 , \ldots , \gamma_s$ are factor automorphisms of distinct vertices (if $s=0$ this list is empty), while $\gamma_{s+1},\ldots ,\gamma_r$ are transvections with finite order support (if $s=r$ this list is empty).
Suppose $\gamma_i$ and $\gamma_j$ are factor automorphisms of infinite order vertices $u$ and $v$, respectively, belonging to the same equivalence class.
As in the proof of \cite[Proposition 4.9]{WadeThesis} we may write
\begin{equation*}
\gamma_j = \gamma_i R_u^v R_v^{u^{-1}} \gamma_i R_u^{v^{-1}} \gamma_i R_u^v \gamma_i R_v^u R_u^{v^{-1}} \gamma_i.
\end{equation*}
Using this, we may replace each occurrence of $\gamma_j$ with $\gamma_i^5=\gamma_i$, after deleting the transvections, which are all in $\Aut^1_\infty(\G)$. 
Thus we may assume that the factor automorphisms $\gamma_1 , \ldots,\gamma_s$ act on at most one infinite order vertex from each equivalence class.
Then, similar to Wade's proof, by mapping onto the product of groups $\Z_2$, one copy for each equivalence class containing infinite order vertices,
we can realise that in order for $\phi$ to be in $\ker D$, none of the factor automorphisms $\gamma_1 , \ldots,\gamma_s$ can act on an infinite order vertex of $\Gamma$.

Now we focus on $D_0$, which comes about from compositions:
$$
\Aut^0(\G) \to \Aut^0(\bar{G}) \hookrightarrow T^n \rtimes (\GL(n,\Z) \times \Aut(T)) \to \Aut(T).$$
Consider the subgroup $H$ of $\Aut^0(G)$ consisting of all automorphisms that preserve the special subgroup $\G_f$ generated by the set of all finite-order vertices in $\Gamma$ and fix all infinite-order vertices.
Then $H$ can be viewed as a subgroup of $\Aut^0(\G_f)$. The abelianization of $\G_f$ is $T$, so the standard representation for $\Aut(\G_f)$ has image in $\Aut(T)$.
In particular, $D_0$ restricted to $H$ is the same map as the standard representation on $\Aut(\G_f)$ restricted to $H$.
By Proposition~\ref{prop:torelli gp finite}, the subgroup of $\Aut(\G_f)$ generated by graph symmetries, factor automorphisms, and transvections embeds into $\Aut(T)$.
In particular, as we have just shown, $\phi'$ is such an element. Thus, $\phi'$ can be in the kernel of $D_0$, and hence of $D$, only if $\phi'=1$. Thus, $\phi\in\Aut^1_\infty(\G)$, as desired.
\end{proof}

To conclude, we have the following.

\begin{cor}\label{cor:Aut1fi}The subgroup $\Aut^1_\infty(\G)$ is finite index in $\Aut(\G)$.\end{cor}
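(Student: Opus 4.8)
The plan is simply to chain together the two facts already established in this section. First I would observe that Proposition~\ref{prop:kernel D} identifies $\Aut^1_\infty(\G)$ with $\ker D$, and that the target of the homomorphism $D$ in~\eqref{eq:defn D}, namely $\Aut(T) \times \prod_{X\in\cal{E}} \Z_2$, is a finite group, since both the graph $\Gamma$ (hence the index set $\cal{E}$) and the torsion part $T$ of the abelianization $\bar{\G}$ are finite. Consequently $\Aut^1_\infty(\G)$ is the kernel of a homomorphism from $\Aut^0(\G)$ to a finite group, so it has index at most $|\Aut(T)|\cdot 2^{|\cal{E}|}$ in $\Aut^0(\G)$; in particular this index is finite.

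Next I would invoke Corollary~\ref{cor:Aut0fi} (itself a consequence of Proposition~\ref{prop:getting graph symmetries} together with the finiteness of $\Aut(\Gamma_\le)$), which gives that $\Aut^0(\G)$ has finite index in $\Aut(\G)$. Since finite index is transitive, $[\Aut(\G):\Aut^1_\infty(\G)] = [\Aut(\G):\Aut^0(\G)]\cdot[\Aut^0(\G):\Aut^1_\infty(\G)]$ is finite, which is exactly the assertion. There is no real obstacle at this stage: all of the substantive work has been carried out in Propositions~\ref{prop:getting graph symmetries} and~\ref{prop:kernel D}, and this corollary is a one-line deduction from them.

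As a closing remark I would note that this also completes the proof of Proposition~\ref{prop:finite index subgroups}: part~(1) and part~(4) were treated above, part~(3) is the present corollary, and parts~(2) and~(5) follow from the inclusions $\Aut^1_\infty(\G) \subseteq \Aut^1(\G) \subseteq \Aut(\G)$ and $\Aut^1_\infty(\G) \subseteq \Aut^0_\infty(\G) \subseteq \Aut^0(\G)$, in each case the middle subgroup being squeezed between a finite-index subgroup of $\Aut(\G)$ and $\Aut(\G)$ itself.
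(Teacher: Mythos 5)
Your argument is correct and is exactly the paper's own route: the paper deduces the corollary from the finiteness of the target of $D$ together with Proposition~\ref{prop:kernel D} (identifying $\ker D$ with $\Aut^1_\infty(\G)$) and Corollary~\ref{cor:Aut0fi}, using transitivity of finite index. Your closing remark about how the two corollaries finish Proposition~\ref{prop:finite index subgroups} also matches the paper's stated plan.
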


\section{The amalgamated projection}\label{sec:amalg proj}

Throughout this section, we assume that $\Gamma$ is connected and is not the star of a vertex. Equivalently, $\G$ is freely indecomposable and has trivial center.

\subsection{The amalgamated restriction and projection maps}

Given a $\le$--maximal vertex $v\in \Gamma$, we can define the restriction map
\[
R_v \colon \Out^1(\cal{G}) \to \Out(\cal{G}_{\st[v]}).
\]
By choice of $v$, this is a homomorphism.

We bundle these maps together to get the amalgamated restriction map
\[
R \colon \Out^1(\cal{G}) \to \prod_{[v] \textrm{ maximal}} \Out(\cal{G}_{\st[v]}).
\]

Since automorphisms will preserve $[v]$ up to conjugacy (as $v$ is maximal),
the restriction map $R_v$ defined above may be composed with a factor map to $\lk[v]$.
This gives homomorphisms
\[
P_v \colon \Out^1(\cal{G}) \to \Out(\cal{G}_{\lk[v]}).
\]
As with the restriction maps, we can combine these to a single map.
\[
P \colon \Out^1(\cal{G}) \to \prod_{[v] \textrm{ maximal}} \Out(\cal{G}_{\lk[v]}).
\]
We call $P$ the {amalgamated projection map}.
The aim of this section is to show this has finite-rank abelian kernel.

As per \cite{CV09}, we make the following definition.

\begin{defn}
	A \emph{leaf-like} transvection is a transvection $R_{u}^{v^k} \in \Out(\cal{G})$ where $v$ is a maximal vertex and $[v]$ is the only maximal equivalence class contained in $\lk(u)$.
	
	In this situation, we say $u$ is a \emph{leaf-like} vertex, and use the same adjective for the equivalence class containing $u$.
\end{defn}

We note that it follows from this definition that if $R_u^{v^k}$ is leaf-like, then $u$ and $v$ cannot be in the same $\leq$--equivalence class.

\begin{lem}\label{lem:leaf-like maximal}
	If $R_u^{v^k}$ is a leaf-like transvection then $[v]$ is an abelian equivalence class.
\end{lem}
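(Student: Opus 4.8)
The plan is to argue by contradiction: suppose $R_u^{v^k}$ is a leaf-like transvection but $[v]$ is a free (hence non-abelian) equivalence class. By Lemma~\ref{lem:eq classes infinite or finite}, a free equivalence class consists of infinitely-order vertices and generates a non-abelian free group; in particular $[v]$ has at least two elements, say $v$ and $v'$, and $v,v'$ are non-adjacent in $\Gamma$. The key point will be to locate a second maximal equivalence class inside $\link{u}$, contradicting the defining property of ``leaf-like'' (that $[v]$ is the \emph{only} maximal equivalence class in $\link u$).

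First I would record that, since the transvection $R_u^{v^k}$ exists, we have $u\le v$, so $\link u\subseteq \st v$, and moreover $u\notin[v]$ (as noted just before the lemma). Next I would examine where $u$ sits relative to $[v]$. Because $[v]$ is a free class, all its vertices are mutually non-adjacent, so $[v]$ is not contained in $\st(w)$ for any $w\notin[v]$ that is adjacent to some vertex of $[v]$; in particular $u$ cannot be adjacent to any vertex of $[v]$ (if $u$ were adjacent to $v$, then $v\in\link u\subseteq\st v$ forces... actually one must check this carefully — the real constraint is that $\link u \subseteq \st v = v\star\link v$, and since $v\notin\link u$ we get $\link u\subseteq\link v$, and symmetrically, using $u\le v\le v$ and re-running with $v'$, one gets $\link u\subseteq\link{v'}$ as well). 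So $\link u\subseteq \link v\cap\link{v'}$.

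Now invoke Lemma~\ref{lem:infty-stars cover Gamma}(2): since $v,v'$ are non-adjacent and (I claim) $\link v\cap\link{v'}\supseteq\link u$ is non-empty — here I need $\link u\neq\emptyset$, which holds because $u\le v$ with $o(u)=\infty$... wait, $o(u)$ need not be $\infty$; but if $\link u=\emptyset$ then $u\le w$ for every $w$, making $[u]$ comparable to everything, and one handles this degenerate case directly (either $\Gamma$ is small or $u$ is isolated, contradicting $\Gamma$ connected and not a star) — Lemma~\ref{lem:infty-stars cover Gamma}(2) gives a maximal vertex $z\in\link v\cap\link{v'}$. I then need to promote this to: there is a maximal equivalence class inside $\link u$ itself, distinct from $[v]$. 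This is where I expect the main obstacle: bridging from ``maximal vertex in $\link v\cap\link{v'}$'' to ``maximal equivalence class contained in $\link u$''. I would argue that $\link u$ together with $[v]$ must generate enough of $\st v$ that some maximal class other than $[v]$ is forced into $\link u$ — concretely, $u\le v'$ too would give $\link u\subseteq\st{v'}$, and since $v'\notin\link u$, the vertex $z$ (or a maximal class containing $z$, obtained via Lemma~\ref{lem:infty-stars cover Gamma}(1)) lands in $\link u$. Since $z\in\link v$ forces $z\notin[v]$ (elements of a free class are non-adjacent), the class $[z]$ is a maximal equivalence class in $\link u$ different from $[v]$, contradicting leaf-likeness. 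The delicate verification is exactly the claim $\link u\subseteq\link{v'}$ for \emph{all} $v'\in[v]$, which rests on unwinding $u\le v$, $v\le v'$, $v'\le v$ and the non-adjacency of the free class; I would present that as a short preliminary claim before the main argument.
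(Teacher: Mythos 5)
There is a genuine gap, and it stems from a misreading of the definition of leaf-like. The definition requires that $[v]$ be the only maximal equivalence class \emph{contained in} $\lk(u)$; in particular $[v]\subseteq\lk(u)$, so $u$ is adjacent to \emph{every} vertex of $[v]$, including $v$ itself. Your argument asserts the opposite (``$u$ cannot be adjacent to any vertex of $[v]$'', and ``$v\notin\lk(u)$''), and your deduction $\lk(u)\subseteq\lk(v)\cap\lk(v')$ rests on that false premise: with $v\in\lk(u)$, the containment $\lk(u)\subseteq\st(v)$ does not yield $\lk(u)\subseteq\lk(v)$.

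Once $[v]\subseteq\lk(u)$ is used correctly, the lemma is immediate and the whole strategy of locating a second maximal class in $\lk(u)$ is unnecessary: for any $v'\in[v]$ we have $v'\in\lk(u)\subseteq\st(v)$, so $v'$ is adjacent to $v$ whenever $v'\neq v$; by (the proof of) Lemma~\ref{lem:eq classes infinite or finite} an equivalence class is either a clique or edgeless, so $[v]$ is a clique and hence abelian. This is exactly the paper's one-line proof. Even on its own terms your argument does not close: the ``promotion'' step from a maximal vertex in $\lk(v)\cap\lk(v')$ to a maximal equivalence class inside $\lk(u)$ distinct from $[v]$ is precisely the part you flag as an obstacle and never supply.
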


\begin{proof}
	If $v'\in[v]$, then $v'\in \lk(u) \subseteq \st(v)$.
\end{proof}

Our next definition comes from \cite{CV11}, where it was called a ``$\hat{v}$--component.''

\begin{defn}
	An induced subgraph $C$ of $\Gamma$ is said to be a \emph{bridged $v$--component} if $C$ is a minimal union of connected components of $\Gamma \setminus \st(v)$ such that for every $u\in C$ if $\gamma$ is an edge path in $\Gamma$ starting at $u$, containing no edges in $\st(v)$ (but possibly including vertices from it), and ending at a vertex $w$, then $w$ is also in $C$.
\end{defn}

\subsection{The extended graph}
Define $\hat{\Gamma}$ as follows (compare with the relative cone graph of \cite[Section 3.2]{DayWade}).
First add two vertices $v_1,v_2$ whose links are equal to $\Gamma$.
For each maximal equivalence class $X$ in $\Gamma$, and each non-empty subset $S\subseteq \lk X$, add a vertex 
$X_S$, whose link is $X \cup S$.
We extend $o$ to $\hat{o}$ on $\hat{\Gamma}$ by defining 
$\hat{o}(v_i)$ and $\hat{o}(X_S)$, to be distinct primes that do not divide $o(v)$ for any $v\in \Gamma$.

We will denote $\G(\Gamma,o)$ by $\G$ and $\G(\hat{\Gamma},\hat{o})$ by $\Ghat$. We also denote stars by $\st_{\hat{\Gamma}}$ or $\st_\Gamma$, depending on which graph we are working in, and similarly for links.

We claim the restriction map
$$\hat{R} \colon \Out^{1}(\Ghat) \to \Out(\G)$$
is well-defined and has image equal to $\ker(P)$.

\begin{lem}
	The restriction map $\hat{R}$ is well-defined.
\end{lem}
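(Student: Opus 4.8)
The plan is to verify that every element of $\Out^1(\Ghat)$ preserves the special subgroup $\G = \G_\Gamma$ up to conjugacy, so that the restriction map $\operatorname{Res}\colon \Out(\Ghat ; \G) \to \Out(\G)$ from Section~\ref{subsec:ResFact} can be precomposed with the inclusion $\Out^1(\Ghat) \hookrightarrow \Out(\Ghat)$. Since $\Out^1(\Ghat)$ is generated by transvections and partial conjugations of $\Ghat$, it suffices to check that each such generator has a representative automorphism of $\Ghat$ that sends $\G_\Gamma$ into itself (in fact, onto itself), and then invoke well-definedness of the restriction homomorphism on the subgroup of $\Out(\Ghat)$ preserving $\G_\Gamma$ up to conjugacy. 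The key structural fact to exploit is how the added vertices $v_1, v_2$ and $X_S$ interact with $\Gamma$: each added vertex has order a prime not dividing any $o(v)$, $v \in \Gamma$, and each has link \emph{contained in} $\Gamma$ (indeed $\st_{\hat\Gamma}(v_i) = \hat\Gamma$ and $\st_{\hat\Gamma}(X_S) = X_S \cup X \cup S$ with $X \cup S \subseteq \Gamma$).

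The main steps, in order. First, I would analyze the pre-order $\le$ (equivalently, which transvections exist) on $\hat\Gamma$. The claim is: no added vertex is the multiplier of a transvection whose support lies in $\Gamma$, and no vertex of $\Gamma$ is the support of a transvection whose multiplier is an added vertex. For the infinite-order clause of the transvection definition, an added vertex cannot be the multiplier target for $u \in \Gamma$ of infinite order unless $\lk(u) \subseteq \st(\text{added vertex})$, which forces constraints; for the finite-order clause $\st(u) \subseteq \st(v)$ with $o(v) \mid o(u)$, the coprimality of the added primes with all $o(w)$, $w \in \Gamma$, rules out divisibility in both directions between an added vertex and a $\Gamma$-vertex. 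The surviving transvections among the added vertices (if any) have both multiplier and support among added vertices, hence act trivially on $\Gamma$. So every transvection generator of $\Out^1(\Ghat)$ either fixes $\G_\Gamma$ pointwise or is a transvection internal to $\Gamma$, and in the latter case obviously preserves $\G_\Gamma$.

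Second, I would handle partial conjugations $\pi^x_C$ in $\Ghat$. If $x \in \Gamma$, then since $\G_\Gamma$ is a special subgroup and $x \in \Gamma$, the automorphism $\pi^x_C$ restricted to $\G_\Gamma$ is (a conjugate of) the partial conjugation of $\G_\Gamma$ by $x$ along $C \cap \Gamma$, so $\G_\Gamma$ is preserved. If $x$ is an added vertex, I need that conjugation by $x$ does not mix $\G_\Gamma$ out of itself: the subgraph $\hat\Gamma \setminus \st_{\hat\Gamma}(x)$ meets $\Gamma$ in some union of components, and conjugating those $\Gamma$-vertices by $x$ gives $x z x^{-1}$; but $x \notin \G_\Gamma$. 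Here I would use the right choice of representative: up to inner automorphism of $\Ghat$, a partial conjugation $\pi^x_C$ with $x$ added can be adjusted so that either $\Gamma \subseteq C \cup \st(x)$ or $\Gamma \cap C = \emptyset$ — using that $\Gamma$ is connected and that its intersection with $\hat\Gamma \setminus \st(x)$ lies in a controlled set of components (mirroring the argument of Lemma~\ref{lem:maximal eq classes not star separated} / Lemma~\ref{lem:preservation of maximal stars}). In either case there is a representative acting trivially on $\G_\Gamma$ or by a global conjugation, hence preserving $\G_\Gamma$ up to conjugacy.

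The main obstacle I anticipate is the bookkeeping for partial conjugations by added vertices, specifically checking that $\Gamma$ cannot be ``separated'' by $\st_{\hat\Gamma}(x)$ in a way that leaves part of $\G_\Gamma$ genuinely conjugated and part of it fixed — i.e.\ ruling out a SIL-type obstruction involving an added vertex. This is where connectedness of $\Gamma$ and the explicit description of the links of $v_i$ and $X_S$ (links of added vertices either contain all of $\Gamma$, as for $v_1, v_2$, or sit inside a single star $\st_\Gamma(X)$) do the real work; one shows $\Gamma$ always lies in one component of $\hat\Gamma \setminus \st_{\hat\Gamma}(x)$ together with $\st(x)$. Once that is established, the conclusion is immediate: $\Out^1(\Ghat)$ preserves $\G_\Gamma$ up to conjugacy, so $\hat R$ is the restriction of the well-defined homomorphism $\operatorname{Res}$ and is therefore itself well-defined.
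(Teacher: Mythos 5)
Your overall strategy is the paper's: check that each generating transvection and partial conjugation of $\Out^1(\Ghat)$ preserves $\G_\Gamma$. However, there is a genuine gap in the transvection step, compounded by an incorrect structural claim. You assert $\st_{\hat\Gamma}(v_i)=\hat\Gamma$; in fact $\st_{\hat\Gamma}(v_1)=\{v_1\}\cup\Gamma$, since $v_2$ and the vertices $X_S$ are \emph{not} adjacent to $v_1$. This matters: the dangerous transvections are $R_u^{v^k}$ with $u\in\Gamma$ and $v\notin\Gamma$ (these send $u\mapsto uv^k\notin\G_\Gamma$), and both clauses of the transvection condition require $\lk_{\hat\Gamma}(u)\subseteq\st_{\hat\Gamma}(v)$. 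Your coprimality argument disposes of the finite-order clause, but for the infinite-order clause you only say the containment ``forces constraints'' --- and if $\st_{\hat\Gamma}(v_1)$ really were all of $\hat\Gamma$, the containment would hold trivially, $R_u^{v_1}$ would be a legitimate generator for any infinite-order $u\in\Gamma$, and $\hat R$ would \emph{not} be well-defined. The missing observation --- and the reason the construction adds two cone vertices rather than one --- is that $v_1,v_2\in\lk_{\hat\Gamma}(u)$ for every $u\in\Gamma$, while $v_1\notin\st_{\hat\Gamma}(v_2)$, $v_2\notin\st_{\hat\Gamma}(v_1)$, and neither is adjacent to any $X_S$; hence for every $v\notin\Gamma$ at least one of $v_1,v_2$ lies in $\lk_{\hat\Gamma}(u)\setminus\st_{\hat\Gamma}(v)$, so no such transvection exists.

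The same two-cone-vertex fact is what resolves the partial-conjugation step that you flag as your ``main obstacle'' but do not actually carry out: for $v\notin\Gamma$, if $x\in C\cap\Gamma$ and $y\in\Gamma\setminus\st_{\hat\Gamma}(v)$, then the paths $x,v_1,y$ and $x,v_2,y$ both exist in $\hat\Gamma$ and at least one avoids $\st_{\hat\Gamma}(v)$, so $y\in C$; thus either $C\cap\Gamma=\emptyset$ or $\Gamma\subseteq C\cup\st_{\hat\Gamma}(v)$, and $\G_\Gamma$ is preserved. Connectedness of $\Gamma$ is not what does the work here. Your plan would go through once these points are made explicit and the claim about $\st_{\hat\Gamma}(v_i)$ is corrected.
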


\begin{proof}
	We first show that the restriction $\hat{R}$ is well-defined by checking that each generator of $\Out^1(\Ghat)$ preserves $\G$.
	
	First consider a transvection $R_u^{v^k}$. 
	If $u,v\in \Gamma$, or $u \notin \Gamma$, then $\cal{G}$ is preserved.
	The other case, with $u\in \Gamma$ and $v\notin \Gamma$, is not possible. 
	Indeed, for $u\in \Gamma$, we will have $v_1,v_2 \in \lk(u)$, but $v_i \notin \st(v)$ unless $v=v_i$.
	Thus all transvections preserve $\cal{G}$.
	
	Now consider a partial conjugation $\pi^v_C$.
	If $v\in \Gamma$, then $\cal{G}$ is preserved.
	So assume $v \notin \Gamma$.
	We claim either $C \cap \Gamma = \emptyset$, or $\Gamma \subseteq \st(v) \cup C$.
	Indeed, if $x\in C\cap\Gamma$, and $y\in \Gamma\setminus \st(v)$, then $x$ is connected to $y$ via $v_1$ and via $v_2$, at least one of which is outside $\st(v)$. So $y\in C$ also.
	Thus partial conjugations preserve $\cal{G}$.
\end{proof}

We now analyze how the partial order on $\Gamma$ differs to that on $\hat{\Gamma}$.
The main consequence is that only leaf-like transvections remain in the automorphism group for the extended graph. In particular, we also see that the equivalence classes in $\Gamma$ are dismantled to give equivalence classes in $\hat{\Gamma}$ that consist of only a single vertex.

For mathematical, if not aesthetic, clarity, we denote the partial orders $\leq$ and $\leinf$ on $\hat{\Gamma}$ by $\lehat$ and $\leinfhat$.

\begin{lem}\label{lem:preorder on extended graph}
Suppose $\Gamma$ is connected and is not the star of a vertex.
	Let $u,v$ be vertices in ${\Gamma}$.
	Then $u \lehatcomp  v$ if and only if there exists $k\in \N$ such that $R_u^{v^k}$ is leaf-like.
\end{lem}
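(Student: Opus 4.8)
The plan is to prove both implications by directly comparing the condition ``$\lk_\Gamma(u) \subseteq \st_\Gamma(v)$'' with the condition ``$\lk_{\hat\Gamma}(u) \subseteq \st_{\hat\Gamma}(v)$,'' keeping careful track of the new vertices $v_1, v_2$ and $X_S$, and of the orders $\hat o$. Since $o$ is not changed on $\Gamma$, the divisibility clauses in the definition of a transvection are unaffected for $u,v \in \Gamma$; the only thing that can change is whether the link/star containment still holds after passing to $\hat\Gamma$. So the heart of the argument is the following equivalence: for $u,v \in \Gamma$, we have $u \lehatcomp v$ if and only if $u \leq v$ in $\Gamma$ \emph{and} $[v]$ is the unique maximal equivalence class of $\Gamma$ contained in $\lk_\Gamma(u)$ — which, unwinding the definition of leaf-like, is exactly what we want to prove.

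**Key steps.** First I would observe that for $u \in \Gamma$, the vertices $v_1, v_2$ always lie in $\lk_{\hat\Gamma}(u)$ (their links are all of $\Gamma$), and $v_i \in \st_{\hat\Gamma}(v)$ only when $v = v_i$; since $v \in \Gamma$, we conclude $v_1, v_2 \in \lk_{\hat\Gamma}(u) \setminus \st_{\hat\Gamma}(v)$ is \emph{impossible} only because $v_i \in \lk_{\hat\Gamma}(v)$ — wait, indeed $v_i$ is adjacent to every vertex of $\Gamma$, so $v_i \in \lk_{\hat\Gamma}(v) \subseteq \st_{\hat\Gamma}(v)$. So the $v_i$ cause no obstruction; this is the expected and reassuring part. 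The real content is the $X_S$ vertices. Fix a maximal equivalence class $X$ of $\Gamma$ and $\emptyset \neq S \subseteq \lk_\Gamma X$; then $\lk_{\hat\Gamma}(X_S) = X \cup S$. So $X_S \in \lk_{\hat\Gamma}(u)$ iff $u$ is adjacent in $\hat\Gamma$ to everything in $X \cup S$, i.e. $X \cup S \subseteq \lk_\Gamma(u)$; and $X_S \in \st_{\hat\Gamma}(v)$ iff $X_S = v$ (impossible, $v \in \Gamma$) or $X_S \in \lk_{\hat\Gamma}(v)$, i.e. $X \cup S \subseteq \lk_\Gamma(v)$. Thus $X_S$ obstructs $u \lehatcomp v$ precisely when there is a maximal class $X \subseteq \lk_\Gamma(u)$ and a nonempty $S \subseteq \lk_\Gamma X$ with $X \cup S \subseteq \lk_\Gamma(u)$ but $X \cup S \not\subseteq \lk_\Gamma(v)$.

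**Assembling the two directions.** For the forward direction, suppose $u \lehatcomp v$. Restricting the link containment in $\hat\Gamma$ to vertices of $\Gamma$ gives $\lk_\Gamma(u) \subseteq \st_\Gamma(v)$, so $u \leq v$ (the order condition on $o$ being automatic since orders on $\Gamma$ are unchanged, modulo noting $R_u^{v^k}$ exists for the appropriate $k$). It remains to show $[v]$ is the only maximal equivalence class in $\lk_\Gamma(u)$, i.e. $R_u^{v^k}$ is leaf-like. Suppose not: either $[v] \not\subseteq \lk_\Gamma(u)$ (but $v \leq \cdots$ forces... actually $u \leq v$ does not put $v \in \lk u$; I must be careful — the definition of leaf-like asks that $[v]$ be the \emph{only} maximal class \emph{contained in} $\lk(u)$, and Lemma~\ref{lem:infty-stars cover Gamma}(1) guarantees $\lk(u)$ does contain a maximal class), or there is a maximal class $Y \neq [v]$ with $Y \subseteq \lk_\Gamma(u)$. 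Take $S$ to be a single vertex of $Y$ (if $Y = [v]$ handle separately) — here I would use Lemma~\ref{lem:maximal eq classes not star separated} and the structure of maximal classes to produce a vertex witnessing $Y \cup \{s\} \subseteq \lk_\Gamma(u)$ but $\not\subseteq \lk_\Gamma(v)$, giving an $X_S$ that obstructs $u \lehatcomp v$, a contradiction. For the converse, suppose $R_u^{v^k}$ is leaf-like; then $\lk_\Gamma(u) \subseteq \st_\Gamma(v)$ and $[v]$ is the unique maximal class in $\lk_\Gamma(u)$. I need: no $X_S$ obstructs, i.e. for every maximal $X \subseteq \lk_\Gamma(u)$ (so $X = [v]$) and every $\emptyset \neq S \subseteq \lk_\Gamma X$ with $X \cup S \subseteq \lk_\Gamma(u)$, also $X \cup S \subseteq \lk_\Gamma(v)$. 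Since $X = [v]$ and $v \leq v$, clearly $X \subseteq \st_\Gamma(v)$; for $S$, each $s \in S$ lies in $\lk_\Gamma(u) \subseteq \st_\Gamma(v)$, and I'd argue $s \in \lk_\Gamma(v)$ (not merely $\st$) because $s \in \lk_\Gamma X$ and $X \cup S \subseteq \lk_\Gamma(u)$ with $[v]$ maximal forces $s$ adjacent to $v$ — this is the delicate point and I expect it to use maximality of $[v]$ together with $s \in \lk X \ni v$.

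**Main obstacle.** The hard part will be the bookkeeping in the converse direction: ruling out \emph{all} the $X_S$ obstructions when the transvection is leaf-like, specifically showing that a vertex $s$ lying in $\lk_\Gamma X$ and in $\lk_\Gamma(u)$ must actually be adjacent to $v$ and not merely in its star. I anticipate this reduces to: if $s$ were equal to $v$ or non-adjacent-but-in-$\st(v)$, then combining $s \in \lk_\Gamma(u)$ with the hypotheses would produce a second maximal equivalence class in $\lk_\Gamma(u)$ distinct from $[v]$ (using Lemma~\ref{lem:infty-stars cover Gamma} applied inside $\lk(u)$, or Lemma~\ref{lem:maximal eq classes not star separated}), contradicting leaf-likeness. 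Once that case analysis is pinned down, both implications follow by the link/star comparison sketched above.
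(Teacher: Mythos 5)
The skeleton of your argument — split into the two implications, observe that $v_1,v_2$ never obstruct, and reduce everything to the new vertices $X_S$ — is exactly the paper's. But there is a genuine error at the central step: you mischaracterize adjacency to the vertices $X_S$. The construction declares $\lk_{\hat{\Gamma}}(X_S)=X\cup S$, meaning the \emph{neighbours of $X_S$} are precisely the vertices of $X\cup S$; hence $X_S\in\lk_{\hat{\Gamma}}(u)$ if and only if $u\in X\cup S$, and $X_S\in\st_{\hat{\Gamma}}(v)$ if and only if $v\in X\cup S$. You instead assert that $X_S\in\lk_{\hat{\Gamma}}(u)$ iff $X\cup S\subseteq\lk_\Gamma(u)$, replacing membership by containment. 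Consequently your obstruction criterion (``$X\cup S\subseteq\lk_\Gamma(u)$ but $X\cup S\not\subseteq\lk_\Gamma(v)$'') is not the right one, and both halves of the argument inherit the problem.

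In the forward direction the correct witness, once one has a maximal equivalence class $X\subseteq\lk_\Gamma(u)$ with $v\notin X$, is simply $X_{\{u\}}$: it is adjacent to $u$ because $u\in\{u\}=S$, and not adjacent to $v$ because $v\notin X\cup\{u\}$. Your search for a vertex $s$ with $Y\cup\{s\}\subseteq\lk_\Gamma(u)$ is unnecessary and, as sketched, not obviously completable (note also that $S$ must be a subset of $\lk_\Gamma X$, not of $X$, so ``take $S$ to be a single vertex of $Y$'' does not parse). In the reverse direction your criterion is self-defeating: with $X=[v]$ you would need $X\cup S\subseteq\lk_\Gamma(v)$, which is impossible since $v\in X$ but $v\notin\lk_\Gamma(v)$ — this is what forces you into the ``delicate point'' about upgrading $s\in\st(v)$ to $s\in\lk(v)$, which is a red herring. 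With the correct membership condition the reverse direction is short: if $X_S$ is adjacent to $u$ then $u\in X\cup S$; leaf-likeness rules out $u\in X$ (else $u$ would be maximal and equivalent to $v$, forcing $u\in\lk(u)$), so $u\in S\subseteq\lk_\Gamma X$, whence $X\subseteq\lk_\Gamma(u)$, so $X=[v]$ and therefore $v\in X\cup S$, i.e.\ $X_S\in\st_{\hat{\Gamma}}(v)$.
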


\begin{proof}
	It is clear that a necessary condition for $u \lehatcomp v$ is $u\leq v$.
	Suppose $u\leq v$ but $R_u^{v^k}$ is not leaf-like. 
	This implies that either $u$ and $v$ are non-adjacent, $v$ is not a maximal vertex, $u$ and $v$ are equivalent, or $u$ is adjacent to a maximal equivalence class $X\not\ni v$.
	We will see that in each case, as is immediate in the last case, there is an equivalence class $X$ in $\lk_{\hat{\Gamma}}(u)$ that does not contain $v$.
	Then the vertex $X_{\{u\}}$ witnesses $u \nlehatcomp v$.
	
	Firstly, if $v$ is not adjacent to $u$, then by Lemma~\ref{lem:infty-stars cover Gamma} (2) there is a maximal equivalence class $X$ in $\lk_\Gamma(u)\cap\lk_\Gamma(v)$.
	Next, if $v$ is adjacent to $u$, but it is not maximal, then any maximal equivalence class $X$ dominating $v$ must also be in the star of $u$. 
	If $v$ and $u$ are adjacent, and both maximal, then they must be in the same equivalence class, since $v$ dominates $u$.
	By Lemma~\ref{lem:infty-stars cover Gamma} (1), there is a maximal equivalence class $X$ in $\lk_\Gamma([u])$.
	
	On the other hand, suppose $R_u^{v^k}$ is leaf-like.
	We want to check that any new vertex in $\hat{\Gamma}$ that is adjacent to $u$ is also adjacent to $v$.
	This is certainly true for $v_1$ and $v_2$.
	So suppose that $X_S$ is a new vertex and $u \in \lk_{\hat{\Gamma}}(X_S)$.
	By the definition of leaf-like, we cannot have $u\in X$.
	Thus, $X$ is contained in $\lk_\Gamma(u)$, and since $R_u^{v^k}$ is leaf-like, we must have $v\in X$.
	Hence $v$ is also in $\lk_{\hat{\Gamma}}(X_S)$.
\end{proof}

\begin{rem}\label{rem:equivclass breakup} Suppose that $x_1,x_2\in \Gamma$ with $x_1,x_2$ in the same $\le$--equivalence class, $X$. Then there exists a maximal $\le$--equivalence class, $Y$ in $\lk_\Gamma(X)$ by Lemma~\ref{lem:infty-stars cover Gamma}. Thus, in $\hat\Gamma$, $Y_{\{x_1\}}\in\lk_{\hat\Gamma}(x_1)\setminus\lk_{\hat\Gamma}(x_2)$ and $Y_{\{x_2\}}\in\lk_{\hat\Gamma}(x_2)\setminus\lk_{\hat\Gamma}(x_1)$. Thus, $x_1$ and $x_2$ are not equivalent under $\lehat$.\end{rem}

We now complete our look at $\hat\leq$ by considering vertices not in $\Gamma$.

\begin{lem}\label{lem:nodomination}
	Suppose $\Gamma$ is connected and is not the star of a vertex.
	Let $v\not\in\Gamma$. Then $v$ does not dominate any vertex in $\Gamma$.
 That is, there is no vertex $u\in \Gamma$ such that $u\ \lehat\ v$.
\end{lem}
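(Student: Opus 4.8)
The plan is to reduce the statement to the failure of a link-containment in $\hat{\Gamma}$. Recall that if $u\lehatcomp v$ then the transvection $R_u^{v^k}$ lies in $\Aut(\Ghat)$ for some $k$, and whichever of the two defining conditions for a transvection holds, it forces $\lk_{\hat{\Gamma}}(u)\subseteq\st_{\hat{\Gamma}}(v)$ (in case (1) directly, in case (2) via $\lk_{\hat{\Gamma}}(u)\subseteq\st_{\hat{\Gamma}}(u)\subseteq\st_{\hat{\Gamma}}(v)$). So it is enough to show that for every $u\in\Gamma$ and every vertex $v$ of $\hat{\Gamma}$ not lying in $\Gamma$, there is a vertex of $\hat{\Gamma}$ in $\lk_{\hat{\Gamma}}(u)$ but not in $\st_{\hat{\Gamma}}(v)$.

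First I would record the consequences of the construction of $\hat{\Gamma}$ that we need: the two new vertices $v_1,v_2$ satisfy $\lk_{\hat{\Gamma}}(v_1)=\lk_{\hat{\Gamma}}(v_2)=\Gamma$, so every $u\in\Gamma$ is adjacent in $\hat{\Gamma}$ to both $v_1$ and $v_2$; and every remaining new vertex has the form $X_S$ with $\lk_{\hat{\Gamma}}(X_S)=X\cup S$, where $X$ is a maximal equivalence class and $\emptyset\neq S\subseteq\lk_\Gamma X$, so $\st_{\hat{\Gamma}}(X_S)=\{X_S\}\cup X\cup S\subseteq\{X_S\}\cup\Gamma$. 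Then the argument is a short case check. If $v=v_i$ for $i\in\{1,2\}$, the other new vertex $v_j$ lies in $\lk_{\hat{\Gamma}}(u)$ but, being distinct from $v_i$ and not a vertex of $\Gamma$, it does not lie in $\st_{\hat{\Gamma}}(v_i)=\{v_i\}\cup\Gamma$. If $v=X_S$, then $v_1$ lies in $\lk_{\hat{\Gamma}}(u)$ but not in $\st_{\hat{\Gamma}}(X_S)\subseteq\{X_S\}\cup\Gamma$, since $v_1\neq X_S$ and $v_1\notin\Gamma$. In every case $\lk_{\hat{\Gamma}}(u)\not\subseteq\st_{\hat{\Gamma}}(v)$, hence $u\nlehatcomp v$; as $u\in\Gamma$ and $v\notin\Gamma$ were arbitrary, this proves the lemma.

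I do not anticipate a real obstacle: all the content is unpacking the definition of $\hat{\Gamma}$ and of the preorder $\lehat$, together with the standard fact that domination implies the link-is-in-star condition. The only points needing care are that the case list ($v_1$, $v_2$, and the vertices $X_S$) is exhaustive, and that the star of each new vertex stays inside $\{v\}\cup\Gamma$. One could alternatively observe that since $\hat{o}$ assigns the new vertices finite prime orders dividing no $o(w)$ with $w\in\Gamma$, domination into a new vertex by a finite-order vertex of $\Gamma$ already fails on order grounds; but the link argument handles every case uniformly and is cleaner, so that is the route I would take.
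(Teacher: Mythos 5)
Your proof is correct, and for the vertices $X_S$ it takes a genuinely shorter route than the paper's. Both arguments rest on the same necessary condition, stated explicitly in Section~\ref{sec:preorder}, that $u \lehatcomp v$ forces $\lk_{\hat\Gamma}(u)\subseteq\st_{\hat\Gamma}(v)$, and your treatment of the case $v\in\{v_1,v_2\}$ is identical to the paper's. The difference is in the case $v=X_S$: the paper chooses $X_{\{u\}}$ (for $X$ a maximal equivalence class in $\lk_\Gamma(u)$, supplied by Lemma~\ref{lem:infty-stars cover Gamma}) as the vertex of $\lk_{\hat\Gamma}(u)$ to test against $\st_{\hat\Gamma}(v)$, which does not immediately fail --- it forces $v=X_{\{u\}}$ --- so the paper must then bring in the vertices $X_{S'}$ for $\{u\}\subsetneq S'$ and derive $\Gamma=X\star\{u\}$, contradicting the standing hypothesis that $\Gamma$ is connected and not a star. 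You instead use $v_1$ as a universal witness: since the vertices of $\hat\Gamma\setminus\Gamma$ are pairwise non-adjacent, $v_1$ lies in $\lk_{\hat\Gamma}(u)$ for every $u\in\Gamma$ but never in $\st_{\hat\Gamma}(v)$ for $v\in\hat\Gamma\setminus(\Gamma\cup\{v_1\})$, so the containment fails at once. This buys a two-line case and, notably, makes no use of the hypothesis that $\Gamma$ is connected and not a star, whereas the paper's route uses it twice (once to invoke Lemma~\ref{lem:infty-stars cover Gamma} and once for the final contradiction). Your closing remark about the order function $\hat{o}$ is also accurate as far as it goes: it disposes only of finite-order $u$ via condition (2) for transvections, so the link argument is indeed the one that covers all cases.
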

\begin{proof}
	First suppose that $v\in\{v_1, v_2\}$. For any $u\in \Gamma$, $v_1, v_2\in\st(u)$, but $v_i\not\in\st(v_j)$ for $i\neq j$. Hence, $v$ does not dominate $u$.
	Now suppose that $v\not\in\{v_1, v_2\}\cup \Gamma$. Let $u\in\Gamma$ be such that $u\ \lehat\ v$. Then $\lk_{\hat\Gamma}(u)\subseteq\st_{\hat\Gamma}(v)$. Let $X$ be a $\le$--maximal equivalence class in $\lk_\Gamma(u)$, then $X$ is in the star of $v$, and further $v=Y_S$ for some maximal equivalence class $Y$ in $\Gamma$ and $\{u\}\cup X\subseteq Y\cup S= \lk_\Gamma(v)$.
	However, $X_{\{u\}}\in \lk_{\hat{\Gamma}} (u)$, and thus since $X_{\{u\}}$ is not adjacent to any vertex in $\hat\Gamma\setminus\Gamma$, we must have that $X=Y$ and $S=\{u\}$.
	Further, if $\{u\}\subsetneq S'\subseteq \lk_\Gamma(Y)$, then $X_{\{u\}}$ and $X_{S'}$ are non-adjacent. But $u\in S'$ and so $X_{S'}\in\lk_{\hat\Gamma}(u)\subseteq\st_{\hat\Gamma}(v)$. Thus, $\lk_\Gamma(X)=\{u\}$ and $\Gamma = X\star \{u\}$. This is a contradiction, since $\Gamma$ is connected and not the star of a single vertex.
\end{proof}
	
\begin{lem}\label{lem:hat eq singletons}
	Suppose $\Gamma$ is connected and is not the star of a vertex.
	Every $\lehat$--equivalence class in $\hat{\Gamma}$ contains a single vertex.
\end{lem}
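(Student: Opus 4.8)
The plan is to split into cases according to whether the vertices of a $\lehat$--equivalence class $X$ lie in $\Gamma$ or in $\hat\Gamma\setminus\Gamma$, and to show in each case that $X$ cannot contain two distinct vertices. The main tools are the three preceding lemmas: Lemma~\ref{lem:preorder on extended graph} (domination in $\hat\Gamma$ between vertices of $\Gamma$ forces a leaf-like transvection), Remark~\ref{rem:equivclass breakup} (two vertices in the same $\le$--class of $\Gamma$ are no longer equivalent in $\hat\Gamma$), and Lemma~\ref{lem:nodomination} (no vertex outside $\Gamma$ dominates any vertex of $\Gamma$). These already almost completely handle the mixed and the ``both in $\Gamma$'' situations, so the remaining work is the case where both vertices are new vertices of $\hat\Gamma$.

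First I would observe that an equivalence class cannot contain one vertex $u\in\Gamma$ and one vertex $v\notin\Gamma$: this would require $u\lehat v$, which is impossible by Lemma~\ref{lem:nodomination}. Next, suppose $u,v\in\Gamma$ are $\lehat$--equivalent. Then in particular $u\lehat v$ and $v\lehat u$, so by Lemma~\ref{lem:preorder on extended graph} both $R_u^{v^k}$ and $R_v^{u^j}$ are leaf-like transvections; in particular $u\le v$ and $v\le u$ in $\Gamma$, so $u$ and $v$ lie in the same $\le$--equivalence class of $\Gamma$. But Remark~\ref{rem:equivclass breakup} says that two distinct vertices in a common $\le$--class of $\Gamma$ are not $\lehat$--equivalent, a contradiction. (One should also note that a leaf-like transvection $R_u^{v^k}$ forces $u$ and $v$ to be in distinct $\le$--classes, per the remark following the definition of leaf-like, which already gives the contradiction directly.)

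The remaining, and main, case is $u,v\in\hat\Gamma\setminus\Gamma$ with $u\lehat v$ and $v\lehat u$, i.e.\ $\st_{\hat\Gamma}(u)\subseteq\st_{\hat\Gamma}(v)$ and vice versa, hence $\st_{\hat\Gamma}(u)=\st_{\hat\Gamma}(v)$. Here I would use the explicit structure of the new vertices. If $u=v_i$ for $i\in\{1,2\}$, then $\lk_{\hat\Gamma}(v_i)=\Gamma$, which does not contain $v_j$ for $j\neq i$ nor any $X_S$; so $v$ would have to satisfy $\lk_{\hat\Gamma}(v)=\Gamma$ as well, forcing $v=v_i=u$ (a vertex $X_S$ has link $X\cup S\subsetneq\Gamma$, and $v_j$ has link $\Gamma$ but $v_j\notin\lk_{\hat\Gamma}(v_i)$ while each would need to be in the other's star). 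If instead $u=X_S$ and $v=Y_{S'}$, then $\lk_{\hat\Gamma}(u)=X\cup S$ and $\lk_{\hat\Gamma}(v)=Y\cup S'$, both subsets of $\Gamma$; equality of stars forces $X\cup S=Y\cup S'$ (note $u,v\notin\Gamma$, so their links coincide exactly with their stars intersected with $\Gamma$, and neither $u$ nor $v$ is adjacent to any new vertex). Since $X$ and $Y$ are maximal $\le$--equivalence classes of $\Gamma$ and $S\subseteq\lk_\Gamma X$, $S'\subseteq\lk_\Gamma Y$, I would argue that $X=Y$: a maximal class is characterized inside $X\cup S$ as (an appropriate union of) vertices not dominated by anything in the set, or more directly, if $x\in X$ then every vertex of $X\cup S=Y\cup S'$ lies in $\st_\Gamma(x)$, so $X\cup S\subseteq\st_\Gamma(x)$, and since $Y$ is maximal with $Y\subseteq X\cup S\subseteq\st_\Gamma(x)$ and $x\in X\cup S=Y\cup S'$ we get $x\in\st_\Gamma(y)$ for all $y\in Y$, forcing $x\ge y$ hence $x\in Y$ by maximality; symmetrically $X=Y$. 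Then $S=S'$ follows from $X\cup S=X\cup S'$ together with $S\cap X=S'\cap X=\emptyset$ (as $S\subseteq\lk_\Gamma X$ and a vertex of $X$ is not in its own link), so $u=v$.

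I expect the last case to be the main obstacle, since it requires carefully extracting the maximal $\le$--class $X$ back out of the set $X\cup S$ and ruling out the possibility that a different maximal class $Y$ together with some $S'$ produces the same union; the hypothesis that $\Gamma$ is connected and not a star of a vertex may be needed implicitly to ensure the maximal classes behave well (e.g.\ via Lemma~\ref{lem:infty-stars cover Gamma}), though the argument sketched above is largely combinatorial once one knows that $X$ is recoverable from $X\cup S$ as the unique $\le$--maximal class it contains.
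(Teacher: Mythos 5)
Your first two cases (a class meeting both $\Gamma$ and its complement, and a class containing two vertices of $\Gamma$) are correct and match the paper's proof, which invokes Lemma~\ref{lem:nodomination} and Remark~\ref{rem:equivclass breakup} exactly as you do. The problem is the last case, two new vertices, which the paper disposes of in one line: the vertices of $\hat\Gamma\setminus\Gamma$ are assigned \emph{distinct prime} orders under $\hat{o}$, and since $u\ \lehat\ v$ for a finite-order $u$ requires $\hat{o}(v)\mid\hat{o}(u)$ (only the second condition in the definition of transvections can apply), no new vertex dominates any other. You discard the order condition and attempt a purely graphical argument, and two of its steps are false. First, the claim that $\lk_{\hat\Gamma}(X_S)=X\cup S\subsetneq\Gamma$ fails: if $\Gamma$ is a $4$--cycle on $a,b,c,d$ with all orders infinite, then $X=\{a,c\}$ is a maximal class with $\lk_\Gamma X=\{b,d\}$, and $X_{\{b,d\}}$ has link equal to all of $\Gamma$, just like $v_1$ and $v_2$. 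Second, $X\cup S=Y\cup S'$ does not force $X=Y$: on the path $c-a-b-d$ (all orders infinite), $\{a\}$ and $\{b\}$ are distinct maximal classes, and the new vertices $\{a\}_{\{b\}}$ and $\{b\}_{\{a\}}$ have the same link $\{a,b\}$. Your justification of that step (``every vertex of $X\cup S$ lies in $\st_\Gamma(x)$ for $x\in X$'') also fails whenever $X$ is a free class with more than one vertex, since then $X\not\subseteq\st_\Gamma(x)$.

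The case is nevertheless salvageable without the prime-order observation, using a fact you state in passing but do not exploit: since new vertices have finite order, $u\ \lehat\ v$ requires $\st_{\hat\Gamma}(u)\subseteq\st_{\hat\Gamma}(v)$ and in particular $u\in\st_{\hat\Gamma}(v)$; but no two vertices of $\hat\Gamma\setminus\Gamma$ are adjacent, so this already forces $u=v$, with no need to compare the sets $X\cup S$ and $Y\cup S'$ at all. Either this or the paper's order argument should replace your link-set computation.
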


\begin{proof}
	Let $X$ be an equivalence class in $\hat{\Gamma}$.
	By Remark~\ref{rem:equivclass breakup}, $X$ can contain at most one element from $\Gamma$.
	By Lemma~\ref{lem:nodomination}, if $X$ contains a vertex from $\Gamma$ then it cannot contain vertex not in $\Gamma$. 
	So if $X\cap \Gamma \neq \emptyset$, then $X$ is a singleton.
	
	On the other hand, if $X$ consists only of new vertices, then by the choice of $\hat{o}(v)$ as distinct primes for vertices $v\in\hat{\Gamma}\setminus \Gamma$, $X$ must once again be a singleton.
\end{proof}

We now study partial conjugations in $\Out^1(\Ghat)$.
The following result shows that all such partial conjugations restrict to $\G$ to give partial conjugations with a bridged component as its support.

\begin{lem}\label{lem:PC in Rhat image}
	Suppose $\Gamma$ is connected and is not the star of a vertex.
	Let $v \in \hat{\Gamma}$ and ${C}$ be a subset of ${\Gamma}$.
	Then $C$ is a bridged $v$--component in $\Gamma$ if and only if $C=\Gamma\cap\hat{C}$ for some connected component $\hat{C}$ of $\hat{\Gamma}\setminus\st_{\hat{\Gamma}}(v)$.
\end{lem}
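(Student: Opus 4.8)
The statement is a set-theoretic equivalence about which subsets of $\Gamma$ arise as (intersections with $\Gamma$ of) connected components of $\hat\Gamma \setminus \st_{\hat\Gamma}(v)$, and the key feature of $\hat\Gamma$ making this work is the pair of new vertices $v_1,v_2$ adjacent to all of $\Gamma$. I would split into the two cases $v \in \Gamma$ and $v \notin \Gamma$, and handle each direction of the iff in turn; the real content is the forward direction when $v \in \Gamma$.

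\emph{Case $v \in \Gamma$.} First observe that $\st_{\hat\Gamma}(v) \cap \Gamma = \st_\Gamma(v)$, since the only new vertices that could lie in $\lk_{\hat\Gamma}(v)$ are those whose link contains $v$, namely $v_1,v_2$ (always) and any $X_S$ with $v \in X\cup S$. This means $\hat\Gamma \setminus \st_{\hat\Gamma}(v)$ contains all of $\Gamma\setminus\st_\Gamma(v)$ together with possibly some new vertices, and the new vertices $v_1,v_2$ are \emph{not} among them. For the direction ($\Leftarrow$): if $\hat C$ is a connected component of $\hat\Gamma\setminus\st_{\hat\Gamma}(v)$ and $C = \Gamma \cap \hat C$, I need to show $C$ is a bridged $v$--component; this amounts to checking the path-closure condition in the definition, and the point is that any edge-path in $\Gamma$ from $u \in C$ avoiding edges of $\st_\Gamma(v)$ (but passing through vertices of $\st_\Gamma(v)$) can be promoted to a path in $\hat\Gamma$ avoiding $\st_{\hat\Gamma}(v)$ entirely — whenever such a path enters a vertex $w \in \st_\Gamma(v)$, replace the step through $w$ by a detour through $v_1$ or $v_2$ (at least one of which lies outside $\st_{\hat\Gamma}(v)$, since $v_1 \notin \st(v_2)$ and vice versa, and both are adjacent to everything in $\Gamma$). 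Minimality of $C$ as a union of components of $\Gamma\setminus\st_\Gamma(v)$ follows because $\hat C$ is connected, so $C$ cannot be split. For the direction ($\Rightarrow$): given a bridged $v$--component $C$, I claim the set $\hat C$ of all vertices of $\hat\Gamma\setminus\st_{\hat\Gamma}(v)$ connected to $C$ in $\hat\Gamma\setminus\st_{\hat\Gamma}(v)$ satisfies $\Gamma \cap \hat C = C$: the inclusion $C \subseteq \Gamma\cap\hat C$ is clear, and conversely if $u \in \Gamma \cap \hat C$ there is a path in $\hat\Gamma\setminus\st_{\hat\Gamma}(v)$ from $C$ to $u$, which projects (deleting new vertices and concatenating) to an edge-path in $\Gamma$ from $C$ to $u$ avoiding edges of $\st_\Gamma(v)$, so $u \in C$ by the bridging condition. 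One also checks $\hat C$ is actually a full connected component (not a proper subset of one), again using that new vertices $X_S$ have link $X\cup S\subseteq\Gamma$ so they only connect to $\Gamma$-vertices.

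\emph{Case $v \notin \Gamma$.} Here $v$ is either $v_1$, $v_2$, or some $X_S$. If $v \in \{v_1,v_2\}$ then $\lk_{\hat\Gamma}(v) \supseteq \Gamma$, so $\st_{\hat\Gamma}(v) \cap \Gamma = \Gamma$ and there are no connected components of $\hat\Gamma\setminus\st_{\hat\Gamma}(v)$ meeting $\Gamma$; correspondingly there is no vertex of $\Gamma\setminus\st_\Gamma(v)$ at all, so there are no bridged $v$--components in $\Gamma$, and both sides of the iff are vacuous. If $v = X_S$, then $\lk_{\hat\Gamma}(v) = X\cup S \subseteq \Gamma$, and also $v_1,v_2 \in \lk_{\hat\Gamma}(v)$; so $\hat\Gamma \setminus \st_{\hat\Gamma}(v)$ contains neither $v_1$ nor $v_2$. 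Now I use the argument already appearing in the proof of well-definedness of $\hat R$: since $v_1,v_2$ are adjacent to every vertex of $\Gamma$ and lie in $\st_{\hat\Gamma}(v)$, any $x \in \Gamma \setminus \st_\Gamma(v)$ is joined to every other $y \in \Gamma\setminus\st_\Gamma(v)$ via $v_1$ (or $v_2$), hence all of $\Gamma\setminus\st_\Gamma(v)$ lies in a single component $\hat C$ of $\hat\Gamma\setminus\st_{\hat\Gamma}(v)$. So $\Gamma\cap\hat C = \Gamma\setminus\st_\Gamma(v)$, which is indeed the unique bridged $v$--component (the bridging condition is automatic once there is only one component, and vertices of $\st_\Gamma(v)$ are never added). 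This matches the right-hand side, giving the iff.

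\emph{Main obstacle.} The routine-but-delicate part is the forward direction of Case $v\in\Gamma$: verifying carefully that $\Gamma\cap\hat C = C$ for the component $\hat C$ determined by $C$, i.e. that no vertex outside $C$ gets pulled in through a detour via new vertices, and simultaneously that $\hat C$ really is a whole component rather than a union-of-components artifact. This requires knowing precisely which new vertices survive in $\hat\Gamma\setminus\st_{\hat\Gamma}(v)$ and how they connect — new vertices $X_S$ only attach to $\Gamma$, so they cannot create shortcuts between distinct bridged components, while $v_1,v_2$ are removed whenever $v\in\Gamma$ (since $v\in\lk(v_i)$) so they also create no new connections for $v\in\Gamma$. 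Pinning down these adjacency bookkeeping facts, together with the definition of "bridged $v$--component," is where the work lies; everything else is a translation between paths in $\Gamma$ and paths in $\hat\Gamma$.
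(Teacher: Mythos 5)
There is a genuine gap at the heart of your Case $v\in\Gamma$. In the ($\Leftarrow$) direction you promote a $\Gamma$--path avoiding edges of $\st_\Gamma(v)$ to a path in $\hat\Gamma\setminus\st_{\hat\Gamma}(v)$ by detouring through $v_1$ or $v_2$, ``at least one of which lies outside $\st_{\hat\Gamma}(v)$.'' But when $v\in\Gamma$, both $v_1$ and $v_2$ are adjacent to $v$ (their links are all of $\Gamma$), so both lie in $\st_{\hat\Gamma}(v)$ --- as you yourself observe a few sentences earlier. The justification you offer ($v_1\notin\st(v_2)$) is only relevant when $v\in\{v_1,v_2\}$. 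This is not a repairable slip: if detours through $v_1$ worked, then $\hat\Gamma\setminus\st_{\hat\Gamma}(v)$ would be connected for every $v\in\Gamma$, every partial conjugation would restrict to an inner automorphism, and the hypothesis that $\Gamma$ is connected and not a star (which your argument never uses) would be superfluous. The correct bridges are the vertices $X_S$: given a bridging path in $\Gamma$ passing through $p_i\in\st_\Gamma(v)$ with $p_{i-1},p_{i+1}\notin\st_\Gamma(v)$ non-adjacent, Lemma~\ref{lem:infty-stars cover Gamma}(2) produces a maximal vertex, hence a maximal equivalence class $Y$, in $\lk_\Gamma(p_{i-1})\cap\lk_\Gamma(p_{i+1})$, and one replaces $p_i$ by $Y_{\{p_{i-1},p_{i+1}\}}$, which lies outside $\st_{\hat\Gamma}(v)$. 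This is precisely where the standing hypothesis enters the paper's proof.

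A second, smaller gap: in your ($\Rightarrow$) direction you project a path in $\hat\Gamma\setminus\st_{\hat\Gamma}(v)$ to an edge-path in $\Gamma$ by ``deleting new vertices and concatenating.'' If the path reads $p_{j-1},X_S,p_{j+1}$ with $p_{j-1},p_{j+1}$ both in $X$ (or both in $S$), these two vertices need not be adjacent in $\Gamma$, so deletion does not yield an edge-path. The paper's proof needs Lemma~\ref{lem:maximal eq classes not star separated} here (when both lie in the maximal class $X$, that class avoids $\st_\Gamma(v)$ and sits in a single component of $\Gamma\setminus\st_\Gamma(v)$), and inserts a vertex of $X$ when both lie in $S$. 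Finally, in your case $v=X_S$ you assert $v_1,v_2\in\lk_{\hat\Gamma}(X_S)$; in fact $\lk_{\hat\Gamma}(X_S)=X\cup S\subseteq\Gamma$, so $v_1,v_2\notin\st_{\hat\Gamma}(X_S)$ --- which is, fortunately, what your subsequent use of a path through $v_1$ actually requires, but the adjacency bookkeeping should be corrected.
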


\begin{proof}
	We first show that $C=\Gamma\cap\hat{C}$ must be a bridged $v$--component. 
	Suppose that $a,b \in C$.
	There must be a path in $\hat{\Gamma}$ from $a$ to $b$ which avoids $\st_{\hat{\Gamma}}(v)$. 
	Let $p$ be such a path,
	and let $a=p_0, p_1, \ldots, p_k=b$ be the sequence of vertices along this path. 
	If $p_i\in \Gamma$ for all $i$,  then $a,b$ are in the same component of $\Gamma\setminus \st_\Gamma(v)$, and are thus in the same bridged $v$--component. 
	So, without loss of generality, let $j$ be an index so that $p_j\not\in \Gamma$. 
	Then $p_j=X_S$ for some maximal equivalence class $X$ of $\Gamma$ and some subset $S\subseteq \lk_\Gamma X$, and further $p_{j-1}, p_{j+1}\in X \cup S$.
	If both are in $X$, we must have that $p_{j-1}$ and $p_{j+1}$ are in the same component of $\Gamma\setminus \st_\Gamma(v)$ by Lemma~\ref{lem:maximal eq classes not star separated}.
	Thus, we can replace this subpath with a path in $\Gamma$ that avoids $\st_\Gamma(v)$. 
	If one is in $X$ and the other in $S$, then $p_{j-1}, p_{j+1}$ are actually adjacent in $\Gamma$,  
	so we can remove the vertex $p_j$ creating a subpath that remains in $\Gamma$. 
	If both are in $S$, we can replace $p_j$ with a vertex $x\in X$, creating a subpath that stays in $\Gamma$, but may enter $\st_\Gamma(v)$, if $X\subseteq \st_\Gamma(v)$. 
	However, $p_{j-1}, p_{j+1}\not\in\st_\Gamma(v)$, so the new subpath contains no edge in $\st_\Gamma(v)$. 
	Thus, we can replace $p$ with a path in $\Gamma$ that connects $a$ and $b$ and contains no edge in $\st_\Gamma(v)$, implying $a$ and $b$ are in the same bridged $v$--component.
	
	Now we show that every bridged $v$--component $C$ in $\Gamma$ arises this way. 
	Define $\hat{C}$ to be the union of all connected components of $\hat{\Gamma} \setminus \st_{\hat{\Gamma}}(v)$ that intersect $C$.
	The claim is that only one connected component is required.
	Suppose $a,b \in C$. There is a path from $a$ to $b$ in $\Gamma$ that contains no edges in $\st_\Gamma(v)$.
	Choose such a path $p$ of minimal length, label the vertices of $p$ by $a=p_0, p_1, \ldots, p_k=b$.
	If it contains no vertices in $\st_\Gamma(v)$ then $a,b$ are in the same connected component of $\hat{\Gamma} \setminus \st_{\hat{\Gamma}}(v)$.
	So we may assume that there is at least one vertex $p_i$ in $\st_\Gamma(v)$. 
	However, we know that both vertices adjacent to $p_i$ in $p$ are not in $\st_\Gamma(v)$, and we can assume that they are not adjacent.
	By Lemma~\ref{lem:infty-stars cover Gamma}~(2),
	we may assume that $p_i$ is maximal in $\Gamma$.
	Then we can construct a new path from $a$ to $b$ in $\hat{\Gamma}$ that avoids $\st_{\hat{\Gamma}}(v)$ by replacing any such $p_i$ with $Y_{\{p_{i-1},p_{i+1}\}}$, where $Y$ is the equivalence class of $p_i$.
\end{proof}

\begin{lem}\label{lem:Rhat into kerP}
	Suppose $\Gamma$ is connected and is not the star of a vertex.
	The image of $\hat{R}$ is contained in the kernel of $P$.
\end{lem}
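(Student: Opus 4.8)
The goal is to show that for every $\Phi \in \Out^1(\Ghat)$, its restriction $\hat R(\Phi)$ lies in $\ker(P)$, i.e.\ for each maximal equivalence class $X$ in $\Gamma$, the image of $\hat R(\Phi)$ under the factor-then-restrict map $P_{x} \colon \Out^1(\G) \to \Out(\G_{\lk_\Gamma[x]})$ (for $x \in X$) is trivial. Since $\hat R$ and $P_x$ are homomorphisms and $\Out^1(\Ghat)$ is generated by transvections and partial conjugations, it suffices to check this on each generator of $\Out^1(\Ghat)$. The key structural input is Lemma~\ref{lem:preorder on extended graph}: the only transvections surviving in $\Out^1(\Ghat)$ among vertices of $\Gamma$ are the leaf-like ones, together with transvections whose multiplier is a new vertex (which by the well-definedness lemma and Lemma~\ref{lem:nodomination} either fix $\G$ pointwise up to the relevant quotient, or have support disjoint from $\Gamma$).

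First I would handle transvections. A transvection $R_u^{v^k}$ in $\Out^1(\Ghat)$ with $u,v \in \Gamma$ is leaf-like by Lemma~\ref{lem:preorder on extended graph}, so $[v]$ is the \emph{only} maximal equivalence class contained in $\lk_\Gamma(u)$, and $u \notin [v]$. Now fix a maximal equivalence class $X$ with representative $x$; I must show $R_u^{v^k}$ dies under $\operatorname{Fact}$ to $\G_{\leq X}$ followed by $\operatorname{Res}$ to $X$, then the standard map to $\Out(\G_{\lk[x]})$ — actually the relevant composition $P_x$ lands in $\Out(\G_{\lk_\Gamma[x]})$. If $v \notin X$: since $u$ is leaf-like with unique maximal class $[v] \neq X$ in its link, $u$ is not adjacent to all of $X$, so $u \notin \lk_\Gamma X$; also the factor map to $\G_{\le X}$ kills $u$ unless $u \le x'$ for some $x' \in X$, which would force a maximal class $\geq [v]$, contradicting maximality of $X \neq [v]$. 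Either way the transvection is killed. If $v \in X$ (so $X = [v]$ is abelian and maximal by Lemma~\ref{lem:leaf-like maximal}): then $u \in \lk_\Gamma X$, and under $P_x$ the transvection would act on $\G_{\lk_\Gamma[x]}$ — but here the factor map to $\G_{\le X}$ retains $u$ (as $u \le v$), yet $u$ is not in $X$; the restriction to $X = [x]$ then kills $u$, so again the image is trivial. Transvections with multiplier a new vertex preserve $\G$ by acting trivially on it on the nose (by the well-definedness proof, the case $u\in\Gamma$, $v \notin \Gamma$ is impossible), so $\hat R$ sends them to the identity of $\Out^1(\G)$.

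Next, partial conjugations. Let $\pi^v_C \in \Out^1(\Ghat)$. By Lemma~\ref{lem:PC in Rhat image}, $\hat R(\pi^v_C)$ is a partial conjugation $\pi^{v'}_{C'}$ of $\G$ where $C' = \Gamma \cap \hat C$ is a \emph{bridged $v'$--component} (and $v' = v$ if $v\in\Gamma$; if $v\notin\Gamma$ one checks using the well-definedness lemma that the restriction is still a partial conjugation, possibly trivial, of the required bridged-component form). The point is then the same as in \cite{CV11}: I claim partial conjugations with bridged-component support die under $P_x$. Concretely, fix maximal $X = [x]$. The factor map to $\G_{\le X}$ sends $\pi^{v'}_{C'}$ to a partial conjugation $\pi^{\bar v}_{\bar C}$ of $\G_{\le X}$ (or the identity), and then $\operatorname{Res}$ to $X$ kills it because no vertex of $X$ is conjugated: by Lemma~\ref{lem:maximal eq classes not star separated}, the maximal class $X$ lies entirely inside $\st_\Gamma(v')$ or entirely inside one component of $\Gamma \setminus \st_\Gamma(v')$, hence entirely inside one bridged $v'$--component or inside $\st(v')$; a bridged component, being a \emph{union} of components of $\Gamma\setminus\st(v')$, therefore either contains all of $X$ or is disjoint from it, and in the former case the outer class is still trivial since $X$ maps to a conjugate of $\G_X$ inside the quotient — and after composing with $\operatorname{Res}_X$ the conjugating element is absorbed. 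In all cases $P_x(\hat R(\pi^v_C)) = 1$.

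The main obstacle I anticipate is the bookkeeping in the partial-conjugation case: correctly tracking how a bridged $v'$--component $C'$ interacts with the definition of $\G_{\le X}$ and the quotient/restriction, and verifying that $X$ is never "split" by $C'$ so that the restriction to $X$ is genuinely trivial rather than merely an inner automorphism. The clean way to close this is to invoke Lemma~\ref{lem:maximal eq classes not star separated} to get the dichotomy "$X \subseteq \st(v')$ or $X$ in one component of $\Gamma\setminus\st(v')$," observe a bridged component is a union of such components hence respects this dichotomy, and conclude that $\pi^{v'}_{C'}$ restricted to $\G_{\st[x]}$ is conjugation by a power of $v'$ (possibly trivial), which becomes trivial in $\Out(\G_{\lk[x]})$ after projecting away $X$. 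Everything else is a routine generator-by-generator verification once Lemma~\ref{lem:preorder on extended graph} and Lemma~\ref{lem:PC in Rhat image} are in hand.
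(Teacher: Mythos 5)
Your treatment of transvections is essentially sound: by Lemma~\ref{lem:preorder on extended graph} only leaf-like transvections survive among vertices of $\Gamma$, and a leaf-like $R_u^{v^k}$ dies under every $P_x$ --- because $u\notin\st_\Gamma[x]$ when $[x]\neq[v]$ (as $u$ is not maximal and $[x]\not\subseteq\lk_\Gamma(u)$), and because the factor map to $\lk[v]$ kills the multiplier when $[x]=[v]$. Be careful, though, that $P_x$ is restriction to $\st[x]$ followed by the factor map onto $\lk[x]$; it is not the composite $\operatorname{Res}\circ\operatorname{Fact}$ through $\G_{\leq X}$ that your argument repeatedly invokes.

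The partial conjugation case has a genuine gap, and it stems from the same confusion of targets. You verify that $\hat{R}(\pi^v_C)$ becomes trivial in $\Out(\G_X)$ for each maximal class $X$, but $P_x$ lands in $\Out(\G_{\lk[x]})$, so what must be shown is a dichotomy for the whole star: either $\st_\Gamma X\subseteq C\cup\st_\Gamma(v)$ or $\st_\Gamma X\cap C=\emptyset$, which makes the restriction to $\G_{\st X}$ either conjugation by $v$ or the identity. Lemma~\ref{lem:maximal eq classes not star separated} gives this dichotomy only for $X$ itself and says nothing about $\lk X$: a union of components of $\Gamma\setminus\st_\Gamma(v)$ can contain some vertices of $\lk X\setminus\st_\Gamma(v)$ and miss others, in which case $P_x$ sends the partial conjugation to a genuinely non-inner partial conjugation of $\G_{\lk X}$. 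Note that your argument never uses bridgedness in an essential way --- it would apply verbatim to an arbitrary partial conjugation --- yet arbitrary partial conjugations are not all in $\ker P$ (otherwise $\ker P$ could not be abelian whenever $\Gamma$ has a SIL, contradicting Theorem~\ref{thm:kerP_abelian}). The missing step is precisely where the extended graph earns its keep: given $u\in\st_\Gamma X\cap C$ and any other $w\in\st_\Gamma X$, one produces a two-edge path $u,X_S,w$ in $\hat{\Gamma}$ (for a suitable $S\subseteq\lk_\Gamma X$) avoiding $\st_{\hat{\Gamma}}(v)$, which forces $w\in C\cup\st_\Gamma(v)$ because $C=\Gamma\cap\hat{C}$ for a single component $\hat{C}$ of $\hat{\Gamma}\setminus\st_{\hat{\Gamma}}(v)$; the cases $v\in X$ and $u,w\in X$ need separate short arguments. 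Without this, the lemma is not proved.
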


\begin{proof}
	The domain $\Out^1(\Ghat)$ is generated by transvections and partial conjugations.
	First check the transvections.
	By Lemma~\ref{lem:preorder on extended graph}, 	the only transvection that could appear in the image are leaf-like transvections.
	It is immediate from the definition of leaf-like that any leaf-like transvection is in the kernel of $P$.
	
	Now consider a partial conjugation $\pi^v_C$ in the domain of $\hat{R}$. If $v\notin \Gamma$, then $\hat{R}(\pi^v_C)$ is trivial. So we may assume $v\in \Gamma$.
	We want to show that for each maximal equivalence class $X$ in $\Gamma$ either $\st_\Gamma X \subseteq C \cup \st_\Gamma(v)$, or $\st_\Gamma X \cap C = \emptyset$.
	
	Suppose that $u \in \st_\Gamma X \cap C$, and let $w$ be any other vertex of $\st_\Gamma X$.
	First assume that $v \in X$. Since $u \in C$ we must have that $X$ is free and $u\in X$. 
	Then either $w\in \lk_\Gamma X$ and hence in $\st_\Gamma(v)$, or $w\in X$.
	As $\Gamma$ is connected, there must be a vertex $y$ in $\lk_\Gamma X$, and then the path $u,y,w$ has no edge in $\st_\Gamma(v)$, so $u,w$ are in the same bridged $v$--component. By Lemma~\ref{lem:PC in Rhat image} this bridged $v$--component is $C\cap\Gamma$. 
	
	Now assume $v \notin X$.
	If either $u$ or $w$ is not in $X$, then in $\hat{\Gamma}$ we have a two-edge path $u,X_S,w$,
	where $S$ consists of $u$ and $w$, minus either if it is in $X$.
	This path is outside of $\st_{\hat{\Gamma}}(v)$, so $w\in C$.
	
	Consider the case where $u$ and $w$ are both in $X$. If $X$ is an abelian equivalence class, then $w\in C\cup\st_\Gamma(v)$. Thus, suppose that $u$ and $w$ are non-adjacent. 
	As $\Gamma$ is connected, $\lk_\Gamma X$ is nonempty.
	If $v\in \lk_\Gamma X$ then $w\in \st_\Gamma(v)$ and we are done.
	So we may assume in particular that there is some vertex $y$ in $\lk_\Gamma X$ that is distinct from $v$.
	Then we have a two-edge path $u,X_{\{y\}},w$ whose edges and middle vertex lie outside $\st_{\hat{\Gamma}}(v)$, implying $w \in C\cup\st_\Gamma(v)$.
\end{proof}

It remains to show that the kernel of $P$ is contained in the image of $\hat{R}$.
Given an outer automorphism in $\ker P$, we construct an element of its pre-image under $\hat{R}$. In doing this,we need to prove that our constructed automorphism is in $\Aut^1(\Ghat)$.
For this, we use Proposition~\ref{prop:getting graph symmetries}, paired with the following.

\begin{lem}\label{lem:equivalence classes preserved by ker P}
	Suppose $\Gamma$ is connected and is not the star of a vertex.
	Let $x$ be a vertex in $\Gamma$, and $\Phi \in \ker P$. 
	Either
	\begin{enumerate}
		\item if $x$ is not leaf-like, then $\Phi$ preserves $\G_{\{x\}}$;
		\item if $x$ is leaf-like, with dominating adjacent maximal equivalence class $Y$, then $\Phi$ preserves $\G_{\{x\}\cup Y}$.
	\end{enumerate}
\end{lem}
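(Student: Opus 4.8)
The plan is, given $\Phi \in \ker P \leq \Out^1(\G)$, to pin down the image of $x$ by pushing $\Phi$ through the restriction maps $R_w \colon \Out^1(\G) \to \Out(\G_{\st_\Gamma[w]})$ attached to the maximal equivalence classes $[w]$ with $x \in \st_\Gamma[w]$. Two facts drive this. First, $\Phi\in\ker P$ says exactly that $\operatorname{Fact}_{\lk[w]}(R_w(\Phi)) = 1$ for every maximal $[w]$. Second, the conjugacy class of $\phi(x)$ is an invariant of $\Phi$, independent of the representative $\phi$, and in a graph product the support of a conjugacy class is well defined, so if $\phi(x)$ is conjugate to an element supported on $\Lambda_1$ and also to one supported on $\Lambda_2$, then it is conjugate to one supported on $\Lambda_1 \cap \Lambda_2$. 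Throughout I use that, since $[w]$ is maximal, $\G_{\st[w]} = \G_{[w]} \times \G_{\lk[w]}$ is a join and $\Phi$ has a representative preserving both $\G_{[w]}$ and $\G_{\st[w]}$ by Lemma~\ref{lem:preservation of maximal stars}; consequently $R_w(\Phi)$ preserves the direct factor $\G_{[w]}$ of $\G_{\st[w]}$, and, lying in the kernel of $\operatorname{Fact}_{\lk[w]}$, has (after adjusting by an inner automorphism supported in $\G_{\lk[w]}$, which commutes with $\G_{[w]}$) a representative $\psi_w$ with $\psi_w(\G_{[w]}) = \G_{[w]}$ and $\psi_w(b) \in b\,\G_{[w]}$ for all $b \in \G_{\lk[w]}$.

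\textbf{Case (2): $x$ leaf-like.} Let $Y$ be the dominating adjacent maximal equivalence class. By Lemma~\ref{lem:leaf-like maximal} the class $Y$ is abelian; moreover $Y$ is, by definition of leaf-like, the only maximal equivalence class contained in $\lk_\Gamma(x)$, and $x \notin Y$, so $Y$ is the unique maximal $[w]$ with $x \in \st_\Gamma[w]$, and $x \in \lk[Y]$. Applying the previous paragraph with $[w] = Y$ gives a representative $\psi_Y$ of $R_Y(\Phi)$ with $\psi_Y(\G_Y) = \G_Y$ and $\psi_Y(x) = xa$ for some $a \in \G_Y$; since $x$ commutes with $\G_Y$ this yields $\psi_Y(\G_{\{x\}\cup Y}) = \G_Y\langle xa\rangle = \G_Y\langle x\rangle = \G_{\{x\}\cup Y}$. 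Finally, as $\Phi$ preserves $\G_{\st[Y]}$ up to conjugacy, we may choose a representative $\phi$ of $\Phi$ with $\phi(\G_{\st[Y]}) = \G_{\st[Y]}$ whose restriction to $\st[Y]$ is exactly $\psi_Y$ (by conjugating $\phi$ by a suitable element of $\G_{\st[Y]}$); then $\phi(\G_{\{x\}\cup Y}) = \G_{\{x\}\cup Y}$, as wanted.

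\textbf{Case (1): $x$ not leaf-like.} For each maximal $[w]$ with $x \in \lk[w]$ the representative $\psi_w$ sends $x$ into $x\,\G_{[w]}$, so $\phi(x)$ is conjugate in $\G$ to an element supported on $\{x\}\cup[w]$; and if $x$ itself lies in a maximal equivalence class $[x]$, then directly from Lemma~\ref{lem:preservation of maximal stars} $\phi(x)$ is conjugate into $\G_{[x]}$, hence supported on $[x] = \{x\}\cup[x]$. Writing $\cal{W}$ for the set of maximal equivalence classes $[w]$ with $x \in \st_\Gamma[w]$, it follows that the support of the conjugacy class of $\phi(x)$ is contained in $\bigcap_{[w]\in\cal{W}}(\{x\}\cup[w])$. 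Since distinct equivalence classes are disjoint and $x$ lies in at most one of them, this intersection equals $\{x\}$ as soon as $\cal{W}$ contains two distinct classes. Once $\supp$ of the conjugacy class of $\phi(x)$ is $\{x\}$, we get that $\phi(x)$ is conjugate into $\langle x\rangle$ and, comparing the orders of $\langle\phi(x)\rangle$ and $\langle x\rangle$, that $\Phi$ preserves $\G_{\{x\}}$ up to conjugacy. So Case (1) reduces to the combinatorial claim that a non-leaf-like $x$ lies in $\st_\Gamma[w]$ for two distinct maximal equivalence classes.

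That combinatorial claim is the step I expect to be the main obstacle. When $x$ is maximal it is easy: $x\in\st[x]$, and by Lemma~\ref{lem:infty-stars cover Gamma}~(1) $\lk_\Gamma(x)$ contains a maximal equivalence class $Z$, distinct from $[x]$ since $x\notin Z$. The content is the non-maximal case, where one must exclude the scenario that $\lk_\Gamma(x)$ contains a single maximal equivalence class $Z$ while $x$ is not dominated by $Z$ — this makes $x$ non-leaf-like but leaves only one usable restriction map, and it can only happen with $Z$ a non-abelian free class. I expect to rule it out by a careful argument with the pre-order $\le$: $x$ non-maximal forces $x\le u$ for some $u\notin[x]$; then $\lk_\Gamma(x)\subseteq\st_\Gamma(u)$ forces $Z\subseteq\st_\Gamma(u)$ and hence $u\in\lk[Z]$; pushing up the pre-order and using Lemma~\ref{lem:infty-stars cover Gamma} to locate further maximal classes, transitivity of $\le$ should produce a relation $x\le z$ with $z\in Z$, contradicting that $x$ is not leaf-like. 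Some routine care is also needed in Case (2) and in the reductions of Case (1) to move between representatives of $\Phi$ and their restrictions; this is underwritten by the structure of the normalizers of special subgroups, the same input that makes the restriction maps well defined.
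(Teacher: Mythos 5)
Your Case (2) and the ``two maximal classes'' part of Case (1) follow the paper's own argument (for each maximal class $Y$ with $x\in\st Y$, membership in $\ker P$ pins a representative down to $\phi(x)=yx$ with $y\in\grp{Y}$, and two such constraints force $y=1$), and your explicit use of $[x]$ itself when $x$ is maximal is a sensible addition. The genuine gap is the combinatorial claim you yourself flag as the main obstacle: it is false. A non-maximal, non-leaf-like vertex need not lie in the star of two distinct maximal equivalence classes. Take the right-angled Artin group on the graph with vertices $x,y,v,u,t$ and edges $xy$, $xu$, $vy$, $vu$, $ty$, $tv$ (a four-cycle $x,y,v,u$ with an extra vertex $t$ joined to $y$ and $v$). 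Every $\leq$--equivalence class is a singleton, the maximal ones are $\{v\}$ and $\{y\}$, and $x\le v$ strictly (since $t\in\lk(v)\setminus\st(x)$), so $x$ is not maximal. Here $\lk(x)=\{y,u\}$, so $\{y\}$ is the only maximal class contained in $\lk(x)$; but $u\notin\st(y)$, so $x\not\le y$ and $R_x^{y^k}$ does not exist, while $R_x^{v}$ does exist but $\{v\}\not\subseteq\lk(x)$, so no transvection supported on $x$ is leaf-like and $x$ is not a leaf-like vertex. Moreover $x\notin\st(v)$, so $\{y\}$ is the only maximal class whose star contains $x$, and your intersection-of-supports argument yields only that $\phi(x)$ is conjugate to $y^a x$ for some $a$, not that it is conjugate into $\grp{x}$. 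The repair you sketch via transitivity of $\le$ would have to produce $x\le y$, which fails here, so it cannot close the case.

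For what it is worth, the paper's proof is terser at exactly this point: it asserts the dichotomy ``either $\lk(x)$ contains two distinct maximal equivalence classes, or $x$ is leaf-like,'' which is the same false statement in contrapositive form (and it does not separately invoke $[x]$ when $x$ is maximal). So you have correctly located a delicate step, but your proposed resolution does not work; finishing Case (1) for a vertex like $x$ above needs input beyond the restriction maps at maximal classes whose stars contain $x$. (In the example one can rule out $\phi(x)$ conjugate to $y^ax$ with $a\ne 0$ by noting that the centralizer of $y^ax$ is $\grp{x,y}$, which is too small to contain $\phi(u)$ without making the standard representation singular --- but that is a different argument from the one you propose, and it is not clear it generalizes as stated.)
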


\begin{proof}
	By Lemma~\ref{lem:infty-stars cover Gamma} (1), there is at least one maximal equivalence class $Y$ in $\lk_\Gamma (x)$.
	By Lemma~\ref{lem:preservation of maximal stars}, there is a representative $\phi$ of $\Phi$ such that $x$ is mapped to a word on $\st_\Gamma Y$, which we can write as $yw$, with $y \in \grp{Y}$ and $w \in \grp{\lk_\Gamma Y}$, since $Y$ commutes with $\lk_\Gamma Y$.
	Since $\Phi \in \ker P$, we must have $w = x$.
	
	If there are two distinct maximal equivalence classes in $\lk_\Gamma(x)$, then the above shows that each $x\in X$ is fixed by $\phi$. In particular, $\G_{\{x\}}$ is preserved.
	
	Otherwise $x$ is leaf-like, with $Y$ a maximal equivalence class.
	Then $\phi(x) = yx \in \G_{\{x\}\cup Y}$.
	Meanwhile, Lemma~\ref{lem:preservation of maximal stars} also tells us that $\G_Y$ is preserved by $\phi$, and so $\G_{\{x\}\cup Y}$ is preserved by $\phi$.
\end{proof}

\begin{lem}\label{lem:ker P in image Rhat}
Suppose $\Gamma$ is connected and is not the star of a vertex.
	The kernel of $P$ is contained in the image of $\hat{R}$.
\end{lem}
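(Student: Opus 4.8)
The plan is to start with an outer automorphism $\Phi \in \ker P$, choose a good representative $\phi$ of $\Phi$ using Lemma~\ref{lem:preservation of maximal stars}, and then build from $\phi$ an automorphism $\hat{\phi}$ of $\Ghat$ which lies in $\Aut^1(\Ghat)$ and restricts to a representative of $\Phi$. The first step is to get normal-form control on $\phi$: for each vertex $x$ of $\Gamma$, Lemma~\ref{lem:equivalence classes preserved by ker P} tells us that $\phi$ (after adjusting by an inner automorphism, which we may do simultaneously for all $x$ because the maximal equivalence classes and their stars can be preserved compatibly — this compatibility is exactly what Lemma~\ref{lem:preservation of maximal stars} provides) either fixes $x$, or, when $x$ is leaf-like with dominating maximal class $Y$, sends $x$ to $yx$ with $y \in \langle Y\rangle$. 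Since $\ker P$ maps trivially under every $P_v$, the only ``motion'' left in $\phi$ comes from leaf-like transvections (adjusting $x \mapsto yx$) and from partial conjugations of $\Gamma$; I would make this precise by writing $\phi$ as a word in the Corredor--Gutierrez generators of $\Aut^1(\G)$ and using that membership in $\ker P$ forces all non-leaf-like transvections to cancel, up to conjugacy.

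The second step is the construction of $\hat{\phi}$. On the vertices of $\Gamma$, set $\hat{\phi}$ equal to $\phi$. On the new vertices $v_1, v_2$, set $\hat{\phi}$ to be the identity (legitimate since $\hat{o}(v_i)$ are primes not dividing any $o(v)$, so no transvection out of $v_i$ is forced, and $v_i$ is central-like with link all of $\Gamma$, so it is preserved by anything preserving $\Gamma$). The subtle new vertices are the $X_S$: since $\link{X_S} = X \cup S$, and $\phi$ preserves $\G_{X \cup S}$ in an appropriate sense (here is where I need Lemma~\ref{lem:PC in Rhat image} and Lemma~\ref{lem:equivalence classes preserved by ker P} to guarantee that the action of $\phi$ on $X$, respectively on the leaf-like vertices in $S$, is compatible with the commutation relations of $X_S$), I would define $\hat{\phi}(X_S)$ by conjugating $X_S$ appropriately — essentially $X_S \mapsto g X_S g^{-1}$ where $g$ records the partial conjugation pattern of $\phi$ restricted near $X$. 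One must check $\hat{\phi}$ respects the defining relators of $\Ghat$: the order relations (immediate, since $\hat{\phi}$ restricted to $\Gamma$ is an automorphism of $\G$ and $\hat{\phi}(X_S)$ is a conjugate of $X_S$, hence has the same order), and the commutation relations (the content: $[X_S, a] = 1$ for $a \in X \cup S$ must be preserved, which follows from the leaf-like structure guaranteeing that $\phi$ acts on $X \cup S$ by a uniform partial conjugation compatible with $X_S$, exactly the configuration analyzed in Lemmas~\ref{lem:PC in Rhat image} and~\ref{lem:equivalence classes preserved by ker P}).

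The third step is to show $\hat{\phi} \in \Aut^1(\Ghat)$, i.e.~that it is a product of transvections and partial conjugations of $\Ghat$ with no graph symmetry or factor automorphism factor. For this I invoke Proposition~\ref{prop:getting graph symmetries}: it suffices to check that $\hat{\phi}$ lies in $\ker\Sigma$ for $\Ghat$, i.e.~that it induces the trivial automorphism of $(\hat{\Gamma})_{\le}$. By Lemma~\ref{lem:hat eq singletons} every $\lehat$--equivalence class is a singleton, so $(\hat{\Gamma})_{\le} = \hat{\Gamma}$, and I must check $\hat{\phi}$ preserves $\Ghat_{\ge z}$ up to conjugacy and $\langle\langle \Ghat_{> z}\rangle\rangle$ for every vertex $z$. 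For $z$ a new vertex this is clear since $\hat{\phi}$ fixes $z$ up to conjugacy and by Lemma~\ref{lem:nodomination} $z$ dominates nothing in $\Gamma$; for $z \in \Gamma$ it follows from the fact that $\phi$, being built from transvections $R_u^{v^k}$ with $u \le v$ and partial conjugations of $\G$, already preserves the relevant subgroups in $\G$, and the new vertices above $z$ under $\lehat$ were pinned down in Lemma~\ref{lem:preorder on extended graph}. Then $\hat{R}(\hat{\phi}) = \phi$, so $\Phi \in \Im(\hat{R})$.

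The main obstacle I anticipate is the bookkeeping in defining $\hat{\phi}(X_S)$ consistently and verifying the commutation relators of $\Ghat$ are preserved — one needs the partial-conjugation data of $\phi$ near a maximal equivalence class $X$ to be coherent enough that a single conjugating element $g$ works for all relations involving $X_S$ simultaneously, across all subsets $S \subseteq \link{X}$. This coherence is precisely what the bridged-component analysis of Lemma~\ref{lem:PC in Rhat image} and the leaf-like analysis of Lemma~\ref{lem:equivalence classes preserved by ker P} are designed to supply, so the proof should reduce to carefully assembling those two lemmas; but making the ``simultaneously for all $S$'' part airtight is where the real work lies.
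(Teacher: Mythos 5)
Your overall strategy coincides with the paper's: define $\hat{\phi}$ to agree with $\phi$ on $\Gamma$, to fix $v_1,v_2$, and to send each $X_S$ to a conjugate $g_X X_S g_X\m$ where $g_X$ records how $\phi$ moves $\grp{X}$ and $\grp{\st X}$ (Lemma~\ref{lem:preservation of maximal stars}); then verify the relators of $\Ghat$ (the only delicate ones being $[X_S,u]=1$ for $u\in X\cup S$, where membership in $\ker P$ is what forces the $\lk_\Gamma X$--component of $\phi(u)$ to be $u$ itself); and finally use Proposition~\ref{prop:getting graph symmetries} together with Lemmas~\ref{lem:hat eq singletons}, \ref{lem:nodomination}, \ref{lem:preorder on extended graph} and~\ref{lem:equivalence classes preserved by ker P} to place $\hat{\phi}$ in a controlled subgroup of $\Aut(\Ghat)$.

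There is, however, a genuine gap in your third step. Proposition~\ref{prop:getting graph symmetries} identifies $\ker\Sigma$ with $\Aut^0(\Ghat)$, \emph{not} with $\Aut^1(\Ghat)$: elements of $\ker\Sigma$ may still involve factor automorphisms. So checking that $\hat{\phi}$ preserves each $\Ghat_{\hat{\geq}X}$ up to conjugacy only yields $\hat{\phi}\in\Aut^0(\Ghat)$, and you cannot conclude from this alone that $\hat{\phi}\in\Aut^1(\Ghat)$ --- which is what you need, since the domain of $\hat{R}$ is $\Out^1(\Ghat)$. The missing argument is to write $\hat{\phi}=f\phi_1$ with $\phi_1\in\Aut^1(\Ghat)$ and $f$ a product of factor automorphisms (possible because conjugating a transvection or partial conjugation by a factor automorphism returns a generator of the same type), and then to show $\hat{R}(f)=1$: since $\hat{R}(f)=\hat{R}(\hat{\phi}\phi_1\m)\in\ker P$ by Lemma~\ref{lem:Rhat into kerP}, and every vertex of $\Gamma$ lies in the link of some maximal equivalence class by Lemma~\ref{lem:infty-stars cover Gamma}, the factor automorphism $f$ must act trivially on $\Gamma$, whence $\hat{R}(\phi_1)=\phi$. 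A smaller imprecision: Lemma~\ref{lem:preservation of maximal stars} produces a representative adapted to one maximal class at a time, so you should not assert a single representative fixing all classes ``simultaneously''; the correct bookkeeping is a separate conjugator $g_X$ for each maximal class $X$, which is in any case what your definition of $\hat{\phi}(X_S)$ implicitly requires.
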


\begin{proof}
	Let $\Phi$ be in the kernel of $P$. Take $\phi$ to be a representative of $\Phi$.
	By Lemma~\ref{lem:preservation of maximal stars}, for each maximal equivalence class $X$, there exists $g_X\in \G$ such that $\phi(\grp{X}) = g_X\grp{X}g_X^{-1}$ and $\phi(\grp{\st X}) = g_X \grp{\st X} g_X^{-1}$.
	Define $\hat{\phi}$ on $\Ghat$, by acting on the vertices of $\hat{\Gamma}$ as follows:
	$$\hat{\phi}(v)=
	\begin{cases} 
	\phi(v)     & \text{ if $v$ is a vertex of $\Gamma$},\\
	g_X v g_X\m & \text{ if } v=X_S,\text{ for a maximal equivalence class $X$, and $S\subseteq \lk_\Gamma X$,} \\
	v           & \text{ if } v\in\{v_1, v_2\}.\end{cases}$$
	We first show that $\hat{\phi}$ is a homomorphism by verifying each defining relation is preserved.
	Indeed, each order relation on generators is preserved by the definition of $\hat{\phi}$.
	Also, all commutator relations between vertices in $\Gamma$ are preserved, and it is immediate that the image of $v_i$ commutes with the image of any vertex of $\Gamma$, for $i=1,2$.
	The only relation left to check is $[X_S , u] = 1$ for $u \in X\cup S$, for any maximal equivalence class $X$, and $S \subseteq \lk_\Gamma X$. 
	First assume $u\in S$.
	Since $\grp{X}$ commutes with $\grp{\lk_\Gamma X}$, we can write $\phi(u) = g_Xywg_X\m$, for $y\in\grp{X}$, and $w\in\grp{\lk_\Gamma X}$.
	Since $\Phi$ is in the kernel of $P$, we have $w=u$.
	This gives 
	$$[\phi(X_S) , \phi(u)] = [g_X X_S g_X\m , g_X y u g_X\m] = g_X[X_S , yu]g_X\m = 1$$
	since $yu$ commutes with $X_S$ as $X\cup \{u\}$ is in the link of $X_S$.
	Now assume $u\in X$.
Then $\phi(u) = g_X y g_X^{-1}$ for some $y\in \grp{X}$.
But $X \subset \lk_{\hat{\Gamma}}(X_S)$, so $\hat{\phi}(X_S) = g_X X_S g_X^{-1}$ commutes with $g_X y g_X^{-1}$ as required.	
	
	To verify that $\hat{\phi}$ is an automorphism, we note that its inverse can be constructed in a similar way using $\phi\m$ instead of $\phi$, and $g_X\m$ in place of $g_X$.
	
	We will use $\hat{\phi}$ to show that there is an automorphism $\phi_1$ in $\Aut^1(\Ghat)$ with $\hat{R}(\phi_1) = \phi$ (we're abusing notation here of course by applying $\hat{R}$ to an automorphism).
	We first apply Proposition~\ref{prop:getting graph symmetries} to see  that $\hat{\phi}$ is in $\Aut^0(\Ghat)$.
	Let $X$ be an equivalence class in $\hat{\Gamma}$. 
	Then by Lemma~\ref{lem:hat eq singletons}, $X = \{x\}$ for some $x\in\hat{\Gamma}$.
	We claim $\hat{\phi}$ preserves $\Ghat_{\hat{\geq}X}$ up to conjugacy, and so, by Proposition~\ref{prop:getting graph symmetries}, $\hat{\phi}$ is in $\Aut^0(\Ghat)$.
	
	To prove the claim, first suppose $x\in \Gamma$. By Lemmas~\ref{lem:preorder on extended graph} and~\ref{lem:nodomination}, if $x$ is not leaf-like, then the set $\hat{\geq}X$ is equal to $X$.
	If on the other hand $x\in \Gamma$ is leaf-like, then there is a unique maximal equivalence class $Y$ in $\st_\Gamma(x)$. 
	Again by Lemmas~\ref{lem:preorder on extended graph} and~\ref{lem:nodomination}, $\hat\geq X =X\cup Y$.
	In either case, Lemma~\ref{lem:equivalence classes preserved by ker P} implies $\Ghat_{\hat{\geq}X}$ is preserved, up to conjugacy, by $\hat{\phi}$.
	
	Now suppose $x\notin \Gamma$. Using again the definition of $\hat{o}$ on $\hat{\Gamma}\setminus \Gamma$, no vertex in $\hat{\Gamma}$ can dominate $x$. Further, again by choice of $o(x)$, there is no vertex in $\Gamma$ that dominates $v$. Thus, $\hat{\geq}X=X$ and it follows then from the definition of $\hat{\phi}$ that $\Ghat_{\hat{\geq}X}$ is also preserved, up to conjugacy.
	
	Manual computation verifies that the conjugate of any partial conjugation or transvection by a factor automorphism is again (a power of) a partial conjugation or transvection, respectively.
	Thus we can write $\hat{\phi}$ as $f \phi_1$, for $\phi_1 \in \Aut^1(\Ghat)$, and $f$ a product of factor automorphisms.	
	Since $\hat{R}(f) =\hat{R}(\hat{\phi}\phi_1\m) \in \ker P$ by Lemma~\ref{lem:Rhat into kerP},
	we deduce that $f$ acts trivially on $\Gamma$ (using Lemma~\ref{lem:infty-stars cover Gamma}~(1), that every vertex is in the link of some maximal equivalence class), so $\hat{R}(f)=1$.
	Thus $\hat{R}(\phi_1) = \hat{R}(f\m \hat{\phi}) = \phi$ and, up to replacing $\hat{\phi}$ with $\phi_1$, we obtain  an automorphism in $\Aut^1(\hat{G})$ whose image is $\phi$.	
(We note that we have abused notation: technically we apply $\hat{R}$ to the outer automorphisms containing $f,\hat{\varphi}\varphi_1\m, \varphi_1, f\m \hat{\varphi}$ above).
\end{proof}

We have now proved that $\Im \hat{R} = \ker P$. To conclude this section, we use the structure of $\Ghat$ and its automorphism group to give us a precise description of the kernel of $P$.

\begin{thm}\label{thm:kerP_abelian}
	Suppose $\Gamma$ is connected and is not the star of a vertex.
	The kernel of $P$ is abelian, generated by the set of leaf-like transvections and partial conjugations $\pi^v_C$ where $C$ is a bridged $v$--component.
\end{thm}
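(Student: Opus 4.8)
The identification $\ker P=\Im\hat R$ has just been established (Lemmas~\ref{lem:Rhat into kerP} and~\ref{lem:ker P in image Rhat}), so the plan is to read off a generating set for $\Im\hat R$ from the standard generators of $\Out^1(\Ghat)$ and then to prove that this set is pairwise commuting. For the generating set: every vertex of $\hat\Gamma\setminus\Gamma$ carries a prime order dividing no $o(v)$ and distinct from the orders of the other new vertices, so neither condition~(1) nor condition~(2) in the definition of a transvection can hold with such a vertex as its multiplier or its support; hence every transvection in $\Out^1(\Ghat)$ is supported on two vertices of $\Gamma$, and by Lemma~\ref{lem:preorder on extended graph} it is a leaf-like transvection, carried by $\hat R$ to the same leaf-like transvection of $\G$. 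A partial conjugation $\pi^v_{\hat C}$ of $\Ghat$ with $v\notin\Gamma$ restricts trivially (cf.\ the proof of Lemma~\ref{lem:Rhat into kerP}), while for $v\in\Gamma$ Lemma~\ref{lem:PC in Rhat image} gives $\hat R(\pi^v_{\hat C})=\pi^v_C$ with $C=\Gamma\cap\hat C$ a bridged $v$--component, and conversely every $\pi^v_C$ with $C$ a bridged $v$--component arises this way. Thus $\ker P$ is generated by the leaf-like transvections together with the partial conjugations $\pi^v_C$, $C$ a bridged $v$--component.

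To see these generators commute in $\Out(\G)$, the crucial preliminary point is that a leaf-like vertex is never $\le$--maximal: if $R_u^{v^k}$ is leaf-like then $u\le v$ while $u\notin[v]$, so $u<v$ strictly, and in particular a leaf-like vertex is never the multiplier of a leaf-like transvection. Hence two leaf-like transvections either involve four distinct vertices and commute by~\eqref{eq:transv commute}, share their multiplier and commute by~\eqref{eq:transv rel2}, or share their support — and in the last case the two multipliers lie in a common equivalence class, abelian by Lemma~\ref{lem:leaf-like maximal}, so a one-line computation gives the commutation. For a leaf-like transvection $R_u^{v^k}$ and a bridged partial conjugation $\pi^w_C$ one runs through the positions of $u,v,w$ relative to $C$, using that $v\in\lk(u)$ (so $u,v$ commute and $\st(u)\subseteq\st(v)$) and that the maximal class $[v]$ lies in a single component of $\Gamma\setminus\st(w)$ by Lemma~\ref{lem:maximal eq classes not star separated}; in each configuration — most delicately $w=u$ — the two composites differ by an inner automorphism of $\G$, so they agree in $\Out(\G)$. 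Finally, by Lemma~\ref{lem:partial conj not commute} two bridged partial conjugations $\pi^v_C,\pi^{v'}_{C'}$ fail to commute only if there is a SIL $(v,v'\mids z)$ together with one of four containments of $z$ (and possibly $v,v'$) in $C,C'$; each is incompatible with the defining closure property of a bridged component — equivalently, via Lemma~\ref{lem:PC in Rhat image}, with $C,C'$ being single connected components of $\hat\Gamma\setminus\st_{\hat\Gamma}(v)$ and $\hat\Gamma\setminus\st_{\hat\Gamma}(v')$, the cone vertices $v_1,v_2$ and auxiliary vertices $X_S$ keeping the relevant intersections of links too coarse to separate such a component from its multiplier (Lemmas~\ref{lem:infty-stars cover Gamma} and~\ref{lem:overlapping SILs give STIL} are convenient here). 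Putting the three cases together, $\ker P$ is generated by a commuting set and so is abelian.

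I expect the main obstacle to be the last two cases — verifying that the ``leaf-like'' and ``bridged'' hypotheses are genuinely incompatible with the configurations in Lemmas~\ref{lem:relators} and~\ref{lem:partial conj not commute} that would otherwise give noncommutation. This is exactly the payoff of the somewhat elaborate construction of $\hat\Gamma$: its cone vertices collapse the domination order on $\Gamma$ while its vertices $X_S$ record precisely the bridging data, and cashing this in requires careful bookkeeping of which edges of $\hat\Gamma$ lie inside a given star. By contrast the leaf-like versus leaf-like case is essentially immediate from Lemma~\ref{lem:relators}.
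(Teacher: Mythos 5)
Your overall route is the paper's: identify $\ker P$ with $\operatorname{Im}\hat R$ via Lemmas~\ref{lem:Rhat into kerP} and~\ref{lem:ker P in image Rhat}, read off the generators from Lemmas~\ref{lem:preorder on extended graph} and~\ref{lem:PC in Rhat image}, and check pairwise commutation in three cases. The generating-set step and the leaf-like versus leaf-like case are correct and essentially complete (your observation that the support of a leaf-like transvection is never $\leq$--maximal, hence never the multiplier of another one, cleanly excludes the configuration of~\eqref{eq:transv rel1}). The problems are concentrated in exactly the two cases you defer, and there they are real: you assert the conclusions rather than prove them, and in one case the mechanism you name is not the one that works.

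For a leaf-like transvection $R_u^{v^k}$ and a bridged partial conjugation $\pi^u_C$ with the same base vertex $u$, relation~\eqref{equn:normal3} gives a commutator equal to $\pi^{v^{\pm k}}_C$ in $\Out(\G)$, and there is no a priori reason for this partial conjugation to be inner; ``the two composites differ by an inner automorphism'' is the desired conclusion, not an argument. The actual mechanism is that the leaf-like hypothesis forces $\Gamma\setminus\st(u)$ to have a \emph{unique} bridged $u$--component: on a minimal path forced to use an edge of $\st_\Gamma(u)$, Lemma~\ref{lem:infty-stars cover Gamma}~(2) together with leaf-likeness of $u$ puts $v$ in the links of the vertices flanking the excursion into $\st_\Gamma(u)$, so the path can be rerouted through $v$. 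Hence $C=\Gamma\setminus\st(u)$ and $\pi^u_C$ is inner, which is what kills the case. For two bridged partial conjugations, your plan is to check the four membership conditions of Lemma~\ref{lem:partial conj not commute} directly against bridgedness in $\Gamma$; that claim is true but it \emph{is} the content of the case, and ``the auxiliary vertices keep the intersections of links too coarse to separate a component from its multiplier'' is not the statement that is needed (the SIL separates the witness from $u$ and $v$ by $\lk(u)\cap\lk(v)$, not a component from its multiplier by a star). What one actually proves is that there is no SIL $(u,v\mids w)$ in $\hat\Gamma$ with $u,v,w\in\Gamma$: moving $w$ within its separated component so that it is adjacent to $\lk_\Gamma(u)\cap\lk_\Gamma(v)$, Lemma~\ref{lem:infty-stars cover Gamma}~(2) yields a maximal class $X\subseteq\lk_\Gamma(u)\cap\lk_\Gamma(w)$ with $v\notin X$, and the two-edge path $u,\,X_{\{u,w\}},\,w$ then contradicts the separation; with no such SIL, Lemma~\ref{lem:partial conj not commute} applied upstairs makes the partial conjugations commute before restriction. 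Both missing arguments are short, but they are where the theorem lives; as written, your proposal does not contain them.
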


\begin{proof}
	Lemmas~\ref{lem:Rhat into kerP} and \ref{lem:ker P in image Rhat} imply that the image of $\hat{R}$ is equal to $\ker P$.
	By Lemmas~\ref{lem:preorder on extended graph} and \ref{lem:PC in Rhat image}, the image of $\hat{R}$ is generated by leaf-like transvections and partial conjugations of the form $\pi^v_C$ where $C$ is a bridged $v$--component.
	
	To see that it is abelian, we first claim that there is no SIL $(u,v\mids w)$ in $\hat{\Gamma}$ with $u,v,w\in\Gamma$, so all the partial conjugations in the image of $\hat R$ commute by Lemma~\ref{lem:partial conj not commute}.
	Indeed, suppose that $(u,v\mids w)$ were such a SIL.
	Firstly, observe that $(u,v\mids w)$ also forms a SIL in $\Gamma$.
	As $\Gamma$ is connected we may assume that $w$ is adjacent to $\lk_\Gamma(u)\cap\lk_\Gamma(v)$ in $\Gamma$, and in particular that $\lk_\Gamma(u) \cap \lk_\Gamma(w)$ is non-empty.
	By Lemma~\ref{lem:infty-stars cover Gamma} (2), $\lk_\Gamma(u) \cap \lk_\Gamma(w)$ contains a maximal $\le$--equivalence class $X$. Since $u$ and $v$ are non-adjacent, by the definition of a SIL, we have that $v\not\in X$.
	Thus in $\hat{\Gamma}$ we get a two-edge path $u,X_{\{u,w\}},w$ from $u$ to $w$ avoiding $\lk_{\hat{\Gamma}}(u)\cap\lk_{\hat{\Gamma}}(v)$, a contradiction.

	Now consider two leaf-like transvections $R_u^{v^k}$ and $R_x^{y^j}$. 
	These will commute if $u,v,x,y$ are distinct.
	Suppose $u=x$. 
	Then $v$ and $y$ are in the same equivalence class, by the definition of being leaf-like.
	But then $v$ and $y$ commute, and hence the transvections commute, by Lemma~\ref{lem:leaf-like maximal}.
	Now suppose $x=v$. Since $v$ is maximal, we must have that $v$ and $y$ are equivalent. Then $[y]$ is not contained in $\lk_\Gamma(v)$, so $R_v^{y^j}$ is not leaf-like.
	If $v=y$, it is easy to see the transvections commute. 
	Hence all leaf-like transvections pairwise commute.
	
	Now we take a leaf-like transvection $R_u^{v^k}$ and a partial conjugation $\pi^x_C$ with $C$ a bridged $x$--component.
	Since $u$ and $v$ are adjacent, up to multiplying by an inner automorphism and replacing $\pi^x_C$ with its inverse, we may assume $u,v\notin C$.
	If $x\neq u$, then they commute. So we may assume $x=u$.
	We claim there is only one bridged $x$--component of $\Gamma$, so $\pi^x_C$ is inner.
	Suppose there are two. Then there are vertices $a,b$ such that any path between them involves an edge from $\st_\Gamma(x)$.
	Take such a path of minimal length, and denote its vertices by $a=p_0,p_1,\ldots ,p_r=b$.
	Let $p_i$ be the first vertex in $\st(x)$.
	By Lemma~\ref{lem:infty-stars cover Gamma} (2), there is a maximal equivalence class in $\lk_\Gamma(x)\cap\lk_\Gamma(p_{i-1})$.
	Since $R_x^{v^k}$ is leaf-like, this implies $v\in \lk_\Gamma(x)\cap \lk_\Gamma(p_{i-1})$.
	Similarly, let $p_j$ be the last vertex in the path that is inside $\st_\Gamma(x)$. Then we also have $v\in \lk_\Gamma(x)\cap\lk_\Gamma(p_{j+1})$.
	By minimality of the length of the path, we must have $i=j$, else we could shorten the path by going from $p_{i-1}$ to $v$ to $p_{j+1}$.
	This implies that $a,b$ are actually in the same bridged $x$--component, a contradiction. So $\pi^x_C$ is trivial in $\Out(\G)$; the result follows.
\end{proof}

\section{Applications of the Amalgamated Projection}\label{sec:TARF}
We now turn to the proofs of Theorems~\ref{thms:OutGPTA} and \ref{thms:OutGPRF}, of the Tits Alternative and residual finiteness.

\subsection{The Tits Alternative}

For the outer automorphism group of a RAAG, a substantial obstacle in proving the Tits Alternative holds  was the case when the defining graph is disconnected, i.{}e.{} when $A_\Gamma$ decomposes as a free product.
Horbez dealt with this situation in full generality \cite{HorbezTA}, which we now briefly describe.

Recall that given a finitely generated group $G$, its \emph{Grushko decomposition} is a splitting of $G$ as a free product
$G = G_1\ast \ldots \ast G_n\ast F$
such that each $G_i$ is non-trivial, not isomorphic to $\Z$, and freely indecomposable, and $F$ is a finite rank free group.

\begin{thm}[Horbez \cite{HorbezTA}]\label{thm:HorbezFP}
	Let $G_1\ast \ldots \ast G_n\ast F$ be the Grushko decomposition of a group $G$.
	
	If each of the groups $G_i$ and $\Out(G_i)$ satisfy the Tits Alternative, then $\Out(G)$ does too.
\end{thm}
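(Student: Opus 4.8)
The approach I would take is the inductive one of Horbez, recast in terms of group actions on hyperbolic complexes. Write $\mathcal{F} = \{[G_1],\dots,[G_n]\}$ for the free factor system given by the Grushko decomposition. Since $\Out(G)$ permutes the finitely many conjugacy classes of Grushko factors, the subgroup $\Out(G,\mathcal{F})$ fixing each $[G_i]$ has finite index, and the Tits Alternative passes from a finite-index subgroup to the ambient group; so it suffices to prove the statement for $\Out(G,\mathcal{F})$. One then induces on a complexity of the pair $(G,\mathcal{F})$ (roughly, the rank of the free part plus the number of factors). To make the induction close up it is necessary to prove a strengthened conclusion in which ``virtually solvable'' is replaced by a specific class — virtually a group assembled by extensions from the $\Out(G_i)$, finite groups, and finitely generated free abelian ``twist'' groups — since the Tits Alternative in its bare form is not stable under the extensions that arise. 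The base of the induction is the case where $G$ is freely indecomposable relative to $\mathcal{F}$: then either $G = G_1$ and $\Out(G,\mathcal{F}) = \Out(G_1)$, covered by hypothesis, or $G = \Z$; and when $n = 0$ the statement specializes to the Bestvina--Feighn--Handel theorem for $\Out(F_N)$ \cite{BFH:Tits1}.

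For the inductive step, let $H \le \Out(G,\mathcal{F})$ act on the relative free factor graph $FF(G,\mathcal{F})$, which is Gromov hyperbolic by the relative version of the Handel--Mosher/Bestvina--Feighn theorem (Guirardel--Horbez), and whose Gromov boundary is a quotient of the space of arational $\mathcal{F}$-trees in the boundary of the Guirardel--Levitt relative outer space. Apply the classification of isometric actions on a hyperbolic space to $H$. If $H$ is of general type, a ping-pong argument with high powers of two independent loxodromic elements, using the north--south dynamics of loxodromics on $\partial FF$, produces a nonabelian free subgroup. If $H$ fixes a point of $\partial FF$, then after passing to a finite-index subgroup it fixes each arational tree lying over that point, and the stabilizer of an arational $\mathcal{F}$-tree is virtually cyclic (Guirardel--Horbez, via the stretching homomorphism), so $H$ is virtually solvable. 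If the $H$-orbits in $FF$ are bounded, then $H$ is reducible: up to finite index it preserves the conjugacy class of a proper free factor system $\mathcal{F}'$ refining $\mathcal{F}$. In that case one uses the restriction homomorphisms to the (strictly lower-complexity) factors of $\mathcal{F}'$ together with a corestriction map, and the fact (Levitt) that the kernel of the combined map is a virtually finitely generated abelian group of twists; applying the inductive hypothesis to the factors and reassembling via the strengthened conclusion finishes the step.

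I expect the main obstacle to be the reducible case: pinning down the structure of stabilizers of free factor systems inside $\Out(G,\mathcal{F})$, controlling the twist subgroups precisely enough, and formulating the strengthened inductive statement so that the three cases genuinely recombine. A close second is the geometric input underlying the other two cases — hyperbolicity of $FF(G,\mathcal{F})$, the identification of its boundary (needed both for north--south dynamics and for the structure of point stabilizers at infinity), and the virtual cyclicity of stabilizers of arational trees — which is where most of the machinery of relative outer space is consumed. The overall architecture mirrors the Bestvina--Feighn--Handel proof for $\Out(F_N)$, with the hyperbolic-complex technology standing in for the original train-track arguments.
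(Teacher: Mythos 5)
This statement is an external result: the paper quotes it verbatim from Horbez \cite{HorbezTA} and supplies no proof of its own, so there is no internal argument to compare against. Your outline is a faithful account of the architecture of Horbez's actual proof in that reference --- the reduction to $\Out(G,\mathcal{F})$, the induction on the complexity of $(G,\mathcal{F})$ with a strengthened, extension-stable inductive statement, the trichotomy for the action on the hyperbolic relative free factor graph (ping-pong in the general-type case, arational-tree stabilizers in the parabolic case, restriction/corestriction plus the abelian group of twists in the bounded/reducible case) --- so at the level of detail given it is essentially the same proof as the cited source.
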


Since satisfying the Tits Alternative is stable under graph products \cite{AntolinMinasyanTA}, it is enough for us to consider the case when $\Gamma$ is a connected graph.

Throughout Section~\ref{sec:amalg proj} we made the assumption that $\G$ has trivial center. This was so we could apply Lemma~\ref{lem:infty-stars cover Gamma}.
The proof of Theorem~\ref{thms:OutGPTA}
has two stages. We first deal with the special case when $\Gamma$ is the star of a vertex, and later apply the machinery of Section~\ref{sec:amalg proj} to tackle the case of full generality.

\subsubsection{When $\Gamma$ is the star of a vertex}
This is the special case in which we can write $\Gamma$ as a join $\Gamma'\ast K$, where $K$ is a clique and $\Gamma'$ is not the star of any vertex. Then $\G=\G_{\Gamma'}\times \G_K$.

Every element of $\Out^1(\G)$ preserves $\G_K$, so we may define a restriction map 
$$R_K\colon \Out^1(\G)\to \Out(\G_K)$$ 
and a projection map $P_K\colon \Out^1(\G)\to \Out(\G_{\Gamma'})$, induced by taking the quotient of $\G$ by $\G_K$. 
We combine these and consider the map
$$E_K = P_K \times R_K \colon \Out^1(\G)\to \Out(\G_{\Gamma'})\times\Out(\G_K).$$

\begin{prop}\label{prop:ker_star} 
	The kernel of $E_K$ is an abelian group generated by transvections $R^{v^k}_u$, where $v\in K$ and $u\in\Gamma'$. 
\end{prop}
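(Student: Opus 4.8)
The plan is to analyze the generators of $\Out^1(\G)$ and determine which ones lie in $\ker E_K$, then show these commute. Since $\Gamma = \Gamma' \star K$ with $K$ a clique, every vertex of $K$ is adjacent to every vertex of $\Gamma$, so $\st(w) = \Gamma$ for all $w \in K$; in particular $\Gamma \setminus \st(w)$ is empty, so there are no nontrivial partial conjugations with multiplier in $K$. Conversely, for $v \in \Gamma'$, the components of $\Gamma \setminus \st(v)$ lie entirely inside $\Gamma'$, so partial conjugations with multiplier in $\Gamma'$ restrict to partial conjugations of $\G_{\Gamma'}$ and act trivially on $\G_K$; such a partial conjugation is in $\ker E_K$ only if it is trivial in $\Out(\G_{\Gamma'})$, hence trivial in $\Out^1(\G)$. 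So no nontrivial partial conjugation survives in the kernel. For transvections $R_u^{v^k}$: the domination $u \le v$ forces $\lk(u) \subseteq \st(v)$, and one checks that $u,v$ cannot straddle the join in the ``wrong'' direction — if $u \in \Gamma'$ then $v \in \st(u)$ must still dominate, but a vertex of $K$ dominating a vertex of $\Gamma'$ would be fine, whereas a vertex of $\Gamma'$ cannot dominate a vertex of $K$ since $\lk(v\in K) = \Gamma \not\subseteq \st(u)$ for $u \in \Gamma'$. So the possible ``mixed'' transvections are exactly $R_u^{v^k}$ with $u \in \Gamma'$, $v \in K$; these act trivially on both $\G_{\Gamma'}$ (after projecting away $\G_K$, $v^k \mapsto 1$) and $\G_K$, hence lie in $\ker E_K$. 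Transvections with both vertices in $\Gamma'$ map to transvections in $\Out(\G_{\Gamma'})$, and those with both in $K$ map to transvections in $\Out(\G_K)$; such a transvection in $\ker E_K$ must be trivial in the relevant factor.

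The substantive step is then the reverse inclusion: showing that \emph{every} element of $\ker E_K$ is a product of the mixed transvections $R_u^{v^k}$ ($u \in \Gamma'$, $v \in K$). I would argue as follows. Given $\Phi \in \ker E_K$, pick a representative $\phi$. Using $E_K(\Phi) = 1$, I can post-compose $\phi$ with a product of transvections internal to $\Gamma'$ and internal to $K$ to arrange that $\phi$ restricts to (a conjugate of) the identity on both $\G_{\Gamma'}$ and $\G_K$; the standard normal-form/peak-reduction machinery for graph products (or a direct generating-set argument using Theorem~\ref{thm:CGaut} restricted to $\Out^1$) lets me track this. Since $\G = \G_{\Gamma'} \times \G_K$ and $\phi$ is trivial on each factor up to conjugacy, $\phi(u)$ for $u \in \Gamma'$ has the form $g_u u g_u^{-1}$ with $g_u \in \G$, and its image in $\G_{\Gamma'}$ is conjugate to $u$; but the center of $\G$ is trivial here (as $\Gamma'$ is not a star) — actually we should use that the projection to $\G_{\Gamma'}$ kills the $\G_K$ part, so modulo inner automorphisms $\phi$ fixes $\G_{\Gamma'}$ pointwise. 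Then $\phi(w)$ for $w \in K$ lies in the centralizer of $\G_{\Gamma'}$, which is $\G_K$ (since $K$ is the full clique joined to $\Gamma'$ and $\Gamma'$ has trivial center), and $\phi$ acts trivially on $\G_K$, so $\phi$ fixes $K$ pointwise too. The only remaining freedom is that $\phi(u) = g_u u g_u^{-1}$ for $u \in \Gamma'$ with $g_u$ in the centralizer-relevant part; writing $g_u$ in terms of $K$-letters and $\lk$-data and using that $\phi$ must be a genuine automorphism, one reads off that $\phi$ is a product of the $R_u^{v^k}$ with $v \in K$. Commutativity of the kernel then follows from relation~\eqref{eq:transv commute} together with relation~\eqref{eq:transv rel2}: any two such transvections either have distinct multiplier-support pairs (and commute outright) or share the multiplier $u \in \Gamma'$ with supports $v, v' \in K$, and since $K$ is a clique, $v$ and $v'$ commute, so $R_u^{v^k}$ and $R_u^{(v')^{k'}}$ commute by~\eqref{eq:transv rel2}. (There is no interaction of the type in~\eqref{eq:transv rel1}, since that requires the support of one to be the multiplier of another, which cannot happen when all multipliers lie in $\Gamma'$ and all supports in $K$.)

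The main obstacle I anticipate is the reverse inclusion — pinning down that an arbitrary kernel element, after the obvious reductions, is \emph{exactly} a product of mixed transvections and carries no hidden partial-conjugation content. The delicate point is controlling the conjugators $g_u$: a priori $\phi(u) = g_u u g_u^{-1}$ could involve letters from $\Gamma'$ as well, which would look like a partial conjugation supported in $\Gamma'$, but such a partial conjugation is visible in $P_K(\Phi)$ and hence trivial. Making this rigorous requires either a clean appeal to the normal form for graph products or a careful induction on the length of $\phi$ as a word in the generators of Theorem~\ref{thm:CGaut}, shuffling the mixed transvections to one side using Lemma~\ref{lem:relators} and checking that what remains is detected by $E_K$. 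Once the structure of $\phi$ is established, abelianness is a short computation with the three relations in Lemma~\ref{lem:relators}.
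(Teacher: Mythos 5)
Your overall strategy is the paper's: use $R_K$ together with the fact that $\G_K$ is the center of $\G$ to see that any representative $\phi$ fixes $K$ pointwise, use $P_K$ to normalize the $\Gamma'$-part, conclude that $\phi$ is a product of mixed transvections $R_u^{v^k}$ with $u\in\Gamma'$, $v\in K$, and observe that these commute. Two concrete points need repair, however.

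First, your description of kernel elements via conjugates is wrong and manufactures a spurious difficulty. An element of $\ker E_K$ does not satisfy $\phi(u)=g_u u g_u^{-1}$; for instance $R_u^{w}$ sends $u$ to $uw$, which is not conjugate to $u$. The clean statement uses the direct product decomposition $\G=\G_{\Gamma'}\times\G_K$: write $\phi(u)=g_u h_u$ uniquely with $g_u\in\G_{\Gamma'}$ and $h_u\in\G_K$. The condition $P_K(\Phi)=1$ says exactly that $u\mapsto g_u$ is an inner automorphism of $\G_{\Gamma'}$, so after composing with a single inner automorphism of $\G$ one has $\phi(u)=uh_u$ with $h_u=w_1^{r_1}\cdots w_n^{r_n}$ central. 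Once this is set up there is no ``hidden partial-conjugation content'' to worry about and no need for normal forms or peak reduction; the anxiety in your final paragraph dissolves.

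Second, the substantive omission: you never verify that the automorphism with $\phi(u)=uw_1^{r_1}\cdots w_n^{r_n}$ is a product of the \emph{permissible} transvections. Unlike the RAAG case, $R_u^{w_j}$ need not be an automorphism when $o(u)<\infty$; only $R_u^{w_j^{k_j}}$ with the specific exponent $k_j$ from the definition of transvections is a generator. One must use $o(\phi(u))=o(u)$ to deduce that if $o(u)=p^i$ then $r_j=0$ whenever $o(w_j)=\infty$ or $p\nmid o(w_j)$, and $p^{\ell-i}\mid r_j$ whenever $o(w_j)=p^{\ell}$ with $\ell>i$, so that $\phi=\prod_{u}\prod_j\bigl(R_u^{w_j^{k_j}}\bigr)^{t_j}$ with integer exponents $t_j$. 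This order bookkeeping is precisely where the graph-product case departs from the RAAG argument (the paper flags it immediately after the proposition: the kernel is only a \emph{subgroup} of $\G_K^{\abs{\Gamma'}}$), and a proof that skips it has not shown the kernel is generated by the stated elements. Your commutativity argument at the end is fine.
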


\begin{proof} Let $\{w_1, \ldots, w_n\}$ be the vertex set of $K$. Consider $\Phi\in \ker(E_K)$. Then $\Phi\in\ker(R_K)$ and, since $\G_K$ is the center of $\G$, for any representative $\phi$ of $\Phi$, $\phi(w_i)=w_i$ for all $w_i\in K$.
	Furthermore, for each such $\phi$ and any $u\in \Gamma'$, we have that $\phi(u)=g_u h_u$, where $g_u\in\gspan{\Gamma'}$ and $h_u\in\gspan{K}$. Since $\Phi\in\ker(P_K)$, we can choose a representative so that $g_u=u$ for all $u\in \Gamma'$. Thus, $\phi(u)=uw_1^{r_1}\cdots w_n^{r_n}$, with $r_i=r_i(u)$ all integers.
	
	Note that $o(u)=o(\phi(u))$, so if $o(u)=p^i$ for some prime $p$, then we must have that $o(w_1^{r_1}\cdots w_n^{r_n})\mid p^i$. Thus, whenever $o(w_j)=\infty$ or $p\nmid o(w_j)$, we get $r_j=0$. Further, if $o(w_j)=p^\ell$ with $\ell>i$, we must have that $p^{\ell-i}\mid r_j$. In this case, set $k_j=p^{\ell-i}$ and $t_j=r_jp^{i-\ell}$, otherwise set $k_j=1$ and $t_j=r_j$.
	
	Thus, we can see 
	$$\phi=\displaystyle\prod_{u\in \Gamma'}\left(\prod_{j=1}^n\left(R^{w_j^{k_j}}_u\right)^{t_j}\right).$$
	The choice of each integer $k_j$ ensures each transvection in the product is an automorphism.
	The order of the transvections is not important since they all commute, implying also that the kernel is abelian.
\end{proof}

As a consequence of Proposition~\ref{prop:ker_star}, the kernel of $E_K$ is isomorphic to a subgroup of $\G_K^{\abs{\Gamma'}}$.
In contrast to the RAAG case, see \cite[Proposition 4.4]{CV09}, in general it may not be the whole group since not every transvection is necessarily permissible --- order of elements must be considered. 

\begin{prop}\label{prop:star_SES}
	There  is an abelian subgroup $\Tr$ of $\Out^1(\G)$ such that
	$$\Out^1(\G)\cong \Tr\rtimes (\Out^1(\G_{\Gamma'})\times\Out^1(\G_K)).$$
\end{prop}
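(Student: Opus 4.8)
The plan is to realize the claimed isomorphism as the splitting of the short exact sequence
\[
1 \longrightarrow \Tr \longrightarrow \Out^1(\G) \xrightarrow{\ E_K\ } \Out^1(\G_{\Gamma'})\times\Out^1(\G_K) \longrightarrow 1,
\]
where $\Tr := \ker E_K$, which is abelian by Proposition~\ref{prop:ker_star}. So three things need to be checked: that $E_K$ actually takes values in $\Out^1(\G_{\Gamma'})\times\Out^1(\G_K)$; that this corestriction is onto; and that it admits a section.

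For the section I would use the product decomposition $\G = \G_{\Gamma'}\times\G_K$ directly: any automorphism of $\G_{\Gamma'}$ extends by the identity on $\G_K$, any automorphism of $\G_K$ extends by the identity on $\G_{\Gamma'}$, and since these act on complementary direct factors they commute, giving a homomorphism $\Aut(\G_{\Gamma'})\times\Aut(\G_K)\to\Aut(\G)$ which descends to outer automorphism groups to give $s$. To see that $s$ lands in $\Out^1(\G)$, observe that since $\Gamma=\Gamma'\star K$ with $K$ a clique one has $\lk_\Gamma(u)=\lk_{\Gamma'}(u)\star K$ and $\st_\Gamma(v)=\st_{\Gamma'}(v)\star K$ for $u,v\in\Gamma'$, while $\st_\Gamma(w)=\Gamma$ for $w\in K$; consequently every transvection and every partial conjugation of $\G_{\Gamma'}$ is one of $\G$, and every transvection of $\G_K$ is one of $\G$ (and $\G_K$ has no partial conjugations). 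Thus $s$ carries the transvection and partial-conjugation generators of $\Aut^1$ of the two factors to such generators of $\Aut^1(\G)$. That $E_K\circ s=\operatorname{id}$ then follows at once from the descriptions of $P_K$ (the quotient by the central factor $\G_K$) and $R_K$ (restriction to $\G_K$), verified on these generators; in particular the corestriction of $E_K$ is surjective.

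Next I would classify the generating transvections and partial conjugations of $\Out^1(\G)$, both to confirm $E_K(\Out^1(\G))\subseteq\Out^1(\G_{\Gamma'})\times\Out^1(\G_K)$ and to see the group-theoretic structure. Using the star/link identities above: a partial conjugation $\pi^v_C$ with $v\in K$ is trivial (as $\st_\Gamma(v)=\Gamma$), and with $v\in\Gamma'$ it is $s$ of a partial conjugation of $\G_{\Gamma'}$, since the components of $\Gamma\setminus\st_\Gamma(v)$ and of $\Gamma'\setminus\st_{\Gamma'}(v)$ coincide; a transvection $R^{v^k}_u$ with $u,v\in\Gamma'$ (resp.\ $u,v\in K$) is $s$ of a transvection of $\G_{\Gamma'}$ (resp.\ $\G_K$), the domination conditions transferring by those same identities; a transvection with $u\in\Gamma'$, $v\in K$ lies in $\Tr$ by Proposition~\ref{prop:ker_star}; and there is no transvection $R^{u^k}_w$ with $u\in\Gamma'$, $w\in K$, since either domination condition would force $\Gamma'\subseteq\st_{\Gamma'}(u)$, i.e.\ $\Gamma'=\st_{\Gamma'}(u)$, contradicting the hypothesis that $\Gamma'$ is not the star of a vertex. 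Hence every generator of $\Out^1(\G)$ maps under $E_K$ into $\Out^1(\G_{\Gamma'})\times\Out^1(\G_K)$, and in fact $\Out^1(\G)$ is generated by $\Tr$ together with $s(\Out^1(\G_{\Gamma'}))$ and $s(\Out^1(\G_K))$.

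With these in hand the sequence above is short exact and split, so $\Out^1(\G)=\Tr\rtimes s\bigl(\Out^1(\G_{\Gamma'})\times\Out^1(\G_K)\bigr)$ (here $\Tr\cap\Im s=1$ because $E_K\circ s=\operatorname{id}$), which is the asserted decomposition, with $\Tr$ abelian by Proposition~\ref{prop:ker_star}. The step I expect to be the real work is the generator classification — in particular ruling out the ``cross'' transvections $R^{u^k}_w$ with $u\in\Gamma'$ and $w\in K$, which is precisely where the hypothesis that $\Gamma'$ is not the star of a vertex is used — together with the routine but unavoidable verification that $s$, $P_K$, and $R_K$ are well defined after passing to outer automorphism groups.
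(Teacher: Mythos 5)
Your proposal is correct and follows essentially the same route as the paper: take $\Tr=\ker E_K$ (abelian by Proposition~\ref{prop:ker_star}), check on the transvection and partial-conjugation generators that the image of $E_K$ is exactly $\Out^1(\G_{\Gamma'})\times\Out^1(\G_K)$, and split the sequence using the direct-product decomposition $\G\cong\G_{\Gamma'}\times\G_K$. Your explicit exclusion of transvections with support in $K$ and multiplier in $\Gamma'$ (via the hypothesis that $\Gamma'$ is not the star of a vertex) is a detail the paper asserts without comment, and is a worthwhile addition.
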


\begin{proof} 
	The semidirect product comes from the short exact sequence with quotient map $E_K$.
	The subgroup $\Tr$ is the kernel of $E_K$, which is abelian by Proposition~\ref{prop:ker_star}.
	We now need to show that the image of $E_K$ is precisely $\Out^1(\G)\times\Out^1(\G_K)$.
	
	First consider a transvection  $R^{v^k}_u$. 
	If $v\in K$ and $u\in \Gamma'$, then $E_K(R^{v^k}_u)=(1,1)$. 
	If $u,v\in K$, then $E_K(R^{v^k}_u)=(1, R^{v^k}_u)$.
	If $v\in \Gamma'$, then $u\in \Gamma'$ as well, and $E_K(R^{v^k}_u)=(R^{v^k}_u, 1)$.
	
	Now consider a (non-inner) partial conjugation $\pi^u_C$. Then $u\in \Gamma'$, and so $E_K(\pi^u_C)=(\pi^u_C, 1)$.
	
	The image of $E_K$ is generated by these elements, making is clear that the image is precisely $\Out^1(\G_{\Gamma'})\times\Out^1(\G_K)$.
	
	That the sequence is split is clear, since $\G\cong \G_{\Gamma'}\times\G_K$, and so any pair of automorphisms $\phi_1\in\Aut^1(\G_{\Gamma'})$, $\phi_2\in \Aut^1(\G_K)\cong\Out^1(\G_K)$ lifts to an automorphism of $\G$.
	Further, such an automorphism is inner if and only if $\phi_1$ is inner and $\phi_2$ is the identity (since $\G_K$ is abelian and central). 
	Thus, the sequence is split, and so $\Out^1(\G)$ decomposes as a semi-direct product.
\end{proof}

\subsubsection{The general case}
We are now ready to prove Theorem~\ref{thms:OutGPTA}. As in \cite{CV11,HorbezTA}, the proof is an induction on the size of $\Gamma$, using the set-up above.

\begin{proof}[Proof of Theorem~\ref{thms:OutGPTA}] First note that satsifying the Tits' alternative is stable under subgroups, finite index supergroups, direct products, and extensions by abelian groups. In particular, we can restrict our attention to $\Out^1(\G_\Gamma)$. 
	
	We proceed by induction on the number of vertices in $\Gamma$.
	If $\Gamma$ consists of a single vertex, then the Tits Alternative holds.
	
If $\Gamma$ has more than one vertex but is not connected, then $\G$ splits as a free product with each free factor and its outer automorphism group satisfying the Tits alternative by \cite{AntolinMinasyanTA} and the inductive hypothesis respectively.
Then by Horbez' theorem for free products, Theorem~\ref{thm:HorbezFP}, $\Out(\G)$ satisfies the Tits alternative.
	
	Now suppose $\Gamma$ is connected.
	First, if $\Gamma$ is the star of a vertex, with $\Gamma=\Gamma'\star K$, 
	then Proposition~\ref{prop:star_SES} and our inductive hypothesis implies that $\Out^1(\G)$ satisfies the Tits alternative. 
	
	If $\Gamma$ is connected and not a star, we apply the amalgamated projection map:
	$$P\colon \Out^1(\G_{\Gamma})\to \prod_{[v]\text{ maximal}}\Out(\G_{\lk[v]}).$$
	By induction, 
	$\Out(\G_{\lk[v]})$ satisfies the Tits Alternative for all maximal equivalence classes $[v]$. Further, by Theorem~\ref{thm:kerP_abelian}, the kernel of $P$ is abelian. Thus, $\Out^1(\G)$ is an extension of a subgroup of a product of groups which satisfy the Tits' alternative by an abelian group. Hence, $\Out^1(\G)$ satisfies the Tits Alternative.
\end{proof}

Indeed, \cite[Theorem A]{AntolinMinasyanTA}, which implies that every subgroup of $\G$ is either virtually abelian (and hence virtually polycyclic) or contains $F_2$. Using \cite[Theorem 6.1]{HorbezTA}, and the argument above we obtain the following, stronger, statement.

\begin{cor} \label{cor:polycyclic TA}
	Let $\G$ be a graph product of finitely generated abelian groups. 
	Then every subgroup of $\Out(\G)$ either contains $F_2$ or is virtually polycyclic.
\end{cor}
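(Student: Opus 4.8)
The plan is to re-run the inductive proof of Theorem~\ref{thms:OutGPTA}, tracking throughout the stronger conclusion that the virtually solvable subgroups are in fact virtually polycyclic. Call a group \emph{good} if every one of its subgroups either contains a nonabelian free subgroup or is virtually polycyclic. First I would record the closure properties of the class of good groups. It is trivially closed under passing to subgroups. It is closed under passing to finite-index overgroups: if $G$ is good and $[G':G]<\infty$, then for $H\le G'$ the subgroup $H\cap G$ has finite index in $H$, and either $H\cap G$ (hence $H$) contains $F_2$, or $H\cap G$ is virtually polycyclic, making $H$ finite-by-virtually-polycyclic, hence virtually polycyclic. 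It is closed under extensions $1\to N\to G\to Q\to 1$ in which $N$ is polycyclic (in particular, finitely generated abelian) and $Q$ is good: for $H\le G$, the group $H\cap N$ is polycyclic and $H/(H\cap N)$ embeds in $Q$, so either $H/(H\cap N)$ contains $F_2$ --- which lifts to a free subgroup of $H$ since free groups are projective --- or $H/(H\cap N)$ is virtually polycyclic, whence $H$ is virtually polycyclic because virtual polycyclicity is closed under extensions. Finite direct products are the special case $N=G_1$, $Q=G_2$ of this, after intersecting a subgroup with the first factor.

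Next I would assemble the two external inputs. By \cite[Theorem A]{AntolinMinasyanTA}, every graph product of finitely generated abelian groups has the property that each of its subgroups is virtually abelian --- hence virtually polycyclic --- or contains $F_2$; in particular every special subgroup $\G_\Lambda$, and therefore every free factor $G_i$ in the Grushko decomposition of $\G$, is good. And \cite[Theorem~6.1]{HorbezTA} is precisely the refinement of Theorem~\ref{thm:HorbezFP} that we need for the free-product step: if each Grushko factor $G_i$ is good and each $\Out(G_i)$ is good, then $\Out(G)$ is good. Thus every ingredient used in the proof of Theorem~\ref{thms:OutGPTA} is available in the strengthened ``virtually polycyclic'' form.

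With these in hand, I would repeat the induction on the number of vertices of $\Gamma$ from the proof of Theorem~\ref{thms:OutGPTA} verbatim, everywhere replacing ``satisfies the Tits alternative'' by ``is good''. After reducing to the finite-index subgroup $\Out^1(\G)$, the single-vertex base case is a finite group or $\Z_2$, hence good. If $\Gamma$ is disconnected, $\Out(\G)$ is good by \cite[Theorem~6.1]{HorbezTA}, using the inductive hypothesis for the outer automorphism groups of the components and \cite[Theorem A]{AntolinMinasyanTA} for the free factors themselves. If $\Gamma=\Gamma'\star K$ is a star, Proposition~\ref{prop:star_SES} exhibits $\Out^1(\G)$ as an extension of $\Out^1(\G_{\Gamma'})\times\Out^1(\G_K)$ --- good by induction and the product closure --- by the finitely generated abelian, hence polycyclic, group $\Tr$, so $\Out^1(\G)$ is good. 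If $\Gamma$ is connected and not a star, the amalgamated projection $P\colon\Out^1(\G)\to\prod_{[v]\ \textrm{maximal}}\Out(\G_{\lk[v]})$ has, by Theorem~\ref{thm:kerP_abelian}, finitely generated abelian (hence polycyclic) kernel, while each $\lk[v]$ has strictly fewer vertices than $\Gamma$, so each $\Out(\G_{\lk[v]})$ is good by induction; therefore $\Out^1(\G)$ is an extension of a subgroup of a good group by a polycyclic group, hence good. This completes the induction, and since $\Out^1(\G)$ has finite index in $\Out(\G)$, the latter is good, which is the assertion of the corollary.

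The main thing to get right, beyond transporting the bookkeeping of the induction, is the insistence that the kernels appearing in Proposition~\ref{prop:star_SES} and Theorem~\ref{thm:kerP_abelian} (and, inside the disconnected case, in Horbez's theorem) are \emph{polycyclic} rather than merely abelian or solvable --- this is exactly what keeps the successive extensions virtually polycyclic instead of only virtually solvable --- and this holds because all of these kernels are finitely generated abelian. The only other point requiring care is invoking the correctly refined free-product theorem \cite[Theorem~6.1]{HorbezTA} and observing that the Grushko factors of a graph product of finitely generated abelian groups are themselves graph products of finitely generated abelian groups, so that \cite[Theorem A]{AntolinMinasyanTA} applies to them.
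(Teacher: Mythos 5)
Your proposal is correct and is essentially the paper's own argument: the paper proves the corollary by citing \cite[Theorem A]{AntolinMinasyanTA} for the factors, \cite[Theorem 6.1]{HorbezTA} for the free-product step, and then re-running the induction from the proof of Theorem~\ref{thms:OutGPTA}, exactly as you do, with the key observation being that the kernels in Proposition~\ref{prop:star_SES} and Theorem~\ref{thm:kerP_abelian} are finitely generated abelian and hence polycyclic. Your write-up simply makes explicit the closure properties that the paper leaves implicit.
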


This is frequently called satisfying the Tits Alternative with respect to the class of virtually polycyclic groups. 
Furthermore, this implies that all abelian subgroups of $\Out(\G)$ are finitely generated.

\subsection{Residual Finiteness} We now show that $\Out(\G_\Gamma)$ is residually finite. The proof follows much of the same path as \cite[Theorem 10]{CV11}, though we refrain from using restriction maps, and instead use only the amalgamted projection homomorphism and Theorem~\ref{thm:kerP_abelian}. An alternative proof of this fact is due to Ferov \cite[Corollary 1.5]{Ferov:RF}.

\begin{proof}[Proof of Theorem~\ref{thms:OutGPRF}] We proceed by induction on the number of vertices of $\Gamma$. If $\Gamma$ has a single vertex, then $\Out(\G_\Gamma)$ is finite, and hence residually finite. 
	
	Now suppose that $\Gamma$ has more than one vertex.
	
	 Minasyan and Osin showed that the outer automorphism group of a residually finite group with infinitely many ends is residually finite \cite[Theorem 1.5]{MinasyanOsin:Aut}.
	Meanwhile, Green and, by other means, Hsu and Wise, proved that the graph product of residually finite groups is residually finite \cite[Corollary 5.4]{Green:graphprod}, \cite[Theorem 3.7]{HsuWise}.
	These two results, and the fact that $\Out(\Z_2\ast\Z_2)$ is finite, prove that when $\Gamma$ is disconnected, $\Out(\G_\Gamma)$ is residually finite. 
	
	If $\Gamma$ is the star of a vertex, we use Proposition~\ref{prop:star_SES} and the fact that a semi-direct product of residually finite groups is residually finite.
	
	Finally, suppose $\Gamma$ is connected and not a star. It is enough to show that $\Out^1(\G)$ is residually finite. We consider the amalgamated projection homomorphism:
	$$P\colon \Out^1(\G_{\Gamma})\to \prod_{[v] \text{ maximal}} \Out^1(\G_{\lk[v]}).$$
	
	By induction, for each maximal equivalence class $[v]$, $\Out^1(\G_{\lk[v]})$ is residually finite. Thus, if $\Phi\in \Out^1(\G_{\Gamma})$ is such that $P(\Phi)\neq 1$, then there is a finite quotient of $\Out^1(\G_{\lk[v]})$ for some maximal equivalence class $[v]$ where $\Phi$ survives.
	
	If $P(\Phi)=1$, then by Theorem~\ref{thm:kerP_abelian} we can write $\Phi=R_1\cdots R_s\pi_1\cdots\pi_k$, where $R_1,\ldots,R_s$ are powers of leaf-like transvections and $\pi_1,\ldots,\pi_k$ are (products of) partial conjugations with distinct multipliers $v_i$ and whose supports are unions of bridged $v_i$--components.
	Take such a product with $s+k$ minimal.
	
	If $s\neq 0$, then let $R_1=R^{v^k}_u$ be the first leaf transvection. Let $[w]$ be a maximal equivalence class in $\lk[v]$. 
	Then since $u\le v$ and they are not equivalent, $[w]\neq [u]$.
	Consider the factor map:
	$$E\colon \Out^1(\G_{\Gamma})\to \Out^1(\G_{\Gamma-[w]}).$$
	Note that such a homomorphism exists, by Lemma~\ref{lem:preservation of maximal stars}, since $[w]$ is maximal. 
 Given the structure of $\Phi$ as the product of transvections and partial conjugations above, 
	the image of $u$ under $E(\Phi)$ will be conjugate to an element of the form $uy$, where $y \in \grp{[v]}$ and is non-trivial.
	It follows therefore that $E(\Phi)(u)$ is not conjugate to $u$ (see for example \cite[Lemma 3.12]{FerovConjSep}). In particular $E(\Phi)$ is non-trivial, and, by induction, $\Phi$ survives in some finite quotient.
	
	If $s=0$, this can be dealt with as in \cite[Theorem 10]{CV11}. We have $\Phi=\pi_1\cdots\pi_k$, and since the kernel of $P$ is abelian we can group the partial conjugations together so $\Phi = \bar{\pi}_1 \cdots \bar{\pi}_{k'}$ where $\bar{\pi}_i$ is a product of partial conjugations with multipliers in $[v_i]$, and $[v_i]\neq[v_j]$ for $i\neq j$.
	Take $[w]$ a maximal equivalence class in $\lk [v_1]$, and use the same exclusion map $E$ as above.
	Since the deleted vertices are adjacent to $[v_1]$, the image of $\bar{\pi}_1$ remains a (non-inner) product of partial conjugations.
	Let $u$ be in the support of $\bar{\pi}_1$, such that $\bar{\pi}_1(u) = v_1^kx ux^{-1}v_1^{-k}$, with $k\neq 0$ and $x\in \grp{[v_1]\setminus \{v_1\}}$. 
	Then $\Phi(u) = gug^{-1}$ for some $g$ in which  the exponent sum of $v_1$ is equal to $k$.
	It follows that $E(\Phi)(u) \neq u$, completing the proof.
\end{proof} 

\subsection{Other potential applications}

In \cite{CV09, CV11} Charney and Vogtmann also used the inductive technique above to  prove other results for RAAGs which we do not extend here. For instance,
we do not investigate the virtual cohomological dimension. 
Since the kernel of the amalgamated projection homomorphism may have torsion, it is not clear that it can be used to prove that $\Out(\G)$ is virtually torsion-free. We  note, however, that following work of Carette \cite{Carette} we can deduce that $\Out(\G)$ is virtually torsion-free whenever $o(v)<\infty$ for all vertices $v$. The unresolved case is when some vertices have finite order and some have infinite order.

\section{Generating the Torelli group}\label{sec:torelli}

The standard representation of $\Aut(\G)$ is obtained by acting on the abelianization $\bar{\G}$ of $\G$:
$$\rho \colon \Aut(\G) \to \Aut(\bar{\G}).$$
The kernel of $\rho$ is denoted by $\IA_\G$, and is sometimes referred to as the Torelli subgroup of $\Aut(\G)$.
The aim of this section is to prove the following result, which gives Theorem~\ref{thms:Torelli fg}.

\begin{thm}\label{thm:Torelli gen set}
	Let $\G$ be the graph product of finitely generated abelian groups.
	Then the Torelli subgroup $\IA_\G$ is generated by the set of all partial conjugations and commutator transvections $R_u^{[v,w]} = [R_u^v,R_u^w]$, when $u\leq_\infty v,w$.
\end{thm}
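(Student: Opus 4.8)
The plan is to prove the two inclusions $\IA_\G\supseteq\gspan{\mathcal S}$ and $\IA_\G\subseteq\gspan{\mathcal S}$ separately, where $\mathcal S$ is the proposed generating set, the second by induction on $\abs{V(\Gamma)}$ along the lines of Magnus's argument for $F_n$ and its right-angled Artin adaptations by Day and Wade, with extra care for finite-order vertices. The easy inclusion is immediate: partial conjugations act trivially on $\bar{\G}$ since conjugation is trivial in any abelian quotient, and for a commutator transvection $R_u^{[v,w]}=[R_u^v,R_u^w]$ one observes that on $\bar{\G}$ each of $R_u^v,R_u^w$ acts by an elementary operation altering only the $\bar u$--coordinate (adding a multiple of $\bar v$, respectively $\bar w$), and such operations commute; hence $R_u^{[v,w]}$ is trivial on $\bar{\G}$, i.e.\ $R_u^{[v,w]}\in\IA_\G$. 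A direct computation shows $R_u^{[v,w]}$ sends $u\mapsto u[v,w]$ and fixes all other vertices, so it is a genuine automorphism whenever $u\leq_\infty v$ and $u\leq_\infty w$.

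For the reverse inclusion I would first confine $\IA_\G$ to a convenient finite-index subgroup. An IA automorphism is by definition killed by the standard representation, hence lies in $\ker\Sigma=\Aut^0(\G)$ and is killed by $D_0$; and since the maps $\operatorname{Res}\circ\operatorname{Fact}$ of Section~\ref{subsec:ResFact} carry $\IA_\G$ into the Torelli subgroup of each special subgroup $\G_X$ (the relevant abelianizations sitting compatibly inside, or as quotients of, $\bar{\G}$), each $D_X$ also kills $\IA_\G$, so by Proposition~\ref{prop:kernel D} one gets $\IA_\G\subseteq\Aut^1_\infty(\G)$. This is not yet enough, because $\Aut^1_\infty(\G)$ still contains finite-order automorphisms such as $R_u^v$ with $o(v)<\infty$ (which has order $o(v)$ and visibly does not lie in $\IA_\G$). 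The remedy is to pass to a further finite-index subgroup $\mathcal A\le\Aut(\G)$, engineered to contain $\IA_\G$ but to exclude these torsion transvections; constructing $\mathcal A$ and verifying $\IA_\G\le\mathcal A$ is the first of two new ingredients beyond the RAAG case.

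With $\IA_\G$ confined to $\mathcal A$, I would induct on $\abs{V(\Gamma)}$; when $\Gamma$ has a single vertex, $\IA_\G=1$. For the inductive step, fix a $\leq_\infty$--maximal vertex $v$ and split $\mathcal S$ into the generators \emph{involving} $v$ — partial conjugations with multiplier $v$, and commutator transvections $R_u^{[a,b]}$ with $v\in\{u,a,b\}$ — and the rest, which only involve vertices of $\Gamma\smallsetminus\{v\}$. Given $\phi\in\IA_\G$, the plan is to multiply $\phi$ by generators involving $v$ so that the result lies in the domain of, and is killed by, a suitable restriction-or-factor homomorphism attached to $\Gamma\smallsetminus\{v\}$ (as in Section~\ref{subsec:ResFact}); the image of this homomorphism lands in the Torelli subgroup of a graph product on fewer vertices, so the inductive hypothesis applies. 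The heart of this reduction is the graph-product analogue of Magnus's rewriting lemma: an IA automorphism fixing every vertex but one, $u$, and sending $u$ to an element whose image in $\bar{\G}$ equals $\bar u$, can be written using partial conjugations (to strip conjugating prefixes) and commutator transvections $R_u^{[\cdot,\cdot]}$, with the relations of Lemma~\ref{lem:relators} used to collect and reorder terms, all the while respecting that the support of every transvection in $\Aut^1_\infty(\G)$ has infinite order.

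I expect the crux to be twofold, both parts forced by finite-order vertices: (i) building $\mathcal A$ and proving $\IA_\G\le\mathcal A$, which quarantines the finite-order automorphisms that would otherwise contaminate any generating set read off from the $\Aut^1_\infty(\G)$ picture; and (ii) pushing the Magnus rewriting lemma through with finite-order multipliers appearing inside brackets $[v,w]$ and with the $o(v)$--torsion transvections present — that is, identifying exactly the kernel of the reduction homomorphism and showing it is generated by the generators involving $v$. Once the correct reduction map and its domain are pinned down, the combinatorial bookkeeping with Lemma~\ref{lem:relators} and the commutation of partial conjugations should be routine.
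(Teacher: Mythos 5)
Your easy inclusion and your confinement of $\IA_\G$ to $\Aut^1_\infty(\G)$ via the homomorphism $D$ coincide with the paper's Proposition~\ref{prop:Torelli in 1 inf}. The reverse inclusion, however, is where the entire content of the theorem lives, and there your plan has a genuine gap rather than deferred bookkeeping. The inductive step deletes a $\leq_\infty$--maximal vertex $v$, passes to a reduction homomorphism onto (the Torelli subgroup of) a smaller graph product, and then invokes a ``Magnus rewriting lemma'' asserting that the kernel of this reduction is generated by the partial conjugations and commutator transvections involving $v$. That kernel statement is a Birman-exact-sequence-type result: even for free groups, identifying generators of the kernel of the map $\IA_n\to\IA_{n-1}$ is a substantial theorem in its own right (Day--Putman), and nothing in the toolkit you point to (Lemma~\ref{lem:relators}, the restriction and factor maps of Section~\ref{subsec:ResFact}) produces it. As written, the proposal restates the theorem one vertex at a time without supplying the mechanism that closes the induction. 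The auxiliary finite-index subgroup $\mathcal{A}$ you say must be built is likewise never constructed, and (as the paper's argument shows) it is not actually needed.

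The paper takes a different, non-inductive route that sidesteps both issues. After confining $\IA_\G$ to $\Aut^1_\infty(\G)$, it writes down an explicit presentation of the image of the standard representation restricted to $\Aut^1_\infty(\G)$, namely $E\rtimes\bigl(N\rtimes(\SL(n_1,\Z)\times\cdots\times\SL(n_k,\Z))\bigr)$ with generators $E_u^v$ and five families of relators (Lemma~\ref{lem:standard rep image relators}, leaning on Day's presentation of the Steinberg-type group). It then proves that the candidate subgroup $K$ generated by partial conjugations and commutator transvections is \emph{normal} in $\Aut^1_\infty(\G)$ by computing all conjugates of generators by transvections (Lemmas~\ref{lem:conjugates of pc by transv} and~\ref{lem:conjugates of ct by transv}), and finally checks that every relator of the presentation lifts to an element of $K$; this forces $K$ to equal the kernel of the restricted standard representation, i.e.\ $\IA_\G$. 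In particular, the finite-order transvections $R_u^v$ with $o(u)=\infty$ and $o(v)<\infty$ that motivate your extra subgroup $\mathcal{A}$ are dispatched by the torsion relator $(E_u^v)^{o(v)}$, which lifts to $(R_u^v)^{o(v)}=1$. To salvage your inductive scheme you would have to prove the kernel-generation lemma from scratch, which is at least as hard as the normality-plus-lifted-relators argument the paper actually carries out.
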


We note that Proposition~\ref{prop:torelli gp finite} already proves Theorem~\ref{thm:Torelli gen set} in the case when all vertex groups are finite, since in this case there are no commutator transvections.
However this is a very special case, and, as we shall see, having infinite order vertices makes things considerably more complicated.

As discussed in the introduction, Theorem~\ref{thm:Torelli gen set} generalizes the result of Day for RAAGs \cite{Day_symplectic} (see also Wade \cite{WadeThesis}).
In proving it we often appeal to both sources for inspiration, particularly in finding necessary identities at certain steps of the proof.

The first step is to show that we may restrict to one of our finite-index subgroups without changing the kernel of the standard representation.

\begin{prop}\label{prop:Torelli in 1 inf}
	The Torelli group $\IA_\G$ is a subgroup of $\Aut^1_\infty(\G)$.
\end{prop}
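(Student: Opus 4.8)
The plan is to use the two ``structure-detecting'' homomorphisms built in Section~\ref{sec:fi subgroups}: the map $\Sigma\colon\Aut(\G)\to\Aut(\Gamma_\le)$ of Proposition~\ref{prop:getting graph symmetries}, whose kernel is $\Aut^0(\G)$, and the map $D\colon\Aut^0(\G)\to\Aut(T)\times\prod_{X\in\cal{E}}\Z_2$ of \eqref{eq:defn D}, whose kernel is $\Aut^1_\infty(\G)$ by Proposition~\ref{prop:kernel D}. Since $\Aut^1_\infty(\G)\le\Aut^0(\G)\le\Aut(\G)$, it suffices to show first that $\IA_\G\subseteq\ker\Sigma$ and then that $D$ kills $\IA_\G$.

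For the first step I would use that the abelianization splits as $\bar\G = \bigoplus_{v}\langle\bar v\rangle$, a direct sum of the cyclic groups generated by the vertices, so that the image $L_X$ of the special subgroup $\G_{\ge X}$ in $\bar\G$ is exactly the summand $\bigoplus_{v\ge x\text{ some }x\in X}\langle\bar v\rangle$. Distinct equivalence classes give distinct subgroups $L_X$, because the up-set of $X$ has $X$ as its set of $\le$--minimal elements, so $L_X$ determines $X$. Now let $\phi\in\Aut(\G)$ and $\sigma = \Sigma(\phi)$. By Lemma~\ref{lem:autget}, $\phi(\G_{\ge X})$ is conjugate to $\G_{\ge\sigma X}$; applying the standard representation $\rho$, and using that conjugation is trivial on $\bar\G$, gives $\rho(\phi)(L_X) = L_{\sigma X}$ for every equivalence class $X$. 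Thus $\sigma$ is completely determined by $\rho(\phi)$. In particular, if $\phi\in\IA_\G = \ker\rho$, then $\rho(\phi)$ fixes every $L_X$, forcing $\sigma X = X$ for all $X$; that is, $\Sigma(\phi)=1$ and $\phi\in\Aut^0(\G)$.

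For the second step, fix $\phi\in\IA_\G$; we check $D(\phi)=1$. The map $D_0$ is, by construction, a composite beginning with the standard representation, so $D_0(\phi)=1$ is immediate. For $X\in\cal{E}$ the map $D_X$ runs $\Aut^0(\G)\to\Out^0(\G)\xrightarrow{\operatorname{Res}\circ\operatorname{Fact}}\Out^0(\G_X)\xrightarrow{\rho_X}\GL(N_X,\Z)\xrightarrow{\det}\Z_2$, where $N_X$ is the rank of the free or free-abelian group $\G_X$. Unwinding the definitions of $\operatorname{Fact}$ and $\operatorname{Res}$, a representative $\psi\in\Aut(\G_X)$ of $(\operatorname{Res}\circ\operatorname{Fact})(\Phi)$ sends each vertex $x\in X$ to an element that is conjugate, inside $\G_X$, to the image of $\phi(x)$ under the retraction $\kappa_X\colon\G\to\G_X$ that kills every vertex outside $X$. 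Abelianizing $\G_X$, conjugation disappears and the composite of $\kappa_X$ with abelianization equals the composite of $\G\to\bar\G$ with the projection $\pi_X\colon\bar\G\to\bar\G_X$ onto the $X$--summand, so
\[
\rho_X\big((\operatorname{Res}\circ\operatorname{Fact})(\Phi)\big)(\bar x) = \pi_X\big(\rho(\phi)(\bar x)\big) = \pi_X(\bar x) = \bar x \qquad (x\in X),
\]
using $\rho(\phi)=1$. Hence $\rho_X((\operatorname{Res}\circ\operatorname{Fact})(\Phi))$ is trivial and $D_X(\phi)=1$. Therefore $D(\phi)=1$, so $\phi\in\ker D = \Aut^1_\infty(\G)$ by Proposition~\ref{prop:kernel D}.

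The step I expect to require the most care is the displayed identity in the last paragraph: one must track the conjugating element introduced by the restriction map and the successive vertex-killing quotients to confirm that $\operatorname{Res}\circ\operatorname{Fact}$ intertwines the standard representation of $\Aut(\G)$ with the projection $\pi_X$ onto the $X$--summand of $\bar\G$. Once this compatibility is pinned down, everything else is bookkeeping with the explicit direct-sum decomposition of $\bar\G$ and the already-established facts that $\ker\Sigma = \Aut^0(\G)$ and $\ker D = \Aut^1_\infty(\G)$.
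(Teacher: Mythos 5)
Your proof is correct and follows essentially the same two-step strategy as the paper: first place $\IA_\G$ inside $\Aut^0(\G)=\ker\Sigma$ via Proposition~\ref{prop:getting graph symmetries} and Lemma~\ref{lem:autget}, then show $D$ kills $\IA_\G$ so that Proposition~\ref{prop:kernel D} gives containment in $\Aut^1_\infty(\G)$. Your version merely fills in more detail than the paper does (the explicit direct-sum bookkeeping for $\Sigma$, and the verification that $\operatorname{Res}\circ\operatorname{Fact}$ intertwines $\rho$ with the projection $\pi_X$, which the paper dispatches in one sentence).
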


\begin{proof} 
	We first show that $\IA_\G\le \Aut^0(\G)$.
Given $\phi\in \Aut(\G)$, we decompose $\phi=\alpha \phi_0$, where $\alpha$ is the identity or an automorphism induced by a graph symmetry which is not in $\Aut^0(\G)$, and $\phi_0\in\Aut^0(\G)$. 
	If $\phi\in \IA_\G$ then $\rho(\alpha) = \rho(\phi_0^{-1})$.
	We appeal to Proposition~\ref{prop:getting graph symmetries}.
	If $\alpha \neq 1$ then $\alpha \not\in\Aut^0(\G)$ and there is some equivalence class $X$ such that $\G_{\geq X} \neq \G_{\geq\alpha X}$.
	In particular, there is some $x\in X$ such that $\alpha(x)$ is not in $\geq${}$X$.
	However, $\phi_0^{-1} \in \Aut^0(\G)$, so by Proposition~\ref{prop:getting graph symmetries}, $\phi_0^{-1}(x) \in \G_{\geq X}$.
	This implies that $\rho(\alpha)(x) \neq \rho(\phi_0^{-1})(x)$, a contradiction.
		
	Now we show that $\IA_\G\le \Aut^1_\infty(\G)$.
	To do this, we use the homomorphism $D$ from \eqref{eq:defn D} in Section~\ref{sec:homom D}, whose kernel is $\Aut^1_\infty(\G)$ by Proposition~\ref{prop:kernel D}.
	 Recall that $D$ is a homomorphism constructed by bundling together homomorphisms $D_X$, for equivalence classes $X$ containing infinite order elements, and $D_0$:
	$$
	D := D_0 \times \prod_{X\in\cal{E}} D_X \colon \Aut^0(\G) \to \Aut(T) \times \prod_{X\in\cal{E}}  \Z_2 .
	$$	
	The fact that $\IA_\G \subseteq \ker D$ follows because each of the maps $D_X$ or $D_0$ involves the standard representation, or a restriction of it.	
\end{proof} 

We can write the abelianization of $\G$ as $\bar{\G} \cong \Z^n \times T$, where $T$ is a finite abelian group.
We denote the image of $R_u^v \in \Aut^1_\infty(\G)$ under $\rho$ by $E_u^v$.
Let $C_1,\ldots,C_k$ be the list of all $\leq_\infty$--equivalence classes in $\Gamma$ that contain infinite order vertices.

\begin{lem}\label{lem:standard rep image relators}
	The image of $\Aut^1_\infty(\G)$ under $\rho$ is isomorphic to
	$$E \rtimes (N\rtimes (\SL(n_1,\Z) \times \cdots \times \SL(n_k,\Z)))$$
	where $N$ is a finitely generated nilpotent group, and $E$ is a subgroup of $T^n$ generated by the image of transvections $E^v_u$, with $o(v)<\infty$, and $n_i = \abs{C_i}$, for $i=1,\ldots k$, are the sizes of the infinte-order equivalence classes of $\Gamma$.
	
	Furthermore, it has presentation with generators
	$$\left\{ {E_u^v} \colon u\leq_\infty v \right\}$$
	and relators
	\begin{enumerate}
		\item $[{E_u^v}, {E_x^y}]$ if $v\neq x$ and $u\neq y$;
		\item $[{E_v^w}, {E_u^v}] E_u^{w^{-1}}$ if $u\neq w$;
		\item $(E_u^v E_v^{u^{-1}} E_u^v)^4$ if $u,v \in C_i$ for some $i$;
		\item $(E_u^v E_v^{u^{-1}} E_u^v)^2 (E_u^v E_v^{u^{-1}} E_u^v E_v^u)^{-3}$ if $\{u,v\}=C_i$ for some $i$;
		\item $(E_u^v)^{o(v)}$ if $o(v)<\infty$.
	\end{enumerate}
\end{lem}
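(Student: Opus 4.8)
The plan is to first fix a generating set for $\rho(\Aut^1_\infty(\G))$, then identify its structure as an iterated semidirect product inside $\Aut(\bar{\G})$, and finally check that the five listed families of relators give a presentation. Since partial conjugations act trivially on $\bar{\G}$, the image $\rho(\Aut^1_\infty(\G))$ is generated by the elements $E_u^v=\rho(R_u^v)$ for $u\leq_\infty v$. Recall $u\leq_\infty v$ forces $o(u)=\infty$, so $R_u^v$ is $u\mapsto uv$, and on $\bar{\G}\cong\Z^n\times T$ it sends $\bar u\mapsto\bar u+\bar v$ fixing the other generators. Using the splitting $\Aut(\bar{\G})\cong T^n\rtimes(\GL(n,\Z)\times\Aut(T))$ of Lemma~\ref{lem:aut abelian}: if $o(v)=\infty$ then $E_u^v$ lies in the $\GL(n,\Z)$-factor as the transvection matrix $e_{uv}$ (the matrix of $\bar u\mapsto\bar u+\bar v$), of determinant $1$; if $o(v)<\infty$ then, since $\bar v\in T$, $E_u^v$ is the automorphism $(z,t)\mapsto(z,\,t+\lambda(z))$ with $\lambda\colon\Z^n\to T$, $e_u\mapsto\bar v$ and other free generators to $0$, hence lies in the normal abelian factor $T^n$. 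So $\rho(\Aut^1_\infty(\G))\subseteq T^n\rtimes\SL(n,\Z)$.

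Next I would read off the structure. Set $E=\langle E_u^v: o(v)<\infty\rangle\leq T^n$; it is abelian, and since distinct torsion vertices contribute distinct direct summands of $T$, it is the finite group $\bigoplus_{\{(u,v)\,:\,o(v)<\infty\}}\Z/o(v)$, normalized by all of $\Aut(\bar{\G})$. The quotient $\rho(\Aut^1_\infty(\G))/E$ is the subgroup $H\leq\SL(n,\Z)$ generated by the matrices $e_{uv}$ with either $[u]_\infty\neq[v]_\infty$ (and $u\leq_\infty v$) or $u,v\in C_i$ for some $i$. Since $\leq_\infty$ descends to a partial order on the finite set of $\leq_\infty$-classes, $H$ is block upper triangular for this order: the diagonal block on $C_i$ is generated by $\{e_{uv},e_{vu}:u,v\in C_i\}$, which is all of $\SL(n_i,\Z)$ because $\Z$ is Euclidean, and these blocks commute across distinct classes by \eqref{eq:transv commute}; the strictly upper triangular part $N$, generated by the $e_{uv}$ with $[u]_\infty\neq[v]_\infty$, is the unipotent radical, a finitely generated nilpotent group normalized by the diagonal blocks and complemented block-diagonally by $\prod_i\SL(n_i,\Z)$. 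This gives $H\cong N\rtimes\prod_i\SL(n_i,\Z)$, hence $\rho(\Aut^1_\infty(\G))\cong E\rtimes(N\rtimes\prod_i\SL(n_i,\Z))$.

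That the relators hold is mostly a matter of quoting Lemma~\ref{lem:relators}: relators (1) and (2) are (consequences of) the identities \eqref{eq:transv rel1}--\eqref{eq:transv commute} pushed through $\rho$, and relators (5) hold because iterating $\bar u\mapsto\bar u+\bar v$ a total of $o(v)$ times is the identity. Relators (3) and (4) are a $2\times 2$ computation inside a $C_i$-block: in suitable coordinates $E_u^vE_v^{u^{-1}}E_u^v=\left(\begin{smallmatrix}0&-1\\1&0\end{smallmatrix}\right)$, of order $4$, giving (3), while $E_u^vE_v^{u^{-1}}E_u^vE_v^u=\left(\begin{smallmatrix}0&-1\\1&1\end{smallmatrix}\right)$ has order $6$ and both elements have square, resp.\ cube, equal to $-I$, giving (4).

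Finally I would show these relators suffice. Let $\hat G$ be the group with the stated presentation and $q\colon\hat G\twoheadrightarrow\rho(\Aut^1_\infty(\G))$ the obvious surjection; the goal is that $q$ is injective. The idea is to build in $\hat G$ a normal series mirroring the one above: $\hat E\trianglelefteq\hat E\hat N\trianglelefteq\hat G$, where $\hat E$ is generated by the finite-multiplier generators and $\hat N$ by the strictly-increasing infinite-multiplier generators. Relators (1) and (5) exhibit $\hat E$ as a quotient of the finite group $\bigoplus\Z/o(v)$, and since $q$ maps it onto $E\cong\bigoplus\Z/o(v)$, $q|_{\hat E}$ is an isomorphism; relators (1) and (2) (equivalently \eqref{eq:transv rel1}) show $\hat E$ and $\hat E\hat N$ are normal, that $\hat E\hat N/\hat E$ is finitely generated nilpotent and mapped onto $N$ by $q$, and that $\hat G/\hat E\hat N$ is presented by the within-class instances of relators (1)--(4) --- the classical presentation of $\prod_i\SL(n_i,\Z)$: Steinberg relations (1), (2) plus relation (3) for the blocks of size at least $3$, and the $\Z/4\ast_{\Z/2}\Z/6$ presentation (relations (3) and (4)) for the blocks of size $2$. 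Matching these three subquotients through $q$ forces $q$ to be an isomorphism. The main obstacle will be this last stage: verifying that relators (1) and (2) alone already present the nilpotent radical $N$ (a collection / Hall--Witt induction over the finite poset of $\leq_\infty$-classes) and that they faithfully encode the semidirect-product actions of $\prod_i\SL(n_i,\Z)$ on $N$ and of $N\rtimes\prod_i\SL(n_i,\Z)$ on $E$, so that the three presentations glue into the asserted one; a representative sub-computation one needs is that conjugating a finite-multiplier generator by an infinite-multiplier one is again a product of finite-multiplier generators, which is once more a consequence of \eqref{eq:transv rel1}. By contrast, the $\prod_i\SL(n_i,\Z)$ part is routine once the classical presentations of $\SL_n(\Z)$ are invoked.
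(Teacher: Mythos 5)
Your identification of the image is correct and is essentially the paper's argument: the paper also splits off $E$ as the kernel of the map killing the finite-order part (the factor map $\Aut(\bar{\G})\to\Aut(\Z^n)$), identifies $N$ with the off-diagonal unipotent part coming from transvections between distinct $\leq_\infty$--classes, and gets the block-diagonal $\prod_i\SL(n_i,\Z)$ from the within-class transvections. Your verification that the five families of relators \emph{hold} (via Lemma~\ref{lem:relators} and the $2\times 2$ computations for (3) and (4)) is also fine, as is the observation that relators (1) and (5) present $E$ and that relator (2) encodes the action on $E$ and the normality of $\hat E$.

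The gap is exactly where you flag "the main obstacle": you never actually prove that relators (1)--(4) present $N\rtimes(\SL(n_1,\Z)\times\cdots\times\SL(n_k,\Z))$, i.e.\ that the Steinberg-type relations (1)--(2) suffice for the unipotent radical $N$ and faithfully encode the action of the block-diagonal part on it, and that (1)--(4) restricted to a single class give the classical presentation of $\SL(n_i,\Z)$ glued compatibly across blocks. This is not a routine collection argument to wave at --- it is the entire content of the step, and the paper does not prove it either: it invokes Day's Proposition 3.10 from \cite{DayPeakReduction}, which establishes precisely that these relators present $N\rtimes\prod_i\SL(n_i,\Z)$. So your normal-series strategy $\hat E\trianglelefteq\hat E\hat N\trianglelefteq\hat G$ is a sound way to organize the assembly (and matches how the paper glues the three layers together), but as written the proposal is incomplete at its central point; to close it you must either carry out the induction over the poset of $\leq_\infty$--classes in detail or, as the paper does, cite Day's result. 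One further small caution: when matching $\hat E\hat N/\hat E$ against $N$ you need injectivity, not just a surjection onto a finitely generated nilpotent group, and that injectivity is again part of what Day's proposition delivers rather than something relators (1)--(2) give you for free.
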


\begin{proof}
	Let $n=n_1+\cdots +n_k$. Seeing the image as the given semidirect product is possible by using the factor map from $\Aut(\bar{G})$ to $\Aut(\Z^n)$, by ignoring all finite order vertices.
	The kernel of this map is the subgroup generated by $E_u^v$ for $u\leq_\infty v$ and $o(v)<\infty$, which is isomorphic to a subgroup of $T^n$, which we denote $E$.
	The image is the subgroup generated by those $E_u^v$ for which $u\leq_\infty v$ and $o(v)=\infty$, giving the subgroup of $\SL(n,\Z)$.
	Each diagonal block $\SL(n_i,\Z)$ corresponds to the infinite order vertices in the one equivalence class.
	The subgroup $N$ is generated by those $E_u^v$ for which $u$ and $v$ are not in the same equivalence class.
	The fact that $N$ is nilpotent is immediate since it is isomorphic to a group of lower triangular unipotent matrices.
	It is easy to see that this map is split.
	
	To complete the proof of the Lemma we need to verify the given relators are sufficient.
	Firstly, there are sufficient relators for $E$, using those of the first and fifth types.
	From Day \cite[Proposition 3.10]{DayPeakReduction}, these relators are sufficient to give a presentation  for $N\rtimes (\SL(n_1,\Z) \times \cdots \times \SL(n_k,\Z))$ (c.f.{} the Steinberg relators).
	The semidirect product action of this group on $E$ is encoded within the relators of the first and second types, with $o(y),o(w)<\infty$, by Lemma~\ref{lem:relators}.
\end{proof}

Before we advance, we note the following simple criteria that helps determine when certain automorphisms commute.

\begin{lem}\label{lem:commuting criteria}
	Let $\alpha,\beta$ be partial conjugations or transvections with multipliers $a,b$ and supports   and $C,D$ respectively.
	Then $\alpha$ and $\beta$ commute if either
	\begin{itemize}
		\item $a=b$ and $C\cap D = \emptyset$;
		\item $(\{a\}\cup C)\cap (\{b\}\cup D) = \emptyset$;
		\item $[a,b]=1$ and $C=D$.
	\end{itemize}
\end{lem}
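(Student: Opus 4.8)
The plan is to verify $\alpha\beta=\beta\alpha$ by checking that the two composites agree on every vertex generator of $\G$; this suffices since two endomorphisms of $\G$ agreeing on $V$ are equal. The book-keeping facts I use repeatedly are: a transvection $R^{a^k}_y$ fixes every vertex except $y$ and sends $y\mapsto y a^k$; a partial conjugation $\pi^a_C$ fixes every vertex outside $C$ and sends $z\mapsto a z a^{-1}$ for $z\in C$; and in both cases the multiplier is never a vertex of the support (for $\pi^a_C$ because $a\in\st(a)$ and $C\subseteq\Gamma\setminus\st(a)$; for $R^{a^k}_y$ because a genuine transvection has $a\neq y$). Writing $\alpha$ as having multiplier $a$ and support $C$ (a single vertex, or a union of components), and likewise $\beta$, $b$, $D$, I will run through the three listed conditions, which I refer to as Case~1 ($a=b$, $C\cap D=\emptyset$), Case~2 ($(\{a\}\cup C)\cap(\{b\}\cup D)=\emptyset$), and Case~3 ($[a,b]=1$, $C=D$).

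\textbf{Easy generators.} First I dispose of the generators $t\notin C\cup D$: both $\alpha$ and $\beta$ fix such $t$, hence so do both composites. If $t\in C\setminus D$, then $\beta$ fixes $t$, and in Cases~1 and~2 we also have $a\notin D$ (in Case~1 because $a=b$ is the multiplier of $\beta$ and multipliers avoid their own support; in Case~2 directly from disjointness), so $\beta$ fixes the word $\alpha(t)$, which is built only from $t$ and $a$; therefore $\beta\alpha(t)=\alpha(t)=\alpha\beta(t)$. The symmetric argument handles $t\in D\setminus C$. Since Cases~1 and~2 both force $C\cap D=\emptyset$, no vertex is moved by both maps and these two cases are now complete.

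\textbf{The substantive case.} This leaves Case~3, where $C=D$ and $[a,b]=1$, and I must compute on each $t\in C=D$, splitting by the types of $\alpha,\beta$. If both are transvections (so $C=D=\{t\}$), then $\alpha\beta(t)=t\,a^k b^j$ and $\beta\alpha(t)=t\,b^j a^k$, which agree because $[a,b]=1$ gives $[a^k,b^j]=1$. If one is a transvection and the other a partial conjugation, a one-line computation (say with $\alpha$ the transvection) gives $\alpha\beta(t)=b\,ta^k\,b^{-1}$ and $\beta\alpha(t)=b t b^{-1} a^k$, equal since $[b,a^k]=1$; the reverse assignment of types is symmetric. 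If both are partial conjugations, then $b\notin C=D$ forces $\alpha$ to fix $b$ and, symmetrically, $\beta$ to fix $a$, so $\alpha\beta(t)=b a t a^{-1} b^{-1}$ and $\beta\alpha(t)=a b t b^{-1} a^{-1}$; these coincide because $ab=ba$ lets one slide $a$ past $b$ and $a^{-1}$ past $b^{-1}$.

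\textbf{Main obstacle.} There is no genuine conceptual obstacle; the statement is a routine computation. The only point demanding care is, in each sub-case of Case~3, making sure that the automorphism applied second actually fixes every vertex occurring in the word produced by the first one --- i.e.\ fixes both $t$ and the relevant multiplier --- which is precisely where the standing conventions (multiplier $\neq$ support vertex, multiplier $\notin$ support) together with $[a,b]=1$ are invoked; that is the one place I would write out in full rather than leave to the reader.
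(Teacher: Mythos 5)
Your proof is correct, and it is precisely the routine generator-by-generator verification that the paper omits (the lemma is stated there without proof as a "simple criterion"). The only points that needed care — that the second automorphism fixes both the vertex and the multiplier appearing in the word produced by the first, via the conventions that a multiplier never lies in its own support — are exactly the ones you isolate and check.
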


Recall our conventions: $\pi^v_C$ sends $z\in C$ to $vzv^{-1}$; and $[a,b] = aba^{-1}b^{-1}$. Further, we denote global conjugation by $v$ (sending each $z\in \Gamma$ to $vzv^{-1}$) as $\ad_v$.

\begin{lem}\label{lem:Torelli normal}
	Let $K$ be the subgroup of $\Aut^1_\infty(\G)$ generated by the set of all partial conjugations and all commutator transvections $R_u^{[x,y]} = [R_u^x,R_u^y]$ for $u \leq_\infty x,y$.
	Then $K$ is normal in $\Aut^1_\infty(\G)$.
\end{lem}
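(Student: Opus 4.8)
To show $K \trianglelefteq \Aut^1_\infty(\G)$, it suffices to check that conjugating each generator of $K$ by each generator of $\Aut^1_\infty(\G)$ yields an element of $K$. The generators of $\Aut^1_\infty(\G)$ are partial conjugations and transvections $R^{v^k}_u$ with $u \leinf v$; the generators of $K$ are partial conjugations $\pi^v_C$ and commutator transvections $R^{[x,y]}_u = [R^x_u, R^y_u]$ with $u \leinf x, y$. So there are essentially four families of conjugations to examine: (partial conj.)$^{\text{partial conj.}}$, (partial conj.)$^{\text{transvection}}$, (commutator transvection)$^{\text{partial conj.}}$, and (commutator transvection)$^{\text{transvection}}$. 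I would organize the proof as a case analysis along these lines, using Lemma~\ref{lem:commuting criteria} to dispose quickly of the many cases where the two automorphisms have disjoint or otherwise compatible supports/multipliers, so that the conjugate equals the original generator.

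The two genuinely substantive families are the ones where a transvection conjugates something. For \emph{partial conjugations conjugated by partial conjugations}, one can observe that $\pi^v_C$ maps $\G$-conjugacy to $\G$-conjugacy, and a direct computation shows $(\pi^w_D)^{-1}\pi^v_C \pi^w_D$ is again a product of partial conjugations (possibly of the form $\pi^v_{C'}$ composed with some $\pi^w_{C''}$), lying in $K$; this is standard and appears in the references on partial conjugations. For \emph{commutator transvections conjugated by transvections}, I would lean on the relations in Lemma~\ref{lem:relators}, especially \eqref{eq:transv rel1} $R^{x^k}_y R^{y^j}_z = R^{y^j}_z R^{x^{kj}}_z R^{x^k}_y$ and \eqref{eq:transv rel2}, \eqref{eq:transv commute}. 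Writing $R^{[x,y]}_u = R^x_u R^y_u (R^x_u)^{-1}(R^y_u)^{-1}$ and conjugating by a transvection $R^{b^k}_a$, each factor $R^x_u$ is conjugated; the resulting product should, after applying the commutator relations and cancelling, be expressible as a product of commutator transvections (and possibly partial conjugations), all with multiplier $u$ or with multipliers arising from the interaction, hence in $K$. The key identity to establish is that $R^{[x,y]}_u$ conjugated by $R^{y^j}_z$ (the worst overlap, where the support of one meets the multiplier of the other) produces something like $R^{[x,z]\cdot k}_u R^{[x,y]}_u$ up to reindexing — i.e. the commutator-transvection set is closed under the natural "twisting" action, exactly as in Day's and Wade's treatment of the RAAG case.

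**Main obstacle.** The hard part will be the last family: verifying closure of the commutator transvections under conjugation by an ordinary transvection, since this is where the subtleties of finite-order vertices and the indices $k$ in $R^{v^k}_u$ enter, and where one must be careful that the domination hypothesis $u \leinf x, y$ is preserved (or that any new multiplier/support pair that appears still satisfies the condition needed to lie in $K$). One should double-check that when $o(x) = \infty$ is forced by $u \leinf x$, the relevant transvections all exist with $k = 1$, which simplifies matters considerably; the genuinely delicate bookkeeping is when the \emph{conjugating} transvection $R^{b^k}_a$ has finite-order support. I expect the cleanest route is to mirror the computations in Day~\cite{Day_symplectic} and Wade~\cite{WadeThesis}, citing those identities where they transfer verbatim and only spelling out the steps where the order map $o$ changes the value of an exponent. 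Everything else — the disjoint-support cases and the partial-conjugation-on-partial-conjugation case — follows routinely from Lemma~\ref{lem:commuting criteria} and Lemma~\ref{lem:partial conj not commute}.
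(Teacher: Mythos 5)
Your overall strategy --- verifying that conjugates of the generators of $K$ by the generators of $\Aut^1_\infty(\G)$ land back in $K$ --- is the paper's strategy too, but two of your four families are free and you miss the one that carries half the content. Since every partial conjugation already lies in $K$, conjugating \emph{anything} in $K$ by a partial conjugation trivially stays in $K$; so your paragraph on $(\pi^w_D)^{-1}\pi^v_C\pi^w_D$ is unnecessary, as is the (commutator transvection)$^{\text{partial conj.}}$ family. The whole burden is conjugation by transvections, as you correctly announce --- and then you only carry out one of the two such families.

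The genuine gap is the family (partial conjugation)$^{\text{transvection}}$: you list it, but your closing sentence files it under ``everything else follows routinely from Lemma~\ref{lem:commuting criteria} and Lemma~\ref{lem:partial conj not commute},'' and neither lemma covers it (the latter is only about pairs of partial conjugations). This case is one of the paper's two substantive computations (Lemma~\ref{lem:conjugates of pc by transv}) and is exactly where commutator transvections are forced into $K$ in the first place: when $x$ and $y$ lie in different components of $\Gamma\setminus\st(v)$ with $x\in C$, one gets $R_x^{y^{\mp1}}\,\pi^v_C\,R_x^{y^{\pm1}} = \pi^v_C\, R_x^{[v,y^{\mp1}]}$, and when $v=x$ one gets products of partial conjugations with multiplier $y^{\mp1}$ (or conjugates thereof). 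Neither outcome is ``the conjugate equals the original generator,'' and one must additionally check that the new commutator transvection is a legitimate generator of $K$ --- here $x,y$ non-adjacent with $\lk(x)\subseteq\st(y)$ forces $\lk(x)\subseteq\st(v)$, hence $x\leq_\infty v$. Your sketch of the (commutator transvection)$^{\text{transvection}}$ family is the right idea and matches the paper's Lemma~\ref{lem:conjugates of ct by transv} in spirit, though the true identities are longer than your guessed form and mix in partial conjugations; also note that the paper separately records conjugation by $R_u^{v}$ and by $R_u^{v^{-1}}$, which you must do as well, since the one-sided containments $s^{-1}Ks\subseteq K$ over a generating set do not by themselves yield normality.
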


	To prove this, we need to show that the conjugate of each generator by a transvection, or its inverse, remains in $K$. We split the proof into two separate lemmas, for future reference. 
		
\begin{lem}\label{lem:conjugates of pc by transv}
	Suppose $v,x,y\in \Gamma$ with $x\leq_\infty y$, and $C$ is a connected component of $\Gamma \setminus \st(v)$. Then either $\pi^v_C$ and $R_x^{y}$ commute or $R_x^{y^{\mp1}} \pi^v_C R_x^{y^{\pm1}}$ is equal to:
	\begin{align}
	\tag{$\cal{R}$1}\label{equn:normal1} 
	&\pi^v_C R_x^{[v,y^{\mp 1}]} && \textrm{if $x\in C$ and $y\notin C\cup \st(v)$,}\\
	\tag{$\cal{R}$2}\label{equn:normal2} 
	&R_x^{[y^{\mp 1},v]} \pi^v_C && \textrm{if $x\notin C\cup \st(v)$ and $y\in C$,}\\
	&	\tag{$\cal{R}$3}\label{equn:normal3} 
	\pi^{y^{\mp1}}_C \pi^{x}_C   && \textrm{if $v=x$ and $y \notin C$,}\\
	&	\tag{$\cal{R}$4}\label{equn:normal4} 
	\pi^{x\m}_{C'}\pi^{y^{\pm1}}_{C'}\ad_x\ad_y^{\mp 1}   && \textrm{if $v=x$ and $y \in C$,}
	\end{align} 
	where $C' = \Gamma \setminus (\st(x)\cup C)$.
\end{lem}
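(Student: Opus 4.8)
The plan is to compute the conjugate $R_x^{y^{\mp1}}\pi^v_C R_x^{y^{\pm1}}$ directly on generators, using that both automorphisms act as the identity on most of $\Gamma$, and that the support set $C$, being a connected component of $\Gamma\setminus\st(v)$, interacts in a controlled way with $x$, $y$, $v$. First I would record the elementary facts: $\pi^v_C$ fixes every vertex outside $C$ and sends $z\in C$ to $vzv^{-1}$; while $R_x^{y^{\pm1}}$ fixes every vertex except $x$, which goes to $xy^{\pm1}$. The conjugate therefore fixes every vertex $w\notin C\cup\{x\}$, so the whole computation reduces to tracking the images of the (at most two) vertices $x$ and the vertices of $C$. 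I would organize the argument by which of $x$, $y$, $v$ lie in $C$, in $\st(v)$, or elsewhere, so that the four displayed cases \eqref{equn:normal1}–\eqref{equn:normal4} arise as the non-commuting possibilities and everything else falls under Lemma~\ref{lem:commuting criteria} (using $a=b$ with disjoint supports, or disjoint $\{a\}\cup C$ versus $\{b\}\cup D$). In particular, if $x\notin C$, $x\notin\{v\}$, and the transvection does not move any vertex of $C$ — i.e.\ $x\notin C$ and the support of $\pi^v_C$ is untouched — the two commute; the remaining configurations are exactly the ones listed.

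For case \eqref{equn:normal1} ($x\in C$, $y\notin C\cup\st(v)$): here $R_x^{y^{\pm1}}$ moves only $x$, and $x\in C$, so on any $z\in C\setminus\{x\}$ all three automorphisms fix $z$ except the middle $\pi^v_C$, giving $z\mapsto vzv^{-1}$, which is what $\pi^v_C R_x^{[v,y^{\mp1}]}$ does (the commutator transvection fixes $z\neq x$). On $x$ itself: $R_x^{y^{\pm1}}:x\mapsto xy^{\pm1}$; then $\pi^v_C$ conjugates $x$ by $v$ and fixes $y$ (since $y\notin C$), giving $vxv^{-1}y^{\pm1}$; then $R_x^{y^{\mp1}}$ sends $x\mapsto xy^{\mp1}$ so we get $vxy^{\mp1}v^{-1}y^{\pm1} = v x v^{-1}\,(vy^{\mp1}v^{-1}y^{\pm1})$, using $[v,y^{\pm1}]=1$ fails in general — but note $v$ and $y$ need not commute, and this last expression is precisely $\pi^v_C(x)\cdot [v,y^{\mp1}]$ where $[v,y^{\mp1}]=vy^{\mp1}v^{-1}y^{\pm1}$, which matches $\pi^v_C R_x^{[v,y^{\mp1}]}$ applied to $x$. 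Case \eqref{equn:normal2} ($x\notin C\cup\st(v)$, $y\in C$) is the mirror image: now $\pi^v_C$ moves $y$ but fixes $x$, so on $z\in C$ one checks the conjugation by $v$ survives, and the correction term $[y^{\mp1},v]$ appears on the left because $R_x^{y}$ is applied before $\pi^v_C$ in the appropriate order; I would do this computation symmetrically to the first. Cases \eqref{equn:normal3} and \eqref{equn:normal4} are where $v=x$, so $\pi^v_C=\pi^x_C$ and the transvection's multiplier coincides with the partial conjugation's multiplier; here I would expand $R_x^{y^{\mp1}}\pi^x_C R_x^{y^{\pm1}}$ on $z\in C$: if $y\notin C$ this gives $z\mapsto (xy^{\pm1})z(xy^{\pm1})^{-1}$ up to the outer transvection leaving $z$ fixed, i.e.\ conjugation by $xy^{\pm1}$, which factors as $\pi^{y^{\mp1}}_C\pi^x_C$ (reading the conventions carefully), matching \eqref{equn:normal3}; if $y\in C$ then $\pi^x_C$ also moves $y$ and one must pass to the complementary component $C'=\Gamma\setminus(\st(x)\cup C)$, picking up global conjugations $\ad_x\ad_y^{\mp1}$ and inverting which side is conjugated, yielding \eqref{equn:normal4}.

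The main obstacle is bookkeeping the order of composition and the precise form of the correction terms — in particular getting the signs $\mp$ versus $\pm$ right and deciding whether the commutator transvection or partial conjugation sits on the left or the right in each case. Since $v$ and $y$ generally do not commute, one cannot simplify $[v,y^{\mp1}]$ away, and one must be careful that $R_x^{[v,y^{\mp1}]}=[R_x^v,R_x^{y^{\mp1}}]$ is a well-defined automorphism — this needs $x\leq_\infty v$ as well as $x\leq_\infty y$, which holds because $x\in C$ forces $x\notin\st(v)$ is \emph{false}; rather one uses that in the relevant configuration $x\leq_\infty y$ is given and $x$ is moved by $\pi^v_C$ only through conjugation, so the composite is visibly in $\Aut^1_\infty(\G)$. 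I would verify well-definedness of each right-hand side as a preliminary remark, then present the four computations as short explicit checks on the moved vertices, and finally note that in all un-listed configurations Lemma~\ref{lem:commuting criteria} gives commutation. I expect each individual verification to be a two- or three-line calculation once the conventions are pinned down; the real work is the case analysis and the sign discipline.
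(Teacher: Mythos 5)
Your overall strategy --- direct computation on the moved generators, organized by the positions of $x$, $y$, $v$ relative to $C$ and $\st(v)$, with \eqref{equn:normal4} obtained from \eqref{equn:normal3} via the complementary component $C'$ and the identities $\pi^x_C=\pi^{x\m}_{C'}\ad_x$, $\ad_xR_x^{y\m}=R_x^{y\m}\ad_x\ad_y$ --- is the same as the paper's, and your explicit verification of \eqref{equn:normal1} on the vertex $x$ is correct. But there is a genuine gap exactly at the point you flag and then wave away: the well-definedness of $R_x^{[v,y^{\mp 1}]}$ as a commutator transvection. Saying ``the composite is visibly in $\Aut^1_\infty(\G)$'' is not enough, because this lemma is used (in Lemma~\ref{lem:Torelli normal}) precisely to show that these conjugates lie in the subgroup $K$ generated by partial conjugations and commutator transvections $R_u^{[a,b]}$ with $u\leq_\infty a,b$; so you must actually prove $x\leq_\infty v$. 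This follows from a graph-theoretic observation you never make: in the configurations of \eqref{equn:normal1} and \eqref{equn:normal2}, $x$ and $y$ lie in different connected components of $\Gamma\setminus\st(v)$, hence are non-adjacent; if $w\in\lk(x)$ with $w\notin\st(v)$, then $w$ lies in $x$'s component, yet $w\in\lk(x)\subseteq\st(y)$ forces $w\in\lk(y)$ (as $w\neq y$) and hence $w$ into $y$'s component, a contradiction. Therefore $\lk(x)\subseteq\st(v)$, giving $x\leq_\infty v$ and, when $x\in C$, that $C=\{x\}$. Without this, the right-hand sides of \eqref{equn:normal1} and \eqref{equn:normal2} are not yet expressions in the generators the lemma is meant to produce.

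A second, smaller problem is that your blanket commuting criterion (``$x\notin C$, $x\neq v$, and the transvection does not move any vertex of $C$ implies the two commute'') is contradicted by your own case \eqref{equn:normal2}: there $x\notin C$, $x\neq v$, and $R_x^{y^{\pm 1}}$ moves only $x\notin C$, yet the automorphisms fail to commute because $\pi^v_C$ conjugates the multiplier $y$. The correct dichotomy when $v\notin\{x,y\}$ is whether $x$ and $y$ lie in the same or different components of $\Gamma\setminus\st(v)$: if both lie in $C\cup\st(v)$, or both lie outside $C$, a short computation shows commutation --- and here one again needs $\lk(x)\subseteq\st(y)$, e.g.\ to rule out the configuration $x\in\lk(v)$, $y\in C$ (since $v\in\lk(x)\subseteq\st(y)$ would put $y\in\st(v)$). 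Only the split configurations survive and give \eqref{equn:normal1} and \eqref{equn:normal2}. You should also justify that $\pi^{y^{\mp 1}}_C$ appearing in \eqref{equn:normal3} is a legitimate automorphism, which again uses $\lk(x)\subseteq\st(y)$. With these points supplied, your computations go through and the proof matches the paper's.
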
		

\begin{proof}	
	If $v\notin \{x,y\}$ and $x,y\notin C$, then $\pi^v_C$ and $R^y_x$ commute by Lemma~\ref{lem:commuting criteria}.
	If $v\not\in\{x,y\}$, but $x,y\in C\cup \st(v)$, then direct calculation shows that $R_x^y$ and $\pi^v_C$ commute.
	
	Thus we may assume $x$ and $y$ are in different connected components of $\Gamma\setminus\st(v)$, with one being in $C$.
	This implies $x$ and $y$ are not adjacent and since $\lk(x) \subseteq \st(y)$, we must have $\lk(x)\subseteq \st(v)$, and hence $x\leq_\infty v$.
	 If $x\in C$, in particular $C = \{x\}$ since $\lk(x) \subset \st(v)$, and manual computation verifies relation \eqref{equn:normal1} and \eqref{equn:normal2}.

	Suppose $v=x$. First assume $y \notin C$.  Since $\lk(x)\subseteq \st(y)$, $C$ is contained in the union of a connected component of $\Gamma\setminus\st(y)$ and $\st(y)$. So, $\pi^{y}_C$ is well-defined.
	Then manual calculations give relation \eqref{equn:normal3}.

	Now assume $y \in C$.
	Define $C' = \Gamma \setminus (\st(x)\cup C)$.
	Then \ref{equn:normal3} applies for $\pi_{C'}^x$ in place of $\pi_C^x$.
	Furthermore, $\pi^x_C = \pi^{x\m}_{C'} \ad_x$, where $\ad_x$ is the inner automorphism by $x$,
	and $\ad_x R_x^{y\m} = R_x^{y\m} \ad_x \ad_y$ (which can be verified by direct computation).
	Plugging this (and the fact that $\ad_y$ commutes with $R_x^y$) into relation \eqref{equn:normal3} and rearranging gives relation \eqref{equn:normal4}.

	Suppose $v=y$. If $x\notin C$, then $R_x^v$ and $\pi^v_C$ commute by Lemma~\ref{lem:commuting criteria}.
	If $x\in C$ then $C = \{x\}$ and again Lemma~\ref{lem:commuting criteria} gives us that the automorphisms commute.
\end{proof}

\begin{lem}\label{lem:conjugates of ct by transv}
	Suppose $x,y,z,u,v\in \Gamma$ with $x\leq_\infty y,z$, and $u\leq_\infty v$. 
	Then either $R_x^{[y,z]}$ and $R_u^{v}$ commute or $R_u^{v} R_x^{[y,z]} R_u^{v^{-1}}$ is equal to:
\begin{align}
	\tag{$\cal{R}$5}\label{equn:normal5}  
	&\pi_{\{u\}}^{v^{-1}} R_u^{[y,z]} \pi_{\{u\}}^{v}  && \textrm{if $u=x$;}\\
	\tag{$\cal{R}$6}\label{equn:normal6} 
	&\pi_{\{u\}}^{v^{-1}} R_u^{[z,y]} \pi_{\{u\}}^{v} R_v^{[y,z]} && \textrm{if $x = v$ and $u\notin \{y,z\}$;}\\
	\tag{$\cal{R}$7}	\label{equn:normal7} 
	& \alpha 
	R^{[u,z]}_v
	\beta 
	&& \textrm{if $x=v$ and $y=u$;}\\
	\tag{$\cal{R}$8}	\label{equn:normal8} 
	& \gamma 
	R^{[y,u]}_v 
	\delta 
	&& \textrm{if $x=v$ and $z=u$;}\\
	\tag{$\cal{R}$9}	\label{equn:normal9} 
	&\pi_{\{x\}}^{u^{-1}} R_x^{[v,z]} \pi_{\{x\}}^u R_x^{[u,z]} && \textrm{if $y=u$ and $[v,z]\ne 1$;}\\
	\tag{$\cal{R}$10}	\label{equn:normal10} 
	&R_x^{[y,u]} \pi_{\{x\}}^{u^{-1}} R_x^{[y,v]} \pi_{\{x\}}^u && \textrm{if $z=u$ and $[v,y]\ne 1$.}
\end{align}
In \eqref{equn:normal7}, 
$\alpha = \pi^v_Z \pi^u_Z \pi^z_{\{u\}} R^{[z,v]}_u \pi^u_{\{v\}}$, 
$\beta =  \pi^{z^{-1}}_{\{v\}} \pi^{u^{-1}}_{\{v\}} \pi^{u^{-1}}_Z \pi^{v^{-1}}_Z$, 
and $Z$ is the connected component of $\Gamma\setminus \st(u)$ and $\Gamma\setminus\st(v)$ that contains $z$. 

In \eqref{equn:normal8}, 
$\gamma = \pi^v_Y \pi^u_Y \pi^u_{\{v\}} \pi^y_{\{v\}} $, 
$\delta = \pi^{u\m}_{\{v\}} 	R^{[v,y]}_u\pi^{y^{-1}}_{\{u\}} \pi^{u^{-1}}_{Y}  \pi^{v^{-1}}_Y$, and 
$Y$ is the connected component of $\Gamma\setminus \st(u)$ and $\Gamma\setminus\st(v)$ that contains $y$.
\end{lem}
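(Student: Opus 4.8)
The plan is to reduce the whole statement to bookkeeping about the effect of the automorphisms involved on the (at most five) vertices $x,y,z,u,v$, which are the only ones any of these transvections or partial conjugations move. Since $R_x^{[y,z]}=[R_x^y,R_x^z]$ and conjugation distributes over commutators, for each coincidence pattern I would compute the conjugates $R_u^{v}R_x^{w}R_u^{v^{-1}}$ for $w\in\{y,z\}$ (or, when more convenient, track the conjugated commutator directly), using Lemma~\ref{lem:relators}, Lemma~\ref{lem:commuting criteria}, and the two elementary identities $\pi^a_C=\pi^{a^{-1}}_{C'}\ad_a$ with $C'=\Gamma\setminus(\st(a)\cup C)$ and $\ad_a R_a^{b^{-1}}=R_a^{b^{-1}}\ad_a\ad_b$. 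The hypotheses $u\leinf v$ and $x\leinf y,z$ pin down the relevant link inclusions ($\lk(u)\subseteq\st(v)$, $\lk(x)\subseteq\st(y)\cap\st(z)$, and by transitivity $u\leinf y,z$ when $v\in\{x\}$), and these are exactly what force, in every case, either commutation or one of the listed normal forms; they also guarantee that each commutator transvection appearing on the right-hand side (e.g.\ $R_u^{[z,y]}$, $R_v^{[y,z]}$, $R^{[z,v]}_u$) is a legitimate generator of $K$, and the inner factors $\ad_a$ lie in the partial-conjugation subgroup, so each right-hand side is indeed in $K$.

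Next I would organize the case analysis by the intersection pattern of $\{u,v\}$ with $\{x,y,z\}$ (recall $x,y,z$ are distinct and $u\ne v$). If these sets are disjoint, or if $u,v$ sit inside a common star/component so their footprints are disjoint, Lemma~\ref{lem:commuting criteria} gives commutation. The case $u=x$ uses the observation $R_x^v R_x^w R_x^{v^{-1}}=\pi^{v^{-1}}_{\{x\}}R_x^w\pi^v_{\{x\}}$ (both sides send $x\mapsto xvwv^{-1}$ and fix all else), so conjugating the commutator yields $\pi^{v^{-1}}_{\{x\}}R_x^{[y,z]}\pi^v_{\{x\}}$, giving \eqref{equn:normal5}. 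When $x=v$ the multiplier transvection $R_u^v=R_u^x$ also moves $u$: if $u\notin\{y,z\}$, tracking the effect on $x$ and $u$ shows the conjugate sends $x\mapsto x[y,z]$ and $u\mapsto ux[z,y]x^{-1}$, which one recognizes as $\pi^{x^{-1}}_{\{u\}}R_u^{[z,y]}\pi^x_{\{u\}}R_x^{[y,z]}$, i.e.\ \eqref{equn:normal6}. When instead $u=y$ (resp.\ $u=z$) but $v\notin\{x,y,z\}$, the conjugate sends $x\mapsto x[yv,z]$ (resp.\ $x\mapsto x[y,vz]$) and fixes everything else; rewriting $x[yv,z]=x\,y[v,z]y^{-1}[y,z]$ matches \eqref{equn:normal9} (and a symmetric computation gives \eqref{equn:normal10}), with commutation occurring precisely when $[v,z]=1$ (resp.\ $[v,y]=1$), so the dichotomy in the statement is the correct one. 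The residual sub-cases where $v\in\{y,z\}$ collapse to commutation by the same vertex-tracking.

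The hard part will be the cases $x=v$ with $u=y$, giving \eqref{equn:normal7}, and $x=v$ with $u=z$, giving \eqref{equn:normal8}. Here $u\leinf v=x$ and $x\leinf y=u$ force $u$ and $x$ to lie in the same $\leinf$-equivalence class, while the remaining multiplier $z$ (resp.\ $y$) may sit in a component $Z$ (resp.\ $Y$) of $\Gamma\setminus\st(u)=\Gamma\setminus\st(v)$ containing neither $u$ nor $x$; pushing the conjugation past this component is what introduces the partial conjugations $\pi^v_Z,\pi^u_Z$ together with the ``local'' pieces $\pi^z_{\{u\}},\pi^u_{\{v\}},R^{[z,v]}_u$ assembled into $\alpha,\beta$ (resp.\ $\gamma,\delta$). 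For these I would expand the conjugated commutator as a word in the generators, evaluate it simultaneously on $x$, $u$ and $z$ (resp.\ $y$) using the identities above and the partial-conjugation calculus, then peel off the $\ad$-factors and regroup the partial conjugations across $Z$ (resp.\ $Y$) to reach the stated form. Checking that this regrouping is valid --- that the various partial conjugations really are defined on the claimed components and that no extra terms survive --- is where the bookkeeping is heaviest, and is the step I expect to occupy most of the write-up.
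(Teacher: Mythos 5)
Your proposal is correct and follows essentially the same route as the paper: a case analysis on the coincidence pattern of $\{u,v\}$ with $\{x,y,z\}$, Lemma~\ref{lem:commuting criteria} for the disjoint cases, and direct vertex-by-vertex computation for the rest, with the domination hypotheses guaranteeing the needed partial conjugations exist. The heaviest cases \eqref{equn:normal7} and \eqref{equn:normal8} are only sketched in your write-up, but the paper itself defers these to ``the patient reader,'' so the level of detail matches.
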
	
	
\begin{proof}
	If $x,y,z,u,v$ are all distinct then $R_u^v$ and $R_x^{[y,z]}$ commute by Lemma~\ref{lem:commuting criteria}.
	
	If $x=u$, then $\pi^v_{\{u\}}$ exists, and we have relation \eqref{equn:normal5}.
	
	If $x=v$ and $u\notin \{y,z\}$, then $u\le_\infty y,z$. Direct calculations yield relations \eqref{equn:normal6}. 
	
	Now assume $x=v$ and $u=y$.
	Then $[u]_\infty = [v]_\infty$ and $u,v\leq_\infty z$, so partial conjugations $\pi^z_{\{u\}},\pi^z_{\{v\}},\pi^v_{\{u\}},\pi^u_{\{v\}}$ exist,
	and furthermore the connected component of $\Gamma \setminus \st(u)$ containing $z$ is equal to the corresponding component of $\Gamma \setminus \st(v)$, which we denote by $Z$.
	The patient reader may verify that relation \eqref{equn:normal7} holds, and that equation \eqref{equn:normal8} follows from \eqref{equn:normal7} by realizing that $R_v^{[y,u]}$ is the inverse of $R_v^{[u,z]}$.
	
	We are left with $x\notin\{u,v\}$.
	If $u \notin \{y,z\}$ then $R_u^v$ and $R_v^{[y,z]}$ commute by Lemma~\ref{lem:commuting criteria}.
	So assume $u=y$.
	If $[v,z] = 1$, then $R_u^v$ and $R_x^{[y,z]}$ commute, else 
	$x\le_\infty u$ implies that $\pi^u_{\{x\}}$ exists and relation \eqref{equn:normal9} holds.
	Finally, relation \eqref{equn:normal10} follows from \eqref{equn:normal9} by using that $R_x^{[z,y]}$ is the inverse of $R_x^{[y,z]}$.
\end{proof}

\begin{proof}[Proof of Lemma~\ref{lem:Torelli normal}]
	Lemma~\ref{lem:conjugates of pc by transv} shows that any partial conjugation in $K$, when conjugated by a transvection, remains in $K$.
	
	Lemma~\ref{lem:conjugates of ct by transv} shows that conjugates of the form $R_u^v R_x^{[y,z]} R_u^{v\m}$ are in $K$.
	The other conjugates, $R_u^{v\m} R_x^{[y,z]} R_u^v$, are as follows below.
	Cases \eqref{equn:normal5a} and \eqref{equn:normal6a} can be quickly checked by direct computation.
	Cases \eqref{equn:normal7a}--\eqref{equn:normal10a} follow from \eqref{equn:normal7} and \eqref{equn:normal8} by algebraic manipulation.
	Each of $\alpha,\beta,\gamma,\delta$ are as in Lemma~\ref{lem:conjugates of ct by transv}.

		\begin{align}
			\tag{$\cal{R}$5$^\prime$}\label{equn:normal5a} 
			&\pi_{\{u\}}^{v} R_u^{[y,z]} \pi_{\{u\}}^{v\m} 
			&& \textrm{if $x=u$;}\\
			\tag{$\cal{R}$6$^\prime$}\label{equn:normal6a} 
			&R^{[y,z]}_u R^{[y,z]}_v 
			&& \textrm{if $x = v$ and $u\notin \{y,z\}$;}\\
			\tag{$\cal{R}$7$^\prime$}\label{equn:normal7a} 
			&R^{v^{-1}}_u  
			\alpha\m 
			R^v_u \ R_v^{[u,z]} \ R^{v^{-1}}_u
			\beta\m 
			R^v_u 
			&& \textrm{if $x=v$ and $y=u$;}\\
			\tag{$\cal{R}$8$^\prime$}\label{equn:normal8a} 
			&R^{v^{-1}}_u  
			\gamma\m 
			 R^{v}_u \ R_v^{[y,u]}	\ R^{v\m}_u
			 \delta\m 
			  R^v_u 
			  && \textrm{if $x=v$ and $z=u$;}\\
			 \tag{$\cal{R}$9$^\prime$}\label{equn:normal9a}
			&R^{v^{-1}}_u  \pi_{\{x\}}^{u^{-1}} R_x^{[z,v]} \pi_{\{x\}}^u  R_u^v R_x^{[u,z]}
			 && \textrm{if $y=u$ and $[v,z]\ne 1$;}\\
			 \tag{$\cal{R}$10$^\prime$}\label{equn:normal10a}
			 & R_x^{[y,v]} R^{v^{-1}}_u \pi_{\{x\}}^{u^{-1}} R_x^{[v,y]} \pi_{\{x\}}^u R_u^v 
			  && \textrm{if $z=u$ and $[v,y]\ne 1$.}
		\end{align}
	It is immediate that in cases \eqref{equn:normal5a} and \eqref{equn:normal6a} that $R_u^{v\m} R_x^{[y,z]} R_u^v \in K$.
	Cases \eqref{equn:normal7a} and \eqref{equn:normal8a} follow from Lemma~\ref{lem:conjugates of pc by transv} and case \eqref{equn:normal5a}.
	Cases \eqref{equn:normal9a} and \eqref{equn:normal10a} also follow from Lemma~\ref{lem:conjugates of pc by transv} but also require that $R_u^v$ commutes with $R_x^{[z,v]}$ and $R_x^{[v,y]}$ by Lemma~\ref{lem:conjugates of ct by transv}.
\end{proof}

We now prove Theorem~\ref{thm:Torelli gen set} by showing that the normal subgroup $K$ is equal to the kernel of $\rho$. 

\begin{proof}[Proof of Theorem~\ref{thm:Torelli gen set}]	
	Let $\rho^1_\infty \colon \Aut^1_\infty (\G) \to \Aut(\bar{\G})$ be the restriction of the standard representation.
	It is not hard to see that $K$ is contained in $\ker \rho^1_\infty$.
 	We claim that $K=\ker\rho^1_\infty$, and hence is equal to $\IA_\G$ by Proposition~\ref{prop:Torelli in 1 inf}. 
	As $K$ is normal in $\Aut^1_\infty (\G)$ by Lemma~\ref{lem:Torelli normal}, 
	it is enough to verify that each of the relators for the image of $\rho^1_\infty$ given by Lemma~\ref{lem:standard rep image relators}
	can be seen as the image of an element from $K$.
	
	We use the obvious lift of $E_u^v$ to $R_u^v$.
	Then the 5 relators in Lemma~\ref{lem:standard rep image relators} lift to elements of $K$ as follows.
	
	The relators $[E_u^v,E_x^y]$, when $v\neq x$ and $u\neq y$, lift to 
	$$[{R_u^v}, {R_x^y}] = \begin{cases} 
	R_x^{[v,y]} & \textrm{if $u=x$ and $[v,y] \neq 1$,}\\
	1 & \textrm{otherwise}.
	\end{cases}
	$$
	
	The relators $[E_u^v,E_v^w]E_u^{w^{-1}}$, when $u\neq w$, lift to 
	$$[{R_v^w}, {R_u^v}] R_u^{w^{-1}} = \begin{cases}
	 R_u^{[v,w]} & \textrm{if $[v,w]\neq 1$,}\\
	 1 & \textrm{otherwise.}
	\end{cases}$$
	
	As in the RAAG case, \cite[Pg.{} 61]{WadeThesis}, the relators $(E_u^v E_v^{u^{-1}} E_u^v)^4$, when $u,v \in C_i$ for some $i$, lift to  
	$$(R_u^v R_v^{u^{-1}} R_u^v)^4 = \begin{cases}
	\pi^u_{\{v\}} \pi^v_{\{u\}}\pi^{u^{-1}}_{\{v\}} \pi^{v^{-1}}_{\{u\}} & \textrm{if $[u,v] \neq 1$}\\
	1 & \textrm{otherwise.}
	\end{cases}$$
	
	When $\{u,v\}=C_i$ for some $i$, the relators $(E_u^v E_v^{u^{-1}} E_u^v)^2 (E_u^v E_v^{u^{-1}} E_u^v E_v^u)^{-3}$ always lift to the identity, whether $u,v$ commute or not. 
	
	Finally, the relators $(E_u^v)^{o(v)}$, when $o(v)<\infty$, lift to $(R_u^v)^{o(v)} = 1$.
\end{proof}

\section{A short exact sequence when there is no free SIL}\label{sec:noSIL}

The aim of this section is to prove Theorem~\ref{thms:no SIL nil} (see Theorem~\ref{thm:no SIL nil class}), which generalizes \cite[Theorem 1.3, Proposition 2.11]{Day_solvable}. It gives conditions on $(\Gamma, o)$ that determine precisely when $\Out(\G)$ contains a nonabelian free subgroup. When this criterion is satisfied, we get two transvections or two or three partial conjugations that generate a virtually free group. The transvections arise when there is a suitable $\leq_\infty$--equivalence class, while the existence of the partial conjugations requires a sufficiently complex SIL (a ``free'' SIL). To show the other half of the dichotomy, it is important to understand the structure of $\Out(\G)$ when there is no free SIL, and this can be described by the short exact sequence of Theorem~\ref{thms:no SIL ses} (see Theorem~\ref{thm:no SIL SES}), which generalizes \cite[Theorem 2]{GuirardelSale-vastness}. Proving Theorem~\ref{thm:no SIL SES} takes us most of the way to proving Theorem~\ref{thm:no SIL nil class}.

When $\Out(\G)$ does not contain a nonabelian free subgroup we show it is nilpotent. In order to understand the nilpotency class we make the following definition.

\begin{defn}
	Let $(\Gamma,o)$ be a finite labeled graph.
	Given a vertex $v$ its \emph{$\infty$--depth}  is the largest integer $c$ such that either:
	
\begin{enumerate}
	\renewcommand{\theenumi}{$\cal{D}$\arabic{enumi}}
	\item\label{depth:tr only} there is a set of vertices $v_1,\ldots,v_c$ in distinct $\leq_\infty$--equivalence classes of $\Gamma$ such that:
	\begin{itemize}
		\item $v_1 \leq_\infty v_2 \leq_\infty \cdots \leq_\infty v_c=v.$
	\end{itemize}
	\item\label{depth:tr and pc} there is a set of vertices $v_1, \ldots, v_{c-1}$ in  distinct $\leq_\infty$--equivalence classes of $\Gamma$ such that:	
	\begin{itemize}
		\item $v_1\leq_\infty v_2 \leq_\infty \cdots \leq_\infty v_{c-1}=v$
		\item $\Gamma\setminus \st(v_1)$ contains two components $C_1, C_2\not\subseteq \st(v)$.
	\end{itemize}
\end{enumerate}

	The \emph{$\infty$--depth of the labeled graph $(\Gamma,o)$} is defined to be the largest $\infty$--depth of any vertex in $\Gamma$ whose order is not equal to 2.
	If every vertex has order 2, then $(\Gamma,o)$ has $\infty$--depth equal to 1.
\end{defn}

A complete statement of Theorem~\ref{thms:no SIL nil} is the following.

\begin{thm}\label{thm:no SIL nil class}
	Let $(\Gamma,o)$ be a finite labeled graph and $\G = \G(\Gamma,o)$.
	Then $\Out(\G)$ contains a nonabelian free subgroup if and only if $(\Gamma,o)$ contains either
	\begin{itemize}
		\item a $\leq_\infty$--equivalence class of size at least 2;
		\item a non-Coxeter SIL;
		\item a STIL;
		\item an FSIL.
	\end{itemize}
	Otherwise, $\Out(\G)$ contains a finite-index subgroup that is nilpotent of class equal to the $\infty$--depth of $(\Gamma,o)$.
\end{thm}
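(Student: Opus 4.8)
I would prove the theorem in two halves, matching the two alternatives of the dichotomy, and then refine the nilpotent half to compute the class.

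\emph{Producing a free subgroup.} Suppose first that $(\Gamma,o)$ satisfies one of the four bulleted conditions. If $(\Gamma,o)$ has a $\leq_\infty$--equivalence class of size $n\geq 2$, then by Lemma~\ref{lem:standard rep image relators} the image of the standard representation $\rho$, which factors through $\Out(\G)$, surjects onto $\SL(n,\Z)$; since $\SL(n,\Z)$ contains a nonabelian free subgroup for $n\geq 2$, so does $\Out(\G)$. If instead $(\Gamma,o)$ contains a non-Coxeter SIL, a STIL, or an FSIL, I would exhibit two or three partial conjugations (and, for a STIL, possibly a transvection) generating a subgroup of $\Out(\G)$ that is virtually a nonabelian free group. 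Non-commutation of the relevant partial conjugations is detected by Lemma~\ref{lem:partial conj not commute}, and overlaps of SILs are handled by Lemma~\ref{lem:overlapping SILs give STIL}. The FSIL and STIL cases are essentially the ping-pong arguments of \cite{SaleSusse} for graph products of finite abelian groups, which go through here; the single non-Coxeter SIL case is the ping-pong of \cite{GuirardelSale-vastness} for RAAGs, where the ``non-Coxeter'' hypothesis --- i.e.\ that $o(x)\neq 2$ or $o(y)\neq 2$ --- is precisely what guarantees one of the two partial conjugations has infinite order and so avoids the infinite-dihedral degeneration.

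\emph{Producing a finite-index nilpotent subgroup.} Now suppose $(\Gamma,o)$ contains none of the four configurations. In particular it has no non-Coxeter SIL, no STIL and no FSIL, so Theorem~\ref{thms:no SIL ses} gives a finite-index subgroup $\cal{O}\leq\Out(\G)$ fitting in a short exact sequence $1\to P\to\cal{O}\to\prod_{i=1}^k\SL(n_i,\Z)\to 1$ with $P$ finitely generated and virtually nilpotent and $n_1,\dots,n_k$ the sizes of the $\leq_\infty$--equivalence classes of size more than one. By hypothesis there are no such classes, so $k=0$, the product is trivial, $\cal{O}=P$, and hence $\Out(\G)$ has a finite-index nilpotent subgroup. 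Together with the first half this proves the stated equivalence --- the two alternatives are exclusive since a virtually nilpotent group has no nonabelian free subgroup.

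\emph{Computing the nilpotency class.} It remains to identify the class of a finite-index nilpotent subgroup with the $\infty$--depth $c$ of $(\Gamma,o)$, for which I would appeal to (and, where necessary, extract from the proof of) Theorem~\ref{thm:no SIL SES}, which already determines the class of a finite-index subgroup of $P$. The \emph{upper bound} comes from a central filtration of $\Out^1_\infty(\G)$: by Theorem~\ref{thm:Torelli gen set} the Torelli subgroup $\IA_\G$ is generated by partial conjugations and commutator transvections, and adjoining the transvections $R^v_u$ with $u\leq_\infty v$ --- whose $\rho$--images lie in the nilpotent group $N$ of Lemma~\ref{lem:standard rep image relators} --- gives $\Out^1_\infty(\G)$, which is finite index in $\Out(\G)$. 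Assigning each such generator a weight read off from the longest $\leq_\infty$--chain below its multiplier, with an extra $+1$ in the partial-conjugation situation of clause~\ref{depth:tr and pc}, the relations of Lemmas~\ref{lem:relators} and~\ref{lem:standard rep image relators} and the conjugation formulas~\eqref{equn:normal1}--\eqref{equn:normal10} show that the subgroups of weight $\geq j$ form a central series which reaches the trivial group at the step dictated by the $\infty$--depth. For the \emph{lower bound}, given a vertex $v$ with $o(v)\neq 2$ realising the $\infty$--depth, I would write down an explicit iterated commutator attaining the full depth: from a chain $v_1\leq_\infty\cdots\leq_\infty v_c=v$ as in clause~\ref{depth:tr only}, iterating relation~\eqref{eq:transv rel1} produces $[\,\cdots[[R^{v_c}_{v_{c-1}},R^{v_{c-1}}_{v_{c-2}}],R^{v_{c-2}}_{v_{c-3}}]\cdots]$, a nontrivial transvection of infinite order, which therefore survives in any finite-index subgroup; from a chain as in clause~\ref{depth:tr and pc} one instead begins the bracket with a partial conjugation $\pi^{v_1}_{C_1}$ and uses~\eqref{equn:normal1} to gain the final level. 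The \textbf{main obstacle} is exactly this class computation: the weight bookkeeping must be run through transvections, commutator transvections and partial conjugations simultaneously, tracking how conjugation by a transvection pushes a generator ``up'' the poset, and one must show that the two clauses~\ref{depth:tr only} and~\ref{depth:tr and pc} of the $\infty$--depth account for \emph{precisely} --- not merely at most, and not more than --- the length of the resulting central series.
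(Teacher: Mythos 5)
Your proposal is correct and follows essentially the same route as the paper: free subgroups are exhibited directly from each graphical feature, the converse reduces to the short exact sequence of Theorem~\ref{thm:no SIL SES} with $k=0$ so that $\cal{O}=P$, and the nilpotency class is pinned down by the weight filtration $S_i$ together with the same explicit iterated commutators $[\cdots[R^{v_c}_{v_{c-1}},R^{v_{c-1}}_{v_{c-2}}],\cdots]$ and the variant starting from $\pi^{v_1}_{C_1}$. The only (harmless) deviation is in the equivalence-class case, where you pass through the surjection onto $\SL(n,\Z)$ from Lemma~\ref{lem:standard rep image relators} rather than the paper's direct observation that $(R^u_v)^2$ and $(R^v_u)^2$ generate $F_2$.
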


One direction of Theorem~\ref{thm:no SIL nil class} is not hard. The existence of the specified graphical features naturally gives rise to a free subgroup. 

Firstly, if $u$ and $v$ are distinct  vertices in the same $\le_\infty$--equivalence class, then they are both infinite order, and $(R^u_v)^2$ and $(R^v_u)^2$ generate a subgroup isomorphic to $F_2$.

If $(x,y\mids z)$ forms a non-Coxeter SIL, and $C$ is the component of $\Gamma\setminus(\lk(x)\cap \lk(y))$ containing $z$, then $\gspan{\pi^x_C, \pi^y_C}\cong \Z_{o(x)}\ast \Z_{o(y)}$, where we interpret $\Z_\infty=\Z$. This group is virtually non-abelian free, since either $o(x)\ge 3$ or $o(y)\ge 3$, or at least one of $x$ and $y$ has infinite order.
 Finally, if $(\Gamma,o)$ contains a STIL or FSIL, the three relevant partial conjugations in either case generate a subgroup isomorphic to the free product of three cyclic groups, which is virtually nonabelian free.

The converse, and indeed the ``otherwise'' statement, is a consequence of Theorem~\ref{thms:no SIL ses}. A version including the nilpotency class is given below.

\begin{thm}\label{thm:no SIL SES}
	Suppose $(\Gamma,o)$ does not contain either
	\begin{itemize}
		\item a non-Coxeter SIL,
		\item a STIL,
		\item an FSIL.
	\end{itemize}
	Then we have a short-exact sequence
	$$1 \to P \to \Out^1_\infty(\G) \to \prod \SL(n_i,\Z) \to 1$$
	where $P$ is finitely generated and virtually nilpotent of class equal to the $\infty$--depth of $(\Gamma,o)$, and the values of $n_i$ are the sizes of the $\leq_\infty$--equivalence classes.
\end{thm}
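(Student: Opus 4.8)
My plan is to build the short exact sequence from the amalgamated projection machinery of Section~\ref{sec:amalg proj}, combined with an induction on $|\Gamma|$, using Theorem~\ref{thm:kerP_abelian} and Theorem~\ref{thm:Torelli gen set} as the main inputs. First I would dispose of the easy structural cases: if $\Gamma$ is disconnected, $\G$ splits as a free product, and the hypothesis ``no non-Coxeter SIL, STIL, or FSIL'' forces each free factor to be either a single vertex or two non-adjacent order-$2$ vertices (otherwise a partial conjugation across the free product gives a free SIL or non-Coxeter SIL), so $\Out^1_\infty(\G)$ is virtually a product of the smaller pieces and the statement follows by induction. If $\Gamma=\Gamma'\star K$ is the star of a vertex, I would use Proposition~\ref{prop:star_SES}: $\Out^1(\G)\cong \Tr\rtimes(\Out^1(\G_{\Gamma'})\times\Out^1(\G_K))$ with $\Tr$ abelian, and observe that passing to the $\le_\infty$ versions only removes some transvections, so the sequence for $\G$ assembles from those for $\G_{\Gamma'}$ and $\G_K$ after checking that no new $\SL$ factors appear except those recorded in $\prod\SL(n_i,\Z)$.

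The heart of the argument is the case $\Gamma$ connected and not a star. Here I would apply the amalgamated projection $P\colon \Out^1_\infty(\G)\to \prod_{[v]\text{ max}}\Out^1_\infty(\G_{\lk[v]})$ and invoke Theorem~\ref{thm:kerP_abelian}, which says $\ker P$ is abelian (finitely generated, since it is generated by finitely many leaf-like transvections and bridged partial conjugations). By the inductive hypothesis each factor $\Out^1_\infty(\G_{\lk[v]})$ sits in its own short exact sequence with finitely generated virtually nilpotent kernel over a product of $\SL$'s; the subtlety is that $\lk[v]$ may contain an equivalence class of size $\geq 2$ only if that class was already an equivalence class in $\Gamma$ of the same size, so the $\SL(n_i,\Z)$ factors appearing downstairs are exactly the ones claimed. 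I would then define $\Out^1_\infty(\G)\to \prod\SL(n_i,\Z)$ by composing $P$ with the product of the inductive quotient maps, take $P$ (the kernel) to be the preimage of $\prod (\text{virtually nilpotent kernels})$, and check $P$ is an extension of a finitely generated virtually nilpotent group by the abelian group $\ker P$, hence finitely generated and virtually nilpotent. Surjectivity onto $\prod\SL(n_i,\Z)$ uses that the $\SL(n_i,\Z)$ block corresponding to an equivalence class $C_i$ is hit by the transvections among vertices of $C_i$, exactly as in the $\Res\circ\Fact$ discussion of Section~\ref{subsec:ResFact} and Lemma~\ref{lem:standard rep image relators}.

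The main obstacle is pinning down the nilpotency class of $P$ as the $\infty$--depth of $(\Gamma,o)$. For this I would argue that, after passing to a finite-index subgroup, $P$ is generated by (commutator) transvections $R_u^{[v,w]}$ and bridged partial conjugations, and filter $P$ by the length of a maximal $\le_\infty$--chain ending at the multiplier. The relations \eqref{eq:transv rel1}–\eqref{eq:transv commute} of Lemma~\ref{lem:relators} and the conjugation relations $(\cal{R}1)$–$(\cal{R}10)$ of Lemmas~\ref{lem:conjugates of pc by transv} and~\ref{lem:conjugates of ct by transv} show that commuting a generator past another one either kills it or drops it one level in this filtration --- a $\le_\infty$--chain of length $c$ producing, via iterated commutators $R_{v_1}^{[v_2,\dots]}$, a nontrivial element of the $c$-th term of the lower central series and nothing deeper --- and the condition $\cal{D}2$ accounts for the extra level contributed by a bridged component when the chain bottoms out at a vertex whose complement-of-star is disconnected. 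The upper bound (that the $(c{+}1)$-st lower central term dies virtually) is the routine-but-lengthy direction: one checks that every iterated commutator of generators of total ``depth'' exceeding the $\infty$--depth is either inner or expressible via strictly shorter ones using the cited relations, and that the exclusion of non-Coxeter SILs, STILs, and FSILs is exactly what prevents two partial conjugations (or a partial conjugation and a transvection) from generating a free group inside $P$, so that all such commutators collapse. I expect verifying this collapse uniformly --- rather than case-by-case on the graphical configurations --- to be where the real work lies, and I would isolate it as a separate lemma on the lower central series of the group generated by bridged partial conjugations and leaf-like transvections.
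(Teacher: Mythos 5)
Your route is genuinely different from the paper's, but it has a fatal logical gap at its center. The paper does not use the amalgamated projection here at all: it defines $\mu$ as the standard representation $\rho$ followed by the projection onto the $\SL$--blocks of the semidirect product decomposition of $\rho(\Out^1_\infty(\G))$ from Lemma~\ref{lem:standard rep image relators}, obtains a generating set for $\ker\mu$ from the Torelli generating set (Theorem~\ref{thm:Torelli gen set}), passes to an explicit finite-index subgroup $\cal{A}$ of $\ker\mu$, and then computes \emph{all} commutators between the generators of $\cal{A}$ (Proposition~\ref{prop:calA commutators}), finding that everything commutes except for two relations that strictly increase $\infty$--depth. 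That commutator calculus is the entire content of the nilpotency claim. Your proposal replaces it with the assertion that the kernel is ``an extension of a finitely generated virtually nilpotent group by the abelian group $\ker P$, hence finitely generated and virtually nilpotent.'' This inference is false: $\Z^2\rtimes_A\Z$ with $A$ a hyperbolic matrix is abelian-by-cyclic and finitely generated but not virtually nilpotent. Abelian-by-(virtually nilpotent) does not imply virtually nilpotent, and ruling out precisely this kind of behavior is what the graphical hypotheses and the case-by-case commutator analysis are for. You acknowledge in your last paragraph that the ``collapse'' of deep commutators is where the real work lies and defer it to an unproven lemma, so the proposal does not actually prove the theorem.

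There are two further problems with the inductive set-up itself. First, your identification of the $\SL$ factors downstairs is unjustified: the pre-order $\leq_\infty$ on $\lk[v]$ is computed in the subgraph, where links are smaller, so vertices that are inequivalent in $\Gamma$ can become equivalent in $\lk[v]$, creating new $\SL$ factors in the inductive quotients that do not correspond to equivalence classes of $\Gamma$; moreover a single class of $\Gamma$ can appear in several $\lk[v]$'s, so the composed map lands in a diagonal rather than surjecting onto a clean product. Second, the disconnected case is mishandled: with no free SIL the components need not be single vertices or pairs of order-$2$ vertices (two disjoint large cliques of infinite-order vertices have no SIL at all), and even when the components are understood, $\Out^1_\infty(\G)$ of a free product is not virtually a product of the $\Out$'s of the factors because of the partial conjugations of one whole component by a vertex of another (compare $\Out(\Z_2\ast\Z_2\ast\Z_2)$, which is infinite while each factor has trivial outer automorphism group). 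The paper's argument via the standard representation avoids any case analysis on the shape of $\Gamma$ and treats all of these situations uniformly.
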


To construct the short-exact sequence we begin by taking the standard representation $\rho$ of $\Out^1_\infty(\G)$.
The image is described by Lemma~\ref{lem:standard rep image relators}, and admits as a quotient a product of special linear groups.
We define $\mu$ to be the composition of the standard representation with this quotient map, giving:
$$\mu \colon \Out^1_\infty(\G) \to \prod \SL(n_i , \Z).$$
The values $n_i$ correspond to the sizes of the $\leq_\infty$--equivalence classes containing more than one vertex.

\begin{rem} 
It's not hard to see that $\mu$ is a split surjection  when $(\Gamma, o)$ does not contain a non-abelian equivalence class of size at least 3, though that is immaterial in the proof of either Theorem~\ref{thm:no SIL nil class} or ~\ref{thm:no SIL SES}.
\end{rem}

The next lemma shows that the kernel of $\mu$ is generated by a subset of the standard generating set for $\Out(\G)$, whether $(\Gamma,o)$ contains any of the free SIL structures or not.

\begin{lem}\label{lem:ker mu gen set}
	The kernel of $\mu$ is generated by the set consisting of:
	\begin{itemize}
		\item  all partial conjugations, 
		\item  all commutator transvections $R_u^{[x,y]}$ for $u\leq_\infty x,y$,
		\item all transvections $R_u^v$ such that $u \leq_\infty v$, and $u \not\sim v$.
	\end{itemize}
Moreover, if $(\Gamma,o)$ satisfies the conditions of Theorem~\ref{thm:no SIL SES} then we may remove commutator transvections from the list of generators.
\end{lem}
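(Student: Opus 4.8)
The plan is to pin down $\ker\mu$ using the description of the image of the standard representation. Recall that by Lemma~\ref{lem:standard rep image relators} (together with Theorem~\ref{thm:Torelli gen set} and Proposition~\ref{prop:Torelli in 1 inf}) the map $\rho$ restricts to a surjection $\rho^1_\infty\colon\Out^1_\infty(\G)\to E\rtimes\bigl(N\rtimes\prod_i\SL(n_i,\Z)\bigr)$ with kernel $\IA_\G$, and $\mu$ is $\rho^1_\infty$ followed by the quotient map killing $E\rtimes N$. Hence $\ker\mu=(\rho^1_\infty)^{-1}(E\rtimes N)$, and restricting $\rho^1_\infty$ yields a short exact sequence
$$1\to\IA_\G\to\ker\mu\to E\rtimes N\to 1.$$
So it is enough to produce a generating set for $\IA_\G$ together with lifts of a generating set for $E\rtimes N$.

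For the first, Theorem~\ref{thm:Torelli gen set} gives that $\IA_\G$ is generated by all partial conjugations and all commutator transvections $R_u^{[x,y]}$ with $u\leq_\infty x,y$. For the second, Lemma~\ref{lem:standard rep image relators} says $E$ is generated by the $E_u^v$ with $o(v)<\infty$ and $N$ by the $E_u^v$ with $u,v$ in distinct $\leq_\infty$--equivalence classes; since $u\leq_\infty v$ forces $o(u)=\infty$, the union of these two families is precisely $\{E_u^v : u\leq_\infty v,\ u\not\sim v\}$, and this set generates $\langle E,N\rangle=E\rtimes N$. Lifting each $E_u^v$ to $R_u^v$, and noting that partial conjugations and commutator transvections lie in $\IA_\G\subseteq\ker\mu$ while each $R_u^v$ with $u\not\sim v$ maps into $E\rtimes N$ and hence lies in $\ker\mu$, one concludes: any $\Phi\in\ker\mu$ differs from a word in these $R_u^v$'s by an element of $\IA_\G$, which gives the claimed generating set. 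Both steps here are routine.

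For the ``moreover'' part I would show that, under the hypotheses of Theorem~\ref{thm:no SIL SES}, every nontrivial commutator transvection $R_u^{[x,y]}=[R_u^x,R_u^y]$ already lies in the subgroup $K''$ generated by partial conjugations and the transvections $R_u^v$ with $u\leq_\infty v$, $u\not\sim v$; combined with the first part this gives $\ker\mu=K''$. Since $R_u^{[x,y]}\neq 1$ we have $[x,y]\neq1$ and $u\leq_\infty x,y$ with $u,x,y$ distinct, so it suffices to prove $u\not\sim x$ and $u\not\sim y$ (then both $R_u^x$ and $R_u^y$ are allowed generators). Using that $\sim$ is an equivalence relation and that a $\leq_\infty$--class is either a clique or edgeless (Lemma~\ref{lem:eq classes infinite or finite}): if $u\sim x$ and $u\sim y$ then $x,y,u$ lie in one class, which is edgeless because $x,y$ are nonadjacent, so $x,y,u$ are pairwise nonadjacent infinite-order vertices with a common link $L$; each is isolated in $\Gamma\setminus L$, so $(x,y\mids u)$ is a SIL, and it is non-Coxeter since $o(u)=\infty$, contradicting the hypothesis. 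If instead, say, $u\sim y$ but $u\not\sim x$, the class of $u,y$ is again forced to be edgeless (else $y\in\lk(u)\subseteq\st(x)$ contradicts $[x,y]\neq1$), with common link $L'=\lk(u)=\lk(y)$, and $x\notin L'$ (else $x\in\lk(y)$); then $u,y$ are isolated in $\Gamma\setminus L'$ and $(u,y\mids x)$ is again a non-Coxeter SIL, a contradiction. Hence $u\not\sim x,y$ and $R_u^{[x,y]}=[R_u^x,R_u^y]\in K''$.

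The lifting argument and the membership checks for $\ker\mu$ are straightforward; the step I expect to be the main obstacle is the SIL bookkeeping in the ``moreover'' part — verifying carefully that the degenerate configurations forced by $u\sim x$ or $u\sim y$ genuinely produce a \emph{non-Coxeter} SIL, i.e.\ that the relevant vertices are truly separated by the intersection of the links and that none of the small exceptional cases (isolated vertices, $\leq_\infty$--classes of size exactly two, or situations where $\lk(x)\cap\lk(y)$ is empty) evade the argument.
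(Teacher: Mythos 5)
Your proof is correct and follows essentially the same route as the paper: factoring $\mu$ through the standard representation, combining the Torelli generating set of Theorem~\ref{thm:Torelli gen set} with lifts of the generators of $E\rtimes N$ from Lemma~\ref{lem:standard rep image relators}, and, for the ``moreover'' part, showing that the absence of a non-Coxeter SIL forces $u\not\sim x$ and $u\not\sim y$ so that each nontrivial commutator transvection decomposes as a commutator of allowed simple transvections. (One cosmetic slip: in your first case the SIL $(x,y\mids u)$ is non-Coxeter because $o(x)=o(y)=\infty$ --- the definition constrains the two link vertices, not the separated one --- but since all three vertices lie in a single infinite-order class the conclusion is unaffected.)
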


\begin{proof}[Proof of Lemma~\ref{lem:ker mu gen set}]
	The homomorphism $\mu$ is the composition of two maps, $\rho$ (the standard representation) and a projection map $\pi$, defined explicitly below.
	We analyze the kernels of $\rho$ and $\pi$ separately.
	
	As in Lemma~\ref{lem:standard rep image relators}, we denote $E_u^{v} = \rho(R_u^{v})$, and have
	\begin{align*} \rho\left(\Out^1_\infty(\G)\right)  \cong E & \rtimes\left( N \rtimes \left( \SL(n_1,\Z) \times \cdots \times \SL(n_k , \Z) \right) \right)\\
	& \le T^n\rtimes\left( N \rtimes \left( \SL(n_1,\Z) \times \cdots \times \SL(n_k , \Z) \right) \right).\end{align*}
	The subgroup $E$ is generated by $E_u^v$, where $u\leq_\infty v$ and $o(v)<\infty$,
	and $N$ is generated by $E_u^v$ for $u\leq_\infty v$, $o(v) = \infty$ and $u\not\sim v$.
	
The homomorphism $\pi$ is the projection:
	$$\pi\colon  T^n\rtimes\left( N \rtimes \left( \SL(n_1,\Z) \times \cdots \times \SL(n_k , \Z) \right) \right) \to \left( \SL(n_1,\Z) \times \cdots \times \SL(n_k , \Z) \right).$$
	The kernel of $\pi$ in $\rho(\Out^1_\infty(\G))$ is generated by   $E_u^v$ for $u\leq_\infty v$, $o(v) = \infty$ and $u\not\sim v$.
	Thus, combining this with the generating set for $\IA_\G= \ker \rho$ given by Theorem~\ref{thm:Torelli gen set}, $\ker\mu$ is generated by the above transvections, all partial conjugations, and all commutator transvections.
	
	 To prove the ``moreover'' statement, consider $R^{[u,v]}_x\in\ker \mu$.
	Since there is no free SIL, if $u\sim x$, and $u, x$ are non-adjacent, then $(u,x\mids v)$ is a non-Coxeter SIL. Thus, $u, x$ must be adjacent. In this case though, since $\lk(x)\subseteq \st(v)$, we must have that $u, v$ commute, and so the commutator transvection would be trivial. Thus, if $R_x^{[u,v]}$ is nontrivial, then $R^u_x,R^v_x\in\ker\mu$, and the commutator transvections are products of (simple) transvections in our generating set, and can therefore be removed.
\end{proof}

In the next proposition we (expand and) refine the generating set given by Lemma~\ref{lem:ker mu gen set} to give a generating set for a finite-index subgroup of $\ker \mu$ which we will later prove is  nilpotent. 

\begin{prop}\label{prop:calA finite index}
	Let $\cal{A}$ be the subgroup of $\ker\mu$ generated by the following elements:
	\begin{enumerate}
		 \renewcommand{\theenumi}{$\cal{A}$\arabic{enumi}}
		\item\label{gen:pc} all partial conjugations $\pi^v_C$ with $o(v) \neq 2$,
		\item\label{gen:cpc} all commutator partial conjugations $\pi^{[v,w]}_C$ with $o(v)=o(w)=2$,
		\item\label{gen:tr} all transvections $R_x^y$ with $x\leq_\infty y$, $x\not\sim y$, and $o(y)\neq 2$,
		\item\label{gen:ctr} all commutator transvections $R_x^{[y,z]}$, with $x\leq_\infty y,z$ and $o(y)=o(z) = 2$.
	\end{enumerate}
 If $(\Gamma,o)$ does not contain a non-Coxeter SIL, a STIL, or an FSIL, 
then $\cal{A}$  has finite index in $\ker \mu$.
\end{prop}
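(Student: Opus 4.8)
The plan is to prove that $\cal{A}$ is normal in $\ker\mu$ and that $\ker\mu/\cal{A}$ is a finitely generated abelian group of exponent two, hence finite. I would begin from the generating set for $\ker\mu$ given by Lemma~\ref{lem:ker mu gen set}: since $(\Gamma,o)$ has no non-Coxeter SIL, STIL, or FSIL, $\ker\mu$ is generated by all partial conjugations together with all transvections $R_u^v$ with $u\leq_\infty v$ and $u\not\sim v$. Of these, the ones already lying in $\cal{A}$ are exactly the partial conjugations with $o(v)\neq 2$ (type $\cal{A}1$) and the transvections with $o(v)\neq 2$ (type $\cal{A}3$), so the remaining generators --- the order-two partial conjugations $\pi^v_C$ with $o(v)=2$ and the order-two transvections $R_u^v$ with $o(v)=2$ --- are involutions. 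Hence, once $\cal{A}\trianglelefteq\ker\mu$ is known, $\ker\mu/\cal{A}$ is generated by finitely many elements of order dividing two.

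To prove $\cal{A}\trianglelefteq\ker\mu$, I would check, as in the proof of Lemma~\ref{lem:Torelli normal}, that conjugating each of the generating families $\cal{A}1$--$\cal{A}4$ by a generator of $\ker\mu$ (or its inverse) returns an element of $\cal{A}$. Conjugation by a transvection is governed by Lemmas~\ref{lem:conjugates of pc by transv} and~\ref{lem:conjugates of ct by transv} and the relations of Lemma~\ref{lem:relators}; conjugation by a partial conjugation needs analogous identities, many cases of which collapse immediately via Lemma~\ref{lem:commuting criteria}; and the commutator generators $\cal{A}2$ and $\cal{A}4$ can be reduced to the previous cases using bilinearity of the bracket. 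The only ``new'' terms that can occur in these computations are commutator transvections $R_x^{[v,w]}$ (with $x\leq_\infty v,w$) and commutator partial conjugations, and the crucial point --- where the hypothesis enters --- is that each such term is either trivial or already of type $\cal{A}4$ or $\cal{A}2$: if $R_x^{[v,w]}$ is nontrivial then $v,w$ are non-adjacent, and the configuration that produced it forces $\lk(x)\subseteq\lk(v)\cap\lk(w)$ with $x$ in its own component of $\Gamma\setminus(\lk(v)\cap\lk(w))$, so $(v,w\mids x)$ is a SIL; were $o(v)\neq 2$ or $o(w)\neq 2$ this would be a non-Coxeter SIL, contradicting the hypothesis, so $o(v)=o(w)=2$ and $R_x^{[v,w]}$ has type $\cal{A}4$. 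The partial-conjugation commutators are controlled in the same spirit, with the absence of a STIL and of an FSIL (compare Lemmas~\ref{lem:partial conj not commute} and~\ref{lem:overlapping SILs give STIL}) ensuring that the commutator of two partial conjugations occurring here is a commutator partial conjugation of type $\cal{A}2$; in the model case $z\in C=D$ one simply computes $[\pi^v_C,\pi^w_C]=(\pi^v_C\pi^w_C)^2=\pi^{[w,v]}_C$.

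With normality in hand, I would show $\ker\mu/\cal{A}$ is abelian by checking that the commutators of the generating involutions lie in $\cal{A}$. Two order-two partial conjugations commute modulo $\cal{A}$ by the computation above; for two order-two transvections, $[R_x^y,R_a^b]$ is trivial unless $x=a$, in which case it equals $R_x^{[y,b]}$, of type $\cal{A}4$; and for $\pi^v_C$ with $o(v)=2$ and $R_x^y$ with $o(y)=2$ only cases~\eqref{equn:normal1} and~\eqref{equn:normal2} of Lemma~\ref{lem:conjugates of pc by transv} are possible (cases~\eqref{equn:normal3} and~\eqref{equn:normal4} would force the transvection multiplier to be of finite order, contradicting $x\leq_\infty y$), where $[\pi^v_C,R_x^y]=R_x^{[v,y^{\pm1}]}$ has type $\cal{A}4$ because $o(v)=o(y)=2$. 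Thus $\ker\mu/\cal{A}$ is a finitely generated abelian group of exponent two, so it is finite and $\cal{A}$ has finite index in $\ker\mu$.

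I expect the main obstacle to be assembling the full normality argument: it rests on a lengthy case analysis, and the cases not already covered by Lemmas~\ref{lem:conjugates of pc by transv} and~\ref{lem:conjugates of ct by transv} --- conjugation by partial conjugations and conjugation of the commutator generators --- require, in each configuration whose conjugate threatens to escape $\cal{A}$, the extraction of a forbidden non-Coxeter SIL, STIL, or FSIL.
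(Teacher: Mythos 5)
Your strategy founders on its first structural claim: $\cal{A}$ is not, in general, normal in $\ker\mu$. The problem is that the ``new'' terms arising when you conjugate a generator of $\cal{A}$ by one of the order-two-multiplier generators of $\ker\mu$ are not only commutator transvections and commutator partial conjugations. Relation~\eqref{equn:normal3} of Lemma~\ref{lem:conjugates of pc by transv} gives $R_x^{y^{-1}}\pi^x_C R_x^{y} = \pi^{y^{-1}}_C\pi^x_C$ whenever the multiplier of the partial conjugation equals the supported vertex $x$ of the transvection and $y\notin C$; taking $o(x)=\infty$ and $o(y)=2$, the left-hand side is a conjugate of the type~\eqref{gen:pc} generator $\pi^x_C$ by the $\ker\mu$-generator $R_x^y$, and the new term $\pi^{y}_C$ is a \emph{simple} partial conjugation with multiplier of order $2$ --- it is of none of the types \eqref{gen:pc}--\eqref{gen:ctr}. (Similarly, relation~\eqref{eq:transv rel1} shows that conjugating a type~\eqref{gen:tr} generator $R_z^x$ by $R_x^y$ with $o(y)=2$ introduces the simple transvection $R_z^y$.) This is not a removable defect. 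Take $\Gamma$ with vertices $x,y,a,b,z,w$, edges $xy,xa,xb,ya,yb,az,bw$, and $o(x)=\infty$ with all other orders equal to $2$: one checks there are no SILs at all, so the hypotheses hold, and here $\cal{A}$ is the infinite cyclic subgroup generated by $\pi^x_{\{z\}}$, while $R_x^{y}\pi^x_{\{z\}}R_x^{y^{-1}} = \pi^{y}_{\{z\}}\pi^x_{\{z\}}$ with $\pi^y_{\{z\}}$ a non-inner involution not lying in $\langle\pi^x_{\{z\}}\rangle$. So in this example $\cal{A}$ has index $4$ in $\ker\mu$ but is not normal, and the quotient set $\ker\mu/\cal{A}$ is not a group at all.

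The paper's proof is designed precisely to sidestep this. Rather than normality, it establishes the product decomposition $\ker\mu = \cal{A}\cdot H$, where $H$ is the subgroup generated by all transvections and partial conjugations whose multiplier has order $2$: one shuffles the $H$-letters to the right of a word in the generators of Lemma~\ref{lem:ker mu gen set}, and the new letters created by the shuffle (such as the $\pi^y_C$ and $R_z^y$ above) are absorbed into the $H$-part rather than being forced into $\cal{A}$. One then shows separately that $\cal{A}\cap H$ contains the commutator subgroup $H'$, which has finite index in $H$ because $H/H'$ is abelian and generated by finitely many torsion elements; finitely many coset representatives of $H'$ in $H$ then witness the finite index of $\cal{A}$. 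To salvage your outline you would either have to enlarge $\cal{A}$ (which changes the statement) or replace the normality-plus-quotient argument with a two-step factorization of this kind.
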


Before proving the proposition, we investigate when automorphisms from this list commute, and see some of the graphical consequences if they do not. 
First though, we note that the list in Proposition~\ref{prop:calA finite index} includes all possible commutator transvections in the absence of a non-Coxeter SIL.

Note that in Lemmas~\ref{lem:pc and transv not commute gives SIL}, \ref{lem:pc and comm transv not commute STIL}, and \ref{lem:no SIL comm transv commute with transv} there is no assumption on the orders of the vertices. By restricting orders, certain cases can be excluded from consideration.

\begin{lem}\label{lem:comm transv gives SIL}
	Suppose $R_x^{[y,z]}$ is a non-trivial commutator transvection in $\Out(\G)$ (equivalently suppose $R_x^y$ and $R_x^z$ do not commute).
	
	Then $(y,z \mids x)$ is a SIL and $\lk(x)\subseteq \lk(y)\cap\lk(z)$.
\end{lem}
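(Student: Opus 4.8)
The plan is to read both conclusions straight off the existence conditions for the transvections $R_x^y$ and $R_x^z$, once we observe that non-triviality of $R_x^{[y,z]}$ forces $[y,z]\neq 1$ in $\G$. First I would record the basic consequence of the hypothesis: since $R_x^{[y,z]}$ is non-trivial in $\Out(\G)$ it is in particular non-trivial in $\Aut(\G)$, and as this automorphism sends $x\mapsto x[y,z]$ and fixes every other vertex, we must have $[y,z]\neq 1$. In a graph product this means $y$ and $z$ are distinct and non-adjacent. (Equivalently, if $[y,z]=1$ then $R_x^y$ and $R_x^z$ already commute in $\Aut(\G)$, so the parenthetical form of the hypothesis likewise yields $y\neq z$ and $y,z$ non-adjacent.)

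Next I would prove $\lk(x)\subseteq\lk(y)\cap\lk(z)$. Whether $R_x^y$ exists via condition (1) or condition (2) in the definition of a transvection, in both cases $\lk(x)\subseteq\st(y)=\{y\}\cup\lk(y)$, and likewise $\lk(x)\subseteq\{z\}\cup\lk(z)$; this is the only feature of the transvections that the argument uses, so it is insensitive to the precise power appearing. Now take $v\in\lk(x)$. If $v=y$, then $y=v\in\{z\}\cup\lk(z)$, which is impossible since $y\neq z$ and $y\notin\lk(z)$; hence $v\neq y$, and symmetrically $v\neq z$. Therefore $v\in\lk(y)$ and $v\in\lk(z)$. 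Note this in particular shows $y\notin\lk(x)$ and $z\notin\lk(x)$, a fact I will reuse below.

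Finally I would check the separation condition needed for $(y,z\mids x)$ to be a SIL. Since $y$ and $z$ are non-adjacent, $\lk(y)\cap\lk(z)$ is a legitimate candidate separating set. We have $x\notin\lk(y)\cap\lk(z)$, because $y\notin\lk(x)$ gives $x\notin\lk(y)$; and $y,z\notin\lk(y)\cap\lk(z)$ since the graph is simplicial. So $x$, $y$, and $z$ each lie in some connected component of $\Gamma\setminus(\lk(y)\cap\lk(z))$. Suppose for contradiction that $x$ lies in the same component as $y$ (the case of $z$ being identical), and take a shortest edge-path $x=p_0,p_1,\dots,p_n=y$ in $\Gamma\setminus(\lk(y)\cap\lk(z))$. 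Since $x\neq y$ and $x,y$ are non-adjacent, $n\geq 2$, so $p_1$ exists, $p_1\neq y$, and $p_1\in\lk(x)\subseteq\lk(y)\cap\lk(z)$ — contradicting that $p_1$ lies on a path avoiding $\lk(y)\cap\lk(z)$. Hence the component of $x$ contains neither $y$ nor $z$, so $(y,z\mids x)$ is a SIL.

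The argument is short; the only point requiring genuine care is the case analysis in the second step ruling out $y,z\in\lk(x)$, i.e.\ showing that the extra ``$\{y\}$'' and ``$\{z\}$'' terms in $\st(y)$ and $\st(z)$ cannot be met, which is exactly where non-adjacency of $y$ and $z$ enters. Once $\lk(x)\subseteq\lk(y)\cap\lk(z)$ is established, the SIL condition follows from the minimal-path observation with no further graphical input.
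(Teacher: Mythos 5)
Your proof is correct and follows essentially the same route as the paper's: both deduce $\lk(x)\subseteq\st(y)\cap\st(z)=\lk(y)\cap\lk(z)$ from non-adjacency of $y$ and $z$, then observe $x\notin\lk(y)\cap\lk(z)$ and that every neighbour of $x$ lies in the separating set. The paper phrases the last step directly as ``$\{x\}$ is a connected component of $\Gamma\setminus(\lk(y)\cap\lk(z))$,'' which is the same observation your shortest-path argument establishes slightly more laboriously.
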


\begin{proof}
	Firstly, note that if we have a commutator transvection $R_x^{[y,z]}$ then we require $\lk(x)\subseteq \st(y)\cap\st(z)$. 
	We must have $[y,z] \neq 1$, so $\st(y)\cap\st(z) = \lk(y)\cap\lk(z)$,
	and furthermore  if $x\in\lk(y)\cap\lk(z)$, then $y\in\lk(x)\subseteq\lk(y)$, a contradiction. 
	Thus $\{x\}$ is a connected component of $\Gamma\setminus(\lk(y)\cap\lk(z))$, forming a SIL.	
\end{proof}

\begin{lem}\label{lem:pc and transv not commute gives SIL}
	Suppose 
	$R_x^y$ and $\pi^v_C$ are outer automorphisms of $\G$ that do not commute.
	Then either 
	\begin{itemize}
		\item $(v,y\mids x)$ is a SIL and $\lk(x)\subseteq \lk(v)\cap\lk(y)$,
		\item $v=x$ and $[R^y_x , \pi^x_C] = \pi^y_C$.
	\end{itemize}
\end{lem}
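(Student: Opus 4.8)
\medskip
The plan is to argue by the position of the multiplier $v$ relative to the pair $\{x,y\}$, and, when $v\notin\{x,y\}$, by the positions of $x$ and $y$ relative to $C$. Throughout I use that the existence of $R_x^y$ forces $\lk(x)\subseteq\st(y)$, and that $C$ is a union of connected components of $\Gamma\setminus\st(v)$. It is enough to prove the contrapositive: if $v\neq x$ and $(v,y\mids x)$ fails to be a SIL with $\lk(x)\subseteq\lk(v)\cap\lk(y)$, then $R_x^y$ and $\pi^v_C$ already commute as elements of $\Aut(\G)$.

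First I would handle the case $v=x$ directly. A short computation (analogous to relation \eqref{equn:normal3} of Lemma~\ref{lem:conjugates of pc by transv}, but now with only $\lk(x)\subseteq\st(y)$ available, not $x\leq_\infty y$) shows that $[R_x^y,\pi^x_C]$ sends each $z\in C$ to $yzy\m$ and fixes every other generator. The point needing care is that this is a genuine partial conjugation $\pi^y_C$: since $\lk(x)\subseteq\st(y)$, a path in $\Gamma\setminus\st(y)$ can meet $\st(x)$ only at $x$ itself, and $x$ cannot be an interior vertex of such a path (its path-neighbours would lie in $\lk(x)\subseteq\st(y)$) nor an endpoint between two vertices of $C$ (as $x\notin C$); hence $C\cap(\Gamma\setminus\st(y))$ is a union of components of $\Gamma\setminus\st(y)$, and $\pi^y_C$ equals the standard partial conjugation on that set, the vertices of $C$ inside $\st(y)$ being fixed by conjugation by $y$. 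This gives the second alternative.

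Next, if $v=y$, I would check on generators --- splitting according to whether $x\in C$ or $x\notin C$, and using that $\lk(x)\subseteq\st(y)$ makes $\{x\}$ a component of $\Gamma\setminus\st(y)$ when $x\notin\st(y)$ --- that $R_x^y$ and $\pi^y_C$ always commute, so this case does not arise. Finally suppose $v\notin\{x,y\}$. Since $R_x^y$ moves only $x$ and $\pi^v_C$ conjugates only $C$, a direct check shows the two automorphisms commute whenever $\{x,y\}\cap C=\emptyset$, whenever $\{x,y\}\subseteq C$, and also when $x\in C$, $y\notin C$ and $y\in\st(v)$ (the discrepancy at $x$ being exactly $[v,y]=1$). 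So if they do not commute then exactly one of $x,y$ lies in $C$, and in fact both $x$ and $y$ lie outside $\st(v)$: this is immediate when $x\in C$ (then $y\notin C\cup\st(v)$), while if $y\in C$ then $x\in\st(v)$ would force $v\in\lk(x)\subseteq\st(y)$, contradicting $y\notin\st(v)$. Hence $x$ and $y$ lie in distinct components of $\Gamma\setminus\st(v)$ and so are non-adjacent, giving $\lk(x)\subseteq\lk(y)$; moreover any $w\in\lk(x)$ must lie in $\st(v)$, for otherwise $w\in\lk(x)\subseteq\lk(y)$ would join the component of $x$ to that of $y$. Thus $\lk(x)\subseteq\lk(v)\cap\lk(y)$, so removing $\lk(v)\cap\lk(y)$ isolates $x$, whose component $\{x\}$ contains neither $v$ nor $y$; since $v,y$ are non-adjacent, $(v,y\mids x)$ is a SIL, the first alternative.

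Most of this is routine (if tedious) verification of commuting relations on the generators of $\G$. The genuinely delicate step, which I would flag, is the $v=x$ case: confirming that the commutator is literally a partial conjugation $\pi^y_C$ from our generating set, rather than merely some automorphism supported on $C$, since this is exactly what the later normality arguments (e.g.\ Lemma~\ref{lem:Torelli normal}) rely on.
\medskip
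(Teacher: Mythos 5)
Your route differs from the paper's: the paper reduces at once to Lemma~\ref{lem:conjugates of pc by transv}, which catalogues $R_x^{y^{\mp1}}\pi^v_C R_x^{y^{\pm1}}$ for $C$ a single component (relations \eqref{equn:normal1}--\eqref{equn:normal4}) and then handles disconnected $C$ by a short induction, whereas you redo the case analysis from scratch. Your treatment of the first alternative is correct and arguably cleaner than the paper's (you extract the SIL and the containment $\lk(x)\subseteq\lk(v)\cap\lk(y)$ directly rather than routing through Lemma~\ref{lem:comm transv gives SIL}), and your point that only $\lk(x)\subseteq\st(y)$ is available here, not $x\leq_\infty y$, is well taken, as is your care in checking that the resulting $\pi^y_C$ is a genuine (product of) partial conjugation(s).

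The gap is in the case $v=x$, which you rightly single out as the delicate one but only half-handle: you never address the subcase $y\in C$, and your computation is wrong there. The formula ``$[R_x^y,\pi^x_C]$ sends each $z\in C$ to $yzy\m$ and fixes every other generator'' is what one gets only when $y\notin C$. When $y\in C$, the same computation sends each $z\in C$ to $(xyx\m)z(xyx\m)\m$ and sends $x$ to $(xyx\m)x(xyx\m)\m$; equivalently, after composing with $\ad_{(xyx\m)\m}$, it fixes $C\cup\st(x)$ and conjugates the complement $C'=\Gamma\setminus(C\cup\st(x))$ by $xy\m x\m$. This is exactly relation \eqref{equn:normal4} of Lemma~\ref{lem:conjugates of pc by transv}, and identifying the outcome with $\pi^y_C$ in $\Out(\G)$ is not automatic: writing it as $\pi^{x}_{C'}\pi^{y\m}_{C'}\pi^{x\m}_{C'}$, one still has to move $\pi^{x^{\pm1}}_{C'}$ past $\pi^{y^{\mp1}}_{C'}$, and by Lemma~\ref{lem:partial conj not commute} that is obstructed precisely when there is a SIL $(x,y\mids z)$ with $z\in C'$. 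So the subcase you omit is not vacuous, the assertion you make about it is not what the commutator literally computes to, and it is the one place where the later normality and nilpotency arguments genuinely lean on the shape of the answer. You need either to carry out the \eqref{equn:normal4} computation together with the ensuing normalization, or to reduce to the $y\notin C$ situation by replacing $\pi^x_C$ with $\pi^{x\m}_{C'}$ in $\Out(\G)$ and then tracking explicitly how the resulting commutator relates to $\pi^y_C$.
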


\begin{proof}
Lemma~\ref{lem:conjugates of pc by transv} tells us what happens when $R_x^y$ and $\pi^v_C$ do not commute, in the case when $C$ is connected.
Relations~\eqref{equn:normal1} and~\eqref{equn:normal2} therein both imply there is a SIL $(v,y\mids x)$ and $\lk(x)\subseteq \lk(v)\cap \lk(y)$ by Lemma~\ref{lem:comm transv gives SIL}.
Meanwhile relations~\eqref{equn:normal3} and~\eqref{equn:normal4} both lead to the given commutator in $\Out(\G)$.

It is not hard to generalize this to the case when $C$ is not a single connected component.
Proceeding by induction, if $C$ is a disjoint union $C_1\cup C_2$ with $C_2$ a connected component, then
$$[R_x^y , \pi^x_{C_1} \pi^x_{C_2}] = [R_x^y , \pi^x_{C_1}] \pi^x_{C_1} [R_x^y , \pi^x_{C_2}] \pi^x_{C_2}\pi^{x\m}_{C_1}\pi^{x\m}_{C_2} = \pi^y_{C_1}  \pi^x_{C_1} \pi^y_{C_2} \pi^x_{C_2}\pi^{x\m}_{C_1}\pi^{x\m}_{C_2}.$$
Since $C_1\cap C_2\neq\emptyset$, and $\pi^x_{C_1}$ and $\pi^x_{C_2}$ commute, by swapping $C_1$ with $C_2$ above, we can assume that $y\not\in C_1$. Thus, $[\pi^y_{C_1},\pi^x_{C_2}]=1$ by Lemma~\ref{lem:commuting criteria}, and the above reduces to $\pi^y_{C_1}  \pi^y_{C_2}$ as required.
\end{proof}

\begin{lem}\label{lem:pc and comm transv not commute STIL}
	Suppose 
	$R_x^{[y,z]}$  and $\pi^v_C$ are outer automorphisms of $\G$ that do not commute.
	Then either
	\begin{itemize}
		\item $v,x,y,z$ are distinct and $(v,y,z \mids x)$ is a STIL,
		\item $v=x$ and $\{x,y,z\}$ is an FSIL,
		\item $v=y$, $C=\{x\}$ and 
		$[R_x^{[y,z]},\pi^y_C] = R_x^{[y,z]}R_x^{[y^{-1},z]}$,
		\item  $v=z$, $C=\{x\}$ and
		$[R_x^{[y,z]},\pi^z_C] = R_x^{[y,z]}R_x^{[y,z^{-1}]}$.
	\end{itemize}
\end{lem}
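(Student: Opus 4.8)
The plan is to bootstrap from the interaction between partial conjugations and \emph{simple} transvections, by expanding $R_x^{[y,z]}=[R_x^y,R_x^z]$.

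Since $R_x^{[y,z]}$ is, by hypothesis, nontrivial in $\Out(\G)$, Lemma~\ref{lem:comm transv gives SIL} already tells us that $(y,z\mids x)$ is a SIL and $\lk(x)\subseteq\lk(y)\cap\lk(z)$; in particular $x$, $y$, $z$ are distinct, pairwise non-adjacent, and $o(x)=\infty$ (so $R_x^y$, $R_x^z$ exist, with $x\leq_\infty y$ and $x\leq_\infty z$). As in the final paragraph of the proof of Lemma~\ref{lem:pc and transv not commute gives SIL}, an induction on the number of connected components comprising $C$ --- using $[ab,c]={}^a[b,c]\,[a,c]$ to split $[\pi^v_{C_1}\pi^v_{C_2},R_x^{[y,z]}]$ --- reduces us to the case that $C$ is a single connected component of $\Gamma\setminus\st(v)$.

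If $\pi^v_C$ commuted with both $R_x^y$ and $R_x^z$ then it would commute with $R_x^{[y,z]}$, so it fails to commute with at least one of them, and we apply Lemma~\ref{lem:pc and transv not commute gives SIL} to that pair. If $v\notin\{x,y,z\}$: say $\pi^v_C$ does not commute with $R_x^y$ (the case of $R_x^z$ being identical with $y,z$ swapped); then the first conclusion of Lemma~\ref{lem:pc and transv not commute gives SIL} applies, giving that $(v,y\mids x)$ is a SIL, and since $v,x,y,z$ are then distinct we may merge it with $(y,z\mids x)$ --- they overlap in $y$ with common ``separated'' vertex $x$ --- via Lemma~\ref{lem:overlapping SILs give STIL} to conclude $(v,y,z\mids x)$ is a STIL. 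If $v=y$ (the case $v=z$ is symmetric): one checks that $\pi^y_C$ automatically commutes with $R_x^y$, hence must fail to commute with $R_x^z$; feeding this into relations~\eqref{equn:normal1}--\eqref{equn:normal4} of Lemma~\ref{lem:conjugates of pc by transv} shows the only nontrivial possibility is $C=\{x\}$, and then a direct calculation gives $[R_x^{[y,z]},\pi^y_{\{x\}}]=R_x^{[y,z]}R_x^{[y^{-1},z]}$. If $v=x$: Lemma~\ref{lem:pc and transv not commute gives SIL} gives $[R_x^y,\pi^x_C]=\pi^y_C\neq 1$, one checks $\pi^x_C$ also fails to commute with $R_x^z$ so that $\pi^z_C\neq 1$ as well, and then --- working with the three nontrivial partial conjugations $\pi^x_C,\pi^y_C,\pi^z_C$, all supported on $C$, via Lemma~\ref{lem:partial conj not commute}, together with the SIL $(y,z\mids x)$ and $\lk(x)\subseteq\lk(y)\cap\lk(z)$ --- one extracts the two remaining SILs $(x,y\mids z)$ and $(x,z\mids y)$, so that $\{x,y,z\}$ is an FSIL.

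I expect the case $v=x$ to be the main obstacle: one has to produce \emph{exactly} the pair of SILs $(x,y\mids z)$, $(x,z\mids y)$ that complete $\{x,y,z\}$ to an FSIL, which comes down to careful bookkeeping of the connected components of $\Gamma\setminus\lk(x)$ and $\Gamma\setminus\st(x)$ and the positions of $y$ and $z$ within them. The explicit commutator identities in the $v=y$ and $v=z$ cases also require a careful hand computation, best organised by re-using the relations of Lemma~\ref{lem:conjugates of pc by transv}.
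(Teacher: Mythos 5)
Your setup, your handling of the case where $v,x,y,z$ are distinct (one of $R_x^y,R_x^z$ must fail to commute with $\pi^v_C$, apply Lemma~\ref{lem:pc and transv not commute gives SIL} and then Lemma~\ref{lem:overlapping SILs give STIL}), and your reduction of the $v=y$, $v=z$ cases to $C=\{x\}$ plus a hand computation all match the paper's proof. The genuine gap is the $v=x$ case, precisely where you flag the difficulty: the mechanism you propose cannot produce the FSIL. Knowing that $\pi^y_C$ and $\pi^z_C$ are nontrivial in $\Out(\G)$ carries essentially no information --- since $x\notin C$, these are non-inner whenever $C\neq\emptyset$, including in situations where $[R_x^{[y,z]},\pi^x_C]$ \emph{is} trivial, so nontriviality of the three partial conjugations cannot characterize the non-commuting case. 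Moreover, Lemma~\ref{lem:partial conj not commute} converts \emph{non-commutation} of two partial conjugations (which you have not established for any of the pairs $\pi^x_C,\pi^y_C,\pi^z_C$) into a SIL $(x,y\mids w)$ for \emph{some} $w$ dictated by the supports; you have no way to force $w=z$ (resp.\ $w=y$), and the FSIL $\{x,y,z\}$ requires exactly the SILs $(x,y\mids z)$ and $(x,z\mids y)$, not SILs over arbitrary witnesses.

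The paper's argument for $v=x$ is of a different, purely graphical nature and does not pass through partial conjugations at all. It dichotomizes on whether $\st(x)$ separates $y$ from $z$. If it does, then the containment $\lk(x)\subseteq\lk(y)\cap\lk(z)$ upgrades to an equality $\lk(x)=\lk(y)\cap\lk(z)$ (any $w\in(\lk(y)\cap\lk(z))\setminus\st(x)$ would give a path $y,w,z$ avoiding $\st(x)$), and from this equality one reads off directly from the definition of a SIL that $(x,y\mids z)$ and $(x,z\mids y)$ are SILs, hence $\{x,y,z\}$ is an FSIL --- no commutator computation and no appeal to Lemma~\ref{lem:partial conj not commute}. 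If it does not separate them, then $y$ and $z$ lie in a common component of $\Gamma\setminus\st(x)$; replacing $x$ by $x^{-1}$ one may assume neither lies in $C$, and a direct computation shows the commutator with $R_x^{[y,z]}$ vanishes. So the real content of this case is a separation statement about $\st(x)$, which your route does not recover; you would need to supply this (or an equivalent) argument to close the proof.
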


\begin{proof} 
	First suppose that $v,x,y,z$ are distinct vertices. By Lemma~\ref{lem:comm transv gives SIL} we have a SIL $(y,z\mids x)$.
		Since $\pi^v_C$ cannot commute with both $R_x^y$ and $R_x^z$, Lemma~\ref{lem:pc and transv not commute gives SIL} implies either $(v,y\mids x)$ or $(v,z\mids x)$ is a SIL Thus Lemma~\ref{lem:overlapping SILs give STIL} implies $(v,y,z\mids x)$ is a STIL.

	Now we assume $v,x,y,z$ are not distinct.
	Suppose $v=x$.
	By Lemma~\ref{lem:comm transv gives SIL},  $(y,z\mids x)$ is a SIL and $\lk(x)\subseteq \lk(y)\cap\lk(z)$. Suppose that $\st(x)$ separates $y$ and $z$. Then $\lk(x)=\lk(y)\cap\lk(z)$, and there is no path from $y$ to $z$, $y$ to $x$, or $x$ to $z$ outside of $\lk(x)=\lk(y)\cap\lk(z)$. This implies that $\{x,y,z\}$ is an FSIL.
	
	 Thus, if this is not an FSIL, then $y$ and $z$ must be in the same component of $\Gamma\setminus\st(x)$.
	Without loss of generality replacing $x$ by $x^{-1}$, we may thus assume neither is in $C$.
	Direct calculation then shows that $[R_x^{[y,z]} , \pi^x_C] = 1$.	
	
	When $v=y$ or $z$, if $C \ne \{x\}$ then by Lemma~\ref{lem:commuting criteria} $\pi^y_C$ and $\pi^z_C$ commute with $R_x^{[y,z]}$.
	We can therefore assume $C = \{x\}$.
	Calculation then yields the given relations.
\end{proof}

\begin{lem}\label{lem:no SIL comm transv commute with transv}
	Suppose $R_x^{[y,z]}$ and $R_u^v$ are outer automorphisms of $\G$ that do not commute.
	Then either
	\begin{itemize}
		\item $v,x,y,z$ are distinct and $(v,y,z \mids x)$ is a STIL,
		\item $u,x,y,z$ are distinct and $(x,y,z\mids u)$ is a STIL,
		\item $u=x$, $v=y$ and $[R_x^{[y,z]},R_x^y]=  [ R_x^{[y,z]}, \pi^{y\m}_{\{x\}}]$,
		\item $u=x$, $v=z$ and $[R_x^{[y,z]},R_x^z]= [ R_x^{[y,z]}, \pi^{z\m}_{\{x\}}]$,
		\item $u=y$ or $z$, $v=x$ and $\{x,y,z\}$ is an FSIL,
	\end{itemize}
\end{lem}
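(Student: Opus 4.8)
The plan is to run a careful case analysis on how the multipliers and supports $\{x,y,z\}$ and $\{u,v\}$ overlap, exactly as in the companion Lemmas~\ref{lem:comm transv gives SIL}, \ref{lem:pc and transv not commute gives SIL} and \ref{lem:pc and comm transv not commute STIL}, with the conjugation identities of Lemma~\ref{lem:conjugates of ct by transv} doing most of the heavy lifting. Throughout we use that a nontrivial $R_x^{[y,z]}$ forces the SIL $(y,z\mids x)$ with $\lk(x)\subseteq\lk(y)\cap\lk(z)$ by Lemma~\ref{lem:comm transv gives SIL}, and that $u\leq_\infty v$ forces $\lk(u)\subseteq\st(v)$.

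First I would dispose of the case where $u,v,x,y,z$ are all distinct: then $R_u^v$ commutes with $R_x^{[y,z]}$ by Lemma~\ref{lem:commuting criteria}, so at least two of these vertices coincide. Next, the case $v=x$ with $u\notin\{y,z\}$: here $u\leq_\infty v=x$, so by Lemma~\ref{lem:pc and transv not commute gives SIL} (applied to the fact that $R_u^v$ cannot commute with both $R_x^y$ and $R_x^z$, using relation~\eqref{equn:normal6} of Lemma~\ref{lem:conjugates of ct by transv}) one of $(u,y\mids x)$, $(u,z\mids x)$ is a SIL; combined with $(y,z\mids x)$ and Lemma~\ref{lem:overlapping SILs give STIL} this gives that $(u,y,z\mids x)$ — i.e.\ a STIL with third vertex $x=v$ — wait, rather we get the STIL $(x,y,z\mids u)$ after checking $u$ lies in a separated component, which is exactly what relations~\eqref{equn:normal6}--\eqref{equn:normal6a} encode; this is the second bullet. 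Symmetrically, the case $u=x$, $v\notin\{y,z\}$ gives the first bullet $(v,y,z\mids x)$ a STIL, via the same overlapping-SIL argument applied to $(v,y\mids x)$ or $(v,z\mids x)$. The case $u\in\{y,z\}$, $v=x$ is handled by relations~\eqref{equn:normal7}/\eqref{equn:normal8}: if $y$ and $z$ are not separated from $x$ by $\st(x)$ one checks by the direct computation indicated there that the automorphisms commute, so non-commuting forces $\st(x)$ to separate all of $x,y,z$ pairwise, i.e.\ $\{x,y,z\}$ is an FSIL — the fifth bullet. The remaining overlap cases, $u=x$ with $v\in\{y,z\}$, are precisely relations~\eqref{equn:normal9}/\eqref{equn:normal10} (and their primed versions in the proof of Lemma~\ref{lem:Torelli normal}): these display $R_u^v R_x^{[y,z]} R_u^{v\m}$ as a product in which the commutator $[R_x^{[y,z]},R_x^v]$ is conjugate to $[R_x^{[y,z]},\pi^{v\m}_{\{x\}}]$, giving the third and fourth bullets; one should also note the side conditions $[v,z]\neq1$ (resp.\ $[v,y]\neq1$) needed for $R_x^{[y,z]}$ and $R_x^v$ to genuinely fail to commute, since otherwise the commutator is trivial. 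Finally, overlaps such as $v=y$, $u=z$ or $u=y=v$ etc.\ either contradict $u\leq_\infty v$ and the SIL structure (non-adjacency of $y$ and $z$), or reduce to one of the previous cases after relabeling; these I would clear quickly at the start.

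The main obstacle I expect is not any single case but bookkeeping: making sure each overlap pattern is matched to the correct relation among \eqref{equn:normal5}--\eqref{equn:normal10a}, and in the STIL cases verifying that the relevant vertex (e.g.\ $u$ in the second bullet) really lies in a connected component of $\Gamma\setminus(\lk(x)\cap\lk(y)\cap\lk(z))$ containing none of $x,y,z$ — i.e.\ that the STIL hypothesis "$\gspan{x,y,z}$ is not virtually abelian" is met. That last point follows because a non-commuting pair of transvections/commutator-transvections on $\{x,y,z\}$ already certifies that $\gspan{x,y,z}$ is not abelian, and in the absence of the excluded SIL structures one checks it is then not virtually abelian either; this is the only place requiring genuine (if short) argument beyond citing the identities already established.
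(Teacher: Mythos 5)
Your overall strategy — a case analysis on how $\{u,v\}$ overlaps $\{x,y,z\}$, powered by Lemma~\ref{lem:comm transv gives SIL}, Lemma~\ref{lem:overlapping SILs give STIL}, Lemma~\ref{lem:commuting criteria} and the identities of Lemma~\ref{lem:conjugates of ct by transv} — is exactly the paper's, and several of your cases (all distinct; $u=x$ with $v\notin\{y,z\}$; $u=x$ with $v\in\{y,z\}$; $u\in\{y,z\}$ with $v=x$) match the paper's treatment. However, there are two genuine gaps.

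First, your case list omits the configuration $u\in\{y,z\}$ with $v\notin\{x,y,z\}$. This is not covered by Lemma~\ref{lem:commuting criteria} (the supports meet in $y$ or $z$), it does not ``reduce by relabeling'' to another case since $u$ and $v$ play asymmetric roles, and it is a configuration in which the automorphisms genuinely can fail to commute, landing in the first bullet. The paper handles it by noting $\lk(x)\subseteq\lk(y)\cap\lk(z)\subseteq\st(v)$ (since $y\leq v$), checking by direct computation that $R_x^{[y,z]}$ and $R_y^v$ commute whenever $v$ commutes with both $y$ and $z$, and otherwise extracting a SIL $(v,y\mids x)$ or $(v,z\mids x)$ which combines with $(y,z\mids x)$ via Lemma~\ref{lem:overlapping SILs give STIL} to give the STIL $(v,y,z\mids x)$. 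You need this argument; nothing in your proposal supplies it.

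Second, in the case $v=x$, $u\notin\{x,y,z\}$ (the second bullet), your route via overlapping SILs cannot work as written: Lemma~\ref{lem:overlapping SILs give STIL} applied to SILs of the form $(u,\cdot\mids x)$ produces a STIL whose separated vertex is $x$, not $u$, so it cannot yield $(x,y,z\mids u)$. You notice this and fall back on ``checking $u$ lies in a separated component,'' but you never perform the check, and relations~\eqref{equn:normal6}--\eqref{equn:normal6a} do not encode it — they are only rewriting identities. The actual argument is a direct link computation: from $u\leq_\infty x$ one has $\lk(u)\subseteq\st(x)$; if $u$ and $x$ are non-adjacent then $\lk(u)\subseteq\lk(x)\subseteq\lk(y)\cap\lk(z)$, so $\{u\}$ is a whole component of $\Gamma\setminus(\lk(x)\cap\lk(y)\cap\lk(z))$ avoiding $x,y,z$, while $\grp{x,y,z}$ is a free product of three nontrivial vertex groups (pairwise non-adjacency follows from $(y,z\mids x)$ being a SIL with $\{x\}$ a component) and hence not virtually abelian; and if $u\in\lk(x)$ then $u\in\lk(y)\cap\lk(z)$, so $y,z\in\lk(u)\subseteq\st(x)$, contradicting the SIL $(y,z\mids x)$. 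Your closing remark correctly identifies that the ``not virtually abelian'' and ``separated component'' conditions are the crux, but only the former is addressed, and only loosely.
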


\begin{proof}
	First note that since $R^{[y,z]}_x$ is non-trivial, $(y,z\mids x)$ is a SIL and $\lk(x)\subseteq \lk(y)\cap \lk(z)$. The proof proceeds in several cases.

	\textbf {Case 1.} First suppose $v,x,y,z$ are distinct vertices.	
	Since the automorphisms do not commute, by Lemma~\ref{lem:relators} we must have that $u \in \{x,y,z\}$.
	
	First suppose $u=x$. 
	Since $R_x^v$ cannot commute with both $R_x^y$ and $R_x^z$, we must have either a SIL $(v,y\mids x)$ or $(v,z\mids x)$.
	In either case Lemma~\ref{lem:overlapping SILs give STIL} gives a STIL $(v,y,z\mids x)$.
	Hence $(v,y,z\mids x)$ is a STIL.
		
	If $u=y$ then we  have $\lk(x)\subseteq \lk(y)\cap\lk(z) \subseteq \st(v)$ since $y\leq v$.
	Direct calculation shows that $R_x^{[y,z]}$ and $R_y^v$ commute whenever $v$ commutes with both $y$ and $z$.
	We therefore again get a SIL $(v,y\mids x)$ or $(v,z\mids x)$ and hence a STIL by Lemma~\ref{lem:overlapping SILs give STIL}.
	When $u=z$ we similarly get a STIL.

	\textbf{Case 2.} Now suppose $u,x,y,z$ are  distinct.
	As above, in order for the automorphisms to not  commute, we need $v\in \{x,y,z\}$.
	
	If $v=x$ we have $\lk(u) \subseteq \st(x)$ and $\lk(x)\subseteq \lk(y)\cap\lk(z)$.
	So if $u$ and $x$ are not adjacent we have $\lk(u) \subseteq \lk(x)\cap\lk(y)\cap\lk(z)$, and since $(y,z\mids x)$ forms a SIL---in particular $\grp{x,y,z}$ is not virtually abelian--we have a STIL $(x,y,z\mids u)$.
	So assume that $u\in \lk(x)$. Then also $u\in \lk(y)\cap\lk(z)$. But then $y,z \in \lk(u) \subset \st(x)$, contradicting that fact that $(y,z\mids x)$ is a SIL.
	
	If $v=y$ then $R_u^y$ commutes with both $R_x^y$ and $R_x^z$ by Lemma~\ref{lem:relators}, and hence $R^{[y,z]}_x$ and $R^{y}_x$ commute. They again commute when $v=z$.
	
	\textbf{Case 3.} Finally, suppose that $u,v\in\{x,y,z\}$. 
	Suppose $u=y$ and $v=x$.
	Since $y\leq x$ we have $\lk(y)=\lk(x) \subseteq \lk(z)$, implying that $\{x,y,z\}$ is an FSIL.
	When $u=z$ instead, the argument is similar.
The remaining cases follow from Lemma~\ref{lem:conjugates of ct by transv}.
	\end{proof}

The last combination we consider are commutators of  commutator partial conjugations with either partial conjugations or another commutator partial conjugation. This case was resolved in the authors' previous paper \cite{SaleSusse}, noting the proofs of the two lemmas carry through to this more general situation.

\begin{lem}[{\cite[Lemmas 2.8 and 2.10]{SaleSusse}}]\label{lem:SS17}
	Suppose $\Gamma$ does not contain a STIL or FSIL. Let $u,v,w,x,y$ be vertices of $\Gamma$ with $o(v)=o(w)=o(x)=o(y)=2$.
	Let $A$ be a  connected component of $\Gamma\setminus \st(u)$, $B$ a shared component of  $\Gamma\setminus \st(x)$ and $\Gamma\setminus \st(y)$, and $C$ a shared component of  $\Gamma\setminus \st(v)$ and $\Gamma\setminus \st(w)$.
	
	Then $\pi^u_A$ and $\pi^{[v,w]}_C$ commute, and $\pi^{[x,y]}_B$ and $\pi^{[v,w]}_C$ commute.
\end{lem}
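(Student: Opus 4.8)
The statement is exactly \cite[Lemmas 2.8 and 2.10]{SaleSusse}, proved there for graph products of finite abelian groups; since the vertices $u,v,w,x,y$ all have order $2$ and the automorphisms in play are partial conjugations and commutators thereof --- objects that depend only on a multiplier vertex and a union of components of a link-complement, not on the orders of the other vertices of $\Gamma$ --- those arguments carry over, and the plan is to reproduce their skeleton. The first move is to reduce each assertion to the triviality of a commutator of \emph{ordinary} partial conjugations. Since $o(v)=o(w)=2$, the partial conjugations $\pi^v_C$ and $\pi^w_C$ are involutions, so $\pi^{[v,w]}_C = [\pi^v_C,\pi^w_C] = (\pi^v_C\pi^w_C)^2$; consequently, if $\pi^u_A$ commutes with both $\pi^v_C$ and $\pi^w_C$ it commutes with $\pi^{[v,w]}_C$ and we are done (for instance, this happens whenever the supports are suitably disjoint, by Lemma~\ref{lem:commuting criteria}). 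So assume for contradiction that $[\pi^u_A,\pi^{[v,w]}_C]\neq 1$; then $\pi^u_A$ must fail to commute with $\pi^v_C$ or with $\pi^w_C$, and by Lemma~\ref{lem:partial conj not commute} this produces a SIL, say $(u,v\mids z)$, together with one of its four overlap configurations relating the supports $A$ and $C$.

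The key structural input now is that $C$ is a component of $\Gamma\setminus\st(v)$ \emph{and} of $\Gamma\setminus\st(w)$ (and, for the second assertion, that $B$ is shared between $\Gamma\setminus\st(x)$ and $\Gamma\setminus\st(y)$). In the overlap configurations of Lemma~\ref{lem:partial conj not commute} that are phrased through membership of the separated vertex $z$, this symmetry transports the configuration to the pair $(\pi^u_A,\pi^w_C)$ with the same $z$, producing a second SIL $(u,w\mids z)$; two SILs overlapping at $z$ in this way yield a STIL $(u,v,w\mids z)$ by Lemma~\ref{lem:overlapping SILs give STIL}. In the remaining configurations one argues directly from the adjacency information that $(u,v\mids w)$, $(v,w\mids u)$ and $(w,u\mids v)$ are all SILs, i.e.\ that $\{u,v,w\}$ is an FSIL. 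Either outcome contradicts the hypothesis, so $[\pi^u_A,\pi^{[v,w]}_C]=1$. The second assertion is identical in form: writing $\pi^{[x,y]}_B=(\pi^x_B\pi^y_B)^2$ with $\pi^x_B,\pi^y_B$ involutions, a nontrivial commutator $[\pi^{[x,y]}_B,\pi^{[v,w]}_C]$ forces some $\pi^a_B$ with $a\in\{x,y\}$ to fail to commute with some $\pi^b_C$ with $b\in\{v,w\}$, and the shared-component property on \emph{both} sides propagates the resulting SIL to a STIL among $\{x,v,w\}$, $\{y,v,w\}$, $\{x,y,v\}$ or $\{x,y,w\}$, or to an FSIL --- in every case contradicting the hypothesis.

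The only genuine work --- and the reason it is cleanest simply to invoke \cite{SaleSusse} --- lies in the bookkeeping: Lemma~\ref{lem:partial conj not commute} has four overlap configurations, and one must check for each that the ``shared component'' hypotheses really do either propagate the SIL to the second multiplier with an unchanged separated vertex or else force the full symmetric triple of SILs of an FSIL, and then diagnose which of the two forbidden structures has appeared. This last step is a real dichotomy precisely because every vertex here has order $2$: a $3$--generator special subgroup $\gspan{u,v,w}$ of $\G$ fails to be virtually abelian exactly when the induced subgraph on $\{u,v,w\}$ has at most one edge, so one has to keep track of adjacencies among $u,v,w$ (respectively among $x,y,v,w$) to tell a STIL from an FSIL. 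No individual verification is deep, but there are many of them.
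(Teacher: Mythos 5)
Your proposal matches the paper: the paper itself gives no argument here beyond citing \cite[Lemmas 2.8 and 2.10]{SaleSusse} and observing that, since all the relevant multipliers have order $2$ and partial conjugations depend only on a multiplier and a union of components, those proofs carry over verbatim to the present setting. Your reduction of $\pi^{[v,w]}_C$ to $(\pi^v_C\pi^w_C)^2$, followed by the SIL-to-STIL/FSIL casework via Lemmas~\ref{lem:partial conj not commute} and~\ref{lem:overlapping SILs give STIL}, is exactly the skeleton of the cited arguments.
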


We are now in a position to prove Proposition~\ref{prop:calA finite index}. We will use the relations above to show that every element of $\ker\mu$ can be written as a product so that all generators whose multipliers have order 2 are pushed all the way to the right,
and ultimately that $\cal A$ has finite index.

\begin{proof}[Proof of Proposition~\ref{prop:calA finite index}]
	Start with an outer automorphism $\Phi$ in $\ker\mu$, and express it as a word $w = \Phi_1 \cdots \Phi_k$ on the generating set from Lemma~\ref{lem:ker mu gen set}. 
	Let $H$ be the subgroup of $\ker\mu$ generated by all transvections and partial conjugations with multiplier of order $2$.
	First we show that we can write $\Phi =\alpha h$ so that $\alpha \in \cal{A}$ and $h\in H$.
	To do this, we shuffle letters $\Phi_i$ that are in $H$ to the right of the given word for $\Phi$.	
	
	Suppose $\Phi_i$ is the right-most letter with multiplier of order 2, and assume first that it is a transvection $\Phi_i = R_x^{y}$.
	We begin by moving $R_x^{y}$ to the right past $\Phi_{i+1}$.
	First note that $R_x^y$ must commute with any transvection of the form $R_x^z$ with $o(z)\neq 2$ since by Lemma~\ref{lem:comm transv gives SIL} we would otherwise get a non-Coxeter SIL $(y,z\mids x)$.
	As for other transvections, by Lemma~\ref{lem:relators} $R_x^y$ will commute with all except any with multiplier $x$ (note that transvections in $\Out^1_\infty(\G)$ have infinite order supporting vertex, which excludes the other possibility from Lemma~\ref{lem:relators}).
	So, if $\Phi_{i+1} = R_z^{x ^{\pm1}}$, then:
	\begin{equation}\label{eq:shuffle 1}
	\Phi_i \Phi_{i+1} = \Phi_{i+1} R_z^{y} \Phi_i.
	\end{equation}
	Hence in shuffling $R_x^y$ past $R_z^x$ we have a new word for $\Phi$ and in it we have introduced a new letter $R_z^y$ that is in $H$.

	Now we consider when $\Phi_i = R_x^y$ and $\Phi_{i+1} = \pi^{v^{\pm1}}_C$.
	By the choice of $\Phi_i$ we know $o(v)\neq 2$.
	Hence, by Lemma~\ref{lem:pc and transv not commute gives SIL}, if $\Phi_i$ does not commute with $\Phi_{i+1}$ then we must have $v=x$ and 
	\begin{equation}\label{eq:shuffle 2}
	\Phi_i \Phi_{i+1}
	= \pi^y_C \Phi_{i+1} \Phi_i
	= \Phi_{i+1} \pi^y_C \Phi_i
	\end{equation}
	since otherwise $(v,y\mids x)$ or $(v,y\mids C)$ would be a non-Coxeter SIL.
	As above, the letter introduced in this process, namely $\pi^y_C$, is in $H$.
	
	If, on the other hand, $\Phi_i = \pi_C^v$ (now $o(v) = 2$),
	then it follows from Lemma~\ref{lem:pc and transv not commute gives SIL} and Lemma~\ref{lem:partial conj not commute} that $\Phi_i$ commutes with $\Phi_{i+1}\not\in H$ (recall that the domain of $\mu$ is $\Out^1_\infty(\G)$, and no transvection in $\ker\mu$ can act on $v$).

	To see that we can shuffle all the letters from $H$ to the end, group together strings of letters in $w$ according to whether they are in $H$ or not. So we will have $H$--syllables (maximal length subwords consisting of generators with multipliers of order 2) and non-$H$--syllables (maximal length subwords consisting of of generators with multipliers of order not 2).
	Write $w = \alpha_1 h_1 \alpha_2 h_2 \cdots \alpha_lh_l$, where $\alpha_i$ are non-$H$--syllables and $h_i$ are $H$--syllables.
	Then, using equations~\eqref{eq:shuffle 1} and~\eqref{eq:shuffle 2}, shuffling $h_{i}$ past $a_{i+1}$ will give us a new word $w' =\alpha_1h_1 \cdots \alpha_i \alpha_{i+1} h'\alpha_{i+2} \cdots \alpha_lh_l$ for some $h'\in H$.
	The number of non-$H$--syllables in $w'$ has decreased by one.
	Repeating the shuffling in this way we see that ultimately we will have the form $\Phi = \alpha h$ as required.

Let $H_1\le H$ be the subgroup generated by automorphisms of type (\ref{gen:cpc}) and (\ref{gen:ctr}) (i.e.{} those whose multiplier is a commutator). We claim that $H_1=H'$,  the commutator subgroup of $H$ and is thus finite index in $H$ (since $H/H'$ is finitely generated by torsion elements and is abelian, hence is finite).
This claim completes the proof, since then, given $T=\{t_1,\ldots, t_k\}$, a complete set of right-coset representatives for $H_1\le H$, any $\Phi \in \ker \mu$ can be written as $\Phi = \alpha  \alpha_1 t_i$ for $\alpha\in\cal{A}$ as above, $\alpha_1 \in \cal{A}\cap H_1$, and some $i$.

To prove the claim, that $H_1 = H'$, first note that $H_1 < H'$ since each generator of $H_1$ is a commutator of generators of $H$.
To get the other inclusion, using the identity $[ab,c] = a[b,c]a\m[a,c]$, it is enough to show that $H_1$ is normal in $H$ and commutators of generators of $H$ are in $H_1$.

We first show that $H_1$ is normal in $H$. 
 Consider conjugates of $R=R^{[y,z]}_x$, where $o(y)=o(z)=2$.
First, conjugating $R$ by a partial conjugation will keep us in $H_1$ by Lemma~\ref{lem:pc and comm transv not commute STIL}.
	If we conjugate $R$ by a transvection with order 2 multiplier, then the result, by Lemma~\ref{lem:no SIL comm transv commute with transv}, is a product of commutator transvections (with commutators $[y,z]$ or its inverse) and conjugates of these by partial a conjugation in $H$.

Now consider conjugates of $\pi=\pi^{[v,w]}_C$, with $o(v)=o(w)=2$. 
In particular, for $\pi$ to be non-trivial we require a SIL $(v,w\mids z)$ for some $z\in C$.
Consider the conjugate $R^y_x \pi R^y_x$, where $o(y)=2$ and $o(x)=\infty$.
Assuming $R_x^y$ and $\pi$ do not commute, without loss of generality, we may assume $R_x^y$ and $\pi^v_C$ do not commute.
Lemma~\ref{lem:pc and transv not commute gives SIL} then tells us $(v,y\mids x)$ is a SIL and $\lk(x)\subseteq \lk(v)\cap\lk(y)$ (the other case is not possible since $o(v)\neq o(x)$).
Also, from Lemma~\ref{lem:commuting criteria}, we must have $x\in C$.
In particular we therefore have SILs $(v,y\mids x)$ and $(v,w\mids x)$.
These are sufficient to imply that $(v,w,y\mids x)$ is a STIL by Lemma~\ref{lem:overlapping SILs give STIL}, a contradiction.

The case where $\pi^{[v,w]}_C$ is conjugated by a partial conjugation $\pi^y_D$ with $o(y)=2$ is resolved by Lemma~\ref{lem:SS17}. 

Thus, $H_1$ is normal in $H$.

By definition, any commutator of two transvections or two partial conjugations in $H$ is contained in $H_1$. 
It is therefore enough to show that a commutator of the form $[R^y_x, \pi^v_C]$ lies in $H_1$ for $x\le_\infty y$, $o(y)=o(v)=2$. However, since $x\neq v$, by Lemma~\ref{lem:pc and transv not commute gives SIL} either $(v,y\mids x)$ is a SIL or the commutator is trivial. Without loss of generality, we can assume that $y\not\in C$. If $x\not\in C$, the commutator is trivial. If $x\in C$, then $x\le_\infty v$ since $\lk(x) \subseteq \lk(v)\cap\lk(y)$ by Lemma~\ref{lem:pc and transv not commute gives SIL}, and direct computation shows that $[R^y_x, \pi^v_C]=R^{[y,v]}_x$.
Thus, $H_1=H'$.
\end{proof}

Before completing the proof of Theorem~\ref{thm:no SIL SES}, we look at the relations between the generators of $\cal A$.

\begin{prop}\label{prop:calA commutators}
	Suppose that $(\Gamma, o)$ satisfies the hypotheses for Theorem~\ref{thm:no SIL SES}. Then, all the generators of types \eqref{gen:pc}, \eqref{gen:cpc}, \eqref{gen:tr}, \eqref{gen:ctr} commute with one-another, with the following exceptions:
	\begin{enumerate}
		\renewcommand{\theenumi}{$\cal{C}$\arabic{enumi}}
		\item\label{comm:pc and tr} $[R_x^y, \pi^x_C] = \pi^{y}_C$,
		\item\label{comm:tr and tr} $[R_x^{y} , R_w^x] = R_w^{y}$.
	\end{enumerate}
\end{prop}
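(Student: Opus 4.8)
The plan is to run a finite case analysis over the ten unordered pairs of generator types \ref{gen:pc}--\ref{gen:ctr}, deciding in each case precisely when two generators of those types commute. The inputs are the commutation criteria already established: Lemma~\ref{lem:partial conj not commute} for two partial conjugations, Lemma~\ref{lem:relators} for two transvections, Lemma~\ref{lem:commuting criteria}, and Lemmas~\ref{lem:comm transv gives SIL}, \ref{lem:pc and transv not commute gives SIL}, \ref{lem:pc and comm transv not commute STIL}, \ref{lem:no SIL comm transv commute with transv}, \ref{lem:SS17} for the mixed pairs. Each of these pins down the possible obstructions to a commutation, and in every case the hypotheses of Theorem~\ref{thm:no SIL SES} (no non-Coxeter SIL, no STIL, no FSIL) together with the order restrictions built into \ref{gen:pc}--\ref{gen:ctr} --- namely $o(v)\neq 2$ for a partial conjugation multiplier, $o(y)\neq 2$ for a transvection support, $o(v)=o(w)=2$ for a commutator partial conjugation, and $o(x)=\infty$ for any transvection multiplier in $\Out^1_\infty(\G)$ --- rule out all the non-commuting branches except the two producing \eqref{comm:pc and tr} and \eqref{comm:tr and tr}. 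One recurring nuisance is that partial-conjugation supports may be unions of connected components; in every instance one first reduces to indecomposable supports, using that partial conjugations with a common multiplier and disjoint supports commute. This bookkeeping is routine but pervasive.

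The pairs in which at most one of the two generators is a commutator generator are quick. For \ref{gen:pc}--\ref{gen:pc} and \ref{gen:pc}--\ref{gen:tr}, Lemma~\ref{lem:partial conj not commute} (resp.\ Lemma~\ref{lem:pc and transv not commute gives SIL}) forces a non-Coxeter SIL --- the only surviving non-commuting case being $v=x$ in Lemma~\ref{lem:pc and transv not commute gives SIL}, which is exactly \eqref{comm:pc and tr}. The pairs \ref{gen:pc}--\ref{gen:cpc} and \ref{gen:cpc}--\ref{gen:cpc} are immediate from Lemma~\ref{lem:SS17}. For \ref{gen:pc}--\ref{gen:ctr}, Lemma~\ref{lem:pc and comm transv not commute STIL} applies: its STIL and FSIL branches are forbidden, and the branches $v=y$, $v=z$ cannot occur since $o(v)\neq 2=o(y)=o(z)$. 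For \ref{gen:cpc}--\ref{gen:tr}, expand $\pi^{[v,w]}_C=[\pi^v_C,\pi^w_C]$ and apply Lemma~\ref{lem:pc and transv not commute gives SIL} to each factor; the SIL branch is again non-Coxeter and $v=x$ is impossible on orders, so each factor commutes with $R^y_x$ and hence so does the commutator. For \ref{gen:tr}--\ref{gen:tr}, Lemma~\ref{lem:relators} gives commutation except for the chained commutator $[R^y_x,R^x_w]=R^y_w$, and one checks $R^y_w$ is again a generator of type \ref{gen:tr}: transitivity of $\leq_\infty$ gives $w\leq_\infty y$, while $w\leq_\infty x$ forces $\lk(w)\subseteq\st(x)$, so $y\notin\st(x)$ yields $w\not\sim y$; this is \eqref{comm:tr and tr}. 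For \ref{gen:tr}--\ref{gen:ctr}, Lemma~\ref{lem:no SIL comm transv commute with transv} applies: each of its non-commuting branches either yields a STIL or FSIL, or forces the transvection to have an order-$2$ supporting vertex, contradicting that it is a type \ref{gen:tr} generator.

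The remaining pairs \ref{gen:cpc}--\ref{gen:ctr} and \ref{gen:ctr}--\ref{gen:ctr} are the crux, and I expect this to be the main obstacle: here both generators are commutators whose individual factors need not commute, so the criteria above cannot be applied factor-by-factor without extra argument. When the multipliers in play are pairwise distinct the factor-by-factor argument does go through, since every non-commuting branch of Lemmas~\ref{lem:pc and comm transv not commute STIL} and~\ref{lem:no SIL comm transv commute with transv} either yields a STIL or FSIL or requires two multipliers to coincide, and a coincidence would put an order-$2$ multiplier equal to an infinite-order one. When multipliers do coincide --- say comparing $R^{[y_1,z_1]}_x$ with $R^{[y_2,z_2]}_x$ in the \ref{gen:ctr}--\ref{gen:ctr} case --- nontriviality and Lemma~\ref{lem:comm transv gives SIL} give SILs $(y_1,z_1\mids x)$ and $(y_2,z_2\mids x)$; if $\{y_1,z_1\}$ and $\{y_2,z_2\}$ meet in exactly one vertex these overlap into a STIL by Lemma~\ref{lem:overlapping SILs give STIL}, a contradiction, while if they are disjoint the four simple transvections $R^{y_i}_x,R^{z_i}_x$ pairwise commute except within each pair (a cross non-commutation would again give a STIL by Lemma~\ref{lem:overlapping SILs give STIL}), so the two commutator transvections commute, and if $\{y_1,z_1\}=\{y_2,z_2\}$ they are equal or mutually inverse. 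The pair \ref{gen:cpc}--\ref{gen:ctr} is handled by the same overlap principle, with the nontriviality SIL of the commutator partial conjugation supplied by Lemma~\ref{lem:partial conj not commute}; the residual sub-case, where the commutator partial conjugation has both multipliers equal to the commutator pair of the transvection and the support meets $x$, is closed by the direct computation that $\pi^{[a,b]}_{\{x\}}$ commutes with $R^{[a,b]}_x$, after reducing to indecomposable supports. Finally, the precise forms of \eqref{comm:pc and tr} and \eqref{comm:tr and tr} are read off from Lemma~\ref{lem:conjugates of pc by transv} and Lemma~\ref{lem:relators} respectively.
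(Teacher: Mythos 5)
Your proposal is correct and follows essentially the same route as the paper: a pairwise case analysis over the generator types, using Lemmas~\ref{lem:partial conj not commute}, \ref{lem:relators}, \ref{lem:SS17}, \ref{lem:pc and transv not commute gives SIL}, \ref{lem:pc and comm transv not commute STIL}, \ref{lem:no SIL comm transv commute with transv} and the overlap-to-STIL argument of Lemma~\ref{lem:overlapping SILs give STIL} to kill every non-commuting branch except \eqref{comm:pc and tr} and \eqref{comm:tr and tr}. The only quibble is terminological: in $R_x^y$ the paper calls $y$ the multiplier and $x$ the support, the reverse of your usage, but the order constraints you impose on each vertex are the correct ones.
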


\begin{proof}
	We consider $[\alpha,\beta]$, with $\alpha,\beta$ coming from automorphisms of types \eqref{gen:pc}, \eqref{gen:cpc}, \eqref{gen:tr}, \eqref{gen:ctr}.
	
	\eqref{gen:pc} and \eqref{gen:pc}.
	These commute by Lemma~\ref{lem:partial conj not commute} since $\Gamma$ has no non-Coxeter SILs.
	
	\eqref{gen:pc} and \eqref{gen:cpc}.
	These commute by Lemma~\ref{lem:SS17}.
	
	\eqref{gen:pc} and \eqref{gen:tr}.
	Consider $[\pi^v_C , R_x^y]$, with $o(v),o(y) \neq 2$.
	By Lemma~\ref{lem:pc and transv not commute gives SIL}, either the commutator is the identity, or
	we get a non-Coxeter SIL $(v,x\mids y)$ in $\Gamma$, a contradiction,
	or we have  $[R_x^y, \pi^x_C] = \pi^{y}_C$, which is relation~\eqref{comm:pc and tr}.

	\eqref{gen:pc} and \eqref{gen:ctr}.
	Consider $[\pi^v_C , R_x^{[y,z]}]$.
	By Lemma~\ref{lem:pc and comm transv not commute STIL}, the lack of a STIL or FSIL means this commutator is the identity 
	else $v\in \{y,z\}$. But $o(v)\neq 2$ and $o(y)=o(z)=2$, so the commutator is indeed the identity.
	
	\eqref{gen:cpc} and \eqref{gen:cpc}.
	These commute by Lemma~\ref{lem:SS17}.

	\eqref{gen:cpc} and \eqref{gen:tr}.
	Consider $[\pi_C^{[v,w]} , R_x^y]$, with $o(y)\neq 2$, and $o(v) = o(w) =2$.
	By virtue of their orders, the vertices $v,w,x,y$ are distinct.
	Lemma~\ref{lem:pc and transv not commute gives SIL} then implies that $R_x^y$ commutes with both $\pi^v_C$ and $\pi^w_C$, since otherwise we will have a non-Coxeter SIL.
	Thus $[\pi_C^{[v,w]} , R_x^y]=1$.

	\eqref{gen:cpc} and \eqref{gen:ctr}.
	Consider $[\pi_C^{[v,w]} , R_x^{[y,z]}]$.
	By Lemma~\ref{lem:pc and comm transv not commute STIL}, if the commutator is to be non-trivial, since there is no STIL or FSIL, we have that
	 $C = \{x\}$ and $\{v,w\}\cap \{y,z\} \ne \emptyset$.
	Without loss of generality, assume $v=y$.
	Then we have SILs $(y,z \mids x)$ and $(y,w\mids x)$. If $w\ne z$, Lemma~\ref{lem:overlapping SILs give STIL} implies $(y,z,w\mids x)$ is a STIL.
	Hence we may assume $v=y$ and $w=z$, and direct calculations show that the automorphisms then commute.

	\eqref{gen:tr} and \eqref{gen:tr}.
	Lemma~\ref{lem:relators} resolves this case, in particular we get \eqref{comm:tr and tr}.

	\eqref{gen:tr} and \eqref{gen:ctr}.
	Consider $[R_v^w,R_x^{[y,z]}]$, with $o(w) \neq 2$.
	On account of their orders, $w,y,z$ are distinct. Lemma~\ref{lem:no SIL comm transv commute with transv} then tells us that the commutator is the identity.

	\eqref{gen:ctr} and \eqref{gen:ctr}.
	Consider $[R_u^{[v,w]},R_x^{[y,z]}]$.
	If $u\neq x$ then by Lemma~\ref{lem:relators} the commutator transvections commute.
	Suppose $u=x$. If $\{v,w\} \cap \{y,z\}$ is empty, then Lemma~\ref{lem:no SIL comm transv commute with transv} implies they again commute.
	If $\{v,w\} = \{y,z\}$, then the commutator transvections are equal, up to taking inverses, so commute.
	So we may assume $v=y$ and $w\neq z$.
	We have two SILs, $(v,w\mids u)$ and $(v,z \mids u)$, so Lemma~\ref{lem:overlapping SILs give STIL} implies that $(v,w,z\mids u)$ is a STIL.
\end{proof}

\begin{proof}[Proof of Theorem~\ref{thm:no SIL SES}]
	By Proposition~\ref{prop:calA finite index} the subgroup $\cal{A}$ has finite index in $\ker \mu$.
	To complete the proof, we show that $\cal{A}$ is nilpotent, of the desired nilpotency class.

	Let $S_i$ be the set consisting of all partial conjugations $\pi^v_C$ so that $o(v)\ne 2$ and $v$ has $\infty$--depth at least $i$, and all transvections $R_u^v$ so that $o(v) \ne 2$ and the difference in $\infty$--depths of $u$ and $v$ is at least $i$.
	To $S_1$ we also add the commutator partial conjugations and commutator transvections of types \eqref{gen:cpc} and \eqref{gen:ctr}.
	Then $\grp{S_1} = \cal{A}$.
	Let $c$ be the $\infty$--depth of $(\Gamma,o)$.
	Then $S_{i}$ is empty for every $i>c$.

	Proposition~\ref{prop:calA commutators} tells us that if $\alpha \in S_i$ and $\beta \in S_j$ then $[\alpha,\beta] \in \grp{S_{i+j}}$ (the empty set generates the trivial group).	
	This implies that $\cal{A}$ is nilpotent with nilpotency class at most $c$.
	
	To see that it has nilpotency class exactly $c$, 
	first suppose $c=1$. 
	Then the relations \eqref{comm:pc and tr} \eqref{comm:tr and tr} do not occur in $\cal{A}$, so all generators commute and $\cal{A}$ is abelian.
	
	Now suppose $c>1$.
	Consider a vertex $v$ of maximal $\infty$--depth $c$.
	Then either \eqref{depth:tr only} holds, so there is a chain of vertices 
	$$v_1 \leq_\infty v_2 \leq_\infty \cdots \leq_\infty v_c = v$$
	or \eqref{depth:tr and pc} holds and there is a chain 
	$$v_1 \leq_\infty v_2 \leq_\infty \cdots \leq_\infty v_{c-1} = v$$
	where $\Gamma \setminus \st(v_1)$ contains two components $C_1,C_2 \not\subseteq \st(v)$.
	If \eqref{depth:tr only} holds we have
	$$\left[\cdots \left[\left[R^v_{v_{c-1}} , R^{v_{c-1}}_{v_{c-2}} \right] , R^{v_{c-2}}_{v_{c-3}} \right] , \cdots R_{v_1}^{v_2} \right] = R_{v_1}^v$$
	while for \eqref{depth:tr and pc} we instead have
	$$\left[R^v_{v_{c-2}} , \cdots \left[ R^{v_3}_{v_2} , \left[ R^{v_2}_{v_{1}} , \pi^{v_{1}}_{C} \right]  \right]  \cdots \right] = \pi_{C}^v.$$
	In both cases this  is a non-trivial element in the center of $\cal{A}$, giving the required lower bound on its nilpotency class.
 \end{proof}

\begin{proof}[Proof of Theorem~\ref{thm:no SIL nil class}]
	 The existence of a free subgroup was described above, immediately after the statement of the theorem. So we assume $(\Gamma,o)$ has none of the given graphical features.
	
	We consider the subgroup $\Out^1_\infty(\G)$, which is finite index in $\Out(\G)$ by Proposition~\ref{prop:kernel D}. Let $c$ be the $\infty$--depth of $(\Gamma,o)$.
	
	Since $(\Gamma, o)$ contains no non-Coxeter SILs, STILs or FSILs, then by Theorem~\ref{thm:no SIL SES} there is a short exact sequence:
	$$1 \to P \to \Out^1_\infty(\G_\Gamma) \to \SL(n_i, \Z)\to 1,$$
	where $P$ is finitely generated virtually nilpotent of nilpotency class $c$, and $(n_1, \ldots n_k)$ are the sizes of the infinite--order $\infty$--equivalence classes of $\Gamma$. 
	
	By assumption, $n_i=1$ for all $i$, and hence $P\cong \Out^1_\infty(G_\Gamma)$, and hence by Theorem~\ref{thm:no SIL SES}, $\Out^1_\infty(\G_\Gamma)$ is virtually nilpotent of class $c$.
\end{proof}

\bibliographystyle{alpha}
\bibliography{bibliography_out_gp_ta1}

\begin{thebibliography}{CRSV10}

\bibitem[AM15]{AntolinMinasyanTA}
Yago Antol\'in and Ashot Minasyan.
\newblock Tits alternatives for graph products.
\newblock {\em J. Reine Angew. Math.}, 704:55--83, 2015.

\bibitem[Bau63]{Baumslag:AutRF}
Gilbert Baumslag.
\newblock Automorphism groups of residually finite groups.
\newblock {\em J. London Math. Soc.}, 38:117--118, 1963.

\bibitem[BFH00]{BFH:Tits1}
Mladen Bestvina, Mark Feighn, and Michael Handel.
\newblock The {T}its alternative for {${\rm Out}(F_n)$}. {I}. {D}ynamics of
  exponentially-growing automorphisms.
\newblock {\em Ann. of Math. (2)}, 151(2):517--623, 2000.

\bibitem[Bir71]{Birman:Torelli}
Joan~S. Birman.
\newblock On {S}iegel's modular group.
\newblock {\em Math. Ann.}, 191:59--68, 1971.

\bibitem[Car15]{Carette}
Mathieu Carette.
\newblock Virtually splitting the map from {${\rm Aut}(G)$} to {${\rm
  Out}(G)$}.
\newblock {\em Proc. Amer. Math. Soc.}, 143(2):543--554, 2015.

\bibitem[CCV07]{CCV_automorphisms}
Ruth Charney, John Crisp, and Karen Vogtmann.
\newblock Automorphisms of 2-dimensional right-angled {A}rtin groups.
\newblock {\em Geom. Topol.}, 11:2227--2264, 2007.

\bibitem[CG12]{COrredor-Gutierrez}
L.~J. Corredor and M.~A. Gutierrez.
\newblock A generating set for the automorphism group of a graph product of
  abelian groups.
\newblock {\em Internat. J. Algebra Comput.}, 22(1):1250003, 21, 2012.

\bibitem[CM13]{CapraceMinasyan:RFRACG}
Pierre-Emmanuel Caprace and Ashot Minasyan.
\newblock On conjugacy separability of some {C}oxeter groups and
  parabolic-preserving automorphisms.
\newblock {\em Illinois J. Math.}, 57(2):499--523, 2013.

\bibitem[CRSV10]{CRSV_no_SIL}
Ruth Charney, Kim Ruane, Nathaniel Stambaugh, and Anna Vijayan.
\newblock The automorphism group of a graph product with no {SIL}.
\newblock {\em Illinois J. Math.}, 54(1):249--262, 2010.

\bibitem[CV09]{CV09}
Ruth Charney and Karen Vogtmann.
\newblock Finiteness properties of automorphism groups of right-angled {A}rtin
  groups.
\newblock {\em Bull. Lond. Math. Soc.}, 41(1):94--102, 2009.

\bibitem[CV11]{CV11}
Ruth Charney and Karen Vogtmann.
\newblock Subgroups and quotients of automorphism groups of {RAAG}s.
\newblock In {\em Low-dimensional and symplectic topology}, volume~82 of {\em
  Proc. Sympos. Pure Math.}, pages 9--27. Amer. Math. Soc., Providence, RI,
  2011.

\bibitem[Day09a]{DayPeakReduction}
Matthew~B. Day.
\newblock Peak reduction and finite presentations for automorphism groups of
  right-angled {A}rtin groups.
\newblock {\em Geom. Topol.}, 13(2):817--855, 2009.

\bibitem[Day09b]{Day_symplectic}
Matthew~B. Day.
\newblock Symplectic structures on right-angled {A}rtin groups: between the
  mapping class group and the symplectic group.
\newblock {\em Geom. Topol.}, 13(2):857--899, 2009.

\bibitem[Day11]{Day_solvable}
Matthew~B. Day.
\newblock On solvable subgroups of automorphism groups of right-angled {A}rtin
  groups.
\newblock {\em Internat. J. Algebra Comput.}, 21(1-2):61--70, 2011.

\bibitem[DR12]{DuncanRemeslennikovII}
Andrew~J. Duncan and Vladimir~N. Remeslennikov.
\newblock Automorphisms of partially commutative groups {II}: {C}ombinatorial
  subgroups.
\newblock {\em Internat. J. Algebra Comput.}, 22(7):1250074, 44, 2012.

\bibitem[DW17]{DayWade}
Matthew~B. Day and Richard~D. Wade.
\newblock Relative automorphism groups of right-angled {A}rtin groups.
\newblock {\em arXiv:1712.01583}, 2017.

\bibitem[Fer16a]{FerovConjSep}
Michal Ferov.
\newblock On conjugacy separability of graph products of groups.
\newblock {\em J. Algebra}, 447:135--182, 2016.

\bibitem[Fer16b]{Ferov:RF}
Michal Ferov.
\newblock Separability properties of automorphisms of graph products of groups.
\newblock {\em Internat. J. Algebra Comput.}, 26(1):1--27, 2016.

\bibitem[GdlH90]{GhysDLH:hyperbolic}
\'E. Ghys and P.~de~la Harpe, editors.
\newblock {\em Sur les groupes hyperboliques d'apr\`es {M}ikhael {G}romov},
  volume~83 of {\em Progress in Mathematics}.
\newblock Birkh\"auser Boston, Inc., Boston, MA, 1990.
\newblock Papers from the Swiss Seminar on Hyperbolic Groups held in Bern,
  1988.

\bibitem[GM18]{GenevoisMartin}
Anthony Genevois and Alexandre Martin.
\newblock Automorphisms of graph products of groups from a geometric
  perspective.
\newblock {\em arXiv:1809.08091}, 2018.

\bibitem[GPR12]{GPR_automorphisms}
Mauricio Gutierrez, Adam Piggott, and Kim Ruane.
\newblock On the automorphisms of a graph product of abelian groups.
\newblock {\em Groups Geom. Dyn.}, 6(1):125--153, 2012.

\bibitem[Gre90]{Green:graphprod}
Elisabeth~R. Green.
\newblock {\em Graph product of groups}.
\newblock PhD thesis, University of Leeds, 1990.

\bibitem[Gro87]{Gromov:hyperbolic}
M.~Gromov.
\newblock Hyperbolic groups.
\newblock In {\em Essays in group theory}, volume~8 of {\em Math. Sci. Res.
  Inst. Publ.}, pages 75--263. Springer, New York, 1987.

\bibitem[GS18]{GuirardelSale-vastness}
Vincent Guirardel and Andrew Sale.
\newblock Vastness properties of automorphism groups of {RAAG}s.
\newblock {\em J. Topol.}, 11(1):30--64, 2018.

\bibitem[Hor]{HorbezTA}
Camille Horbez.
\newblock The tits alternative for the automorphism group of a free product.
\newblock arXiv:1408.0546.

\bibitem[HW99]{HsuWise}
Tim Hsu and Daniel~T. Wise.
\newblock On linear and residual properties of graph products.
\newblock {\em Michigan Math. J.}, 46(2):251--259, 1999.

\bibitem[Iva84]{Ivanov:Tits}
N.~V. Ivanov.
\newblock Algebraic properties of the {T}eichm\"uller modular group.
\newblock {\em Dokl. Akad. Nauk SSSR}, 275(4):786--789, 1984.

\bibitem[Mag35]{Magnus:IAfree}
Wilhelm Magnus.
\newblock \"uber {$n$}-dimensionale {G}ittertransformationen.
\newblock {\em Acta Math.}, 64(1):353--367, 1935.

\bibitem[McC85]{McCarthy:Tits}
John McCarthy.
\newblock A ``{T}its-alternative'' for subgroups of surface mapping class
  groups.
\newblock {\em Trans. Amer. Math. Soc.}, 291(2):583--612, 1985.

\bibitem[Min12]{Minasyan:RFRAAG}
Ashot Minasyan.
\newblock Hereditary conjugacy separability of right-angled {A}rtin groups and
  its applications.
\newblock {\em Groups Geom. Dyn.}, 6(2):335--388, 2012.

\bibitem[MO10]{MinasyanOsin:Aut}
A.~Minasyan and D.~Osin.
\newblock Normal automorphisms of relatively hyperbolic groups.
\newblock {\em Trans. Amer. Math. Soc.}, 362(11):6079--6103, 2010.

\bibitem[Pow78]{Powell:Torelli}
Jerome Powell.
\newblock Two theorems on the mapping class group of a surface.
\newblock {\em Proc. Amer. Math. Soc.}, 68(3):347--350, 1978.

\bibitem[Ser89]{Servatius}
Herman Servatius.
\newblock Automorphisms of graph groups.
\newblock {\em J. Algebra}, 126(1):34--60, 1989.

\bibitem[SS17]{SaleSusse}
Andrew Sale and Timothy Susse.
\newblock Outer automorphism groups of right-angled {C}oxeter groups are either
  large or virtually abelian.
\newblock {\em arXiv:1706.07873}, 2017.

\bibitem[SW05]{SageevWise}
Michah Sageev and Daniel~T. Wise.
\newblock The {T}its alternative for {${\rm CAT}(0)$} cubical complexes.
\newblock {\em Bull. London Math. Soc.}, 37(5):706--710, 2005.

\bibitem[Tit72]{TitsAlt}
J.~Tits.
\newblock Free subgroups in linear groups.
\newblock {\em J. Algebra}, 20:250--270, 1972.

\bibitem[Wad12]{WadeThesis}
Richard~D. Wade.
\newblock {\em Symmetries of free and right-angled {A}rtin groups}.
\newblock PhD thesis, University of Oxford, 2012.

\end{thebibliography}

\medskip
\noindent {Andrew Sale}, 
University of Hawaii at Manoa, \\
{andrew@math.hawaii.edu}, \ \href{https://math.hawaii.edu/~andrew/}{https://math.hawaii.edu/$\sim$andrew/}
\medskip

\noindent {Tim Susse}, 
Bard College at Simon’s Rock, \\
 {tsusse@simons-rock.edu}, \ \href{https://sites.google.com/site/tisusse/}{https://sites.google.com/site/tisusse/}

\end{document}